\documentclass[a4paper]{amsart}
\usepackage{amsfonts}
\usepackage{amsthm}
\usepackage{amsmath}
\usepackage{amssymb}
\usepackage{graphicx}
\usepackage{layout}
\usepackage{mathrsfs}
\usepackage{epsfig}
\usepackage{indentfirst,latexsym,bm}
\usepackage{pifont}
\usepackage{color}
\usepackage{txfonts}
\begin{document}
\def\dis{\displaystyle}
\renewcommand{\theequation}{\thesection.\arabic{equation}}
\newtheorem{thm}{Theorem}[section]
\newtheorem{lemma}[thm]{Lemma}
\newtheorem{cor}[thm]{Corollary}
\newtheorem{pro}[thm]{Proposition}
\newtheorem{remark}[thm]{Remark}
\newtheorem{defin}[thm]{Definition}

\title[]{Stabilities of background perturbations for compressible adiabatic flows through porous media in 3-D}
\author{Shijin Deng}
\address{School of Mathematical Sciences, CMA-Shanghai, Shanghai Jiao Tong University}
\author{Xiaochun Yang}
%\thanks{* Corresponding author.}
\thanks{Email addresses: matdengs@sjtu.edu.cn (Shijin Deng), yangxc0220@sjtu.edu.cn (Xiaochun Yang).}
\address{School of Mathematical Sciences, Shanghai Jiao Tong University}
\date{}

\maketitle

\noindent{\bf Abstract.} In this paper, we study the background stability problem for three-dimensional compressible adiabatic flows through porous media. For two given equilibrium states $(\bar{P}^k,\vec{0},\bar{S}^k)$ ($k=0,1$), under the assumption of small initial perturbations around each equilibrium state, pointwise estimates of solutions for individual perturbation problems are obtained. Based on these pointwise structures, a comparison between two individual solutions could be applied and it is proved that the difference between two solutions depends continuously on the equilibrium distance $|\bar{P}^0-\bar{P}^1|+|\bar{S}^0-\bar{S}^1|$ as well as the initial perturbation difference, with additional decay rates, thereby establishing a nonlinear comparison principle for this system. Compared with previous works on background stability problems, it is the first time to obtain stabilities in the pointwise sense. Our main tools are Green's functions, a priori energy estimates, and refined decay estimates for higher‑order derivatives.

\vskip .05in 

\noindent {\bf Key Words.} Compressible adiabatic flows through porous media;\ \ Background stabilities;\ \ Green's function;\ \ Energy estimates;\ \ Pointwise estimates

%\noindent {\bf 2000 MR Subject Classification}: 

\section{Introduction}
The compressible adiabatic flows through porous media could be described by the following equations:
\begin{equation} \label{pro1}
\begin{cases}
	\partial_t \rho+div\left(\rho\vec{u}\right)=0, \quad (\vec{x} , t) \in \mathbb{R}^3 \times \mathbb{R}_{+},\\
	\partial_t\left(\rho\vec u\right)+div \left(\rho\vec{u} \otimes \vec{u}\right)+\nabla P=-a\rho\vec u,\\
	\partial_t\left(\rho\mathcal{E}\right)+div \left(\rho\mathcal{E}\vec{u}+P\vec{u}\right)=-a\rho\left|\vec u\right|^2.
\end{cases}
\end{equation}
Here, $\rho, \vec{u}\equiv (u_1,u_2,u_3)^t, P$ and $e$ denote the density, the velocity, the pressure and 
the internal energy respectively, and the total energy $\mathcal{E}$ is given by $\mathcal{E}\equiv \frac 12\left|\vec u\right|^2+e$. The constant $a>0$ is the friction constant. In this paper, we only consider the polytropic fluids and thus the state equations read
$$P=R\rho\theta,\quad e=C_v \theta,$$
with the universal gas constant $R>0$ and the specific heat $C_v>0$ at constant volume.

For isentropic flows, the system \eqref{pro1} reduces to the well-known compressible Euler equations with damping,
\begin{equation}\nonumber
	\begin{cases}
		\partial_t \rho+div\left(\rho\vec{u}\right)=0,\\
		\partial_t\left(\rho\vec u\right)+div \left(\rho\vec{u} \otimes \vec{u}\right)+\nabla P=-a\rho\vec u,
	\end{cases}
\end{equation}
which has been widely investigated. Global existence of smooth solutions for small initial data in the one-dimensional setting was established by Nishida \cite{n,n1}. The existence theory and large-time behaviors for the 1-D case and the multi-D case were studied in \cite{hp2,hp,hp1,mm1,m,mm,mm2,mmr,m1,ny,pz,zh1,zh2} and \cite{d3,fx,hyz,k,lwy,lw1,pz1,svs,tw,wy,wly,xz,zsl} respectively (see also references therein). 

For $s \neq \text{constant}$, it is the system \eqref{pro1} for the adiabatic flows which is totally different from the classical compressible Euler equations with damping. The system \eqref{pro1} contains zero eigenvalues, which correspond to non-decaying components at the linear level and may cause troubles for the existence of the nonlinear solution. Fortunately, for this system, the non-decaying components could always be decoupled.  In the one-dimensional setting, its Lagrangian formulation decouples the non-decaying part, and there are huge results about 1-D case. For the 1-D Cauchy problem, results include the existence theory and also large-time behaviors of solutions \cite{gw,hs,hl,hs1,mp,nn,p1,zh}. For the half-space problems, decay rates of asymptotic profiles were obtained in \cite{gw1,p}. 

The three-dimensional case was first studied in \cite{wth}. The authors draw on the experience of Duan and Ma’s work \cite{dm} on the compressible Navier–Stokes equations, and treat the thermodynamic quantity $\rho$ as a function of $p$ and $s$ to convert system \eqref{pro1} into the $(p,\vec{u},s)$ system and achieve the linear decoupling of decaying and non-decaying components. In \cite{qw}, this strategy is extended to a class of partially dissipative quasilinear hyperbolic systems. The authors introduce a (weak) linear degeneracy condition under which a partially dissipative quasilinear hyperbolic system can be decomposed into a subsystem satisfying the Kawashima condition and a remaining subsystem that violates the Kawashima condition but is (weakly) linearly degenerate. This (weak) linear degeneracy condition applies to the damped full Euler system of adiabatic flows studied in \cite{wth}; however, it does not apply to the three-dimensional thermal nonequilibrium flow model \cite{z1}, which %admits a triple zero eigenvalue and 
retains a double zero eigenvalue even after linear decoupling from the entropy subsystem. This necessitates additional structural conditions such as the irrotational assumption introduced in \cite{hl1}. Related work on the 3-D adiabatic flows could be also found in \cite{d,wm,yz}. In \cite{wm}, the $L^1$ boundedness condition on initial data required in \cite{wth} is eliminated through a frequency-decomposed energy approach; in \cite{yz}, weighted energy estimates and interpolation arguments are applied for decay rates of the pressure and velocity in the weighted Lebesgue space $L_{\gamma}^2$; and the half-space problem for 3-D is studied in \cite {d} by the Green's function for the initial-boundary value problem which is constructed through a Laplace–Fourier transform converting the half-space problem into a whole-space one.

As shown in \cite{wth}, if one chooses dependent variables $P, \vec u$ and the entropy $S$ with the state equation for the gas given by
\begin{equation}\label{state}
\rho=\kappa P^{\frac{C_v}{C_v+R}}\exp{\left(-\frac{S}{C_v+R}\right)}
\end{equation}
with a positive constant $\kappa$, it leads to a decoupling of $(P,\vec u)$ and the non-decaying component $S$ in the linear level. In the rest of the paper, we follow this choice, and the equations \eqref{pro1} in terms of new unknowns $P, \vec{u}$ and $S$ read
\begin{equation}\label{proo}
\begin{cases}
\partial_t P+\frac{R+C_v}{C_v} P div\vec{u}+\vec{u} \cdot \nabla P=0,\\
\partial_t \vec{u}+\vec{u}\cdot \nabla \vec{u}+\frac{\nabla P}{\rho}=-a\vec{u},\\
\partial_t S+\vec{u}\cdot\nabla S=0,
\end{cases}
\end{equation}
with $\rho=\rho(P,S)$ given by \eqref{state}.

In this paper, we are interested in the Cauchy problems for \eqref{proo} with the following two initial conditions and denote the solutions as $\bigl(P^k,\vec u^k,S^k\bigr)(\vec{x},t)$:
\begin{equation}\label{ini12}
\left(P^k,\vec u^k,S^k\right) \! (\vec{x},0)=\left(p^k_0,\vec u^k_0,s^k_0\right) \! (\vec{x})+\left(\bar{P}^k,\vec 0,\bar{S}^k\right), \ \ k=0,1,
\end{equation}
and aim to establish a comparison principle between the two corresponding solutions. Here, $\bigl(p^k_0,\vec u^k_0,s^k_0\bigr)$ denote initial perturbation functions and $\bigl(\bar{P}^k,\vec 0,\bar{S}^k\bigr)$ ($k=0,1$) are two different constant solutions of \eqref{proo}.

In applications, background states such as $\bigl(\bar{P}^k,\vec 0,\bar{S}^k\bigr)$ ($k=0,1$) are usually obtained from measurements and thus inherent errors cannot be avoided, which naturally leads to background stability problems. In \cite{lww}, the background stability problem for the Boltzmann equation is investigated, and a refined Green's function method suitable for such problems is proposed. In \cite{dy}, both the refined Green's function method and a refined energy method are used to study the background perturbation problem for the compressible Navier-Stokes equations. There is also related work on potential stability problems \cite{ltw} and force stability problems \cite{lw}. Recently, we established the background stability for the three-dimensional thermal nonequilibrium flow model in \cite{dy2}, where a new mechanism concerning models with zero eigenvalues appears. In this paper, the adiabatic flow model also contains zero eigenvalues and the study here enriches the understanding of the background stability problems for partially dissipative quasilinear hyperbolic systems.

It requires very precise estimates for the individual perturbation problem to derive a comparison principle incorporating the spatial and temporal information. In \cite{yz}, the obtained decay rates $t^{-3/4+\gamma/2}$ for the pressure and velocity in the weighted Lebesgue space $L_{\gamma}^2$ and the ones $t^{-5/4+\gamma/2}$ for the $k$-th ($1\le k\le 3$) order spatial derivatives $\nabla^k(p,u)$ are not adequate for the comparison; and thus we seek more decaying structures for the individual perturbation problems before the comparison. In fact, in Section~\ref{sec:2}, the Green’s function method combined with energy estimates and refined decay rates of high-order derivatives is applied to derive pointwise space-time estimates of $(P^k-\bar{P}^k,\vec{u}^k,S^k-\bar{S}^k)$ and their higher-order derivatives, which provide sufficient information for the study of the comparison problem in Section~\ref{sec:3}.

The main result of this paper is as follows:
\begin{thm}
\label{main}
Suppose that the two constant solutions $\bigl(\bar{P}^k,\vec 0,\bar{S}^k\bigr)$ $(k=0,1)$ satisfy $\bigl| \bar{P}^0-\bar{P}^1\bigr|, \bigl|\bar{S}^0-\bar{S}^1\bigr|\le \delta$ for a positive constant $\delta\ll 1$ and the initial perturbation functions $\bigl(p_0^k,\vec u_0^k,s_0^k\bigr)(\vec{x})$ in \eqref{ini12} satisfy
\begin{equation}\label{inicon}
\begin{cases}
\left\|\left(p_0^k, \vec{u}_0^k\right) \right\|_{H^8} + \sum_{|\alpha|=0}^5 \left\| e^{|\vec x|} \partial_{\vec x}^{\alpha} \left(p^k_0,\vec u^k_0\right) \! (\vec x)\right\|_{L^\infty} +\sum_{|\alpha|=0}^{7} \left\| e^{|\vec{x}|} \partial_{\vec x}^{\alpha} s_0^k (\vec x) \right\|_{L^2} \equiv \epsilon_k \ll 1\\
\\
\left\|\left(p_0^0-p_0^1, \vec{u}_0^0-\vec{u}_0^1, s_0^0-s_0^1\right) \right\|_{H^6}+\sum_{|\alpha|=0}^2 \left\| e^{|\vec x|} \partial_{\vec x}^{\alpha}\left(p^0_0-p^1_0,\vec u^0_0-\vec u^1_0\right) \! (\vec x)\right\|_{L^\infty}+\sum_{|\alpha|=0}^{2} \left\| e^{|\vec{x}|} \partial_{\vec x}^{\alpha} \left( s_0^0-s_0^1\right) \! (\vec x)\right\|_{L^2} \le \epsilon_{01} \delta_0,
\end{cases}
\end{equation}
for positive constants $\delta_0$, and $\epsilon_{01}:=\epsilon_0+\epsilon_1 \ll 1$. Then there exists a positive constant $\epsilon_*\ll 1$ such that if $\epsilon_{01}, \delta_0< \epsilon_*$, there exists a constant $D_c>0$ such that the solutions $\bigl(P^k,\vec u^k,S^k\bigr)(\vec x,t)$ $(k=0,1)$ of the Cauchy problems \eqref{proo}--\eqref{ini12} satisfy
\begin{equation}\label{puscpoint}
	\begin{aligned}
		&\left|\left(P^0-P^1\right) \!(\vec x,t)-\left(\bar{P}^0-\bar{P}^1\right)\right|\le D_c\epsilon_{01}\left(\delta_0+\delta\right)\left((1+t)^{-3/2}e^{-\frac{|\vec{x}|^2}{D_c t}} + e^{-(|\vec{x}|+t)/D_c} \right),\\
		&\left|\left(\vec u^0-\vec u^1\right) \!(\vec x,t)\right|\le D_c\epsilon_{01}\left(\delta_0+\delta\right)\left((1+t)^{-2}\log(2+t)e^{-\frac{|\vec{x}|^2}{D_c t}} + e^{-(|\vec{x}|+t)/D_c} \right),\\
		&\left|\left(S^0-S^1\right) \!(\vec x,t)-\left(\bar{S}^0-\bar{S}^1\right)\right|\le D_c\epsilon_{01}\left(\delta_0+\delta\right)e^{-|\vec{x}|}.
	\end{aligned}
\end{equation}
\end{thm}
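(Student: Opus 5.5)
The proof runs in two stages: first sharp pointwise estimates for each individual perturbation problem, then a comparison of the two solutions through the difference equations.

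\textbf{Stage 1: individual problems.} For each $k$, write $p^k=P^k-\bar P^k$, $s^k=S^k-\bar S^k$. Linearize \eqref{proo} at $(\bar P^k,\vec 0,\bar S^k)$. The $(p^k,\vec u^k)$ part is a damped wave system with sound speed $c_k^2=\frac{R+C_v}{C_v}\bar P^k/\bar\rho^k$. The entropy part is pure transport.
\begin{itemize}
\item Construct the Green's function $G^k$ of the linear $(p,\vec u)$ system by Fourier analysis. The low-frequency part behaves like the heat kernel with diffusion $c_k^2/a$. The $p$-component gives $(1+t)^{-3/2}e^{-|\vec x|^2/(Ct)}$; the $\vec u$-component gains one derivative, giving $(1+t)^{-2}$. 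The high-frequency part consists of singular terms $e^{-at/2}\delta$ plus remainders that decay exponentially in $|\vec x|+t$.
\item Write $(p^k,\vec u^k)$ via Duhamel with the nonlinear terms. Among these is $\nabla P(1/\rho(P,S)-1/\bar\rho)$, which involves $s^k\nabla p^k$.
\item Obtain $H^8$ energy estimates and refined decay of high derivatives.
\item Bound $s^k$ along particle trajectories. Since $\vec u^k$ is small and integrable in time (it decays like $(1+t)^{-2}$), characteristics move only $O(\epsilon_k)$, so $|s^k|\lesssim \epsilon_k e^{-|\vec x|}$. The weighted $L^2$ norms of $\partial^\alpha s_0$ up to order $7$, via Sobolev, give this with derivatives.
\item Close a bootstrap on the ansatz
\[
|\partial^\alpha p^k|\lesssim \epsilon_k\psi_{3/2},\qquad |\partial^\alpha\vec u^k|\lesssim \epsilon_k\psi_2,
\]
where $\psi_r=(1+t)^{-r}e^{-|\vec x|^2/(Ct)}+e^{-(|\vec x|+t)/C}$. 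The nonlinear convolutions are estimated by the standard lemmas for heat kernels convolved with products of these profiles.
\end{itemize}

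\textbf{Stage 2: comparison.} Set $\hat p=p^0-p^1$, $\hat{\vec u}=\vec u^0-\vec u^1$, $\hat s=s^0-s^1$. Subtract the two equations, writing both around the $k=0$ background. The difference of linear operators produces source terms proportional to $(\bar P^0-\bar P^1)$ and $(\bar S^0-\bar S^1)$, such as $\big(\frac{\bar P^0}{\bar P^1}-1\big)\operatorname{div}\vec u^1$ and $\big(1/\bar\rho^0-1/\bar\rho^1\big)\nabla p^1$. These are $O(\delta)$ times first derivatives of Stage-1 solutions. The nonlinear differences are bilinear: (Stage-1 quantity) $\times$ (hatted quantity).

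Then:
\begin{itemize}
\item Write $(\hat p,\hat{\vec u})=G^0*(\hat p_0,\hat{\vec u}_0)+\int_0^t G^0(t-\tau)*(\text{sources})\,d\tau$.
\item The initial part contributes $\epsilon_{01}\delta_0\psi$.
\item The $\delta$-sources are in divergence form, so one derivative lands on $G^0$. Convolving $\partial G^0$ with $\delta\epsilon\,\psi_{5/2}$, using the Stage-1 derivative decay, gives $\delta\epsilon\psi_{3/2}$ for $\hat p$. For $\hat{\vec u}$ the extra derivative gives $\delta\epsilon(1+t)^{-2}\log(2+t)$; the logarithm comes from the borderline time integral $\int_0^t(1+t-\tau)^{-2}(1+\tau)^{-2}\cdots$ of critical type.
\item The entropy difference $\hat s$ satisfies $\partial_t\hat s+\vec u^0\cdot\nabla\hat s=-\hat{\vec u}\cdot\nabla S^1$. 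Integrating along characteristics gives $|\hat s|\lesssim \epsilon_{01}\delta_0 e^{-|\vec x|}+\int_0^\infty|\hat{\vec u}||\nabla s^1|\,d\tau$. This is $\lesssim\epsilon_{01}(\delta_0+\delta)e^{-|\vec x|}$, since $\hat{\vec u}$ is time-integrable and $\nabla s^1\lesssim \epsilon e^{-|\vec x|}$.
\item A bootstrap on the weighted norms of $(\hat p,\hat{\vec u},\hat s)$ closes using the smallness of $\epsilon_{01}$. The $H^6$ energy estimate for the difference controls the derivatives needed in the nonlinear terms.
\end{itemize}

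\textbf{Main obstacle.} The hardest step is the coupling of $\hat s$, which does not decay in time, with $\nabla p$ in the momentum equation. The term $\hat s\,\nabla p^1$ has only spatial decay $e^{-|\vec x|}$ times $\epsilon\psi_{5/2}$. Showing that its Green's-function convolution still yields the $\psi_{3/2}$ and $(1+t)^{-2}\log$ profiles (rather than a slower rate) requires the refined Stage-1 decay of $\nabla^k p^1$. I would handle it by splitting the time integral at $t/2$: use the $L^1$-in-space bound for $\tau<t/2$, and the pointwise bound with the heat-kernel convolution lemma for $\tau>t/2$.
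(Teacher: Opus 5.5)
Your architecture matches the paper's: pointwise bounds for each individual problem, then Duhamel for the difference around the $k=0$ background with the $\delta$-forcing, plus a transport estimate for $\hat s$. Using characteristics instead of the $e^{|\vec x|}$-weighted energy is a fine substitute. But Stage~2 has a genuine gap. You never establish \emph{time decay} for derivatives of $(\hat p,\hat{\vec u})$; as written they are controlled only by a uniform $H^6$ energy bound, and that cannot close the argument.

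Every forcing term in the difference system has the form (individual)$\times$(difference). In several of them the individual factor supplies little or no time decay, so the decay must come from derivatives of the difference.

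\begin{itemize}
\item The decisive term is $\bigl(\frac{1}{\bar\rho^0}-\frac{1}{\rho^0}\bigr)\nabla\hat p$ in the momentum equation. Here $\rho^0-\bar\rho^0$ contains $s^0=\mathcal{O}(\epsilon_0e^{-|\vec x|})$, which does not decay in $t$. Through the piece $e^{-at}\delta(\vec x)I_n$ of $\mathbb{F}^0_S$ it enters $\hat{\vec u}$ as $\int_0^te^{-a(t-\tau)}(s^0\nabla\hat p)(\vec x,\tau)\,d\tau$. With only $\sup_\tau\|\nabla\hat p\|_{L^\infty}<\infty$, the best bound is $C\epsilon_0e^{-|\vec x|}\sup_\tau\|\nabla\hat p\|_{L^\infty}$, with no decay at all.
\item Likewise, $p^0\operatorname{div}\hat{\vec u}$ with $\|\nabla\hat{\vec u}\|_{L^\infty}$ merely bounded is a $\psi_{3/2}$-forcing. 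After convolution with the regular part of $\mathbb{F}^0_{11}$ it gives only $(1+t)^{-1/2}$ for $\hat p$.
\item Because $\mathbb{F}^0_S$ does not smooth, the singular part also needs decaying bounds on $s^0\nabla^3\hat p$ and $p^0\nabla^2\operatorname{div}\hat{\vec u}$. A pointwise bootstrap over finitely many derivatives can never supply the top order.
\end{itemize}

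The missing ingredient is exactly Theorem~\ref{enlec} and Lemma~\ref{decaypuc}: $\|\hat{\vec u}\|_{H^l}+\|\nabla\hat p\|_{H^{l-1}}\lesssim(1+t)^{-5/4}\log(2+t)$ and $\|\nabla\hat p\|_{L^\infty}+\|\nabla^2\hat{\vec u}\|_{H^{l-2}}+\|\nabla^3\hat p\|_{H^{l-3}}\lesssim(1+t)^{-2}\log(2+t)$, both with the factor $\epsilon_{01}(\delta+\delta_0)$. The paper proves these as follows:
\begin{itemize}
\item it couples the energy functionals with the $L^2$ and $L^\infty$ Green's function bounds of Theorem~\ref{greenlp};
\item it controls $\|\hat s\|_{H^l}$ through $\int_0^t\|\nabla\hat{\vec u}\|_{H^{l-1}}\,d\tau$ inside the decay functional;
\item it carries $\|\nabla\hat p\|_{L^\infty}$ in the functional, because the commutator produces $\|\nabla\hat p\|_{L^\infty}^2\|\nabla(p^0,s^0)\|_{H^{l-1}}$, whose entropy factor does not decay.
\end{itemize}
With these rates in hand the paper needs no pointwise bootstrap for the difference at all. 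Lemma~\ref{lehpuc} bounds each source by a decaying $W^{k,\infty}$ norm of the difference times the pointwise profile of an individual solution, e.g. $|s^0||\nabla\hat p|\lesssim\epsilon_{01}(\delta+\delta_0)(1+t)^{-2}\log(2+t)e^{-|\vec x|}$. The convolutions are then computed directly.

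The step you single out as the main obstacle, $\hat s\,\nabla p^1$, is in fact harmless, and some of your rates are off.
\begin{itemize}
\item Theorem~\ref{pointwise} gives only $\psi_2$ for $\nabla^i(p^k,\vec u^k)$, $1\le i\le 3$. $\nabla p^1$ is genuinely no better, since $p^1$ behaves like a heat kernel of mass $\int p_0^1$. So $\psi_{5/2}$ for first derivatives is unjustified.
\item With the correct rate, $\hat s\,\nabla p^1\lesssim\epsilon_{01}^2(\delta+\delta_0)\psi_2$, which is a forcing of the same type as the $\delta$-terms.
\item The $\delta$-forcing still works if, as in \eqref{sri}, the derivative in $(\bar P^0-\bar P^1)\operatorname{div}\vec u^1$ is moved onto the regular part of $\mathbb{F}^0$ only, while the singular part takes derivatives of the source. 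The borderline integral $\int_{t/2}^t(1+t-\tau)^{-1}\,d\tau$ then produces the logarithm for $\hat{\vec u}$.
\item Convolving $\partial G^0$ with a $\psi_{5/2}$ source, as you wrote, counts the derivative gain twice.
\end{itemize}
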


\begin{remark}
Under the assumption $\bar{\vec u}^0=\bar{\vec u}^1(=\vec 0)$, there is almost no loss in decay rates compared with the original ones obtained in Theorem~\ref{pointwise} for individual problems. As shown in \cite{lww}, this is the optimal result in the sense that it matches the Taylor expansion of heat kernels with respect to the diffusion constant. It is the first time to obtain these optimal decaying structures for a complex model; the previous results for both the Boltzmann equation \cite{lww} and Navier-Stokes equations \cite{dy} show a growing factor $t^{1/2}$ to compensate for a dependence on the difference of background states.

The last inequality of \eqref{puscpoint} reveals a uniform dependence with respect to $t$ on the difference of background states for the non-decaying component $S^0-S^1$. This is different from the result obtained in \cite{dy2} for the three-dimensional thermal nonequilibrium flow model, which is another type of partially dissipative system.
\end{remark}

The rest of the paper is arranged as follows. In Section~\ref{sec:2}, we quote the basic properties of the Green's functions for individual perturbation problems and establish pointwise estimates for solutions of the corresponding nonlinear problems. Proofs of the standard energy estimates and temporal decay estimates, and computational lemmas concerning nonlinear wave interactions are given in Appendix I and Appendix II, respectively.
In Section~\ref{sec:3}, we establish the comparison principle in the pointwise sense by the refined Green's function method and refined energy estimates, together with the required decay rates of higher‑order derivatives.

{\bf Notations.} For a multi-index $\alpha := (\alpha_1, \alpha_2, \alpha_3)$, one denotes
$$\partial_{\vec{x}}^\alpha := \partial_{x_1}^{\alpha_1}\partial_{x_2}^{\alpha_2}\partial_{x_3}^{\alpha_3},\quad \partial_{\vec{x}}:= (\partial_{x_1},\partial_{x_2},\partial_{x_3}),\quad |\alpha| := \alpha_1+\alpha_2+\alpha_3.$$
The symbol $\nabla_{\vec{x}}^m$ ($m\in\mathbb{N}^+$), abbreviated as $\nabla^m$, represents all $m$th-order derivatives with respect to the spatial variable $\vec{x}$. For a vector $\vec{x} =(x_1, x_2, x_3)^t \in \mathbb{R}^3$, its Euclidean norm is denoted as $|\vec{x}| \equiv \bigl(\sum_{i=1}^{3} (x_i)^2\bigr)^{1/2}$. The notations $\left\| \cdot \right\|_{L^p}$ and $\left\| \cdot \right\|_{H^m}$ denote the classical norms on $L^p_{\vec{x}} (\mathbb{R}^3)$ and Sobolev space $H^m_{\vec{x}} (\mathbb{R}^3)$, respectively. The Fourier transform of a function $f$ is denoted by $\mathscr{F}[f](\vec{\xi})$, defined as
$$\mathscr{F}[f](\vec{\xi}) := \int_{\mathbb{R}^3} e^{-i\vec{\xi} \cdot \vec{x}} f(\vec{x}) \, d\vec{x},$$
with the Fourier variable $\vec{\xi} = (\xi_1, \xi_2, \xi_3)^t \in \mathbb{R}^3$ and $\mathscr{F}^{-1}$ denotes the inverse Fourier transform.

The notation $f = \mathcal{O}(1) g$ indicates that there exists a positive constant $C $ such that $|f| \le C |g|$. Throughout this paper, $C > 0$ denotes a generic constant and its value may vary from line to line.

\setcounter{equation}{0}

\section{Green's Function and Pointwise Space-Time Estimates for Solutions of the Individual Perturbation Problems} \label{sec:2}
In this section, we consider the solutions $\bigl(P^k,\vec u^k,S^k\bigr)(\vec{x},t)$ of the Cauchy problems \eqref{proo}--\eqref{ini12} in the perturbation form: for $k=0,1$,
$$\left(p^k,\vec u^k,s^k\right) \! (\vec{x},t)\equiv\left(P^k,\vec u^k,S^k\right) \! (\vec{x},t)-\left(\bar{P}^k,\vec 0,\bar{S}^k\right),$$
and the equations for the perturbations $\bigl(p^k,\vec u^k,s^k\bigr) (k=0,1)$ are as follows:
\begin{equation}\label{pk}
	\begin{cases}
		\partial_t p^k+\frac{\bar{P}^k\left(R+C_v\right)}{C_v} div\vec u^k=-\frac{R+C_v}{C_v} p^k div\vec u^k-\vec u^k\cdot\nabla p^k:=R_p^k,\\
		\partial_t \vec{u}^k+\frac{1}{\bar{\rho}^k}\nabla p^k+a\vec{u}^k=-\vec{u}^k\cdot\nabla \vec u^k+\frac{\rho^k-\bar{\rho}^k}{\bar{\rho}^k\rho^k}\nabla p^k:=	R_{\vec{u}}^k,\\
		\partial_t s^k=-\vec u^k\cdot\nabla s^k,\\
		\left(p^k,\vec u^k,s^k\right) \! (\vec x,0)=\left(p^k_0,\vec u^k_0,s^k_0\right) \!(\vec x).
	\end{cases}
\end{equation}
Here, the relations among $\rho^k, \bar{\rho}^k$ and the corresponding $\bigl(P^k,S^k\bigr), \bigl(\bar{P}^k,\bar{S}^k\bigr)$ are given by the identity \eqref{state}. The pointwise estimates for the individual solutions $\bigl(p^k,\vec{u}^k,s^k\bigr)$ are as follows:
\begin{thm}[\textbf{Pointwise Space-Time Structures}] \label{pointwise}
	Suppose that the initial data $\bigl(p_0^k,\vec{u}_0^k,s_0^k\bigr)(\vec{x})$ in \eqref{pk} satisfy
	\begin{equation} \label{initial}
		\left\|\left(p_0^k, \vec{u}_0^k\right) \right\|_{H^8} + \sum_{|\alpha|=0}^5 \left\| e^{|\vec x|} \partial_{\vec x}^{\alpha} \left(p^k_0,\vec u^k_0\right) \!(\vec x)\right\|_{L^\infty} +\sum_{|\alpha|=0}^{7} \left\| e^{|\vec{x}|} \partial_{\vec x}^{\alpha} s_0^k (\vec x)\right\|_{L^2} \equiv \epsilon_k \ll 1.
	\end{equation}
	Then there exist positive constants $D_k$ such that the solutions $\bigl(p^k,\vec{u}^k,s^k\bigr)$ $(k=0,1)$ of the Cauchy problem \eqref{pk} satisfy
	\begin{equation}\label{poinconclu}
		\begin{aligned}
			&\left| p^k(\vec{x},t)\right| \le D_k \epsilon_k \left( (1+t)^{-3/2} e^{-\frac{|\vec{x}|^2}{D_k t}} + e^{-(|\vec{x}+t)/D_k}\right),\\
			& \left|\vec{u}^k(\vec{x},t)\right|, \left|\nabla^i (p^k,\vec{u}^k)(\vec{x},t)\right| \le D_k \epsilon_k \left( (1+t)^{-2} e^{-\frac{|\vec{x}|^2}{D_k t}} + e^{-(|\vec{x}+t)/D_k}\right), \quad \text{for } 1 \le i \le 3,\\
			& \left|\nabla^j s^k (\vec{x})\right| \le D_k\epsilon_k e^{-|\vec{x}|}, \quad \text{for } 0 \le j \le 5.
		\end{aligned}
	\end{equation}
\end{thm}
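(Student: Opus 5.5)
Since $s^k$ enters the linear part of \eqref{pk} only through the transport equation $\partial_t s^k=-\vec u^k\cdot\nabla s^k$, we first treat the decaying block $(p^k,\vec u^k)$. Here $s^k$ appears only through the coefficient $\rho^k=\rho(P^k,S^k)$ in $R^k_{\vec u}$. The linearized operator for $(p^k,\vec u^k)$ is the damped acoustic system with constant coefficients $\bar P^k,\bar\rho^k$. Its Green's function $G^k(\vec x,t)$ is built by Fourier analysis, splitting into low, middle and high frequencies, in the spirit of \cite{wth}. We will show that $G^k$ decomposes into three parts:
\begin{itemize}
\item a diffusive part, bounded by $(1+t)^{-3/2-|\alpha|/2}e^{-|\vec x|^2/(Ct)}$ for the $p$-$p$ entry, with one extra $t^{-1/2}$ for every entry involving $\vec u$, because $\hat u\sim i\xi\hat p$ at low frequency;
\item singular parts $e^{-at}\delta(\vec x)$-type terms coming from the high-frequency expansion;
\item a remainder bounded by $e^{-(|\vec x|+t)/C}$.
\end{itemize}
The $\vec u$ component at low frequency also contains the $e^{-at}$-decaying divergence-free part, which is harmless. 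The nonlinear solution is then written by Duhamel's principle,
$$(p^k,\vec u^k)=G^k*(p^k_0,\vec u^k_0)+\int_0^t G^k(t-\tau)*(R^k_p,R^k_{\vec u})(\tau)\,d\tau.$$

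Next comes a continuity (bootstrap) argument on the weighted quantity
$$M(t)=\sup_{\tau\le t,\vec x}\Bigl\{|p^k|\,\Psi_{3/2}^{-1}+\bigl(|\vec u^k|+\textstyle\sum_{i\le3}|\nabla^i(p^k,\vec u^k)|\bigr)\Psi_2^{-1}\Bigr\},$$
where $\Psi_r=(1+t)^{-r}e^{-|\vec x|^2/(Dt)}+e^{-(|\vec x|+t)/D}$. We also carry along the energy bound $\|(p^k,\vec u^k)\|_{H^8}+\|e^{|\vec x|}\nabla^{\le7}s^k\|_{L^2}\le C\epsilon_k$, which is obtained from standard symmetrizable-hyperbolic energy estimates. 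Those estimates use the damping $-a\vec u$ and the Kawashima-type cross term $\int\vec u\cdot\nabla p$ to dissipate $\nabla p$. For the entropy, we integrate the transport equation along the particle paths $\dot X=\vec u^k(X,t)$. Since $\int_0^\infty\|\vec u^k\|_{W^{1,\infty}}dt\lesssim\epsilon_k$ by the $(1+t)^{-2}$ decay, the flow map is an $O(\epsilon_k)$ perturbation of the identity. Hence the weight $e^{|\vec x|}$ is preserved up to a constant factor, and Sobolev embedding applied to $e^{|\vec x|}\nabla^j s^k\in H^2$ gives $|\nabla^j s^k|\le C\epsilon_ke^{-|\vec x|}$ for $j\le5$. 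This works uniformly in $t$, since the coefficients $\nabla^{\le 7}\vec u^k$ are integrable in time.

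The key step, and the expected obstacle, is the nonlinear pointwise estimate of the Duhamel term. The nonlinearities $R^k_p,R^k_{\vec u}$ are quadratic, e.g.\ $p\,\mathrm{div}\vec u$, $\vec u\cdot\nabla\vec u$ and $(\rho-\bar\rho)\nabla p$, so they are bounded by $M^2\,\Psi_{3/2}\Psi_2$ together with the $s$-contribution $\epsilon_ke^{-|\vec x|}\Psi_2$. The latter is not time-decaying in the $s$ factor, but it is exponentially localized in space, which is what keeps it under control. We need the standard wave-interaction lemmas (Appendix II-type):
$$\int_0^t\!\!\int G_{\rm diff}(\vec x-\vec y,t-\tau)\,\Psi_{3}(\vec y,\tau)\,d\vec y\,d\tau\lesssim\Psi_{3/2}, \qquad \int_0^t\!\!\int\nabla G_{\rm diff}\,\Psi_{7/2}\lesssim\Psi_2,$$
with analogous bounds for the localized term $e^{-|\vec y|}\Psi_2$. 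The $\vec u$ and gradient rates of $(1+t)^{-2}$ are borderline. The gain comes from writing $R^k$ in divergence form where possible and moving the derivative onto $G^k$; the extra $t^{-1/2}$ of the $\vec u$-rows handles the rest. The higher derivatives $\nabla^i$ with $i\le3$ would lose regularity through the singular parts of $G^k$. For these we plan to use the singular part only against terms controlled in $H^8$, and combine it with refined $L^2$ decay rates for $\nabla^{m}(p,\vec u)$ to close the high-frequency contribution, in the manner of \cite{yz}. Closing $M(t)\le C\epsilon_k+CM(t)^2$ then yields \eqref{poinconclu}.
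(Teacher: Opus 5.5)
Your architecture matches the paper's: Green's function of the damped acoustic block, Duhamel, a pointwise bootstrap with Gaussian-plus-exponential weights, energy estimates, a weighted estimate for $s^k$, and interaction lemmas. Your flow-map treatment of $s^k$ is an acceptable substitute for \eqref{weightsk}--\eqref{pointsk}. The problem is at the step you call key: you misidentify the obstacle and omit the mechanism that actually closes the argument.

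\textbf{The $(1+t)^{-2}$ rates are not the difficulty.} The rates for $\vec u^k$ and $\nabla^i(p^k,\vec u^k)$ need no divergence-form trick; they follow directly from kernel bounds like \eqref{kercon1}--\eqref{kercon2}. What caps them at $(1+t)^{-2}$ is the localized source $s^k\nabla p^k$, of size $(1+t)^{-2}e^{-|\vec x|}$. Rewriting it as $\nabla(s^kp^k)-p^k\nabla s^k$ would make things worse, since $p^k\nabla s^k$ only decays like $(1+t)^{-3/2}$.

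\textbf{The real difficulty is the top order.} The bound on $\nabla^3(p^k,\vec u^k)$ must pass through $\mathbb{F}^k_S$. By \eqref{fs1k}--\eqref{fs2k} this needs $\nabla^3R^k,\dots,\nabla^5R^k$ pointwise, and these contain terms such as $s^k\nabla^6p^k$, $\nabla s^k\,\nabla^5p^k$ and $\nabla^2 s^k\,\nabla^4p^k$, whose entropy factors do not decay in time. So you need $\|\nabla^mp^k\|_{L^\infty}\lesssim(1+t)^{-2}$ for $4\le m\le 6$, which is what \eqref{decaygen3} supplies.

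Energy estimates plus $L^2$ Green's function bounds cannot give this on their own. In $\|\nabla^3R^k_{\vec u}\|_{L^2}$ the same non-decaying factor $\nabla^3 s^k$ multiplies $\nabla p^k$, so one needs $\|\nabla p^k\|_{L^\infty}\lesssim(1+t)^{-2}$. From $L^2$ information and Sobolev embedding you only get $\|\nabla p^k\|_{L^\infty}\lesssim\|\nabla^2p^k\|_{H^1}\lesssim(1+t)^{-7/4}$.

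The paper closes this with a two-way bootstrap. It proves \eqref{decaygen3} using $\|\nabla p^k\|_{L^\infty}\le C(1+t)^{-2}\mathcal{N}^k(t)$ taken from the pointwise functional. That output then feeds Lemma~\ref{lerpu} for $i=3,4,5$. The closing inequality is therefore $\mathcal{N}^k\le C[\epsilon_k+\mathcal{N}^k(\epsilon_k+\mathcal{N}^k)]$, not $M\le C\epsilon_k+CM^2$ with the $L^2$ decay treated as an independent input. Importing rates from the weighted-$L^2$ work you cite does not help: those rates ($t^{-5/4+\gamma/2}$ for derivatives) are too weak. Without the coupling you cannot reach the $(1+t)^{-2}$ rate claimed for $\nabla^3(p^k,\vec u^k)$.

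\textbf{Your Green's function is also misdescribed where it matters.} The singular part is not of $e^{-at}\delta(\vec x)$ type. Apart from the $e^{-at}\delta(\vec x)I_n$ term in the $\vec u$-$\vec u$ block, it consists of $e^{-at/2}$ times the wave propagators $\mathbb{S}^k,\mathbb{C}^k$ and their Yukawa-smoothed versions, concentrated near the sound cone $|\vec x|=c_kt$; see \eqref{fks}. This has three consequences:
\begin{itemize}
\item convolving against it produces spherical means, via Kirchhoff's formulas \eqref{sg}--\eqref{cg};
\item it costs up to two extra derivatives of the source, which is why $\nabla^5R^k$ and $H^8$ data are needed;
\item spatial decay comes only from trading $e^{-a(t-\tau)/4}$ against $e^{-a|\vec x|/(16c_k)}$, as in Lemma~\ref{leskck}.
\end{itemize}
Likewise, the divergence-free piece $e^{-at}\bigl(I_n-\vec\xi\vec\xi^t/|\vec\xi|^2\bigr)$ is not harmless pointwise if split off. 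Its kernel has an $|\vec x|^{-3}$ tail, which is incompatible with $e^{-(|\vec x|+t)/D}$. It must be recombined with the acoustic part, whose coefficient $\frac{\lambda_+^ke^{\lambda_+^kt}-\lambda_-^ke^{\lambda_-^kt}}{\lambda_+^k-\lambda_-^k}-e^{-at}$ vanishes to order $|\vec\xi|^2$ at $\vec\xi=0$. That cancellation is why only $e^{-at}\delta(\vec x)I_n$ plus exponentially localized kernels survive in \eqref{fks}.
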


\subsection{Green's Functions}
Define $\mathbb{F}^k$ ($k=0,1$) to be the solution of the following Cauchy problem
\begin{equation}\label{f}
\begin{cases}
\partial_t\mathbb{F}^k+\begin{pmatrix}
0&\frac{\bar{P}^k\left(R+C_v\right)}{C_v} \partial_{\vec{x}}\\
\frac{1}{\bar{\rho}^k}\partial_{\vec{x}}^t &aI_n
\end{pmatrix}\mathbb{F}^k=0,\\
\mathbb{F}^k(\vec x,0)=\delta(\vec x)I_{n+1},
\end{cases}
\end{equation}
where $\delta(\vec x)$ is the 3-D Dirac delta function and $I_{n+1}$ denotes the $(n+1)\times (n+1)$ identity matrix. Then one could define $\mathbb{G}^k(\vec x,t)$ as follows: 
\begin{equation}\nonumber
\mathbb{G}^k(\vec x,t)\equiv \begin{pmatrix}
\mathbb{F}^k(\vec x,t)&0\\
0&\delta(\vec x)
\end{pmatrix},
\end{equation}
and $\mathbb{G}^k(\vec x,t)$ is the Green's function of \eqref{pk} for  $k=0,1$.  

One could apply Fourier transform to \eqref{f} and follow the steps in \cite{d} to construct the Green's functions $\mathbb{F}^k(\vec x,t)$ for $k=0,1$. Denote
\begin{equation}\label{fks}
\mathbb{F}^k_S(\vec{x},t)\equiv 
\begin{pmatrix}
	\mathbb{j}_0^{k*}(\vec{x},t)&-\frac{\bar{P}^k (R+C_v)}{C_v}\partial_{\vec{x}}\mathbb{j}_1^{k*}(\vec{x},t)\\
\\
	-\frac{1}{\bar{\rho}^k}\partial^t_{\vec{x}}\mathbb{j}_1^{k*}(\vec{x},t)&e^{-at}\delta(\vec{x})I_n+\mathbb{j}_{ml}^{k*}(\vec{x},t)
\end{pmatrix},\end{equation}
and
$$\begin{cases}
\mathbb{j}_0^{k*}(\vec{x},t)\equiv e^{-at/2}\left(a_{0S}^k(t)\mathbb{S}^k(\vec{x},t;c_k)+a_{0C}^{k}(t)\mathbb{C}^k(\vec{x},t;c_k)+ \left(a_{0SY}^k(t)\mathbb{S}^k(\vec{x},t;c_k)+a_{0CY}^{k}(t)\mathbb{C}^k(\vec{x},t;c_k)\right)\underset{\vec{x}}{\ast} \mathbb{Y}(\vec{x})\right),\\
\\
\mathbb{j}_1^{k*}(\vec{x},t)\equiv e^{-at/2}\left(a_{1S}^k(t)\mathbb{S}^k(\vec{x},t;c_k)+a_{1C}^{k}(t)\mathbb{C}^k(\vec{x},t;c_k)+ 
\left(a_{1SY}^k(t)\mathbb{S}^k(\vec{x},t;c_k)+a_{1CY}^{k}(t)\mathbb{C}^k(\vec{x},t;c_k)\right) \underset{\vec{x}}{\ast} \mathbb{Y}(\vec{x})\right),\\
\\
\mathbb{j}_{ml}^{k*}(\vec{x},t)\equiv \left(\mathbb{j}_0^{k*}-a\mathbb{j}_1^{k*}\right) \! (\vec{x},t) \underset{\vec{x}}{\ast} \partial_{x_m x_l}\mathbb{Y}(\vec{x}),
\end{cases}$$
\begin{equation}\label{skck}
	\begin{cases}
		\mathbb{S}^k(\vec{x},t;c_k)\equiv \mathscr{F}^{-1}\left[\sin \left(c_k|\vec{\xi}|t\right)/|\vec{\xi}|\right],\\
		\mathbb{C}^k(\vec{x},t;c_k)\equiv \mathscr{F}^{-1}\left[\cos \left(c_k|\vec{\xi}|t\right)\right],\\
		c_k \equiv \sqrt{\frac{\bar{P}^k(R+C_v)}{C_v\bar{\rho}^k}},\\
		\mathbb{Y}(\vec{x})\equiv \mathscr{F}^{-1}\left[\left(1+|\vec{\xi}|^2\right)^{-1}\right]=\frac{e^{-|\vec{x}|}}{4\pi|\vec{x}|}.
	\end{cases}
\end{equation}
Here, the coefficients $a_{0(1)S}^k(t), a_{0(1)C}^k(t), a_{0(1)SY}^k(t), a_{0(1)CY}^k(t)$ are polynomials in $t$.
The pointwise structures of Green's functions $\mathbb{F}^k(\vec x,t)$ are given as follows:
\begin{thm}[Theorem~2.3, \cite{d}]\label{thm2.1}
There exists a positive constant $D>1$ such that $\mathbb{F}^k(\vec x,t)$ given by \eqref{f} satisfy that
for $0\le|\alpha|\le 5$,
\begin{equation}\nonumber\left|\partial_{\vec{x}}^\alpha \left(\mathbb{F}^k-\mathbb{F}^{k*}\right)_{ij}(\vec{x},t)\right|
\le D\left(1+t\right)^{-(3+|\alpha|)/2}e^{-\frac{|\vec{x}|^2}{Dt}} \cdot
\begin{cases}
	1,\ \ i=j=1,\\
	(1+t)^{-1/2},\ \ i=1, j\neq 1, \ \text{ or } j=1, i\neq 1,\\
	(1+t)^{-1},\ \ i\neq 1, j\neq 1,
\end{cases}
\end{equation}
\begin{equation}\nonumber\left|\mathbb{F}^{k*}(\vec{x},t)-\mathbb{F}_S^k(\vec{x},t)\right|\le De^{-(|\vec{x}|+t)/D},\end{equation}
where $A_{ij}$ denotes the $(i,j)$-entry of a matrix $A$ and $\mathbb{F}^k_S(\vec{x},t)$ given by \eqref{fks} is the singular part 
of $\mathbb{F}^k(\vec{x},t)$.
\end{thm}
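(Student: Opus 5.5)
The plan is to work entirely on the Fourier side. I would split $\mathbb{F}^k$ into a low-frequency diffusive part and a high-frequency part that carries the singularities. Finite propagation speed then converts purely temporal exponential decay into decay in $|\vec x|+t$.

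\textbf{Step 1: explicit symbol.} Taking the Fourier transform of \eqref{f} and writing $\vec\xi=|\vec\xi|\omega$, the symbol has eigenvalues
\[
\lambda_\pm(|\vec\xi|)=-\tfrac a2\pm\sqrt{\tfrac{a^2}{4}-c_k^2|\vec\xi|^2}
\]
on the longitudinal part, and $-a$ on the $n-1$ transversal directions. This gives, for instance,
\[
\hat{\mathbb F}^k_{11}=\frac{\lambda_+e^{\lambda_-t}-\lambda_-e^{\lambda_+t}}{\lambda_+-\lambda_-},\qquad
\hat{\mathbb F}^k_{1j}=-\frac{\bar P^k(R+C_v)}{C_v}\, i\xi_j\,\frac{e^{\lambda_+t}-e^{\lambda_-t}}{\lambda_+-\lambda_-}.
\]
The lower right block is $e^{-at}I_n$ plus $\xi_m\xi_l/|\vec\xi|^2$ times a combination of the same type. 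All these entries are entire functions of $|\vec\xi|^2$, which gives analyticity in a complex strip.

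\textbf{Step 2: high frequency.} For $|\vec\xi|$ large we have $\lambda_\pm=-\frac a2\pm ic_k|\vec\xi|\sqrt{1-a^2/(4c_k^2|\vec\xi|^2)}$. Expanding the square root and the prefactors in powers of $(1+|\vec\xi|^2)^{-1}$ and $t$ produces the terms
\[
e^{-at/2}\bigl(\text{poly}(t)\bigr)\bigl(\sin(c_k|\vec\xi|t)/|\vec\xi|,\ \cos(c_k|\vec\xi|t)\bigr)\times\bigl(1\ \text{or}\ (1+|\vec\xi|^2)^{-1}\bigr).
\]
Matching these leading terms identifies the singular part $\mathbb F^k_S$ in \eqref{fks}, with $\mathbb Y$ coming from the factor $(1+|\vec\xi|^2)^{-1}$. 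The remainder symbol is $O(|\vec\xi|^{-4})e^{-at/2}$ for large $|\vec\xi|$, so it is integrable together with its $\alpha$-derivatives for $|\alpha|\le5$ after enough terms are taken. I would define $\mathbb F^{k*}$ as $\mathbb F^k$ minus its low-frequency smooth part, i.e. cut by $1-\chi(\vec\xi)$ plus the singular terms. Then $\mathbb F^{k*}-\mathbb F^k_S$ is a smooth function bounded by $Ce^{-t/D}$, uniformly in $\vec x$.

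\textbf{Step 3: low frequency.} For $|\vec\xi|<r$ we have $\lambda_+=-c_k^2|\vec\xi|^2/a+O(|\vec\xi|^4)$ and $\lambda_-=-a+O(|\vec\xi|^2)$. The $e^{\lambda_-t}$ contributions decay exponentially. The $e^{\lambda_+t}$ contributions behave like heat kernels with extra factors:
\begin{itemize}
\item $1$ in entry $(1,1)$;
\item $\xi_j$ in the mixed entries, since they carry one $\partial_{\vec x}$;
\item $\xi_m\xi_l$ in the $(i,j)$ entries with $i,j\ne1$, via $\partial^t\partial\mathbb j_1^{k*}$ and $\partial_{x_mx_l}$ against the low-frequency symbol.
\end{itemize}
Each such $\xi$ factor yields $(1+t)^{-1/2}$, which gives the three cases of the first estimate. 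The Gaussian profile $e^{-|\vec x|^2/(Dt)}$ follows from the standard Liu–Zeng complex-shift argument. One shifts $\vec\xi\mapsto\vec\xi+i\vec x/(Ct)$ in the region $|\vec x|\le Ct$, which is allowed by analyticity of the low-frequency symbol. When $|\vec x|\le\sqrt t$ the plain $L^1_\xi$ bound suffices.

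\textbf{Step 4: exponential decay in space, the main obstacle.} The difficulty is obtaining $e^{-(|\vec x|+t)/D}$, and not just $e^{-t/D}$, for $\mathbb F^{k*}-\mathbb F^k_S$ and for the exponentially decaying low-frequency pieces. I would use finite propagation speed. Since \eqref{f} is a symmetrizable hyperbolic system with speed $c_k$, $\mathbb F^k$ vanishes for $|\vec x|>c_kt$, and the $\sin/\cos$ kernels in $\mathbb F^k_S$ are also supported in $|\vec x|\le c_kt$. Outside the cone, the only nonzero contributions to the differences are the Gaussian-tail low-frequency part and the $\mathbb Y$-convolutions, which decay like $e^{-|\vec x|}$ times $e^{-at/2}$. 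Inside the cone, $e^{-t/D}\le e^{-(|\vec x|+t)/D'}$ with $D'=D(1+c_k)$.

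The delicate point is reconciling the frequency cutoff $\chi$ with this support argument. The cutoff destroys compact support, so I would not use a sharp cutoff. Instead I would define $\mathbb F^{k*}$ through the entire functions of $|\vec\xi|^2$ directly, subtracting only the analytic diffusive symbol $e^{\lambda_+t}$ expanded near $0$. The shift of contour then works globally in a strip, and gives exponential decay in $|\vec x|$ for the remainder.
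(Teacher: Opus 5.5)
The paper does not prove this statement. It quotes Theorem~2.3 of \cite{d} and only says that the construction follows \cite{d} via the Fourier transform, and your Steps~1--3 are the expected route. The gap is in Step~4, where both global-in-$\vec x$ bounds are decided, and the remedy you propose there does not work.

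\textbf{Why the Step~4 remedy fails.} The diffusive branch $-\frac{\lambda_-}{\lambda_+-\lambda_-}e^{\lambda_+t}$ is not entire, and not even smooth on $\mathbb R^3$:
\begin{itemize}
\item $\lambda_+-\lambda_-=2\sqrt{a^2/4-c_k^2|\vec\xi|^2}$ vanishes on the real sphere $|\vec\xi|=a/(2c_k)$;
\item beyond that sphere $|e^{\lambda_+t}|=e^{-at/2}$ does not decay in $\vec\xi$.
\end{itemize}
Only the symmetric combinations appearing in $\mathscr F[\mathbb F^k]$ are entire, so there is no strip in which to shift contours.

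If you instead subtract a truncated Taylor expansion (a heat kernel with polynomial corrections), the subtracted symbol is entire. But then $\mathbb F^{k*}-\mathbb F^k_S$ contains the higher diffusion-wave corrections, which decay only algebraically in $t$. Already for the plain heat kernel, the $(1,1)$ discrepancy at $\vec x=0$ is of exact order $t^{-5/2}$. This contradicts $|\mathbb F^{k*}-\mathbb F^k_S|\le De^{-(|\vec x|+t)/D}$.

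So a cutoff in $\vec\xi$ is unavoidable. However, $\mathscr F^{-1}[\chi\,\mathscr F[\mathbb F^k]]$ with $\chi$ compactly supported cannot decay even like $e^{-\varepsilon|\vec x|}$, because its symbol is not analytic in any strip. Also, as $t\to0$ its $(1,1)$ entry tends to $\mathscr F^{-1}[\chi]\neq0$, whereas $e^{-|\vec x|^2/(Dt)}\to0$ for $\vec x\neq0$. Hence your Step~4 claim that the low-frequency part has only a Gaussian tail outside the cone is false. Defining $\mathbb F^k-\mathbb F^{k*}$ by a frequency cutoff alone fails the first estimate both for $|\vec x|\gg t$ and for small $t$.

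\textbf{The missing idea.} Localize in physical space, using the cone support $\operatorname{supp}\mathbb F^k\subset\{|\vec x|\le c_kt\}$ that you already noted.
\begin{itemize}
\item For $t<1$ take $\mathbb F^{k*}=\mathbb F^k$.
\item For $t\ge1$, let $\mathcal L$ be the $e^{\lambda_+t}$-part of $\mathscr F[\mathbb F^k]$ restricted to a ball $|\vec\xi|\le r<a/(2c_k)$. Set $\mathbb F^k-\mathbb F^{k*}:=\theta(\vec x/t)\,\mathcal L$, where $\theta$ is smooth, $\theta=1$ on $|\vec y|\le\nu/2$, $\theta=0$ on $|\vec y|\ge\nu$, and $\nu\le\kappa r$ with $\kappa=c_k^2/a$ (so $\nu<c_k$).
\end{itemize}

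With this definition, for $t\ge1$:
\begin{itemize}
\item On $\operatorname{supp}\theta$, the line-by-line contour shift by $|\vec x|/(2\kappa t)\le r/2$ gives the Gaussian bounds with the entry-dependent factors. The side terms are $O(e^{-t/C})$, which on $|\vec x|\le\nu t$ is at most $C_N(1+t)^{-N}e^{-|\vec x|^2/(C'\nu^2t)}$. Each derivative of $\theta(\vec x/t)$ costs only $t^{-1}$.
\item A shift of fixed small size gives $\mathcal L=O(e^{-t/C})$ on $|\vec x|\ge\nu t/2$.
\item $\mathbb F^k-\mathbb F^k_S-\mathcal L=O(e^{-t/C})$ uniformly in $\vec x$, since its symbol has $L^1_\xi$-norm $O(e^{-t/C})$, given the $O(|\vec\xi|^{-4})$ high-frequency matching of your Step~2.
\end{itemize}
Hence $\mathbb F^{k*}-\mathbb F^k_S=O(e^{-t/C})$ everywhere, which suffices inside the cone.

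Outside the cone $\mathbb F^k=0=\theta$, so $\mathbb F^{k*}-\mathbb F^k_S=-\mathbb F^k_S$. The bound there follows from the explicit formulas for $\mathbb Y\ast\mathbb S^k$ and $\mathbb Y\ast\mathbb C^k$. These behave like $e^{-(|\vec x|-c_kt)}e^{-at/2}$, not $e^{-|\vec x|}e^{-at/2}$ as you wrote, so you must treat $|\vec x|\ge2c_kt$ and $c_kt<|\vec x|<2c_kt$ separately. For $t<1$, the same two facts (cone support and $\mathscr F[\mathbb F^k-\mathbb F^k_S]\in L^1_\xi$) give the second estimate.
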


Based on the pointwise structure of the Green's function $\mathbb{F}^k(\vec{x},t)$ given above, one could obtain its $L^p$ estimates as follows:
\begin{thm}\label{greenlp} 
	For a vector function $\vec{V}(\vec{x}) \equiv (p_0, \vec{u}_0)(\vec{x})=(p_0, {u}_0^1, {u}_0^2, {u}_0^3)(\vec{x})$,
	there exist positive constants $\tilde{D}$ and $R_0$ such that for $|\alpha| \ge 0$ and $q \ge 1$,
	\begin{equation}\label{gfkl00}
		\left\|\partial_{\vec{x}}^\alpha \left(\mathbb{F}_i^k(\vec{x},t) \underset{\vec{x}}{\ast} \vec{V}(\vec{x})\right)\right\|_{L^\infty} \le C(1+t)^{-\frac{3}{2q}-\frac{|\alpha|}{2}-d_i} \left(\left\|p_0 \right\|_{L^q}+(1+t)^{-1/2} \left\|\vec{u}_0 \right\|_{L^q}\right) + Ce^{-t/\tilde{D}} 
		\begin{cases}
			\left\|\partial_{\vec{x}}^\alpha p_0\right\|_{W^{1,\infty}}+\left\|\partial_{\vec{x}}^\alpha \vec{u}_0\right\|_{W^{2,\infty}}, \  \text{for } i=1,\\
			\left\|\partial_{\vec{x}}^\alpha \vec{V} \right\|_{W^{2,\infty}}, \ \ \ \ \ \ \ \ \ \ \  \ \ \ \ \ \ \ \ \ \ \ \  \text{for } i=2,
		\end{cases}
	\end{equation}
		\begin{equation}\label{gf2kl2}
		\left\|\partial_{\vec{x}}^\alpha \left(\mathbb{F}_i^k(\vec{x},t) \underset{\vec{x}}{\ast} \vec{V}(\vec{x})\right)\right\|_{L^2} \le Ce^{-R_0t} \left\|\nabla^{|\alpha|} \vec{V}\right\|_{L^2} +C
		\begin{cases}
			(1+t)^{-\frac{|\alpha|}{2}}\left(\left\|p_0\right\|_{L^2} + (1+t)^{-\frac{1}{2}}\left\|\vec{u}_0\right\|_{L^2} \right), & \text{for } i=1,\\
			(1+t)^{-\frac{1+|\alpha|}{2}}\left\|\vec{V}\right\|_{L^2}, & \text{for } i=2.
		\end{cases}
	\end{equation}
	Moreover, the $L^2$ estimates could be improved under the $L^1$ boundedness assumption on $V$\textup{:}
	\begin{equation}\label{gf1kl2}
		\left\|\partial_{\vec{x}}^\alpha \left(\mathbb{F}_i^k(\vec{x},t) \underset{\vec{x}}{\ast} \vec{V}(\vec{x})\right)\right\|_{L^2} \le Ce^{-R_0t} \left\|\nabla^{|\alpha|} \vec{V}\right\|_{L^2} +C
		\begin{cases}
			(1+t)^{-\frac{3}{4}-\frac{|\alpha|}{2}}\left(\left\|p_0\right\|_{L^1} + (1+t)^{-\frac{1}{2}}\left\|\vec{u}_0\right\|_{L^1} \right), & \text{for } i=1,\\
			(1+t)^{-\frac{5}{4}-\frac{|\alpha|}{2}} \left\|\vec{V}\right\|_{L^1}, & \text{for } i=2.
		\end{cases}
	\end{equation}
	Here, $\mathbb{F}_i^k$ denotes the $i$-th row of the second‑order block matrix $\mathbb{F}^k$. The parameter $d_i$, which distinguishes the decay rates of $\mathbb{F}_1^k \ast \vec{V}$ and $\mathbb{F}_2^k \ast \vec{V}$, is defined by
	\begin{equation}\label{di}
		d_1=0, \quad d_2=1/2.
	\end{equation}
\end{thm}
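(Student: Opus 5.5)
The plan is to split $\mathbb{F}^k$ into three parts, as Theorem~\ref{thm2.1} allows:
\[
\mathbb{F}^k=(\mathbb{F}^k-\mathbb{F}^{k*})+(\mathbb{F}^{k*}-\mathbb{F}^k_S)+\mathbb{F}^k_S .
\]
These are a smooth Gaussian-type part, an exponentially decaying remainder, and the singular part $\mathbb{F}^k_S$. I will estimate the convolution of each part with $\vec V=(p_0,\vec u_0)$ separately. Throughout I use the block structure: the first row $\mathbb{F}^k_1$ pairs $(1,1)$ entries with $p_0$ and $(1,j)$ entries, $j\neq1$, with $\vec u_0$. The second row $\mathbb{F}^k_2$ pairs $(i,1)$ entries, $i\ne1$, with $p_0$ and $(i,j)$ entries with $\vec u_0$. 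Reading off the exponents in Theorem~\ref{thm2.1}, the $\vec u_0$-part of each row carries one extra factor $(1+t)^{-1/2}$, and the second row carries one more overall. This is exactly the bookkeeping encoded by $d_i$ in \eqref{di}.

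\textbf{Smooth part.} Put the derivative $\partial^\alpha_{\vec x}$ on the kernel and apply Young's inequality. For $L^\infty$, with $1/q+1/q'=1$,
\[
\big\|\partial^\alpha(\mathbb{F}^k-\mathbb{F}^{k*})_{ij}\big\|_{L^{q'}}\le C(1+t)^{-\frac{3+|\alpha|}{2}-\kappa_{ij}}\,t^{\frac{3}{2q'}},
\]
where $\kappa_{ij}\in\{0,\tfrac12,1\}$ is the extra exponent from Theorem~\ref{thm2.1}. For $t\ge1$ this gives the rate $(1+t)^{-\frac{3}{2q}-\frac{|\alpha|}{2}-d_i}$. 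For $t\le1$ the heat-type bound of Theorem~\ref{thm2.1} already controls the $L^{q'}$ norm by a constant. Since the estimate is only claimed for $0\le|\alpha|\le5$, derivatives beyond that order are moved onto $\vec V$ in the $L^2$ bounds. For $L^2$ estimates, pass to Fourier variables. The low-frequency symbol of $\mathbb{F}^k-\mathbb{F}^{k*}$ is bounded by $e^{-c|\xi|^2t}$ times $|\xi|^{\kappa}$-type factors, which are consistent with the pointwise rates. Plancherel then gives \eqref{gf2kl2}. Alternatively, Young's inequality with $L^1\times L^2$ gives \eqref{gf2kl2}, and with $L^2\times L^1$ gives the improved \eqref{gf1kl2}: the kernel $L^2$ norm contributes $t^{-3/4}$. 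The high-frequency part of the symbol decays like $e^{-R_0t}$ and produces the $e^{-R_0t}\|\nabla^{|\alpha|}\vec V\|_{L^2}$ term.

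\textbf{Exponential remainder and singular part.} The term $\mathbb{F}^{k*}-\mathbb{F}^k_S$ is bounded by $De^{-(|\vec x|+t)/D}$, so its convolution is bounded by $Ce^{-t/D}\|\vec V\|_{L^\infty}$ or $Ce^{-t/D}\|\vec V\|_{L^2}$. To handle derivatives, I will put $\partial^\alpha$ onto $\vec V$, using that convolution commutes with derivatives. For $\mathbb{F}^k_S$, the entries are $e^{-at/2}$ times polynomials in $t$ times the wave kernels $\mathbb{S}^k,\mathbb{C}^k$, possibly convolved with $\mathbb{Y}$, plus $e^{-at}\delta I_n$. In $L^2$ the symbols $\sin(c|\xi|t)/|\xi|$ and $\cos(c|\xi|t)$ have modulus at most $t$ and $1$, and $(1+|\xi|^2)^{-1}\le1$. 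So for $|\alpha|\ge0$ one gets $Ce^{-R_0t}\|\nabla^{|\alpha|}\vec V\|_{L^2}$ after absorbing the polynomial growth into the exponential. The first-order operators $\partial_{\vec x}\mathbb{j}_1^{k*}$ are harmless here because $\xi\,\sin(c|\xi|t)/|\xi|$ is bounded.

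\textbf{Main obstacle: $L^\infty$ bound of the singular part.} The kernels $\mathbb{S}^k$ and $\mathbb{C}^k$ are the 3-D wave kernels, namely $\frac{t}{4\pi c^2t^2}\,\delta_{|\vec x|=ct}$ and its time derivative. Kirchhoff's formula shows that $\mathbb{S}\ast f$ is bounded by $Ct\|f\|_{L^\infty}$. The cosine kernel $\mathbb{C}\ast f$ involves one derivative of $f$, since it is the spherical mean of $f$ plus $t$ times the spherical mean of $\partial_r f$, so it is bounded by $C(1+t)\|f\|_{W^{1,\infty}}$. The entry $\partial_{\vec x}\mathbb{j}_1^{k*}$ costs one more derivative, and so does the entry $\mathbb{j}_{ml}^{k*}=(\mathbb{j}_0^{k*}-a\mathbb{j}_1^{k*})\ast\partial_{x_mx_l}\mathbb{Y}$. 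For the latter, $\partial_{x_mx_l}\mathbb{Y}$ is a Calder\'on--Zygmund-type kernel modulo a delta. I would write $\partial_{ml}\mathbb{Y}\ast f=\mathbb{Y}\ast\partial_{ml}f$, so that $\|\mathbb{Y}\ast g\|_{L^\infty}\le\|\mathbb{Y}\|_{L^1}\|g\|_{L^\infty}$. This places two derivatives on $\vec u_0$, and these combine with the wave kernels exactly into the loss pattern stated. The first row needs $W^{1,\infty}$ on $p_0$ and $W^{2,\infty}$ on $\vec u_0$. The second row needs $W^{2,\infty}$ on all of $\vec V$, because of the $\mathbb{C}$ term inside $\mathbb{j}_{ml}^{k*}$. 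Multiplying by $e^{-at/2}$ times polynomials in $t$ gives $Ce^{-t/\tilde D}$. The delicate point is tracking, entry by entry, how many derivatives each kernel really consumes. I will count these losses directly from the Kirchhoff representation rather than by crude Sobolev embedding, since embedding would lose $3/2$ derivatives.
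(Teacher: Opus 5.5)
\textbf{The gap is in the step you flag as delicate: the $L^\infty$ bound for the singular part of the second row.}

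The entry $\mathbb{j}^{k*}_{ml}=(\mathbb{j}^{k*}_0-a\mathbb{j}^{k*}_1)\ast\partial_{x_mx_l}\mathbb{Y}$ contains $e^{-at/2}a^k_{0C}(t)\,\mathbb{C}^k\ast\partial_{x_mx_l}\mathbb{Y}$. Here $a^k_{0C}\not\equiv0$, since it carries the leading $\cos(c_k|\vec\xi|t)$ behaviour of the symbol. Your rules are $\|\mathbb{C}^k\ast g\|_{L^\infty}\le C(1+t)\|g\|_{W^{1,\infty}}$ and $\partial_{x_mx_l}\mathbb{Y}\ast f=\mathbb{Y}\ast\partial_{x_mx_l}f$, with $\mathbb{Y}$ used only as an $L^\infty$-bounded operator. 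Summing over $l$, they give
\[
\Bigl\|\mathbb{C}^k\ast\mathbb{Y}\ast\partial_{x_m}\mathrm{div}\,\vec{u}_0\Bigr\|_{L^\infty}\le C(1+t)\bigl\|\nabla\,\mathrm{div}\,\vec{u}_0\bigr\|_{W^{1,\infty}},
\]
which is three derivatives of $\vec u_0$. So for $i=2$ your argument proves the bound with $\|\partial^\alpha_{\vec x}\vec V\|_{W^{3,\infty}}$, not the stated $W^{2,\infty}$. The claim that the two derivatives from $\partial_{x_mx_l}\mathbb{Y}$ ``combine with the wave kernels exactly into the loss pattern stated'' is off by one.

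The missing idea is that $\mathbb{Y}$ must pay for the derivative that $\mathbb{C}^k$ costs. The paper does this with the closed forms for $\mathbb{Y}\ast\mathbb{S}^k$, $\mathbb{Y}\ast\mathbb{C}^k$, $\mathbb{Y}\ast\mathbb{Y}\ast\mathbb{S}^k$ and $\mathbb{Y}\ast\mathbb{Y}\ast\mathbb{C}^k$ listed at the start of its proof. These are genuine functions, not measures on the sphere $|\vec x|=c_kt$: each is of size $|\vec x|^{-1}$ times exponentials in $\bigl| |\vec{x}| - c_k t \bigr|$ and $|\vec x|+c_kt$, with $L^1$ norm $O(1+t)$. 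Hence $\sum_l\mathbb{j}^{k*}_{ml}\ast u_{0,l}=\bigl((\mathbb{j}^{k*}_0-a\mathbb{j}^{k*}_1)\ast\mathbb{Y}\bigr)\ast\partial_{x_m}\mathrm{div}\,\vec u_0$ costs exactly two derivatives; this is the volume term with $|\nabla\,\mathrm{div}\,\partial^\alpha_{\vec x}\vec u_0|$ in \eqref{fs2k}. An equally good repair inside your own framework is to write $\partial_{x_mx_l}\mathbb{Y}\ast f=\partial_{x_m}\mathbb{Y}\ast\partial_{x_l}f$ and use $\nabla\mathbb{Y}\in L^1(\mathbb{R}^3)$ (it is $O(|\vec x|^{-2})$ at the origin). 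Kirchhoff for $\mathbb{C}^k$ then brings the total to two derivatives. The rest of your $L^\infty$ argument coincides with the paper's: Young's inequality with the $L^{q'}$ norm of the Gaussian kernel, the exponential remainder, and the counts for the other entries.

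\textbf{On the $L^2$ bounds, the paper takes a different route.} It does not use the three-way split. It estimates the explicit symbol $\mathscr{F}[\mathbb{F}^k]$ (with $\lambda^k_\pm$) directly, splitting at $|\vec\xi|=\eta$: low frequencies give $e^{-c_k^2|\vec\xi|^2t/a}$ times the appropriate powers of $|\vec\xi|$, high frequencies give $e^{-R_0t}$. It concludes by Parseval, using $\|\mathscr{F}[f]\|_{L^\infty_{\vec\xi}}\le\|f\|_{L^1}$ for \eqref{gf1kl2}. This covers every $|\alpha|$ and needs no information on $\mathbb{F}^{k*}$.

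By contrast, your ``Fourier'' option tacitly assumes you know the symbol of $\mathbb{F}^{k*}$. Your Young option is tied to $|\alpha|\le5$, and moving surplus derivatives onto $\vec V$ does not give the stated right-hand side. Also, your reason why $\partial_{\vec x}\mathbb{j}^{k*}_1$ is harmless in $L^2$ covers only its $\mathbb{S}^k$ component. As written in \eqref{fks}, $\mathbb{j}^{k*}_1$ also has an $a^k_{1C}\mathbb{C}^k$ component, and $\vec\xi\cos(c_k|\vec\xi|t)$ is unbounded. You would need to show $a^k_{1C}\equiv0$. This does hold, since the exact $(1,2)$ symbol is a multiple of $\vec\xi\,e^{-at/2}\sin(\omega t)/\omega$ with $\omega=\sqrt{c_k^2|\vec\xi|^2-a^2/4}$, but the paper's direct symbol computation avoids the issue altogether.
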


\begin{proof}
	By virtue of Theorem~\ref{thm2.1}, \eqref{fks}, and Kirchhoff's formula \eqref{sg}--\eqref{cg}, together with the following estimates obtained via polar coordinate transformation and the residue theorem, it follows that
	\begin{equation} \nonumber %\label{y3sc}
		\begin{aligned}
			&\mathbb{Y}(\vec{x}) \underset{\vec{x}}{\ast} \mathbb{S}^k (\vec{x},t;c_k) = \frac{1}{8 \pi |\vec{x}|} \left( e^{-\bigl| |\vec{x}| - c_k t \bigr|} - e^{-(|\vec{x}| + c_k t)} \right),\\ &\mathbb{Y}(\vec{x}) \underset{\vec{x}}{\ast} \mathbb{C}^k(\vec{x},t;c_k) = \frac{1}{8 \pi |\vec{x}|} \left( e^{-(|\vec{x}| + c_k t)} + \operatorname{sgn}\bigl(|\vec{x}| - c_k t\bigr) e^{-\bigl| |\vec{x}| - c_k t \bigr|} \right),\\
			&\mathbb{Y}(\vec{x}) \underset{\vec{x}}{\ast} \mathbb{Y}(\vec{x}) \underset{\vec{x}}{\ast} \mathbb{S}^k(\vec{x},t;c_k) = \frac{1}{16 \pi |\vec{x}|} \left[ \left( \bigl| |\vec{x}| - c_k t \bigr| + 1 \right) e^{-\bigl| |\vec{x}| - c_k t \bigr|} - (|\vec{x}| + c_k t + 1)  e^{-(|\vec{x}| + c_k t)} \right],\\
			&\mathbb{Y}(\vec{x}) \underset{\vec{x}}{\ast} \mathbb{Y}(\vec{x}) \underset{\vec{x}}{\ast} \mathbb{C}^k (\vec{x},t;c_k) = \frac{1}{16 \pi |\vec{x}|} \left[ (|\vec{x}| + c_k t) e^{-(|\vec{x}| + c_k t)} + (|\vec{x}| - c_k t)  e^{-\bigl| |\vec{x}| - c_k t \bigr|} \right],		
		\end{aligned}
	\end{equation}
	and thus
	\begin{equation}\label{fs1k}
		\begin{aligned}
			\left|\left( \mathbb{F}_{S,1}^k (\vec{x},t) \underset{\vec{x}}{\ast} \partial_{\vec{x}}^\alpha \vec{V}(\vec{x})\right)\right|
			& \le C(1+t)P_k(t) e^{-at/2} \int_{|\vec{y}|=1} \left(\left|\partial_{\vec{x}}^\alpha p_0\right|+\left|\nabla \partial_{\vec{x}}^\alpha p_0\right|+\left|div\left(\partial_{\vec{x}}^\alpha \vec{u}_0\right) \right|+ \left|\nabla div \left(\partial_{\vec{x}}^\alpha \vec{u}_0 \right)\right| \right)(\vec{x}+c_k t\vec{y}) \, dS(\vec{y})\\
			&\quad + C\left(1+|\vec{x}|^{-1}\right)e^{-at/8}\left( e^{-|\vec{x}|/2}+e^{-a|\vec{x}|/(16c_k)}\right) \underset{\vec{x}}{\ast} \left( \left|\partial_{\vec{x}}^\alpha p_0(\vec{x})\right|+\left|div \left(\partial_{\vec{x}}^\alpha \vec{u}_0(\vec{x}) \right)\right|\right),
		\end{aligned}
	\end{equation}
	\begin{equation}\label{fs2k}
		\begin{aligned}
			\left|\left( \mathbb{F}_{S,2}^k (\vec{x},t) \underset{\vec{x}}{\ast} \partial_{\vec{x}}^\alpha \vec{V}(\vec{x})\right)\right|
			& \le C(1+t)\left(\left|a_{1S}^k(t)\right|+\left|a_{1C}^k(t)\right| \right) e^{-at/2} \int_{|\vec{y}|=1} \left(\left|\nabla \partial_{\vec{x}}^\alpha p_0\right|+\left|\nabla^2 \partial_{\vec{x}}^\alpha p_0\right|\right)(\vec{x}+c_k t\vec{y}) \, dS(\vec{y})\\
			&\quad + C\left(1+|\vec{x}|^{-1}\right)e^{-at/8}\left( e^{-|\vec{x}|/4}+e^{-a|\vec{x}|/(16c_k)}\right) \underset{\vec{x}}{\ast} \left( \left|\nabla \partial_{\vec{x}}^\alpha p_0(\vec{x})\right|+\left|\nabla div \left(\partial_{\vec{x}}^\alpha \vec{u}_0(\vec{x}) \right)\right|\right)+\left| e^{-at} \partial_{\vec{x}}^\alpha \vec{u}_0(\vec{x})\right|,
		\end{aligned}
	\end{equation}
	where $c_k$ is defined in \eqref{skck}, and $P_k(t)=\bigl|a_{0S}^k(t)\bigr|+\bigl|a_{0C}^k(t)\bigr|+\bar{P}^k C_v^{-1} (R+C_v) \left(\bigl|a_{1S}^k(t)\bigr|+\bigl|a_{1C}^k(t)\bigr|\right)$. The convolutions of the regular parts of Green's function and the given function $\vec{V}(\vec x)$ could be estimated as follows:
	\begin{equation}\label{greenfi}
		\begin{aligned}
			\left|\partial_{\vec{x}}^\alpha \left( \left( \mathbb{F}_i^k-\mathbb{F}_{S,i}^k\right) \!(\vec{x},t) \underset{\vec{x}}{\ast} \vec{V}(\vec{x})\right)\right| &\le \left|\partial_{\vec{x}}^\alpha \left( \mathbb{F}_i^k-\mathbb{F}_i^{k*} \right) \! (\vec{x},t) \underset{\vec{x}}{\ast} \vec{V}(\vec{x})\right| +	\left|\left( \mathbb{F}_i^{k*}-\mathbb{F}_{S,i}^k\right) \! (\vec{x},t) \underset{\vec{x}}{\ast} \partial_{\vec{x}}^\alpha \vec{V}(\vec{x}) \right|\\
			&\le C\left[(1+t)^{-(3+|\alpha|)/2-d_i} e^{-\frac{|\vec{x}|^2}{Dt}} \underset{\vec{x}}{\ast} \left(\left|p_0(\vec{x})\right|+(1+t)^{-1/2}\left| \vec{u}_0(\vec{x})\right|\right)\right.\\
			& \left.\quad \quad \quad+ e^{-(|\vec{x}|+t)/D} \underset{\vec{x}}{\ast} \left|\partial_{\vec{x}}^\alpha \vec{V}(\vec{x})\right|\right].
		\end{aligned}
	\end{equation}
	 Combining \eqref{fs1k}--\eqref{greenfi} with Young's inequality yields
	\begin{equation}\nonumber
		\begin{aligned}
			&\left\|\partial_{\vec{x}}^\alpha \left(\mathbb{F}_i^k(\vec{x},t) \underset{\vec{x}}{\ast} \vec{V}(\vec{x})\right)\right\|_{L^\infty} \le \left\|\partial_{\vec{x}}^\alpha \left( \left( \mathbb{F}_i^k-\mathbb{F}_{S,i}^k\right) \!(\vec{x},t) \underset{\vec{x}}{\ast} \vec{V}(\vec{x})\right)\right\|_{L^\infty} +\left\|\left( \mathbb{F}_{S,i}^k (\vec{x},t) \underset{\vec{x}}{\ast} \partial_{\vec{x}}^\alpha \vec{V}(\vec{x})\right)\right\|_{L^\infty}\\
			&\le C\left[\left\|(1+t)^{-(3+|\alpha|)/2-d_i} e^{-\frac{|\vec{x}|^2}{Dt}} \right\|_{L^r} \left(\left\|p_0 \right\|_{L^q}+(1+t)^{-1/2} \left\|\vec{u}_0 \right\|_{L^q}\right) + \left\|e^{-(|\vec{x}|+t)/D}\right\|_{L^1} \cdot \left\|\partial_{\vec{x}}^\alpha \vec{V}\right\|_{L^\infty} \right]\\
			&\quad + C
			\begin{cases}
				e^{-at/8}\left(1+\left\|\left(1+|\vec{x}|^{-1}\right)\left(e^{-|\vec{x}|/2}+e^{-a|\vec{x}|/(16c_k)}\right)\right\|_{L^1}\right) \left(\left\|\partial_{\vec{x}}^\alpha p_0\right\|_{W^{1,\infty}}+\left\|div\left(\partial_{\vec{x}}^\alpha \vec{u}_0\right) \right\|_{W^{1,\infty}}\right) & \text{for } i=1,\\
				e^{-at/8}\left(1+\left\|\left(1+|\vec{x}|^{-1}\right)\left(e^{-|\vec{x}|/4}+e^{-a|\vec{x}|/(16c_k)}\right)\right\|_{L^1}\right) \left(\left\|\nabla \partial_{\vec{x}}^\alpha p_0\right\|_{W^{1,\infty}}+\left\|\nabla div \partial_{\vec{x}}^\alpha \vec{u}_0\right\|_{L^\infty}+\left\|\partial_{\vec{x}}^\alpha \vec{u}_0\right\|_{L^\infty}\right) & \text{for } i=2
			\end{cases}\\
			&\le C(1+t)^{-\frac{3}{2q}-\frac{|\alpha|}{2}-d_i} \left(\left\|p_0 \right\|_{L^q}+(1+t)^{-1/2} \left\|\vec{u}_0 \right\|_{L^q}\right) + Ce^{-t/\tilde{D}} 
				\begin{cases}
				\left\|\partial_{\vec{x}}^\alpha p_0\right\|_{W^{1,\infty}}+\left\|\partial_{\vec{x}}^\alpha \vec{u}_0\right\|_{W^{2,\infty}} & \text{for } i=1,\\
				\left\|\partial_{\vec{x}}^\alpha p_0\right\|_{W^{2,\infty}}+\left\|\partial_{\vec{x}}^\alpha \vec{u}_0\right\|_{W^{2,\infty}} & \text{for } i=2,
			\end{cases}
		\end{aligned}
	\end{equation}
where $1/r+1/q=1$ and $\tilde{D}:=\max\{D,8/a\}$. This estimate verifies \eqref{gfkl00}.

As given in \cite{d}, the Fourier transform of $\mathbb{F}^k$ is as follows:
\begin{equation}\nonumber
\mathscr{F}\left[\mathbb{F}^k\right]=\begin{pmatrix}
\frac{\lambda_+^ke^{\lambda_-^kt}-\lambda_-^ke^{\lambda_+^kt}}{\lambda_+^k-\lambda_-^k}&\frac{i\bar{P}^k(R+C_v)\vec\xi^t\left(e^{\lambda_-^kt}-e^{\lambda_+^kt}\right)}{C_v\left(\lambda_+^k-\lambda_-^k\right)}\\
\frac{i\vec\xi\left(e^{\lambda_-^kt}-e^{\lambda_+^kt}\right)}{\bar{\rho}^k\left(\lambda_+^k-\lambda_-^k\right)}&
\frac{\lambda_+^ke^{\lambda_+^kt}-\lambda_-^ke^{\lambda_-^kt}}{\lambda_+^k-\lambda_-^k}\frac{\vec\xi\vec\xi^t}{|\vec \xi|^2}+
\left(I_n-\frac{\vec\xi\vec\xi^t}{|\vec \xi|^2}\right)e^{-at}
\end{pmatrix}
\end{equation}
with $\lambda_{\pm}^k\equiv -\frac a2\pm\sqrt{\frac{a^2}4-\frac{\bar{P}^k(R+C_v)}{\bar{\rho}^kC_v}|\vec\xi|^2}$, and thus together with Parseval's identity one has that
\begin{equation}\nonumber
	\begin{aligned}
		\left\|\partial_{\vec{x}}^\alpha \left(\mathbb{F}_1^k(\vec{x},t) \underset{\vec{x}}{\ast} \vec{V}(\vec{x})\right)\right\|_{L^2}^2 &\le C\int_{|\vec{\xi}|\le \eta}  e^{-c_k^2|\vec{\xi}|^2 t/a}  |\vec{\xi}|^{2|\alpha|}\left( \bigl|\mathscr{F}[p_0]\bigr|^2 + |\vec{\xi}|^2 \left||\vec{\xi}|^{-1} \mathscr{F}[div \vec{u}_0]\right|^2 \right) d\vec{\xi}\\
		&\quad +C\int_{|\vec{\xi}| > \eta} e^{-2R_0 t} |\vec{\xi}|^{2|\alpha|}\left( \bigl|\mathscr{F}[p_0]\bigr|^2 + \left||\vec{\xi}|^{-1} \mathscr{F}[div \vec{u}_0]\right|^2 \right) d\vec{\xi}\\
		&\le C(1+t)^{-|\alpha|-\frac{3}{2}}\left(\bigl\|\mathscr{F}[p_0]\bigr\|_{L_{\vec{\xi}}^\infty}^2 + (1+t)^{-1}\left\|\mathscr{F}[\vec{u}_0]\right\|_{L_{\vec{\xi}}^\infty}^2 \right) + Ce^{-2R_0t} \left\|\nabla^{|\alpha|} \vec{V}\right\|_{L^2}^2\\
		&\le C(1+t)^{-|\alpha|-\frac{3}{2}}\left(\left\|p_0\right\|_{L^1}^2 + (1+t)^{-1}\left\|\vec{u}_0\right\|_{L^1}^2 \right) + Ce^{-2R_0t} \left\|\nabla^{|\alpha|} \vec{V}\right\|_{L^2}^2,
	\end{aligned}
\end{equation}
for a positive constant $R_0$, which also admits the following alternative form 
\begin{equation}\nonumber
	\begin{aligned}
		\left\|\partial_{\vec{x}}^\alpha \left(\mathbb{F}_1^k(\vec{x},t) \underset{\vec{x}}{\ast} \vec{V}(\vec{x})\right)\right\|_{L^2}^2 
		&\le C(1+t)^{-|\alpha|}\left(\left\|p_0\right\|_{L^2}^2 + (1+t)^{-1}\left\|\vec{u}_0\right\|_{L^2}^2 \right) +Ce^{-2R_0t} \left\|\nabla^{|\alpha|} \vec{V}\right\|_{L^2}^2.
	\end{aligned}
\end{equation}
Similar estimates hold for $\partial_{\vec{x}}^\alpha \bigl(\mathbb{F}_2^k(\vec{x},t)\underset{\vec{x}}{\ast}\vec{V}(\vec{x})\bigr)$:
\begin{equation}\nonumber
	\begin{aligned}
		\left\|\partial_{\vec{x}}^\alpha \left(\mathbb{F}_2^k(\vec{x},t) \underset{\vec{x}}{\ast} \vec{V}(\vec{x})\right)\right\|_{L^2}^2 &\le C\int_{|\vec{\xi}|\le \eta}  e^{-c_k^2|\vec{\xi}|^2 t/a}  |\vec{\xi}|^{2|\alpha|+2} \bigl|\mathscr{F}[p_0]\bigr|^2  d\vec{\xi} + C\int_{|\vec{\xi}|\le \eta} \left(e^{-at}+|\vec{\xi}|^2 e^{-c_k^2|\vec{\xi}|^2 t/a} \right)|\vec{\xi}|^{2|\alpha|}  \left| \mathscr{F}[\vec{u}_0]\right|^2 d\vec{\xi}\\
		&\quad +C\int_{|\vec{\xi}| > \eta} e^{-2R_0 t} |\vec{\xi}|^{2|\alpha|}\left( \bigl|\mathscr{F}[p_0]\bigr|^2 + \left|\mathscr{F}[\vec{u}_0]\right|^2 \right) d\vec{\xi}\\
		&\le C\left[(1+t)^{-|\alpha|-\frac{5}{2}} \left\|\vec V\right\|_{L^1}^2  + e^{-2R_0t} \left\|\nabla^{|\alpha|} \vec{V}\right\|_{L^2}^2\right],
	\end{aligned}
\end{equation}

\begin{equation}\nonumber
	\begin{aligned}
		\left\|\partial_{\vec{x}}^\alpha \left(\mathbb{F}_2^k(\vec{x},t) \underset{\vec{x}}{\ast} \vec{V}(\vec{x})\right)\right\|_{L^2}^2 
		&\le C\left[(1+t)^{-|\alpha|-1} \left\|\vec V\right\|_{L^2}^2 + e^{-2R_0t} \left\|\nabla^{|\alpha|} \vec{V}\right\|_{L^2}^2\right].
	\end{aligned}
\end{equation}
Thus, \eqref{gf2kl2} and \eqref{gf1kl2} follow directly.
\end{proof}

In the above proof of Theorem \ref{greenlp}, Kirchhoff's formula is applied to deal with the convolutions between singular parts of $\mathbb{F}^k$ and the initial functions $\vec{V}(\vec{x})$; and it is quoted below for the convenience of the readers:
\begin{cor}[\textbf{Kirchhoff's Formula}, Theorem~4.1, \cite{z1}]\label{kirch}
	Let $\mathbb{S}^k = \mathbb{S}^k(\vec{x}, t;c_k)$ and $\mathbb{C}^k = \mathbb{C}^k(\vec{x}, t;c_k)$ be as defined in \eqref{skck}. The inverse Fourier transforms of $\mathscr{F}[\mathbb{S}^k] \cdot \mathscr{F}[g]$ and $\mathscr{F}[\mathbb{C}^k] \cdot \mathscr{F}[g]$ for a given function $g(\vec{x})$ are
	\begin{equation} \label{sg}
		\bigl(\mathbb{S}^k \underset{\vec{x}}{\ast} g\bigr)(\vec{x}, t) = \frac{c_kt}{4 \pi} \int_{|\vec{y}|=1} g(\vec{x} + c_kt\vec{y}) \, dS(\vec{y}),
	\end{equation}
	\begin{equation} \label{cg}
		\bigl(\mathbb{C}^k \underset{\vec{x}}{\ast} g \bigr)(\vec{x}, t) = \frac{1}{4 \pi} \int_{|\vec{y}|=1} g(\vec{x} + c_k t\vec{y}) \, dS(\vec{y}) + \frac{c_k t}{4 \pi} \int_{|\vec{y}|=1} \nabla g(\vec{x} + c_k t\vec{y}) \cdot \vec{y} \, dS(\vec{y}),
	\end{equation}
	respectively.
\end{cor}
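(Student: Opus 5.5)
The plan is to verify both identities on the Fourier side: compute the Fourier transform of each right-hand side as a function of $\vec x$ and check that it equals $\mathscr{F}[\mathbb{S}^k]\,\mathscr{F}[g]$, respectively $\mathscr{F}[\mathbb{C}^k]\,\mathscr{F}[g]$. I first take $g$ in the Schwartz class, so that all interchanges of integrals and of $\partial_t$ with the integrals are harmless. The general case ($g$ smooth with the decay used in Theorem~\ref{greenlp}) then follows by density. Both sides of \eqref{sg} and \eqref{cg} are continuous in $g$ in the relevant topology, since the spherical means are bounded by $\|g\|_{W^{1,\infty}}$ locally uniformly in $t$.

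\textbf{Step 1: the spherical mean.} For $r>0$ and $\vec\xi\in\mathbb{R}^3$, I compute $\int_{|\vec y|=1} e^{-ir\vec\xi\cdot\vec y}\,dS(\vec y)$. Since this integral is rotation invariant, I align $\vec\xi$ with the $y_3$-axis and use polar coordinates on the unit sphere. This gives
$$2\pi\int_0^\pi e^{-ir|\vec\xi|\cos\theta}\sin\theta\,d\theta=\frac{4\pi\sin(r|\vec\xi|)}{r|\vec\xi|}.$$

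\textbf{Step 2: identity \eqref{sg}.} Set $M(\vec x,t):=\frac{c_kt}{4\pi}\int_{|\vec y|=1} g(\vec x+c_kt\vec y)\,dS(\vec y)$. By Fubini and the translation rule,
$$\mathscr{F}[M](\vec\xi,t)=\frac{c_kt}{4\pi}\,\mathscr{F}[g](\vec\xi)\int_{|\vec y|=1} e^{ic_kt\vec\xi\cdot\vec y}\,dS(\vec y).$$
The sign in the exponent is immaterial because the sphere is invariant under $\vec y\mapsto-\vec y$. Applying Step 1 with $r=c_kt$ gives
$$\mathscr{F}[M]=\mathscr{F}[g]\,\frac{\sin(c_k|\vec\xi|t)}{|\vec\xi|}=\mathscr{F}[\mathbb{S}^k]\,\mathscr{F}[g],$$
and Fourier inversion yields \eqref{sg}.

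\textbf{Step 3: identity \eqref{cg}.} On the Fourier side we have $\cos(c_k|\vec\xi|t)=c_k^{-1}\partial_t\bigl(\sin(c_k|\vec\xi|t)/|\vec\xi|\bigr)$. Hence $\mathbb{C}^k\ast g=c_k^{-1}\partial_t(\mathbb{S}^k\ast g)$, where differentiation under the Fourier integral is justified for Schwartz $g$. Differentiating the right side of \eqref{sg} in $t$ with the product and chain rules gives
$$\partial_t(\mathbb{S}^k\ast g)=\frac{c_k}{4\pi}\int_{|\vec y|=1} g(\vec x+c_kt\vec y)\,dS(\vec y)+\frac{c_k^2t}{4\pi}\int_{|\vec y|=1}\nabla g(\vec x+c_kt\vec y)\cdot\vec y\,dS(\vec y).$$
Dividing by $c_k$ gives exactly \eqref{cg}.

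The only step with genuine content is the sphere integral in Step 1. The sole technical point is the density/regularity justification, which is routine. Alternatively, the whole statement can be quoted as the classical Kirchhoff solution of the wave equation $\partial_t^2w=c_k^2\Delta w$ with data $(0,g)$, respectively $(g,0)$.
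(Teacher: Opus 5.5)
Your proof is correct. It differs from the paper in that the paper gives no proof here: it simply quotes Theorem~4.1 of \cite{z1}, i.e.\ the classical Kirchhoff representation for the wave equation $\partial_t^2 w=c_k^2\Delta w$, for which $\mathbb{S}^k$ and $\mathbb{C}^k$ are the fundamental solutions.

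You instead verify the identities directly from the Fourier definitions in \eqref{skck}. The only substantive input is the Fourier transform of the surface measure on the unit sphere, $\int_{|\vec y|=1}e^{-ir\vec\xi\cdot\vec y}\,dS(\vec y)=4\pi\sin(r|\vec\xi|)/(r|\vec\xi|)$. After that, \eqref{sg} is immediate, and \eqref{cg} is just $c_k^{-1}\partial_t$ applied to \eqref{sg}.

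What this buys you is a self-contained statement with the normalization checked. With $\mathscr{F}[f](\vec\xi)=\int e^{-i\vec\xi\cdot\vec x}f\,d\vec x$ the convolution theorem carries no $2\pi$ factors, which you use correctly; a citation leaves this to the reader. The wave-equation viewpoint of the citation is what motivates how the paper uses the formula: to localize the singular part of the Green's function on the sphere of radius $c_kt$ in \eqref{fs1k}--\eqref{fs2k} and Lemma~\ref{leskck}.

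In your closing alternative remark, note that $\mathbb{S}^k\ast g$ solves the wave equation with data $(0,c_kg)$, not $(0,g)$, since $\partial_t\bigl(\sin(c_k|\vec\xi|t)/|\vec\xi|\bigr)$ equals $c_k$ at $t=0$.

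Your extension step also needs a small fix. Schwartz functions are not dense in $W^{1,\infty}$ in norm, so a $W^{1,\infty}$ bound alone does not give the extension. Argue by duality instead. The multipliers $\sin(c_k|\vec\xi|t)/|\vec\xi|$ and $\cos(c_k|\vec\xi|t)$ are smooth in $\vec\xi$ with bounded derivatives. The right-hand sides are convolutions of $g$ with compactly supported distributions of order at most one. Hence both sides are continuous on $\mathcal{S}'$, and the identities hold for every tempered $g$. Alternatively, use density in $C^1$ with locally uniform convergence, which suffices because both kernels are supported on the sphere of radius $c_kt$.
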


\subsection{Pointwise Estimates for Solutions of the Individual Perturbation Problems: Proof of Theorem~\ref{pointwise}}
By Duhamel's principle combined with the Green's function constructed in Theorem~\ref{thm2.1}, one has the following representation
for the solution $(p^k,\vec{u}^k)$ of \eqref{pk}:
$$\vec{V}^k(\vec{x},t) = \mathbb{F}^k(\vec{x},t) \underset{\vec{x}}{\ast} \vec{V}^k(\vec{x},0) + \int_0^t \mathbb{F}^k(\vec{x},t-\tau) \underset{\vec{x}}{\ast} 
\begin{pmatrix}
	R_p^k \vspace{1.5mm}\\
	R_{\vec{u}}^k
\end{pmatrix}(\vec{x},\tau) \, d\tau,$$
and thus
\begin{equation} \label{duhpu}
	\begin{aligned}
		\partial_{\vec{x}}^\alpha \vec{V}^k(\vec{x},t) &= \partial_{\vec{x}}^\alpha \mathbb{F}^k (\vec{x},t) \underset{\vec{x}}{\ast} \vec{V}^k(\vec{x},0) + \int_0^t \partial_{\vec{x}}^\alpha \mathbb{F}^k (\vec{x},t-\tau) \underset{\vec{x}}{\ast} 
		\begin{pmatrix}
			R_p^k \vspace{1.5mm}\\
			R_{\vec{u}}^k
		\end{pmatrix}(\vec{x},\tau) \, d\tau\\
		&= \partial_{\vec{x}}^\alpha \left( \mathbb{F}^k-\mathbb{F}^{k*}\right) \!(\vec{x},t) \underset{\vec{x}}{\ast} \vec{V}^k(\vec{x},0) + \left(\mathbb{F}^{k*}-\mathbb{F}_S^k + \mathbb{F}_S^k \right) \! (\vec{x},t) \underset{\vec{x}}{\ast} \partial_{\vec{x}}^\alpha \vec{V}^k(\vec{x},0) + \partial_{\vec{x}}^\alpha \vec{S}_R^k (\vec{x},t).
	\end{aligned}
\end{equation}
Here, one denotes
\begin{equation} \label{v}
	\vec{V}^k:=\left(\vec{V}_1^k,\vec{V}_2^k\right)^t:=\bigl(p^k,\vec{u}^k\bigr)^t,\ \ \ \  \vec{S}_R^k:=\left( \vec{S}_{R,1}^k, \vec{S}_{R,2}^k\right)^t:=\int_0^t  \mathbb{F}^k(\vec{x},t-\tau) \underset{\vec{x}}{\ast} 
	\begin{pmatrix}
		R_p^k \vspace{1.5mm}\\
		R_{\vec{u}}^k
	\end{pmatrix}(\vec{x},\tau) \, d\tau,
\end{equation}
where the source terms $R_p^k$ and $R_{\vec{u}}^k$ are defined in \eqref{pk}.

Since the system \eqref{pk} is quasi-linear and thus there are higher-order derivatives in the solution representation \eqref{duhpu}, usually it should be estimated by the Green's function method combined with a priori bounded estimates for high order derivatives. Here, it is a real combination of two methods, the Green's function method and the energy method, in the sense that the boundedness of the solutions and the derivatives cannot be concluded only from the energy method. The reason is that for the system \eqref{pk}, there is a non-decaying component in the linear level and one could not decouple the nonlinear system \eqref{pk}. To overcome this trouble, one thus introduces the following time-dependent functional $\mathcal{N}^k(t)$. The desired pointwise space–time structures are characterized via the weight functions $w^k_i(\vec x,t)$ ($i=0,1,2,3,k=0,1$):
\begin{equation} \label{anasn}
	\begin{aligned}
		\mathcal{N}^k(t)&:= \underset{0 \le \tau \le t}{\sup} \Bigg\lbrace \left\|\left(\left(w^k_{0}\right)^{-1} (1+\tau)^{-1/2}p^k\right) \!(\cdot, \tau)\right\|_{L^{\infty}} + \left\|\left(\left(w^k_{0}\right)^{-1} \left|\vec{u}^k\right| \right) \!(\cdot, \tau)\right\|_{L^{\infty}}+ \sum_{i=1}^3 \left\|\left(\left(w^k_i\right)^{-1}\left( \left|\nabla^i p^k\right| + \left|\nabla^i \vec{u}^k\right| \right)\right) \! (\cdot, \tau)\right\|_{L^{\infty}}\Bigg\rbrace
	\end{aligned}
\end{equation}
with
\begin{equation}\label{w0123}
		w_i^k (\vec{x}, t):=w^k(\vec{x},t;C_{i,k},D_{i,k}):=(1+t)^{-2} e^{-\frac{|\vec{x}|^2}{C_{i,k} t}} + e^{-(|\vec{x}|+t)/D_{i,k}},
\end{equation}
where $C_{i,k}$ and $D_{i,k}$ for $i=0,1,2,3$ are positive constants satisfying $C_{i,k}=\mu^{2i} C_{0,k}$ and $D_{i,k}=\mu^i D_{0,k}$ for $i=1,2,3$, with $\mu=(3\sqrt{6}/7)^{1/3}>1$, and
\begin{equation}\label{cd0k}
	C_{0,k} \ge 6D_{0,k}, \quad D_{0,k} :=4\max\big\lbrace 4/(\mu-1), D/(\mu-1),2/(a(\mu-1)),16c_k/(a(\mu-1))\big\rbrace.
\end{equation}
Here, the choice of $\mu$ and the relations for $C_{i,k}$ and $D_{i,k}$ are not essential and are imposed only to simplify computations.\\

\textbf{Energy Estimates and Temporal Decay Estimates.}
First, one could replace $H^3(\mathbb{R}^3)$ in Theorem~1.1 of \cite{wth} with $H^l(\mathbb{R}^3)$ for $l\ge 3$ and obtain the following generalized global existence result. The detailed proof is given in Appendix I.
\begin{thm}[\textbf{Basic Energy Estimates}: a modification of Theorem~1.1, \cite{wth}] \label{genes} 
	Let $\bar{P}^k$, $\bar{S}^k$ be positive constants and the initial functions $\bigl(p_0^k, \vec{u}_0^k, s_0^k\bigr)$ satisfy that $\bigl\|\bigl(p_0^k, \vec{u}_0^k\bigr)\bigr\|_{L^1}$ is bounded and $\bigl\|\bigl(p_0^k, \vec{u}_0^k, s_0^k\bigr)\bigr\|_{H^l}$ is sufficiently small for some integer $l \ge 3$.
	Then the Cauchy problem \eqref{pk} admits a unique global solution $\bigl(p^k, \vec{u}^k, s^k\bigr)$ satisfying $p^k+\bar{P}^k >0$ and
	\begin{equation} \nonumber
		\left(p^k, \vec{u}^k, s^k\right) \in C^0([ 0, \infty); H^l(\mathbb{R}^3)) \cap C^1( [0, \infty); H^{l-1}(\mathbb{R}^3)).
	\end{equation}
	Moreover, there exists a constant $C_0>0$ such that
	\begin{equation} \label{genenes}
		\begin{aligned}
			& \left\| \left(p^k, \vec{u}^k\right) \!(\cdot,t) \right\|_{H^l}^2+ \int_0^t \left( \left\| \nabla p^k(\cdot,\tau) \right\|^2_{H^{l-1}} + \left\| \vec{u}^k(\cdot,\tau) \right\|^2_{H^l} \right) d\tau \le C_0 \left\|\left(p_0^k, \vec{u}_0^k\right)\right\|_{H^l}^2\\
			&\left\|s^k(\cdot,t) \right\|_{H^l}^2 \le C_0 \exp\left\lbrace C_0 \left\|\left(p_0^k, \vec{u}_0^k\right)\right\|_{L^1 \cap H^l}\right\rbrace \left\|s_0^k\right\|_{H^l}^2.
		\end{aligned}
	\end{equation}
	And there exists a constant $C_1>0$ such that the solution $\bigl(p^k, \vec{u}^k\bigr)$ satisfies the following decay property\textup{:}
	\begin{equation} \label{pude}
		\left\| \vec{u}^k(\cdot, t) \right\|_{H^l} + \left\| \nabla p^k(\cdot, t) \right\|_{H^{l-1}} \le C_1 (1 + t)^{-\frac{5}{4}} \left\|\left(p_0^k, \vec{u}_0^k\right)\right\|_{L^1 \cap H^l}, \quad \forall t \ge 0.
	\end{equation}
\end{thm}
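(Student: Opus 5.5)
Theorem~\ref{genes} is the $H^l$ analogue ($l\ge3$) of Theorem~1.1 in \cite{wth}. We propose to repeat that argument with the top order $3$ replaced by $l$. The plan is a local existence and continuation argument driven by an a priori estimate. Local well-posedness in $H^l$ for \eqref{pk} is standard, since the system is symmetrizable hyperbolic: multiplying the $p$-equation by $C_v/((R+C_v)P)$ and the $\vec u$-equation by $\rho$ gives a symmetric system. So it suffices to show that, under the smallness assumption
\[
\sup_{0\le\tau\le T}\bigl\|\bigl(p^k,\vec u^k\bigr)(\cdot,\tau)\bigr\|_{H^l}\le\varepsilon,
\]
the bound \eqref{genenes} holds with a constant independent of $T$. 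Positivity of $p^k+\bar P^k$ then follows from Sobolev embedding $H^l\hookrightarrow L^\infty$.

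\textbf{Step 1 (dissipative part).} We work with $\vec V^k=(p^k,\vec u^k)$. For $0\le|\alpha|\le l$, apply $\partial^\alpha_{\vec x}$ to the symmetrized $(p,\vec u)$ equations and take the $L^2$ inner product with $\partial^\alpha(p^k,\vec u^k)$. The quasilinear top-order terms cancel after integration by parts; the commutators are controlled by the Moser calculus. This yields
\[
\frac{d}{dt}\bigl\|\vec V^k\bigr\|_{H^l}^2+c\bigl\|\vec u^k\bigr\|_{H^l}^2\le C\bigl\|\vec V^k\bigr\|_{H^l}\bigl\|\nabla \vec V^k\bigr\|_{H^{l-1}}^2 .
\]
Dissipation for $\nabla p^k$ comes from pairing $\partial^\alpha\vec u^k$ with $\nabla\partial^\alpha p^k$ for $|\alpha|\le l-1$, using $\partial_t\vec u+\nabla p/\bar\rho+a\vec u=\dots$. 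Adding a small multiple of this cross term gives the first line of \eqref{genenes}.

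The main obstacle is that $\rho$, and hence the coefficient $1/\rho$ in front of $\nabla p$, depends on $s^k$, which does not decay. The commutator terms therefore contain $\nabla^j s^k$, up to $j=l$, multiplied by derivatives of $p^k$. Such a term is bounded by $\|\nabla s^k\|_{H^{l-1}}\|\nabla p^k\|_{L^\infty}\|\partial^\alpha \vec V^k\|$, or by the analogous product with $\|\nabla\vec V^k\|_{W^{l-1,\infty}}$ for the lower-order pieces. To close, we need $\|s^k\|_{H^l}$ bounded, which is Step 2, and the time integrability of $\|\nabla\vec V^k\|_{L^\infty}$, which is Step 3. This coupling is exactly why $\|(p_0^k,\vec u_0^k)\|_{L^1}$ appears in the hypothesis.

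\textbf{Step 2 (entropy).} Apply $\partial^\alpha$ to the transport equation $\partial_ts^k+\vec u^k\cdot\nabla s^k=0$ and use the standard commutator estimate:
\[
\frac{d}{dt}\bigl\|s^k\bigr\|_{H^l}\le C\bigl(\bigl\|\nabla\vec u^k\bigr\|_{L^\infty}+\bigl\|\vec u^k\bigr\|_{H^l}\bigr)\bigl\|s^k\bigr\|_{H^l}.
\]
Gronwall then gives
\[
\bigl\|s^k(\cdot,t)\bigr\|_{H^l}\le\bigl\|s_0^k\bigr\|_{H^l}\exp\Bigl(C\int_0^t\bigl\|\vec u^k(\cdot,\tau)\bigr\|_{H^l}\,d\tau\Bigr).
\]
The exponent is finite provided $\|\vec u^k\|_{H^l}\lesssim(1+\tau)^{-5/4}$, which is \eqref{pude}. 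This produces the second line of \eqref{genenes}.

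\textbf{Step 3 (decay and bootstrap).} We prove \eqref{pude} and close the argument with a bootstrap on
\[
M(t)=\sup_{\tau\le t}(1+\tau)^{5/4}\bigl(\|\vec u^k\|_{H^l}+\|\nabla p^k\|_{H^{l-1}}\bigr).
\]
Write the Duhamel formula and estimate the linear part with \eqref{gf1kl2}, which uses the $L^1$ bound on the data. For the nonlinear part, the source terms $R_p^k$ and $R_{\vec u}^k$ are quadratic and contain at least one decaying factor. Their $L^1$ norms are $\lesssim\|\vec V^k\|_{H^1}\|\nabla\vec V^k\|_{L^2}$. Since $\rho-\bar\rho$ involves $s^k$, the term $\frac{\rho^k-\bar\rho^k}{\bar\rho^k\rho^k}\nabla p^k$ is bounded by $C\|(p^k,s^k)\|_{L^2}\|\nabla p^k\|_{L^2}$. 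The top-order derivatives that the Green's function estimate cannot absorb are handled by a time-weighted version of the energy inequality of Step 1, restricted to the derivative part $\nabla\vec V^k$ and $\vec u^k$. In that inequality, $\|\nabla p^k\|^2$ is recovered from the cross term and the low-frequency part is controlled by the Duhamel bound, in the manner of \cite{wth,dm}. This gives
\[
M(t)\le C\|(p_0^k,\vec u_0^k)\|_{L^1\cap H^l}+C\varepsilon M(t)+CM(t)^2 .
\]
The hard point is again the $s^k$-dependent terms in $R_{\vec u}^k$. They are only small, not decaying, and must be paired with the decaying factor $\nabla p^k$ so that the Duhamel integral $\int_0^t(1+t-\tau)^{-5/4}(1+\tau)^{-5/4}\,d\tau$ still yields $(1+t)^{-5/4}$. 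Using the smallness of $\|s^k\|_{H^l}$ from Step 2, this closes for small data and gives \eqref{pude}. Feeding \eqref{pude} back into Steps 1 and 2 completes the proof.
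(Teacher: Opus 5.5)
Your plan is essentially the paper's proof in Appendix~I. It uses a small-$H^l$ a priori bound, a weighted energy for $\partial^\alpha_{\vec x}(p^k,\vec u^k)$ plus the cross term $\int\nabla\partial^\alpha_{\vec x}p^k\cdot\partial^\alpha_{\vec x}\vec u^k\,d\vec x$, Gronwall for $s^k$, and a time-weighted energy for $(\nabla p^k,\vec u^k)$ whose low-order remainder $\|\nabla p^k\|_{L^2}^2$ is fed by the $L^1$--$L^2$ Green's function bound. The paper gets its correction $\int(p^k\rho^k+\bar P^k(\rho^k-\bar\rho^k))|\partial^\alpha_{\vec x}\vec u^k|^2\,d\vec x$ from the momentum equation, and the resulting energy is your Friedrichs symmetrizer energy up to the pointwise factor $(R+C_v)P^k/(C_v\bar\rho^k)$.

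One bookkeeping point: drop the $CM(t)^2$ term and bound every quadratic term as an $H^l$-small factor times a decaying one, as your own $L^1$ bound already does. The theorem assumes $\|(p_0^k,\vec u_0^k)\|_{L^1}$ only bounded, not small, so a quadratic term would not close; the paper instead closes with the linear inequality $\mathcal M_1^k(t)\le C\|(p_0^k,\vec u_0^k)\|_{L^1\cap H^l}^2+C(\delta_3^k)^2\mathcal M_1^k(t)$.
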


Based on Theorem \ref{genes}, temporal decay estimates for higher-order derivatives are then established as follows and it is an improvement of \eqref{pude}:
\begin{lemma}[\textbf{Decay Properties}] \label{decay}
	Under the assumptions of Theorem~\ref{pointwise}, the following temporal decay estimates are valid for the solution $\bigl(p^k, \vec{u}^k\bigr)$ and its derivatives: for all $t \ge 0$,
	\begin{equation} \label{decaygen2}
		\left\| \nabla \vec{u}^k(\cdot, t) \right\|_{H^{l_2-1}} + \left\| \nabla^2 p^k(\cdot, t) \right\|_{H^{l_2 - 2}} \le C(1 + t)^{-\frac{7}{4}} \left\| \left( p_0^k, \vec{u}_0^k\right)  \right\|_{L^1 \cap H^{l_2}},
	\end{equation}
	and
	\begin{equation} \label{decaygen3}
		\left\| \nabla^2 \vec{u}^k(\cdot, t) \right\|_{H^{l_3 - 2}} + \left\| \nabla^3 p^k(\cdot, t) \right\|_{H^{l_3 - 3}} \le C(1 + t)^{-2} \left( \left\| \left( p_0^k, \vec{u}_0^k\right)  \right\|_{L^1 \cap H^{l_3}} + \mathcal{N}^k(t)\right).
	\end{equation}
	Here, $l_i$ denote regularity indices of the Sobolev spaces, with $l_i-i \ge 1$ for $i=2,3$.
\end{lemma}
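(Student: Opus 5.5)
We prove Lemma~\ref{decay} by combining the Duhamel representation \eqref{duhpu} with the $L^2$ Green's function bounds of Theorem~\ref{greenlp} and the energy/decay estimates of Theorem~\ref{genes}. The estimates split into a low-order part, controlled through the Green's function, and a high-order part, controlled by an energy inequality with damping.

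\textbf{Low-order part of \eqref{decaygen2}.} Apply $\partial_{\vec x}^\alpha$ with $|\alpha|=1$ to the $\vec u^k$-component and $|\alpha|=2$ to the $p^k$-component of $\vec V^k=\mathbb{F}^k\ast\vec V^k(0)+\vec S_R^k$.

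For the linear part, \eqref{gf1kl2} gives decay $(1+t)^{-5/4-|\alpha|/2}$ for $\mathbb{F}_2^k$ and $(1+t)^{-3/4-|\alpha|/2}$ for $\mathbb{F}_1^k$. Both equal $(1+t)^{-7/4}$ in the required cases.

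For the Duhamel term, split the time integral at $t/2$.
\begin{itemize}
\item On $[0,t/2]$, use \eqref{gf1kl2} with the $L^1$ norm of the quadratic sources. The derivatives may stay on the Green's function. For example, $\|R_p^k\|_{L^1}\le C\|p^k\|_{L^2}\|\nabla\vec u^k\|_{L^2}+\|\vec u^k\|_{L^2}\|\nabla p^k\|_{L^2}\le C\epsilon_k(1+\tau)^{-5/4}$ by \eqref{genenes}--\eqref{pude}. Note also that $\rho^k-\bar\rho^k$ involves $s^k$, which is bounded but not decaying; $R_{\vec u}^k$ is multiplied by $\nabla p^k$, which decays at rate $-5/4$.
\item On $[t/2,t]$, move the derivatives onto the source and use \eqref{gf2kl2}. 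This requires $L^2$ decay of $\nabla^{|\alpha|}R^k$, which involves only one extra derivative of the solution times lower-order factors.
\end{itemize}

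\textbf{High-order part of \eqref{decaygen2}.} For derivatives of order up to $l_2$, we follow the energy method of Theorem~\ref{genes}, started from $\nabla$ rather than from order $0$. This gives a Lyapunov-type inequality
\[
\frac{d}{dt}\mathcal{E}_1(t)+c\,\mathcal{D}_1(t)\le C\|\nabla^2 p^k\|_{L^2}^2+\text{(small)}\cdot\mathcal{D}_1,
\]
where $\mathcal{E}_1\sim\|\nabla\vec u^k\|_{H^{l_2-1}}^2+\|\nabla^2p^k\|_{H^{l_2-2}}^2$ and $\mathcal{D}_1\gtrsim\mathcal{E}_1-C\|\nabla^2 p^k\|_{L^2}^2$. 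The damping $a\vec u$ provides dissipation for $\vec u$, while the dissipation for $\nabla p$ comes from the cross term $\langle\vec u,\nabla p\rangle$.

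Gronwall's inequality then transfers the low-frequency decay rate to the full norm:
\[
\mathcal{E}_1(t)\le e^{-ct}\mathcal{E}_1(0)+C\int_0^t e^{-c(t-\tau)}(1+\tau)^{-7/2}\,d\tau .
\]

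\textbf{Estimate \eqref{decaygen3}.} We repeat the same scheme one order higher, aiming for rate $(1+t)^{-2}$ in $L^2$. Here the linear part would even give $-9/4$. The obstruction is the nonlinear part: $L^1$ norms of the sources only yield decay like $(1+\tau)^{-5/4-7/4}$, and then the Duhamel integral over $[0,t/2]$ gives only
\[
\int_0^{t/2}(1+t-\tau)^{-7/4}(1+\tau)^{-3}\,d\tau\sim(1+t)^{-7/4},
\]
which is insufficient. To fix this, we use the pointwise bounds encoded in $\mathcal{N}^k(t)$. The weights $w_i^k$ are $(1+t)^{-2}$ times Gaussians, whose $L^2$ norm is $(1+t)^{-2+3/4}$. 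So products such as $\vec u^k\cdot\nabla^3 p^k$, or $p^k\,\mathrm{div}\,\vec u^k$ with derivatives, can be estimated by $\|\vec u^k\|_{L^\infty}\|\nabla^3p^k\|_{L^2}\le\mathcal N^k(1+\tau)^{-2}\cdot(\text{energy})$. Putting one factor in $L^\infty$ via $\mathcal{N}^k$ and the other in $L^2$ via \eqref{decaygen2} gives sources decaying like $\mathcal{N}^k(1+\tau)^{-2-7/4}$ in $L^2$. Then \eqref{gf2kl2} over $[0,t]$ yields $(1+t)^{-2}\mathcal{N}^k(t)$, which is linear in $\mathcal{N}^k(t)$ as stated.

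\textbf{Main obstacle.} The hardest step is the top-order energy estimate in \eqref{decaygen3}. Terms like $\vec u^k\cdot\nabla\partial^\alpha p^k$ lose a derivative and must be handled by symmetrization, which produces commutator terms with $\nabla\vec u^k$ in $L^\infty$. The non-decaying $s^k$ appears through $\rho^k$, and its $L^\infty$ bound and growth in $H^l$ (from \eqref{genenes}) must be absorbed using only smallness, not decay. I would first close the estimate by putting the $L^\infty$ factors through $\mathcal{N}^k$. I would then check that $\int_0^t e^{-c(t-\tau)}(1+\tau)^{-4}\,d\tau$ produces exactly the rate $(1+t)^{-2}$ after taking square roots.
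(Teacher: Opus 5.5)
Your skeleton matches the paper's. It has four parts: an energy inequality started at order $m$, with damping for $\vec u^k$ and the cross term $\int\nabla\partial^\alpha_{\vec x}p^k\cdot\partial^\alpha_{\vec x}\vec u^k\,d\vec x$ for $p^k$, whose only bad right-hand term is $C\|\nabla^m p^k\|_{L^2}^2$; a Green's-function bound for that term with the Duhamel integral split at $t/2$; Gronwall; and a bootstrap (the paper's $\mathcal M_i^k$). The gap is in how you treat the nonlinear Duhamel terms, and it is fatal for \eqref{decaygen3}.

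\textbf{The obstruction you identify is not one.} For $\nabla^3p^k$ and $\nabla^2\vec u^k$, the $L^1\to L^2$ factor from \eqref{gf1kl2} is $(1+t-\tau)^{-9/4}$, not $(1+t-\tau)^{-7/4}$. This is the same rate as for the linear part, as you note yourself. Since $\|R^k\|_{L^1}\le C\epsilon_k(1+\tau)^{-5/4}$ is integrable, the $[0,t/2]$ piece is $O((1+t)^{-9/4})$.

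\textbf{Your replacement cannot reach $(1+t)^{-2}$.} You bound the sources in $L^2$ and apply \eqref{gf2kl2} over all of $[0,t]$. At these orders the low-frequency $L^2\to L^2$ factor is only $(1+t-\tau)^{-3/2}$, so $\int_0^{t/2}$ contributes $(1+t)^{-3/2}$ however fast the sources decay.

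\textbf{The claimed source decay is false.} You assert the sources decay like $\mathcal N^k(1+\tau)^{-15/4}$ in $L^2$. But $R^k_{\vec u}$ contains $\frac{\rho^k-\bar\rho^k}{\bar\rho^k\rho^k}\nabla p^k$, and neither $\rho^k-\bar\rho^k$ nor $\nabla^j s^k$ decays (both carry the entropy). Hence $\|R^k_{\vec u}\|_{L^2}$ and $\|\nabla^i R^k_{\vec u}\|_{L^2}$ are no better than $\epsilon_k\|\nabla p^k\|_{L^\infty}$ plus faster terms.

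The same fact undermines your $[t/2,t]$ step for \eqref{decaygen2} if, as written, all derivatives are moved onto the source. The factor $(1+t-\tau)^{-|\alpha|/2}$ is then lost. With $\|\nabla^{|\alpha|}R^k_{\vec u}\|_{L^2}\sim\epsilon_k(1+\tau)^{-7/4}$, that route gives only $(1+t)^{-5/4}$.

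\textbf{What is needed} is the structure of \eqref{nablampk}. On $[t/2,t]$, keep the derivatives on the kernel at low frequency and use $\|R^k\|_{L^2}$; put derivatives on the source only in the exponentially damped part $e^{-R_0(t-\tau)}\|\nabla^iR^k\|_{L^2}$.

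Then the single bottleneck is $\|\nabla p^k\|_{L^\infty}$, both there and in the top-order energy inequality. In the energy inequality it appears through the term $\|\nabla p^k\|_{L^\infty}^2\|\nabla^3(1/\rho^k)\|_{H^{l_3-3}}$. This term cannot be absorbed, because $\|\nabla p^k\|_{L^\infty}$ is not controlled by the dissipation $\|\nabla^3p^k\|_{H^{l_3-3}}$.

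Sobolev gives $\|\nabla p^k\|_{L^\infty}\le C\|\nabla^2p^k\|_{H^1}\le C(1+t)^{-7/4}\sqrt{\mathcal M_2^k(t)}$. This closes \eqref{decaygen2} but not \eqref{decaygen3}. The pointwise ansatz instead gives $\|\nabla p^k\|_{L^\infty}\le C(1+t)^{-2}\mathcal N^k(t)$, since $w_1^k\le C(1+t)^{-2}$.

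That substitution is the only place $\mathcal N^k$ enters, which is why it appears linearly in \eqref{decaygen3}. Your idea of routing ``the $L^\infty$ factors'' through $\mathcal N^k$ points in this direction, but it never singles out this term.
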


\textbf{Pointwise Estimates for the Entropy Function $s^k(\vec{x},t)$ and Its Derivatives.} Multiply the entropy equation 
$$\left( \partial_{\vec{x}}^{\alpha} s^k \right)_t=-\vec u^k\cdot\nabla \partial_{\vec{x}}^{\alpha} s^k - \left[ \partial_{\vec{x}}^{\alpha}, \vec u^k\cdot\nabla\right]s^k$$
by $e^{2|\vec{x}|} \partial_{\vec{x}}^\alpha s^k$ and integrate the product with respect to the spatial variable $\vec{x}$ over $\mathbb{R}^3$ to obtain
\begin{equation}\label{weightsk}
	\begin{aligned}
		\frac{1}{2} \frac{d}{dt} \left\| e^{|\vec{x}|}\partial_{\vec{x}}^\alpha s^k \right\|_{L^2}^2&=- \int_{\mathbb{R}^3} e^{2|\vec{x}|} \left( \frac{1}{2}\vec{u}^k \cdot \nabla \left(\partial_{\vec{x}}^\alpha s^k \right)^2 + \partial_{\vec{x}}^\alpha s^k \left[ \partial_{\vec{x}}^{\alpha}, \vec u^k\cdot\nabla\right]s^k \right) d\vec{x}\\
		& \le \frac{1}{2} \int_{\mathbb{R}^3} div\left( e^{2|\vec{x}|} \vec{u}^k \right) \left(\partial_{\vec{x}}^\alpha s^k \right)^2 d\vec{x} + \left\| e^{|\vec{x}|}\partial_{\vec{x}}^\alpha s^k \right\|_{L^2} \cdot  \left\| e^{|\vec{x}|} \left[ \partial_{\vec{x}}^{\alpha}, \vec u^k\cdot\nabla\right]s^k \right\|_{L^2}.
	\end{aligned}
\end{equation}
By virtue of the following Sobolev embedding inequality
\begin{equation}\label{weightsob}
	\left\| e^{|\vec{x}|}\partial_{\vec{x}}^\alpha s^k \right\|_{L^\infty} \le C \sum_{i=0}^2 \left\| e^{|\vec{x}|} \nabla^i \partial_{\vec{x}}^\alpha s^k \right\|_{L^2},
\end{equation}
the term $\bigl\| e^{|\vec{x}|} [ \partial_{\vec{x}}^{\alpha}, \vec u^k\cdot\nabla]s^k \bigr\|_{L^2}$ can be estimated as follows:
\begin{equation}\nonumber
	\begin{aligned}
		\sum_{|\alpha|=1}^l \left\| e^{|\vec{x}|} \left[ \partial_{\vec{x}}^{\alpha}, \vec u^k \cdot \nabla \right]s^k \right\|_{L^2} &\le C \sum_{|\alpha|=1}^l \Bigg( \sum_{|\beta| \le \max\{|\alpha|-3,0\}} \left\| e^{|\vec{x}|} \nabla \partial_{\vec{x}}^\beta s^k \right\|_{L^\infty} \cdot \left\| \partial_{\vec{x}}^{\alpha-\beta} \vec u^k \right\|_{L^2}\\
		 &\quad \quad \quad \quad +\sum_{\max\{|\alpha|-2,1\} \le |\beta| \le |\alpha|-1} \left\| \partial_{\vec{x}}^{\alpha-\beta} \vec u^k \right\|_{L^\infty} \cdot \left\| e^{|\vec{x}|} \nabla \partial_{\vec{x}}^\beta s^k \right\|_{L^2} \Bigg)\\
		 & \le C\sum_{|\alpha|=1}^l \left( \sum_{i=1}^{\max\{|\alpha|,3\}} \left\| e^{|\vec{x}|} \nabla^i s^k \right\|_{L^2} \cdot \left\|\nabla \vec{u}^k \right\|_{H^{l-1}} + \sum_{i=\max\{|\alpha|-1,2\}}^{|\alpha|} \left\| e^{|\vec{x}|} \nabla^i s^k \right\|_{L^2} \cdot \left\|\nabla \vec{u}^k \right\|_{W^{1,\infty}} \right)\\
		 & \le C\left(\left\|\nabla \vec{u}^k \right\|_{H^{l-1}}+ \left\|\nabla \vec{u}^k \right\|_{W^{1,\infty}}\right) \sum_{i=1}^l \left\| e^{|\vec{x}|} \nabla^i s^k \right\|_{L^2}
	\end{aligned}
\end{equation}
for $l\ge 3$. 

Combining this result with \eqref{weightsk}, one obtains that
\begin{equation}\nonumber
	\begin{aligned}
		\frac{1}{2} \frac{d}{dt} \sum_{j=0}^l \left\| e^{|\vec{x}|} \nabla^j s^k \right\|_{L^2}^2 \le C \left( \left\| \vec{u}^k\right\|_{W^{2,\infty}} + \left\|\nabla \vec{u}^k \right\|_{H^{l-1}}\right) \sum_{j=0}^l \left\| e^{|\vec{x}|} \nabla^j s^k \right\|_{L^2}^2 \le C \left\|\nabla \vec{u}^k \right\|_{H^{\max\{l-1,3\}}} \sum_{j=0}^l \left\| e^{|\vec{x}|} \nabla^j s^k \right\|_{L^2}^2,
	\end{aligned}
\end{equation}
which together with Gronwall's inequality and \eqref{decaygen2} yields that
\begin{equation}\label{expsk}
	\sum_{j=0}^l \left\| e^{|\vec{x}|} \nabla^j s^k \right\|_{L^2}^2  \le \exp\left\lbrace C\int_0^t \left\|\nabla \vec{u}^k(\cdot,\tau) \right\|_{H^{\max\{l-1,3\}}} d\tau \right\rbrace 	\sum_{j=0}^l \left\| e^{|\vec{x}|} \nabla^j s_0^k \right\|_{L^2}^2 \le \exp\left\lbrace C\left\|\left(p_0^k, \vec{u}_0^k\right)\right\|_{L^1 \cap H^{\max\{l,4\}}} \right\rbrace \sum_{j=0}^l \left\| e^{|\vec{x}|} \nabla^j s_0^k \right\|_{L^2}^2.
\end{equation}
Furthermore, from \eqref{weightsob}, for $|\alpha| \in [0,5]$, it holds that
\begin{equation}\label{pointsk}
	\begin{aligned}
		\left| \partial_{\vec{x}}^\alpha s^k(\vec{x},t) \right| \le e^{-|\vec{x}|} \left\| e^{|\vec{x}|}\partial_{\vec{x}}^\alpha s^k \right\|_{L^\infty} \le C e^{-|\vec{x}|} \sum_{i=|\alpha|}^{|\alpha|+2} \left\| e^{|\vec{x}|} \nabla^i s^k \right\|_{L^2} \le Ce^{-|\vec{x}|+C\left\|\left(p_0^k, \vec{u}_0^k\right)\right\|_{L^1 \cap H^{l_\alpha}}} \sum_{j=0}^{\max\{|\alpha|+2,3\}} \left\| e^{|\vec{x}|} \nabla^j s_0^k \right\|_{L^2} \le C\epsilon_k e^{-|\vec{x}|},
	\end{aligned}
\end{equation}
where $l_\alpha:=\max\{|\alpha|+2,4\}$.\\

\textbf{Pointwise Estimates for $(p^k, \vec{u}^k)(\vec{x},t)$.} Under the following initial assumption
\begin{equation}\nonumber
	\bigl|\vec{V}^k\bigr|(\vec{x},0), \, \bigl|\nabla^i \vec{V}^k\bigr|(\vec{x},0) \le C_{in} \epsilon_k e^{-|\vec{x}|}, \ \ \text{for } i=1,\ldots,5,
\end{equation}
together with \eqref{fs1k}--\eqref{greenfi}, one obtains that for $0 \le |\alpha| \le 3$,
\begin{equation}\label{fiinitial}
	\begin{aligned}
		\left|\partial_{\vec{x}}^\alpha \left( \left( \mathbb{F}_i^k-\mathbb{F}_{S,i}^k\right)(\vec{x},t) \underset{\vec{x}}{\ast} \vec{V}^k(\vec{x},0)\right)\right| &\le C\epsilon_k \left( (1+t)^{-(3+|\alpha|)/2-d_i} e^{-\frac{|\vec{x}|^2}{Dt}} + e^{-(|\vec{x}|+t)/D}\right) \underset{\vec{x}}{\ast} e^{-|\vec{x}|}\\
		& \le C\epsilon_k  \left((1+t)^{-(3+|\alpha|)/2-d_i} e^{-\frac{|\vec{x}|^2}{4Dt}}+e^{-(|\vec{x}|+t)/(8D)}\right),
	\end{aligned}
\end{equation}
\begin{equation}\label{fsikini}
	\begin{aligned}
		\left|\left( \mathbb{F}_{S,i}^k (\vec{x},t) \underset{\vec{x}}{\ast} \partial_{\vec{x}}^\alpha \vec{V}^k(\vec{x},0)\right)\right| & \le C\epsilon_k\left[e^{-at-|\vec{x}|}+ e^{-at/4} \int_{|\vec{y}|=1} e^{-|\vec{x}+c_k t\vec{y}|} \, dS(\vec{y})+ \left( \sum_{i=0}^1 |\vec{x}|^{-i}\right) e^{-at/8}\left( e^{-|\vec{x}|/4}+e^{-a|\vec{x}|/(16c_k)}\right) \underset{\vec{x}}{\ast} e^{-|\vec{x}|}\right]\\
		&\le C\epsilon_k e^{-at/8} \left(e^{-|\vec{x}|/16}+e^{-a|\vec{x}|/(64c_k)} \right),
	\end{aligned}
\end{equation} 
and thus it holds that
\begin{equation}\label{initialcon}
	\begin{aligned}
		\left|\partial_{\vec{x}}^\alpha \left( \mathbb{F}_i^k(\vec{x},t) \underset{\vec{x}}{\ast} \vec{V}^k(\vec{x},0)\right)\right| &\le \left|\partial_{\vec{x}}^\alpha \left( \left( \mathbb{F}_i^k-\mathbb{F}_{S,i}^k\right) \! (\vec{x},t) \underset{\vec{x}}{\ast} \vec{V}^k(\vec{x},0)\right)\right| + \left|\left( \mathbb{F}_{S,i}^k (\vec{x},t) \underset{\vec{x}}{\ast} \partial_{\vec{x}}^\alpha \vec{V}^k(\vec{x},0)\right)\right|\\
		&\le C\epsilon_k \left((1+t)^{-(3+|\alpha|)/2-d_i} e^{-\frac{|\vec{x}|^2}{4Dt}} + e^{-(|\vec{x}|+t)/K} \right), 
	\end{aligned}
\end{equation}
where $K:=8\max\{ 2,D,1/a,8c_k/a\}$.

To estimate the nonlinear wave propagations, one needs to establish the nonlinear estimates which require 
the followings Sobolev inequalities:
\begin{lemma}[Lemma~2.1, \cite{hl1}] \label{derivaineq} For $\omega_1\in H^l(\mathbb{R}^3), \nabla\omega_1\in L^\infty(\mathbb{R}^3)$, $\omega_2\in H^{l-1}(\mathbb{R}^3)\cap L^\infty(\mathbb{R}^3)$ and a multi-index $\alpha$,
	\begin{equation} \nonumber
		\sum_{1 \le  |\alpha|\le l} \left\|\bigl[\partial_{\vec{x}}^\alpha, \omega_1\bigr]\omega_2 \right\|_{L^2} \le C_{l} \left( \left\| \nabla \omega_1\right\|_{L^\infty} \cdot\left\|\nabla^{l-1} \omega_2 \right\|_{L^2} + \left\|\omega_2 \right\|_{L^\infty}\cdot \left\|\nabla^{l} \omega_1 \right\|_{L^2} \right),
	\end{equation}
	where the commutator $[\cdot,\cdot]$ is defined by $$\bigl[\partial_{\vec{x}}^\alpha, \omega_1\bigr]\omega_2:= \partial_{\vec{x}}^\alpha (\omega_1 \omega_2) - \omega_1 \partial_{\vec{x}}^\alpha \omega_2.$$ 
	
	Particularly, for $\omega_1, \omega_2\in H^{l}(\mathbb{R}^3)\cap L^\infty(\mathbb{R}^3)$,
	\begin{equation} \nonumber
		\sum_{|\alpha| \le l} \left\|\partial_{\vec{x}}^\alpha (\omega_1 \omega_2)\right\|_{L^2} \le C_{l} \left(\left\|\omega_1\right\|_{L^\infty} \cdot\left\|\nabla^{l} \omega_2 \right\|_{L^2} + \left\|\omega_2 \right\|_{L^\infty}\cdot \left\|\nabla^{l} \omega_1 \right\|_{L^2} \right),
	\end{equation}
	where 
	\begin{equation}\label{wemb}
		\left\|\omega_1\right\|_{L^\infty}\le C\left\|\nabla\omega_1\right\|^{1/2}_{L^2}\cdot \left\|\nabla^2\omega_1\right\|_{L^2}^{1/2}\le C\left\|\nabla\omega_1\right\|_{H^1},\ \ \forall \omega_1\in H^2(\mathbb{R}^3).
	\end{equation}
\end{lemma}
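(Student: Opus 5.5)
We prove the three assertions of Lemma~\ref{derivaineq} in reverse order, since the embedding \eqref{wemb} is used in the other two.

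\textbf{The embedding \eqref{wemb}.} We use a Fourier splitting argument. By Fourier inversion,
$$\|\omega_1\|_{L^\infty}\le (2\pi)^{-3}\int_{\mathbb{R}^3}\bigl|\mathscr{F}[\omega_1](\vec\xi)\bigr|\,d\vec\xi .$$
We split the integral at $|\vec\xi|=R$ and apply Cauchy--Schwarz on each piece.
\begin{itemize}
\item On $\{|\vec\xi|<R\}$ we use the weight $|\vec\xi|^{-1}$. Since $\int_{|\vec\xi|<R}|\vec\xi|^{-2}\,d\vec\xi=4\pi R$, this piece is bounded by $C R^{1/2}\|\nabla\omega_1\|_{L^2}$.
\item On $\{|\vec\xi|\ge R\}$ we use the weight $|\vec\xi|^{-2}$. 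Since $\int_{|\vec\xi|\ge R}|\vec\xi|^{-4}\,d\vec\xi=4\pi R^{-1}$, this piece is bounded by $C R^{-1/2}\|\nabla^2\omega_1\|_{L^2}$.
\end{itemize}
Choosing $R=\|\nabla^2\omega_1\|_{L^2}/\|\nabla\omega_1\|_{L^2}$ balances the two terms. This gives the first inequality in \eqref{wemb}, and Young's inequality then gives the bound by $\|\nabla\omega_1\|_{H^1}$.

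\textbf{The commutator estimate.} Fix $\alpha$ with $|\alpha|=k$, $1\le k\le l$. By the Leibniz rule,
$$\bigl[\partial_{\vec x}^\alpha,\omega_1\bigr]\omega_2=\sum_{0<\beta\le\alpha}\binom{\alpha}{\beta}\,\partial_{\vec x}^{\beta}\omega_1\,\partial_{\vec x}^{\alpha-\beta}\omega_2 .$$
Every term carries at least one derivative on $\omega_1$. So we set $f:=\nabla\omega_1$ and $g:=\omega_2$, and each term becomes $\nabla^i f\,\nabla^j g$ with $i+j=m:=k-1$.
\begin{itemize}
\item If $i=0$ or $j=0$, Hölder's inequality with $L^\infty\times L^2$ gives directly $\|f\|_{L^\infty}\|\nabla^m g\|_{L^2}$ or $\|g\|_{L^\infty}\|\nabla^m f\|_{L^2}$.
\item If $0<i<m$, we apply Hölder's inequality with exponents $2m/i$ and $2m/j$. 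We then use the Gagliardo--Nirenberg inequality
$$\|\nabla^i h\|_{L^{2m/i}}\le C\|h\|_{L^\infty}^{1-i/m}\|\nabla^m h\|_{L^2}^{i/m}$$
for $h=f$ and for $h=g$ (with $j$ in place of $i$). This bounds the term by
$$C\bigl(\|f\|_{L^\infty}\|\nabla^m g\|_{L^2}\bigr)^{i/m}\bigl(\|g\|_{L^\infty}\|\nabla^m f\|_{L^2}\bigr)^{j/m}.$$
Young's inequality turns this into the sum of the two endpoint products.
\end{itemize}
Since $\|\nabla^m f\|_{L^2}=\|\nabla^k\omega_1\|_{L^2}$, this proves the estimate for $|\alpha|=k$:
$$\bigl\|[\partial_{\vec x}^\alpha,\omega_1]\omega_2\bigr\|_{L^2}\le C_k\bigl(\|\nabla\omega_1\|_{L^\infty}\|\nabla^{k-1}\omega_2\|_{L^2}+\|\omega_2\|_{L^\infty}\|\nabla^{k}\omega_1\|_{L^2}\bigr).$$

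\textbf{The product estimate.} We run the same argument on $\partial_{\vec x}^\alpha(\omega_1\omega_2)$, now including the endpoint terms $\beta=0$ and $\beta=\alpha$. The roles of $f$ and $g$ are played by $\omega_1$ and $\omega_2$ themselves, with $m=|\alpha|$.

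\textbf{Main obstacle: summing over $1\le|\alpha|\le l$.} For $k<l$ the argument produces $\|\nabla^{k-1}\omega_2\|_{L^2}$ and $\|\nabla^k\omega_1\|_{L^2}$, not the top-order norms in the statement. I will read the statement as in \cite{hl1}, with the right-hand side understood as $\sum_{k\le l}$ of the homogeneous norms. This reading is consistent with how the lemma is used: later every such norm is controlled by the full $H^{l-1}$ or $H^l$ norm. The other delicate point is the endpoint Gagliardo--Nirenberg inequality with an $L^\infty$ endpoint. I will either cite it in that form, or prove it by induction on $m$ using integration by parts: write $\|\nabla^i h\|_{L^p}^p=\int\nabla^i h\cdot\nabla^i h\,|\nabla^i h|^{p-2}$, move one derivative off, and apply Hölder.
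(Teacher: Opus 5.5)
The paper gives no proof of this lemma; it is quoted from \cite{hl1}, so there is no in-paper argument to compare with. Your proof is correct and is the standard argument for Moser-type calculus inequalities: the Leibniz expansion, H\"older with exponents $2m/i$ and $2m/j$, the Gagliardo--Nirenberg inequality with $L^\infty$ endpoint, and weighted AM--GM. The Fourier-splitting proof of \eqref{wemb} is also correct as written. The $L^\infty$-endpoint Gagliardo--Nirenberg step is covered by Nirenberg's theorem ($q=\infty$, $r=2$, $a=i/m<1$, no exceptional case). The density step in your integration-by-parts alternative is routine, since cutting off and mollifying does not increase the $L^\infty$ norm. One small slip: multiplying your two Gagliardo--Nirenberg bounds gives $\bigl(\|f\|_{L^\infty}\|\nabla^m g\|_{L^2}\bigr)^{j/m}\bigl(\|g\|_{L^\infty}\|\nabla^m f\|_{L^2}\bigr)^{i/m}$, so the exponents are the other way round from what you wrote. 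Young's inequality absorbs this either way.

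The caveat you raise about summing over $|\alpha|$ should be stated more strongly, and justified by scaling rather than by how \cite{hl1} phrases it: the lemma as literally printed is false.

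\emph{Commutator estimate.} Take $\omega_i(\vec x)=\phi_i(\vec x/L)$ with $\phi_2\equiv 1$ on the support of $\phi_1$. The $|\alpha|=1$ term $\|(\nabla\omega_1)\omega_2\|_{L^2}$ is of order $L^{1/2}$. Both terms on the right-hand side are of order $L^{3/2-l}$. So the estimate fails as $L\to\infty$ for every $l\ge 2$.

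\emph{Product estimate.} In the same way, the $\alpha=0$ term $\|\omega_1\omega_2\|_{L^2}$ is of order $L^{3/2}$, which defeats the product estimate for every $l\ge 1$.

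The correct statements are therefore one of the following:
\begin{itemize}
\item the single-order bound for $|\alpha|=l$;
\item the summed bound with inhomogeneous norms $\|\omega_2\|_{H^{l-1}}$ and $\|\nabla\omega_1\|_{H^{l-1}}$ (respectively $\|\omega_i\|_{H^l}$) on the right.
\end{itemize}
Your per-$|\alpha|$ estimate yields both. The second form is the one the paper actually relies on: it applies the lemma with right-hand sides such as $\|\nabla\vec u^k\|_{H^{l-1}}$ or $\|\nabla^m p^k\|_{H^{l-m}}$. The lower-order factors such as $\|\nabla^{m-1}\vec u^k\|_{L^2}$ in \eqref{umlenes} come precisely from the $|\alpha|=m$ terms of your bound. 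With this correction made explicit, your proposal is complete.
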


Now, one could estimate the nonlinear terms $\bigl(R_p^k, R_{\vec{u}}^k\bigr)$ defined in \eqref{pk} by $\mathcal{N}^k(t)$ defined in \eqref{anasn} as follows:
\begin{lemma}\label{lerpu}
	The nonlinear terms $\bigl(R_p^k, R_{\vec{u}}^k\bigr)$ defined in \eqref{pk} satisfy the following estimates:
	\begin{equation} \label{rpubound1}
		 	\left|\nabla^i \left( R_p^k,R_{\vec{u}}^k\right)(\vec{x},t)\right| \le 
			\begin{cases}C\left( \mathcal{N}^k(t)\right)^2 \left( (1+t)^{1/2} \left(w_3^k(\vec{x},t)\right)^2 
			+ (1+t)^{-2}e^{-|\vec{x}|}\right), \quad \text{for } i=0,1,2,\\
			\\
			 C \mathcal{N}^k(t) \left( \epsilon_k + \mathcal{N}^k(t)\right) \left( \left(w_3^k(\vec{x},t)\right)^2+ (1+t)^{-3/2}  w_{i-3}^k(\vec{x},t) +(1+t)^{-2} e^{-|\vec{x}|}\right),\quad \text{for } i=3,4,5,
			 \end{cases}
	\end{equation}
	where $\epsilon_k$ and $\mathcal{N}^k(t)$ are given in \eqref{initial} and \eqref{anasn} respectively, and $w^k_{j}(\vec{x},t)$ $(j=0,1,2,3)$ are defined in \eqref{w0123}.
\end{lemma}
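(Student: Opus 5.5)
The plan is to expand each nonlinear term into products of the unknowns and their derivatives, and bound every factor pointwise by the weights built into $\mathcal{N}^k(t)$. For derivatives of order up to two this uses only the functional itself. For the higher orders it uses additionally the entropy bound \eqref{pointsk} and the energy/decay bounds of Theorem~\ref{genes} and Lemma~\ref{decay}, through Sobolev embedding.

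\textbf{Structure of the nonlinear terms.} We have $R_p^k=-\frac{R+C_v}{C_v}p^k\,div\vec u^k-\vec u^k\cdot\nabla p^k$, which is purely quadratic in $(p^k,\vec u^k)$. For $R_{\vec u}^k$, the term $\frac{\rho^k-\bar\rho^k}{\bar\rho^k\rho^k}$ is a smooth function of $(p^k,s^k)$ vanishing at the origin, by \eqref{state}. So it is bounded by $C(|p^k|+|s^k|)$, and its derivatives are sums of products of derivatives of $p^k$ and $s^k$. Applying $\nabla^i$ and the Leibniz rule, every term in $\nabla^i(R_p^k,R_{\vec u}^k)$ is of one of three types:
\begin{itemize}
\item[(a)] $\nabla^{j}(p^k,\vec u^k)\cdot\nabla^{i+1-j}(p^k,\vec u^k)$;
\item[(b)] terms containing $\nabla^m s^k$ times at least one factor $\nabla^{l}p^k$ with $l\ge1$;
\item[(c)] for $i\ge3$, terms where one factor has order $\ge 4$ (up to $i+1\le6$), which is not controlled by $\mathcal{N}^k$.
\end{itemize}

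\textbf{Case $i=0,1,2$.} All derivatives appearing have order at most $3$. For type (a) we use $|p^k|\le \mathcal{N}^k(1+t)^{1/2}w_0^k$, $|\vec u^k|\le\mathcal{N}^kw_0^k$, and $|\nabla^j(p^k,\vec u^k)|\le\mathcal{N}^kw_j^k$. Since $C_{j,k}$ and $D_{j,k}$ increase in $j$, we have $w_j^k\le w_3^k$ pointwise. The worst product is $p^k\nabla^{i+1}\vec u^k$, which gives $(\mathcal{N}^k)^2(1+t)^{1/2}(w_3^k)^2$. For type (b) the factor $\nabla^m s^k$ is only bounded by $C\epsilon_ke^{-|\vec x|}$, with no time decay. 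It is always multiplied by some $\nabla^l p^k$ with $l\ge1$, which is bounded by $\mathcal{N}^kw_l^k\le C\mathcal{N}^k\bigl((1+t)^{-2}+e^{-t/D_{3,k}}\bigr)\le C\mathcal{N}^k(1+t)^{-2}$. This produces the term $(1+t)^{-2}e^{-|\vec x|}$. To obtain the stated quadratic factor $(\mathcal{N}^k)^2$, we absorb $\epsilon_k$ using the fact that $|s^k|$-factors appear multiplied by a $p^k$-derivative, together with the smallness of $\epsilon_k$. If this turns out to be delicate, I would instead accept a prefactor $\mathcal{N}^k(\epsilon_k+\mathcal{N}^k)$, which is how the estimate is used later anyway.

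\textbf{Case $i=3,4,5$ (main obstacle).} Here factors $\nabla^{l}(p^k,\vec u^k)$ with $4\le l\le6$ appear, and $\mathcal{N}^k$ gives no pointwise control of them. These factors always multiply a low-order factor: $\vec u^k$, $p^k$, $\nabla^{i-3}$ of something, or $\rho^k-\bar\rho^k$. I would bound the high-order factor in $L^\infty$ by Sobolev embedding \eqref{wemb} together with Lemma~\ref{decay} and Theorem~\ref{genes}. The $H^8$ assumption allows $\|\nabla^l(p^k,\vec u^k)\|_{L^\infty}\le C\|\nabla^{l}(p^k,\vec u^k)\|_{H^2}$ with $l\le 6$, and \eqref{decaygen3} gives this a bound $C(1+t)^{-2}(\epsilon_k+\mathcal{N}^k)$. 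The partner factor, of order $\le i-3$ in $(p^k,\vec u^k)$ or a $p^k$-factor, is bounded by $\mathcal{N}^k w_{i-3}^k$ times at most $(1+t)^{1/2}$. This gives $\mathcal{N}^k(\epsilon_k+\mathcal{N}^k)(1+t)^{-3/2}w_{i-3}^k$. Where the low-order partner is instead $\nabla^m s^k$, we get $(1+t)^{-2}e^{-|\vec x|}$ as before. The remaining mixed terms, with both factors of order $\le3$, are handled exactly as in the case $i\le2$.

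The delicate point is bookkeeping: in every Leibniz term, the factor lacking pointwise control must sit next to a factor carrying a pointwise weight. This has to be checked term by term against the derivative count $i+1\le6$ and the available regularity $l_3\le 8$.
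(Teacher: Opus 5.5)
Your proposal follows essentially the same route as the paper. For derivatives of order at most three you bound every product pointwise through $\mathcal N^k$, \eqref{rhok} and \eqref{pointsk}. For $i=3,4,5$ you put the factor of order $\ge4$ in $L^\infty$ via Sobolev embedding and \eqref{decaygen3}, and let its partner carry $w^k_{i-3}$ (or $(1+t)^{1/2}w^k_0$ for $p^k$), using $w^k_0<w^k_1<w^k_2<w^k_3\le C(1+t)^{-2}$. Your bookkeeping, including that $H^8$ is what puts $\nabla^6(p^k,\vec u^k)$ in $L^\infty$ for $i=5$, is more explicit than the paper's ``the cases $i=4,5$ could be verified similarly''.

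The one step that would fail is your proposed absorption of $\epsilon_k$ into $\mathcal N^k$ for $i=0,1,2$.
\begin{itemize}
\item Smallness of $\epsilon_k$ gives no inequality $\epsilon_k\lesssim\mathcal N^k$, and none holds in general, because $\mathcal N^k$ only sees $(p^k,\vec u^k)$.
\item For instance, take $(p_0^k,\vec u_0^k)$ much smaller than $s_0^k$. Then the term $|s^k|\,|\nabla p^k|$ inside $R^k_{\vec u}$ is of size $\epsilon_k\mathcal N^k$, not $(\mathcal N^k)^2$, already at $t=0$.
\end{itemize}
So your fallback is the correct conclusion: the prefactor for $i=0,1,2$ is $\mathcal N^k(\epsilon_k+\mathcal N^k)$. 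The paper's own proof makes exactly this silent replacement of $\epsilon_k$ by $\mathcal N^k$ in \eqref{rpu0}--\eqref{rpu2}, right after inserting $|s^k|\le C\epsilon_k e^{-|\vec x|}$.

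The same issue recurs in your $i\ge3$ case, where you wrote ``as before''. Terms in which an $s^k$-factor multiplies a derivative of $p^k$ of order $\ge4$ (e.g.\ $|s^k|\,|\nabla^{i+1}p^k|$) carry the prefactor $\epsilon_k(\epsilon_k+\mathcal N^k)$, not $\mathcal N^k(\epsilon_k+\mathcal N^k)$. The high-order factor contributes $(1+t)^{-2}(\epsilon_k+\mathcal N^k)$ and the entropy factor contributes $\epsilon_k$. The paper's \eqref{rpu3} has the same slip.

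Both corrections are harmless downstream. The bootstrap only needs $\mathcal N^k\le C\bigl[\epsilon_k+\mathcal N^k(\epsilon_k+\mathcal N^k)+\epsilon_k^2\bigr]$, which still yields $\mathcal N^k\le C_*\epsilon_k$.
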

\begin{proof}
From \eqref{pk} and \eqref{anasn}, one has that
\begin{equation}\label{rpu0}
	\begin{aligned}
		\left|R_p^k(\vec{x},t)\right| &\le C \left( \left| p^k\right|  \cdot \left| div \vec{u}^k\right| + \left| \vec{u}^k\right| \cdot \left|\nabla p^k \right| \right) \! (\vec{x},t) \le C\left( \mathcal{N}^k(t)\right)^2(1+t)^{1/2} w_{0}^k(\vec{x},t) \cdot w_1^k(\vec{x},t),\\
		\left|R_{\vec{u}}^k(\vec{x},t)\right| &\le C \left( \left| \vec{u}^k\right|  \cdot \left|\nabla \vec{u}^k\right| + \left| \rho^k-\bar{\rho}^k \right| \cdot \left|\nabla p^k \right| \right) \! (\vec{x},t) \le C\left( \mathcal{N}^k(t)\right)^2 \left( (1+t)^{1/2} w_{0}^k(\vec{x},t) +e^{-|\vec{x}|}\right) w_1^k(\vec{x},t),
	\end{aligned}
\end{equation}
\begin{equation}\label{rpu1}
	\begin{aligned}
		\left|\nabla R_p^k(\vec{x},t)\right| &\le C \left( \left| \nabla p^k\right|  \cdot \left| \nabla \vec{u}^k\right| + \left| p^k\right|  \cdot \left| \nabla div \vec{u}^k\right|+ \left| \vec{u}^k\right| \cdot \left|\nabla^2 p^k \right| \right) \!(\vec{x},t)\\
		&\le C\left( \mathcal{N}^k(t)\right)^2 \Bigl( w_1^k(\vec{x},t) \cdot w_1^k(\vec{x},t) + (1+t)^{1/2} w_{0}^k(\vec{x},t) \cdot w_2^k(\vec{x},t)\Bigr),\\
		\left|\nabla R_{\vec{u}}^k(\vec{x},t)\right| &\le C \left( \left|\nabla \vec{u}^k\right|^2+\left| \vec{u}^k\right|  \cdot \left|\nabla^2 \vec{u}^k\right| + \left| \rho^k-\bar{\rho}^k \right| \cdot \left|\nabla^2 p^k \right|+ \left| \nabla \frac{1}{\rho^k} \right| \cdot \left|\nabla p^k \right| \right) \!(\vec{x},t)\\ 
		&\le C\left( \mathcal{N}^k(t)\right)^2 \Bigl[ w_1^k(\vec{x},t) \cdot w_1^k(\vec{x},t) + \left( (1+t)^{1/2} w_{0}^k(\vec{x},t) +e^{-|\vec{x}|}\right) w_2^k(\vec{x},t)\Bigr],
	\end{aligned}
\end{equation}
\begin{equation}\label{rpu2}
	\begin{aligned}
		\left|\nabla^2 R_p^k(\vec{x},t) \right|&\le C \left( \left| \nabla p^k\right|  \cdot \left| \nabla^2 \vec{u}^k\right|+\left| \nabla^2 p^k\right|  \cdot \left| \nabla \vec{u}^k\right| + \left| p^k\right|  \cdot \left| \nabla^2 div \vec{u}^k\right|+ \left| \vec{u}^k\right| \cdot \left|\nabla^3 p^k \right| \right) \!(\vec{x},t)\\
		&\le C\left( \mathcal{N}^k(t)\right)^2 \Bigl( w_1^k(\vec{x},t) \cdot w_2^k(\vec{x},t) + (1+t)^{1/2} w_{0}^k(\vec{x},t) \cdot w_3^k(\vec{x},t)\Bigr),\\
		\left|\nabla^2 R_{\vec{u}}^k(\vec{x},t) \right|&\le C \left( \left|\nabla \vec{u}^k\right| \cdot \left|\nabla^2 \vec{u}^k\right| +\left| \vec{u}^k\right|  \cdot \left|\nabla^3 \vec{u}^k\right| + \left| \rho^k-\bar{\rho}^k \right| \cdot \left|\nabla^3 p^k \right|+  \sum_{i=1}^{2} \left| \nabla^i \frac{1}{\rho^k} \right| \cdot \left|\nabla^{3-i} p^k \right| \right) \!(\vec{x},t)\\ 
		&\le C\left( \mathcal{N}^k(t)\right)^2 \left[ w_1^k(\vec{x},t) \cdot w_2^k(\vec{x},t) + \left( (1+t)^{1/2} w_{0}^k(\vec{x},t) +e^{-|\vec{x}|}\right) w_3^k(\vec{x},t)\right].
	\end{aligned}
\end{equation}
In the above estimates, one applies the following inequalities
\begin{equation}\label{rhok}
	\begin{aligned}
		\left| \rho^k-\bar{\rho}^k \right| \le C\left(\left|p^k\right| +\left|s^k \right| \right), \quad \left| \nabla^i \rho^k \right|, \left| \nabla^i \frac{1}{\rho^k} \right| \le C\sum_{k_1+2k_2+ \cdots+ik_i=i} \prod_{j=1}^i \left( \left|\nabla^j p^k\right| +\left|\nabla^j s^k \right|\right)^{k_j},
	\end{aligned}
\end{equation}
for $i \ge 1 $ and $k_1, k_2,\dots,k_i \in \mathbb{N}$, together with the pointwise estimates \eqref{pointsk} for the entropy function $s^k$ and its derivatives.

Moreover, combining the decay property in \eqref{decaygen3}  and  the Sobolev embedding \eqref{wemb}, one can obtain higher‑order derivative estimates for the nonlinear terms $\bigl(R_p^k, R_{\vec{u}}^k\bigr)$ as follows:
\begin{equation}\label{rpu3}
	\begin{aligned}
		\left|\nabla^3 R_p^k(\vec{x},t)\right| &\le C \sum_{i+j=4, \, i,j \in \mathbb{N}} \left( \left| \nabla^i p^k\right|  \cdot \left| \nabla^j \vec{u}^k\right| \right) \!(\vec{x},t)\\
		&\le C \mathcal{N}^k(t) \left( \epsilon_k + \mathcal{N}^k(t)\right)\left( w_1^k(\vec{x},t) \cdot w_3^k(\vec{x},t) + w_2^k(\vec{x},t) \cdot w_2^k(\vec{x},t) + (1+t)^{-3/2} w_{0}^k(\vec{x},t)\right),\\
		\left|\nabla^3 R_{\vec{u}}^k(\vec{x},t)\right| &\le C \sum_{i+j=4, \, i,j \in \mathbb{N}} \left( \left| \nabla^i \vec{u}^k\right|  \cdot \left| \nabla^j \vec{u}^k\right| \right) \!(\vec{x},t) + \sum_{i+j=3, \, i,j \in \mathbb{N}} \left( \left| \nabla^i \left(\frac{1}{\bar{\rho}^k}-\frac{1}{\rho^k}\right)\right|  \cdot \left| \nabla^{j+1} p^k \right| \right) \!(\vec{x},t)\\
		&\le C \mathcal{N}^k(t) \left( \epsilon_k + \mathcal{N}^k(t)\right)\left( w_1^k(\vec{x},t) \cdot w_3^k(\vec{x},t) + w_2^k(\vec{x},t) \cdot w_2^k(\vec{x},t) + (1+t)^{-3/2}  w_{0}^k(\vec{x},t)+(1+t)^{-2} e^{-|\vec{x}|} \right).
	\end{aligned}
\end{equation}

From \eqref{rpu0}--\eqref{rpu3} and the following inequality
\begin{equation}\nonumber
	w_{0}^k(\vec{x},t)<w_1^k(\vec{x},t)<w_2^k(\vec{x},t)<w_3^k(\vec{x},t)\le C(1+t)^{-2},
\end{equation}
one can directly deduce \eqref{rpubound1} for $i=0,1,2,3$. The cases for $i=4,5$ could be verified similarly. 
\end{proof}

Combining 
\begin{equation}\nonumber
	\begin{aligned}
		 \left| \partial_{\vec{x}}^\alpha \vec{S}_{R,i}^k(\vec{x},t)\right|  &\le \left| \partial_{\vec{x}}^\alpha \left( \mathbb{F}_i^k-\mathbb{F}_i^{k*} \right) \! (\vec{x},t) \underset{(\vec{x},t)}{\ast}\begin{pmatrix}
		 	R_p^k \vspace{1.5mm}\\
		 	R_{\vec{u}}^k
		 \end{pmatrix}(\vec{x},t)\right| +	\left|\left( \mathbb{F}_i^{k*}-\mathbb{F}_{S,i}^k\right) \!(\vec{x},t) \underset{(\vec{x},t)}{\ast} \partial_{\vec{x}}^\alpha
	 \begin{pmatrix}
		 R_p^k \vspace{1.5mm}\\
		 R_{\vec{u}}^k
	 \end{pmatrix}(\vec{x},t) \right|\\
 		&\quad + \left|\mathbb{F}_{S,i}^k(\vec{x},t) \underset{(\vec{x},t)}{\ast} \partial_{\vec{x}}^\alpha
 		\begin{pmatrix}
 			R_p^k \vspace{1.5mm}\\
 			R_{\vec{u}}^k
 		\end{pmatrix}(\vec{x},t) \right|, \ \  i=1,2,
	\end{aligned}
\end{equation}
with \eqref{fs1k}--\eqref{greenfi}, \eqref{rpubound1}, \eqref{kercon1}--\eqref{kercon2} for $m=2d_i$ (with $d_i$ defined by \eqref{di}), \eqref{expw3}--\eqref{expst} with $L_1=\max\{D,4,16c_k/a\}$ and $L_2=\max\{D,8/a\}$, and \eqref{skckw3}--\eqref{skckst} with $c=c_k=\sqrt{\bar{P}^k(R+C_v)/\left(C_v \bar{\rho}^k\right)}$, one obtains the following estimate for $\vec{S}_R^k$:
\begin{equation}\nonumber
	\begin{aligned}
		\left| \vec{S}_{R,i}^k(\vec{x},t)\right|  & \le C\left( \mathcal{N}^k(t)\right)^2 \Bigg[(1+t)^{-3/2-d_i} e^{-\frac{|\vec{x}|^2}{C_{0,k}t}}+e^{-|\vec{x}|/D_{0,k}} \left( e^{-2t/(7L_2)} + e^{-at/28}+e^{-t/D_{0,k}} \right) \\
		&\quad \quad \quad \quad \quad \quad +(1+t)^{-2} \left( e^{-|\vec{x}|/(14L_1)} +e^{-a|\vec{x}|/(56c_k)}+ e^{-|\vec{x}|/2} \right) + (1+t)^{1/2} e^{-at} w_3^k(\vec{x},t) \cdot w_3^k(\vec{x},t) \Bigg]\\
		&\le C\left( \mathcal{N}^k(t)\right)^2 \left((1+t)^{-3/2-d_i} e^{-\frac{|\vec{x}|^2}{C_{0,k}t}}+e^{-(|\vec{x}|+t)/D_{0,k}}\right),
	\end{aligned}
\end{equation}
which together with \eqref{v} and \eqref{initialcon} yields that
\begin{equation}\label{pukpoint}
	\begin{aligned}
		\left| \vec{V}_i^k(\vec{x},t)\right| &=\left| \mathbb{F}_i^k(\vec{x},t) \underset{\vec{x}}{\ast} \vec{V}^k(\vec{x},0)+ \vec{S}_{R,i}^k(\vec{x},t)\right|\\
		&\le C\left( \epsilon_k+\left( \mathcal{N}^k(t)\right)^2\right)\left((1+t)^{-3/2-d_i} e^{-\frac{|\vec{x}|^2}{C_{0,k}t}}+e^{-(|\vec{x}|+t)/D_{0,k}}\right).
	\end{aligned}
\end{equation}

Similarly, combining \eqref{rpubound1}, \eqref{expwi} and \eqref{skckwi}, for $1\le |\alpha| \le 3$, $m=|\alpha|+2d_i$, and $\mu=(3\sqrt{6}/7)^{1/3}$, one has that
\begin{equation}\nonumber
	\begin{aligned}
		\left| \partial_{\vec{x}}^\alpha \vec{S}_{R,i}^k(\vec{x},t)\right| &\le C \mathcal{N}^k(t) \left( \epsilon_k + \mathcal{N}^k(t)\right) \Bigg[(1+t)^{-2} e^{-\frac{|\vec{x}|^2}{C_{0,k}t}} +e^{-|\vec{x}|/D_{0,k}} \left( e^{-2t/(7L_2)} +e^{-at/28} + e^{-t/D_{0,k}} \right)\\
		&\quad \quad +(1+t)^{-2} \left( e^{-|\vec{x}|/(14L_1)}+ e^{-a|\vec{x}|/(56c_k)}+ e^{-|\vec{x}|/2} \right)\\
		&\quad \quad+\sum_{j=0}^{|\alpha|-1} (1+t)^{-7/2}\left( e^{-(1-\mu^{-1})|\vec{x}|/(2L_1)}+e^{-a(1-\mu^{-1})|\vec{x}|/(8c_k)}+e^{-\frac{|\vec{x}|^2}{C_{j+1,k}t}}\right)\\
		&\quad \quad+\sum_{j=0}^{|\alpha|-1} e^{-|\vec{x}|/D_{j+1,k}} \left(e^{-(1-\mu^{-1}) t/L_2}+e^{-a(1-\mu^{-1}) t/8}+e^{-t/D_{j+1,k}}\right)\\
		&\quad \quad+ e^{-at}\left((1+t)^{1/2} w_3^k(\vec{x},t) \cdot w_3^k(\vec{x},t) + (1+t)^{-3/2} w_{0}^k(\vec{x},t)\right)\Bigg]\\
		&\le C \mathcal{N}^k(t) \left( \epsilon_k + \mathcal{N}^k(t)\right)\left( (1+t)^{-2} e^{-\frac{|\vec{x}|^2}{C_{|\alpha|,k}t}}+e^{-(|\vec{x}|+t)/D_{|\alpha|,k}}\right),
	\end{aligned}
\end{equation}
which together with \eqref{initialcon} results in that
\begin{equation}\label{pukalpha}
	\begin{aligned}
		\left| \partial_{\vec{x}}^\alpha \vec{V}_i^k(\vec{x},t)\right| &=\left|\partial_{\vec{x}}^\alpha \left( \mathbb{F}_i^k(\vec{x},t) \underset{\vec{x}}{\ast} \vec{V}^k(\vec{x},0)\right)+ \partial_{\vec{x}}^\alpha \vec{S}_{R,i}^k(\vec{x},t)\right|\\
		&\le C\left[\epsilon_k+\mathcal{N}^k(t) \left( \epsilon_k + \mathcal{N}^k(t)\right)\right]\left( (1+t)^{-2} e^{-\frac{|\vec{x}|^2}{C_{|\alpha|,k}t}}+e^{-(|\vec{x}|+t)/D_{|\alpha|,k}}\right),\quad \text{for } |\alpha|=1,2,3.
	\end{aligned}
\end{equation}

Finally form the definition of $\mathcal{N}^k(t)$ in \eqref{anasn} and the estimates \eqref{pukpoint}--\eqref{pukalpha}, one obtains that
\begin{equation}\nonumber
	\mathcal{N}^k(t) \le C\left[ \epsilon_k+\mathcal{N}^k(t) \left( \epsilon_k + \mathcal{N}^k(t)\right)\right].
\end{equation}
By virtue of the smallness of $\epsilon_k$, there exists a positive constant $C_*$ such that $\mathcal{N}^k(t) \le C_* \epsilon_k$, and we finish the proof of Theorem~\ref{pointwise}.

%%%%%%%%%%%%%%%%%%%%%%%%%%%%%%%%%%%%%%%%%%%%%%%%%%%%%%%%%%%%%%%%%%%%%%%%%%%%%%%%%5
%
%
%                        Section 3 
%
%
%%%%%%%%%%%%%%%%%%%%%%%%%%%%%%%%%%%%%%%%%%%%%%%%%%%%%%%%%%%%%%%%%%%%%%%%%%%%%%%%%

\setcounter{equation}{0}

\section{Comparison Principle: Proof of Theorem~\ref{main}} \label{sec:3}
Denote the difference of the solutions $\bigl(p^k,\vec{u}^k,s^k\bigr)$ ($k=0,1$) for two individual nonlinear problems \eqref{pk} to be $(p,\vec{u},s)$, that is,
\begin{equation}\nonumber
	\left(p,\vec{u},s\right) \!(\vec x,t)\equiv \left(p^0-p^1,\vec{u}^0-\vec{u}^1,s^0-s^1\right) \!(\vec{x},t),
\end{equation}
Then $(p,\vec{u},s)$ satisfies
\begin{equation}\label{pc}
	\begin{cases}
		\partial_t p+\frac{\bar{P}^0\left(R+C_v\right)}{C_v} div\vec{u}+\left(\bar{P}^0-\bar{P}^1\right)\frac{\left(R+C_v\right)}{C_v}div\vec{u}^1\\
		\ \ \ \ \ \ \ \ =-\frac{R+C_v}{C_v} p div\vec{u}^1-\frac{R+C_v}{C_v} p^0 div\vec{u}
		-\vec u\cdot \nabla p^1-\vec u^0\cdot \nabla p:=H_p,\\
		\partial_t \vec{u}+\frac{1}{\bar{\rho}^0}\nabla p+a\vec{u}\\
		\ \ \ \ \ \ \ \ =-\vec{u}\cdot\nabla \vec{u}^1-\vec{u}^0\cdot\nabla \vec{u}+\left(\frac{1}{\bar{\rho}^0}-\frac{1}{\rho^0}\right)\nabla p
		-\left(\frac{1}{\rho^0}-\frac{1}{\rho^1}\right)\nabla p^1:=H_{\vec{u}},\\
		\partial_t s=-\vec{u}\cdot\nabla {s}^1-\vec{u}^0 \cdot\nabla s:=H_s,\\
		\left(p,\vec{u},s\right) \!(\vec{x},0)=\left(p_0,\vec{u}_0,s_0\right) \!(\vec{x})=\left(p^0_0-p^1_0,\vec u^0_0-\vec u^1_0,s^0_0-s^1_0\right) \!(\vec{x}).
	\end{cases}
\end{equation}
In this section, we derive the pointwise space–time structures of $p$, $\vec{u}$, and $s$ using estimates of the Green’s function given in Theorem~\ref{thm2.1} and Duhamel’s principle, and also pointwise estimates of the individual problems obtained in Theorem \ref{pointwise}.

\subsection{Energy Estimates for $p$, $\vec{u}$ and $s$}
One first establishes the dependence of higher-order derivatives of $(p,\vec{u},s)$ on the initial difference and the background perturbations:
\begin{thm}\label{enlec}
	 Let $(p,\vec{u},s)(\vec{x},t)$ be the solution of \eqref{pc}, with $\|(p,\vec{u},s)(\cdot,0)\|_{H^l} \ll 1$ for $l \ge 3$ and $\bigl|\bar{P}^0-\bar{P}^1\bigr|+\bigl|\bar{S}^0-\bar{S}^1\bigr| = \mathcal{O}(1)\delta$. Then $(p,\vec{u})(\vec{x},t)$ satisfies that
	\begin{equation} \label{enpuc}
		\left\|(p,\vec{u})(\cdot, t)\right\|_{H^l}^2 + \int_0^t \left( \left\| \nabla p (\cdot, \tau)\right\|_{H^{l-1}}^2 + \left\|\vec{u} (\cdot, \tau)\right\|_{H^l}^2 \right) d\tau \le C \left(\delta^2+\left\|(p_0,\vec{u}_0,s_0)\right\|_{H^l}^2\right).
	\end{equation}
	Furthermore, if the assumptions of Theorem~\ref{pointwise} also hold, one has that
	\begin{equation}\label{ensc}
		\bigl\|s(\cdot, t)\bigr\|_{H^l} \le C \left\|s_0\right\|_{H^l} + C\left\|s_0^1\right\|_{H^{l+1}} \left(\left\|(p_0,\vec{u}_0)\right\|_{L^1 \cap H^l} + \epsilon_{01} \delta\right),
	\end{equation}
	and
	\begin{equation}\label{pucdecay1}
		\left\|\vec{u}(\cdot,t)\right\|_{H^l} +\left\|\nabla p(\cdot,t)\right\|_{H^{l-1}} \le C(1+t)^{-\frac{5}{4}}\log(2+t) \left(\left\|(p_0,\vec{u}_0)\right\|_{L^1 \cap H^l} + \epsilon_{01} \left(\delta +\left\|s_0\right\|_{H^l}\right)\right),
	\end{equation}
	where $C > 0$ is a universal constant independent of $t$ and $\delta$, and $\epsilon_{01}:=\epsilon_{0}+\epsilon_{1}$.
\end{thm}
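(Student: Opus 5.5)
I would treat \eqref{pc} as a linear damped acoustic system for $(p,\vec u)$ with constant coefficients $(\bar P^0,\bar\rho^0)$, driven by three kinds of forcing:
\begin{itemize}
\item the background-difference term $(\bar P^0-\bar P^1)\frac{R+C_v}{C_v}div\vec u^1$;
\item the coefficient-difference term $-(\frac1{\rho^0}-\frac1{\rho^1})\nabla p^1$;
\item quadratic terms that are linear in $(p,\vec u)$ with small coefficients $p^k,\vec u^k$.
\end{itemize}
The individual solutions are controlled by Theorem~\ref{genes}, Lemma~\ref{decay} and Theorem~\ref{pointwise}. A key observation is that $\rho^0-\rho^1$ is bounded through \eqref{state} by $C(|p|+|s|+|\bar P^0-\bar P^1|+|\bar S^0-\bar S^1|)$. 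Hence the coefficient-difference term is bounded by $|\nabla p^1|(|p|+|s|+\delta)$, which is small times the unknowns plus $\delta|\nabla p^1|$. Its derivatives are handled the same way using \eqref{rhok}.

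\textbf{Energy estimate \eqref{enpuc}.} I would repeat the energy method of Theorem~\ref{genes} (Appendix I) at the linearized level. Apply $\partial^\alpha_{\vec x}$ for $|\alpha|\le l$ and test the $p$-equation with $\frac{C_v}{\bar P^0(R+C_v)}\partial^\alpha p$ and the $\vec u$-equation with $\bar\rho^0\partial^\alpha\vec u$, so the linear cross terms cancel; this gives the dissipation $a\|\vec u\|_{H^l}^2$. Adding the mixed term $\varepsilon\langle\partial^\alpha\vec u,\nabla\partial^\alpha p\rangle$ recovers the dissipation $\|\nabla p\|_{H^{l-1}}^2$. The terms $\vec u^0\cdot\nabla\partial^\alpha p$ and $p^0 div\,\partial^\alpha\vec u$ are quasilinear. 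They are handled by a symmetrizer with coefficients depending on $(P^0,\rho^0)$ (as in \cite{wth}) plus the commutator bounds of Lemma~\ref{derivaineq}. The errors are $C\epsilon_{01}(\|\nabla p\|_{H^{l-1}}^2+\|\vec u\|_{H^l}^2)$ plus terms involving $p$ itself.

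The undifferentiated $p$ lacks dissipation, so I would bound those terms by $\|p\|_{L^\infty\cap L^2}$ times time-integrable norms of $\nabla\vec u^1,\nabla p^1$. Theorem~\ref{pointwise} and Lemma~\ref{decay} give $(1+t)^{-7/4}$ for these, and a Gronwall factor closes the estimate. The $s$-dependent forcing is treated as follows:
\begin{itemize}
\item $s$ enters through $(\rho^0-\rho^1)\nabla p^1$, so it is controlled by $\|s\|_{H^l}\|\nabla p^1\|_{W^{l,\infty}}$, which is integrable in time.
\item The $\delta$-forcing contributes $\delta\int_0^t(\|\nabla\vec u^1\|_{H^l}+\|\nabla p^1\|_{H^l})\|(p,\vec u)\|_{H^l}$. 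By Cauchy–Schwarz in time and \eqref{genenes} this is $\le\delta^2 C+\frac12\sup\|(p,\vec u)\|^2_{H^l}$.
\end{itemize}
This yields \eqref{enpuc} after a bootstrap using the smallness of $\|(p,\vec u,s)(0)\|_{H^l}$. The bootstrap is coupled with \eqref{ensc}.

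\textbf{Entropy estimate \eqref{ensc}.} The transport equation $\partial_t s+\vec u^0\cdot\nabla s=-\vec u\cdot\nabla s^1$ gives
\[
\frac{d}{dt}\|s\|_{H^l}\le C\|\nabla\vec u^0\|_{H^{\max\{l-1,3\}}}\|s\|_{H^l}+C\|\vec u\|_{H^l}\|s^1\|_{H^{l+1}}.
\]
Gronwall with $\int_0^\infty\|\nabla\vec u^0\|\le C\epsilon_0$ (from \eqref{decaygen2}) gives $\|s\|_{H^l}\le C\|s_0\|_{H^l}+C\|s_0^1\|_{H^{l+1}}\int_0^t\|\vec u\|_{H^l}$. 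So \eqref{ensc} requires time-integrability of $\|\vec u\|_{H^l}$, which follows from \eqref{pucdecay1} since $(1+t)^{-5/4}\log(2+t)$ is integrable. The two estimates are therefore proved jointly by a continuity argument on a functional combining $\sup(1+\tau)^{5/4}\log^{-1}(2+\tau)(\|\vec u\|_{H^l}+\|\nabla p\|_{H^{l-1}})$ with $\sup\|s\|_{H^l}$.

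\textbf{Decay estimate \eqref{pucdecay1}.} I would use Duhamel with $\mathbb F^0$ and Theorem~\ref{greenlp}, in the form \eqref{gf1kl2} for the initial data, combined with the high-frequency energy estimate as in Appendix I. The sources are estimated as follows:
\begin{itemize}
\item The quadratic terms are bounded in $L^1$ by products such as $\|\vec u^0\|_{L^2}\|\nabla p\|_{L^2}\lesssim\epsilon_{01}(1+\tau)^{-5/4}\cdot(\ldots)$.
\item The $\delta$-forcing $\delta\,div\vec u^1$ has $L^1$ norm $\lesssim\delta\epsilon_1(1+\tau)^{-2+3/2}$ from Theorem~\ref{pointwise}. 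This is why one should use its divergence form: put the derivative on the Green's function, which gains $(1+t-\tau)^{-1/2}$.
\item The term $(\rho^0-\rho^1)\nabla p^1$ contributes $\epsilon_{01}(\delta+\|s\|_{L^2})(1+\tau)^{-2}$ in $L^1$, as $s^k$ and $p^k$ decay pointwise.
\end{itemize}
The time convolution $\int_0^t(1+t-\tau)^{-5/4}(1+\tau)^{-1}d\tau$ produces exactly the factor $(1+t)^{-5/4}\log(2+t)$.

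\textbf{Main obstacle.} I expect the hardest step to be the loss-free treatment of the $\delta$-forcing together with the non-decaying $s$ coupling. There $\nabla p^1$ multiplied by an $\mathcal O(1)$-in-time $s$ must still yield integrable sources, and the log loss must not compound in the continuity argument. I would first try to close the argument with the $\log$ weight built into the bootstrap functional as above.
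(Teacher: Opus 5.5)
The energy estimate \eqref{enpuc} and the entropy estimate \eqref{ensc} follow the paper closely. The decay step for \eqref{pucdecay1}, however, does not work as you set it up, because the rates you assign to the background forcings are wrong. Theorem~\ref{pointwise} gives $|\nabla p^1|,|\nabla\vec u^1|,|\vec u^1|\lesssim\epsilon_1(1+\tau)^{-2}e^{-|\vec x|^2/(D_1\tau)}+e^{-(|\vec x|+\tau)/D_1}$. Hence $\|\nabla p^1\|_{L^1}$, $\|div\,\vec u^1\|_{L^1}$ and $\|\vec u^1\|_{L^1}$ are only of order $\epsilon_1(1+\tau)^{-1/2}$; the rate $(1+\tau)^{-2}$ is the $L^\infty$ rate, not the $L^1$ rate. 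Moreover, $s^k$ and $s$ do not decay in time at all; they are only localized in space. So the $\delta$-part of $(\frac{1}{\rho^0}-\frac{1}{\rho^1})\nabla p^1$ has $L^1$ norm of size $\epsilon_1\delta(1+\tau)^{-1/2}$. In the Duhamel formula for $\nabla p$ it meets the $L^1\to L^2$ kernel rate $(1+t-\tau)^{-7/4}$ from \eqref{gf1kl2}, and $\delta\,div\,\vec u^1$ written in divergence form meets the same rate. But $\int_{t/2}^t(1+t-\tau)^{-7/4}(1+\tau)^{-1/2}\,d\tau\sim(1+t)^{-1/2}$. The late-time part of the Duhamel integral therefore caps the decay at $(1+t)^{-1/2}$, and the derivative gain on the Green's function helps only for $\tau\le t/2$.

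What is missing is the time splitting the paper uses. On $[0,t/2]$, use $L^1$ norms of the sources with the rate $(1+t-\tau)^{-7/4}$; this is where the divergence form of $\delta\,div\,\vec u^1$ is needed. On $[t/2,t]$, use the faster-decaying $L^2$ norms of the sources together with \eqref{gf2kl2}. Concretely, pair $|\bar P^0-\bar P^1|\,\|div\,\vec u^1\|_{L^2}\lesssim\delta\epsilon_1(1+\tau)^{-7/4}$ with $(1+t-\tau)^{-1/2}$. Pair $\|(\frac{1}{\rho^0}-\frac{1}{\rho^1})\nabla p^1\|_{L^2}\lesssim\epsilon_1(1+\tau)^{-5/4}(\delta+\|s_0\|_{H^3}+\ldots)$ with $(1+t-\tau)^{-1}$.

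The last pairing, $\int_{t/2}^t(1+t-\tau)^{-1}(1+\tau)^{-5/4}\,d\tau\sim(1+t)^{-5/4}\log(2+t)$, is the actual origin of the logarithm. Your proposed mechanism is incorrect: $\int_0^t(1+t-\tau)^{-5/4}(1+\tau)^{-1}\,d\tau\asymp(1+t)^{-1}$. The range $\tau\in[t/2,t]$ alone contributes about $(1+t)^{-1}\int_0^{t/2}(1+\sigma)^{-5/4}\,d\sigma$, so a source of that size would itself break \eqref{pucdecay1}.

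A smaller point: you obtain \eqref{enpuc} only inside the joint bootstrap with the decay estimate. This puts $\|(p_0,\vec u_0)\|_{L^1}$ on its right-hand side and ties it to the hypotheses of the second part of the theorem. The paper closes \eqref{enpuc} on its own. By \eqref{scenes}, $\|s(t)\|_{H^l}^2\lesssim\|s_0\|_{H^l}^2+\epsilon_1^2(1+t)\int_0^t\|\nabla\vec u\|_{H^{l-1}}^2\,d\tau$. The growth factor $(1+t)$ is compensated by $\|\nabla p^1\|_{H^l}^2\lesssim(1+t)^{-5/2}$, so the $s$-term is absorbed by the dissipation without any decay of $\vec u$.
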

\begin{proof}
	For a multi-index $\alpha $, taking the corresponding derivatives $\partial_{\vec{x}}^\alpha$ on both sides of \eqref{pc}, one obtains that
	\begin{align}
		&\left( \partial_{\vec{x}}^\alpha p\right)_t+\frac{\bar{P}^0\left(R+C_v\right)}{C_v}  div \partial_{\vec{x}}^\alpha \vec{u}+\left(\bar{P}^0-\bar{P}^1\right)\frac{\left(R+C_v\right)}{C_v} div \partial_{\vec{x}}^\alpha \vec{u}^1 \label{palpha}\\
		&\quad = \partial_{\vec{x}}^\alpha \left( -\frac{R+C_v}{C_v} p div\vec{u}^1 -\vec{u} \cdot \nabla p^1-\vec{u}^0\cdot \nabla p\right)-\frac{R+C_v}{C_v} \left[\partial_{\vec{x}}^\alpha, p^0\right]div\vec{u} -\frac{R+C_v}{C_v} p^0 div \partial_{\vec{x}}^\alpha \vec{u} := \partial_{\vec{x}}^\alpha H_p,\nonumber\\
		& \left( \partial_{\vec{x}}^\alpha \vec{u}\right)_t+\frac{1}{\bar{\rho}^0}\nabla  \partial_{\vec{x}}^\alpha p+a  \partial_{\vec{x}}^\alpha \vec{u}= \partial_{\vec{x}}^\alpha\left( -\vec{u}\cdot\nabla \vec{u}^1-\vec{u}^0\cdot\nabla \vec{u}-\left(\frac{1}{\rho^0}-\frac{1}{\rho^1}\right)\nabla p^1\right) -\left[\partial_{\vec{x}}^\alpha, \frac{1}{\rho^0}\right]\nabla p +\left(\frac{1}{\bar{\rho}^0}-\frac{1}{\rho^0}\right) \nabla \partial_{\vec{x}}^\alpha p := \partial_{\vec{x}}^\alpha H_{\vec{u}}, \label{ualpha}
	\end{align}
and
\begin{equation}\label{salpha}
	\left( \partial_{\vec{x}}^\alpha s\right)_t =-\partial_{\vec{x}}^\alpha \left( \vec{u}\cdot\nabla s^1\right) -\left[\partial_{\vec{x}}^\alpha, \vec{u}^0 \cdot \nabla\right] s -\vec{u}^0 \cdot \nabla \partial_{\vec{x}}^\alpha s:=\partial_{\vec{x}}^\alpha H_s.
\end{equation}

Integrating $(\rho^0)^{-1} \partial_{\vec{x}}^\alpha p$ \eqref{palpha} $+P^0 C_v^{-1} (R+C_v) \partial_{\vec{x}}^\alpha \vec{u}\cdot$ \eqref{ualpha} over $\mathbb{R}^3$ and using the following identities: 
\begin{equation}\label{rho}
		\partial_t \rho^k=-div\left( \rho^k \vec{u}^k\right),
	\end{equation}
	\begin{equation}\label{pp}
		\partial_t p^k = -\frac{R+C_v}{C_v} \left( p^k + \bar{P}^k\right) div \vec{u}^k-\vec{u}^k \cdot \nabla p^k,
	\end{equation}
one has that
\begin{equation}\label{puenesc}
	\begin{aligned}
		&\frac{1}{2} \frac{d}{dt} \int_{\mathbb{R}^3} \left( \frac{1}{\rho^0} \left(\partial_{\vec{x}}^\alpha p \right)^2 + \frac{P^0 \left(R+C_v\right)}{C_v}  \left|\partial_{\vec{x}}^\alpha \vec{u} \right|^2 \right) d\vec{x} 
		+ \frac{a\left( R+C_v\right)}{C_v} \int_{\mathbb{R}^3} P^0 \left|\partial_{\vec{x}}^\alpha \vec{u} \right|^2 d\vec{x}\\
		&=	\left(\bar{P}^1-\bar{P}^0\right)\frac{\left(R+C_v\right)}{C_v} \int_{\mathbb{R}^3} \frac{1}{\rho^0} \partial_{\vec{x}}^\alpha p \, div \partial_{\vec{x}}^\alpha \vec{u}^1 d\vec{x} + \frac{1}{2} \int_{\mathbb{R}^3} \left( \frac{div\left( \rho^0 \vec{u}^0\right)}{\left(\rho^0\right)^2} + div\left(\frac{\vec{u}^0}{\rho^0} \right)\right)  \left(\partial_{\vec{x}}^\alpha p \right)^2 d\vec{x}\\
		&\quad + \frac{R+C_v}{2C_v} \int_{\mathbb{R}^3} \left( div \left(P^0 \vec{u}^0 \right)-\frac{R+C_v}{C_v} P^0 div \vec{u}^0 - \vec{u}^0 \cdot \nabla p^0\right) \left|\partial_{\vec{x}}^\alpha \vec{u} \right|^2 d\vec{x} - \frac{R+C_v}{C_v} \int_{\mathbb{R}^3} \frac{P^0}{\rho^0} div\left( \partial_{\vec{x}}^\alpha p \, \partial_{\vec{x}}^\alpha \vec{u} \right) d\vec{x}\\
		&\quad -\int_{\mathbb{R}^3} \frac{1}{\rho^0} \partial_{\vec{x}}^\alpha p \left[ \partial_{\vec{x}}^\alpha \left(\frac{R+C_v}{C_v} p div \vec{u}^1 + \vec{u} \cdot \nabla p^1 \right) + \frac{R+C_v}{C_v}\left[\partial_{\vec{x}}^\alpha, p^0\right]div\vec{u} + \left[\partial_{\vec{x}}^\alpha, \vec{u}^0\right] \nabla p\right]d\vec{x}\\
		& \quad -\frac{R+C_v}{C_v} \int_{\mathbb{R}^3} P^0 \partial_{\vec{x}}^\alpha \vec{u} \cdot \left[  \partial_{\vec{x}}^\alpha \left(\vec{u} \cdot \nabla \vec{u}^1 + \left(\frac{1}{\rho^0}-\frac{1}{\rho^1} \right)\nabla p^1 \right) + \left[\partial_{\vec{x}}^\alpha, \vec{u}^0 \cdot \nabla\right]\vec{u} + \left[\partial_{\vec{x}}^\alpha, \frac{1}{\rho^0}\right]\nabla p \right] d\vec{x}.
	\end{aligned}
\end{equation}
Summing \eqref{puenesc} over multi‑indices $\alpha$ with $|\alpha|\in[1,l]$ for $l\ge 3$ together with Lemma~\ref{derivaineq}, \eqref{rhok}, and the following inequalities for $|\alpha|\ge 1$:
\begin{equation}\label{rhocl2}
	\begin{aligned}
		&\left\| \nabla^{|\alpha|} \frac{1}{\rho^k}\right\|_{L^2} \le C \left\| \nabla \left(p^k, s^k\right)\right\|_{H^{|\alpha|-1}}, \quad  \left| \frac{1}{\rho^0}-\frac{1}{\rho^1}\right| \le C\left(\left|\bar{P}^0-\bar{P}^1\right|+\left|\bar{S}^0-\bar{S}^1\right|+|p|+|s|\right),\\
		&\left\|\nabla^{|\alpha|} \left( \frac{1}{\rho^0}-\frac{1}{\rho^1}\right)\right\|_{L^2} \le C\left[\left( \left|\bar{P}^0-\bar{P}^1\right|+\left|\bar{S}^0-\bar{S}^1\right|+\left\|p\right\|_{L^\infty}+\left\|s\right\|_{L^\infty}\right) \left\|\nabla \left(p^1,s^1\right)\right\|_{H^{|\alpha|-1}}+\left\|\nabla(p,s)\right\|_{H^{|\alpha|-1}}\right],
	\end{aligned}
\end{equation}
one deduces that
\begin{equation}\label{puc1l}
	\begin{aligned}
		&\frac{1}{2} \sum_{|\alpha|=1}^l \frac{d}{dt} \int_{\mathbb{R}^3} \left( \frac{1}{\rho^0} \left(\partial_{\vec{x}}^\alpha p \right)^2 + \frac{P^0 \left(R+C_v\right)}{C_v}  \left|\partial_{\vec{x}}^\alpha \vec{u} \right|^2 \right) d\vec{x} 
		+ \frac{a\left( R+C_v\right)}{C_v} \sum_{|\alpha|=1}^l \int_{\mathbb{R}^3} P^0 \left|\partial_{\vec{x}}^\alpha \vec{u} \right|^2 d\vec{x}\\
		&\le C\left|\bar{P}^0-\bar{P}^1\right| \cdot \left\|\nabla p\right\|_{H^{l-1}} \cdot \left\|\nabla div \vec{u}^1 \right\|_{H^{l-1}} + \frac{C}{\tilde{\varepsilon}_0} \left\|\nabla^2 p^1\right\|_{H^{l-1}}^2 \left(\left|\bar{P}^0-\bar{P}^1\right|^2+\left|\bar{S}^0-\bar{S}^1\right|^2 + \left\| \nabla s\right\|_{H^{l-1}}^2 \right)\\
		& \quad + C\left( \left\| \nabla p\right\|_{H^{l-1}}^2 + \left\| \nabla \vec{u}\right\|_{H^{l-1}}^2 \right) \left(\left\| \nabla \left(p^0, \vec{u}^0, s^0\right)\right\|_{H^{l-1}} +  \left\| \nabla^2 \left(p^1, \vec{u}^1\right)\right\|_{H^{l-1}} + \left\| \nabla \rho^0 \right\|_{L^\infty} \right) + C\tilde{\varepsilon}_0 \left\| \nabla \vec{u}\right\|_{H^{l-1}}^2,
	\end{aligned}
\end{equation}
where $\tilde{\varepsilon}_0\in(0,1)$ is taken sufficiently small. For $|\alpha|=0$, it holds that
\begin{equation}\nonumber
	\begin{aligned}
		&\frac{1}{2} \frac{d}{dt} \int_{\mathbb{R}^3} \left( \frac{1}{\rho^0} p^2 + \frac{P^0 \left(R+C_v\right)}{C_v}  \left|\vec{u} \right|^2 \right) d\vec{x} 
		+ \frac{a\left( R+C_v\right)}{C_v}  \int_{\mathbb{R}^3} P^0 \left|\vec{u} \right|^2 d\vec{x}\\
		&\le C\left|\bar{P}^0-\bar{P}^1\right| \cdot \left\|\nabla p\right\|_{H^1} \cdot \left\|\vec{u}^1 \right\|_{L^2} + \frac{C}{\tilde{\varepsilon}_0} \left\|\nabla p^1\right\|_{L^2}^2 \left(\left|\bar{P}^0-\bar{P}^1\right|^2+\left|\bar{S}^0-\bar{S}^1\right|^2 + \left\| \nabla s\right\|_{H^1}^2 \right)\\
		& \quad + C\left\| \nabla p\right\|_{H^1}^2 \left(\left\| \left(\vec{u}^0, \vec{u}^1\right)\right\|_{L^2} +  \left\| \nabla \left(p^0,p^1,\rho^0\right)\right\|_{L^2}\right) + C\left\|\vec{u}\right\|_{L^2}^2 \left(\tilde{\varepsilon}_0 + \left\| div \vec{u}^0 \right\|_{L^\infty}+\left\|\nabla \vec{u}^1 \right\|_{L^\infty} +  \left\| \nabla \left(p^0,p^1,\rho^0\right)\right\|_{L^2}\right),
	\end{aligned}
\end{equation}
which together with \eqref{puc1l} and the smallness of
$\tilde{\varepsilon}_0 + \bigl\| \nabla \bigl(p^0, \vec{u}^0, s^0\bigr)\bigr\|_{H^{l-1}}
+ \bigl\| \nabla^2 \vec{u}^1 \bigr\|_{H^{l-1}}
+ \bigl\| \nabla p^1 \bigr\|_{H^l}
+ \bigl\| \nabla \rho^0 \bigr\|_{H^2}$
results in that
\begin{equation}\label{puc0l}
	\begin{aligned}
		&\frac{1}{2} \sum_{|\alpha|=0}^l \frac{d}{dt} \int_{\mathbb{R}^3} \left( \frac{1}{\rho^0} \left(\partial_{\vec{x}}^\alpha p \right)^2 + \frac{P^0 \left(R+C_v\right)}{C_v}  \left|\partial_{\vec{x}}^\alpha \vec{u} \right|^2 \right) d\vec{x} 
		+ \frac{a\left( R+C_v\right)}{2C_v} \sum_{|\alpha|=0}^l \int_{\mathbb{R}^3} P^0 \left|\partial_{\vec{x}}^\alpha \vec{u} \right|^2 d\vec{x}\\
		&\le C\left|\bar{P}^0-\bar{P}^1\right| \cdot \left\|\nabla p\right\|_{H^{l-1}} \cdot \left\|\vec{u}^1 \right\|_{H^{l+1}} + \frac{C}{\tilde{\varepsilon}_0} \left\|\nabla p^1\right\|_{H^l}^2 \left(\left|\bar{P}^0-\bar{P}^1\right|^2+\left|\bar{S}^0-\bar{S}^1\right|^2 + \left\| \nabla s\right\|_{H^{l-1}}^2 \right)\\
		& \quad + C \left\| \nabla p\right\|_{H^{l-1}}^2 \left(\left\| \nabla \left(p^0, s^0\right)\right\|_{H^{l-1}} +\left\|\vec{u}^0\right\|_{H^l}+\left\|\nabla p^1\right\|_{H^l}+\left\|\vec{u}^1\right\|_{H^{l+1}}+\left\| \nabla \rho^0 \right\|_{H^2} \right).
	\end{aligned}
\end{equation}

Proceeding as in \eqref{pterm}--\eqref{pmlenes} and invoking Lemma~\ref{derivaineq}, \eqref{palpha}--\eqref{ualpha} and \eqref{rhocl2}, one obtains that
\begin{equation}\nonumber
	\begin{aligned}
		\frac{d}{dt} \int_{\mathbb{R}^3} \nabla \partial_{\vec{x}}^\alpha p \cdot \partial_{\vec{x}}^\alpha \vec{u} \, d\vec{x} + \frac{1}{\bar{\rho}^0} \left\| \nabla \partial_{\vec{x}}^\alpha p \right\|_{L^2}^2 &= \frac{\bar{P}^0 \left(R+C_v \right)}{C_v} \left\|div \partial_{\vec{x}}^\alpha \vec{u} \right\|_{L^2}^2 + \frac{\left(\bar{P}^0-\bar{P}^1 \right)\left( R+C_v\right) }{C_v} \int_{\mathbb{R}^3} div \partial_{\vec{x}}^\alpha \vec{u}^1 \, div \partial_{\vec{x}}^\alpha \vec{u} \, d\vec{x}\\
		&\quad - a\int_{\mathbb{R}^3} \nabla \partial_{\vec{x}}^\alpha p \cdot \partial_{\vec{x}}^\alpha \vec{u} \, d\vec{x} +\int_{\mathbb{R}^3} \left( -div \partial_{\vec{x}}^\alpha \vec{u} \, \partial_{\vec{x}}^\alpha H_p + \nabla \partial_{\vec{x}}^\alpha p \cdot \partial_{\vec{x}}^\alpha H_{\vec{u}} \right) d\vec{x}
	\end{aligned}
\end{equation}
and
\begin{equation}\nonumber
	\begin{aligned}
		&\sum_{|\alpha|=0}^{l-1} \left\|\partial_{\vec{x}}^\alpha H_p\right\|_{L^2} \le C\left\|\nabla p\right\|_{H^{l-1}} \left(\left\|\nabla \vec{u}^0\right\|_{H^{l-2}} + \left\|div \vec{u}^1\right\|_{H^{l-1}} \right) + C \left\|\nabla \vec{u}\right\|_{H^{l-1}} \left( \left\|\nabla p^0\right\|_{H^{l-2}}+\left\|\nabla p^1\right\|_{H^{l-1}}\right),\\
		&\sum_{|\alpha|=0}^{l-1} \left\|\partial_{\vec{x}}^\alpha H_{\vec{u}}\right\|_{L^2} \le C\left\|\nabla p^1\right\|_{H^{l-1}} \left(\left|\bar{P}^0-\bar{P}^1\right|+\left|\bar{S}^0-\bar{S}^1\right|+ \left\|\nabla s\right\|_{H^{l-2}}\right)+C\left\|\nabla p\right\|_{H^{l-1}} \left(\left\|\nabla \left(p^0,s^0\right)\right\|_{H^{l-2}}+ \left\|\nabla p^1\right\|_{H^{l-1}} \right) \\
		&\quad \quad \quad \quad \quad \quad \quad 
		 + C \left\|\nabla \vec{u}\right\|_{H^{l-1}} \left( \left\|\nabla \vec{u}^0\right\|_{H^{l-2}}+\left\|\nabla \vec{u}^1\right\|_{H^{l-1}}\right).
	\end{aligned}
\end{equation}
Thus for a sufficiently small positive constant $\tilde{\varepsilon}_1$, combined with the smallness of $\epsilon_0$ and $\epsilon_1$, one has that
\begin{equation}\label{pcgood}
	\begin{aligned}
		\sum_{|\alpha|=0}^{l-1} \frac{d}{dt} \int_{\mathbb{R}^3} \nabla \partial_{\vec{x}}^\alpha p \cdot \partial_{\vec{x}}^\alpha \vec{u} \, d\vec{x} + \frac{1}{2\bar{\rho}^0} \left\| \nabla p \right\|_{H^{l-1}}^2 &\le C \left|\bar{P}^0-\bar{P}^1\right|^2 \left(\frac{1}{\tilde{\varepsilon}_1} \left\| \nabla p^1 \right\|_{H^{l-1}}^2 + \left\|div \vec{u}^1 \right\|_{H^{l-1}}^2 \right)\\
		&\quad + \frac{C}{\tilde{\varepsilon}_1}\left\| \nabla p^1 \right\|_{H^{l-1}}^2  \left(\left|\bar{S}^0-\bar{S}^1\right|^2+ \left\|\nabla s\right\|_{H^{l-2}}^2\right) +C\left\|\vec{u}\right\|_{H^l}^2,
	\end{aligned}
\end{equation}
which together with \eqref{puc0l} and suitably small constants $\tilde{\varepsilon}_2, \tilde{\varepsilon}_3 \in (0,1)$ yields that
\begin{equation}\label{puhlt}
	\begin{aligned}
		&\frac{1}{2} \sum_{|\alpha|=0}^l \frac{d}{dt} \int_{\mathbb{R}^3} \left( \frac{1}{\rho^0} \left(\partial_{\vec{x}}^\alpha p \right)^2 + \frac{P^0 \left(R+C_v\right)}{C_v}  \left|\partial_{\vec{x}}^\alpha \vec{u} \right|^2 \right) d\vec{x} 
		+\tilde{\varepsilon}_2 \sum_{|\alpha|=0}^{l-1} \frac{d}{dt} \int_{\mathbb{R}^3} \nabla \partial_{\vec{x}}^\alpha p \cdot \partial_{\vec{x}}^\alpha \vec{u} \, d\vec{x}\\
		&\quad + \frac{a\left( R+C_v\right)}{4C_v} \sum_{|\alpha|=0}^l \int_{\mathbb{R}^3} P^0 \left|\partial_{\vec{x}}^\alpha \vec{u} \right|^2 d\vec{x} + \frac{\tilde{\varepsilon}_2}{4\bar{\rho}^0} \left\| \nabla p \right\|_{H^{l-1}}^2\\
		&\le C\left(\frac{1}{\tilde{\varepsilon}_0}+\frac{1}{\tilde{\varepsilon}_1}+\frac{1}{\tilde{\varepsilon}_3} \right)\left|\bar{P}^0-\bar{P}^1\right|^2 \left( \left\|\nabla p^1\right\|_{H^l}^2 +  \left\|\vec{u}^1 \right\|_{H^{l+1}}^2\right) + C \left( \frac{1}{\tilde{\varepsilon}_0}+\frac{1}{\tilde{\varepsilon}_1}\right)  \left\|\nabla p^1\right\|_{H^l}^2 \left(\left|\bar{S}^0-\bar{S}^1\right|^2 + \left\| \nabla s\right\|_{H^{l-1}}^2 \right).
	\end{aligned}
\end{equation}

Next, by virtue of \eqref{salpha} and Lemma~\ref{derivaineq}, one could apply the standard energy estimates for $\partial_{\vec{x}}^\alpha s$ to obtain that
\begin{equation}\nonumber
	\begin{aligned}
		\frac{1}{2} \frac{d}{dt}\left\|s\right\|_{H^l}^2 &=\sum_{|\alpha|=0}^l \left( -\int_{\mathbb{R}^3} \partial_{\vec{x}}^\alpha s \, \partial_{\vec{x}}^\alpha \left( \vec{u}\cdot\nabla {s}^1\right) d\vec{x} + \frac{1}{2} \int_{\mathbb{R}^3} div \vec{u}^0 \left(\partial_{\vec{x}}^\alpha s\right)^2 d\vec{x} \right)  -\sum_{|\alpha|=1}^l \int_{\mathbb{R}^3} \partial_{\vec{x}}^\alpha s \left[\partial_{\vec{x}}^\alpha, \vec{u}^0 \cdot \nabla\right] s \, d\vec{x}\\
		& \le C \left( \left\|s\right\|_{H^l} \cdot \left\|\nabla \vec{u}\right\|_{H^{l-1}} \cdot \left\|\nabla  s^1\right\|_{H^l} + \left\|s\right\|_{H^l}^2 \cdot \left\|\nabla \vec{u}^0\right\|_{H^{l-1}}\right),
	\end{aligned}
\end{equation}
which combined with Gronwall's inequality and \eqref{genenes}--\eqref{pude} yields that
\begin{equation}\label{scenes}
	\begin{aligned}
		\left\|s(\cdot, t)\right\|_{H^l} &\le e^{C\int_0^t \left\|\nabla \vec{u}^0(\cdot,\tau)\right\|_{H^{l-1}} d\tau} \left\|s_0\right\|_{H^l} + C\int_0^t e^{C\int_{\tau}^t \left\|\nabla \vec{u}^0(\cdot,t_1)\right\|_{H^{l-1}} dt_1}  \left\|\nabla \vec{u}(\cdot,\tau)\right\|_{H^{l-1}} \cdot \left\|\nabla s^1(\cdot,\tau)\right\|_{H^l} d\tau\\
		&\le C \left\|s_0\right\|_{H^l} + C\left\|s_0^1\right\|_{H^{l+1}} \int_0^t \left\|\nabla \vec{u}(\cdot,\tau)\right\|_{H^{l-1}} d\tau\\
		&\le C \left\|s_0\right\|_{H^l} + C\left\|s_0^1\right\|_{H^{l+1}} (1+t)^{1/2} \left( \int_0^t \left\|\nabla \vec{u}(\cdot,\tau)\right\|_{H^{l-1}}^2 d\tau\right)^{1/2}.
	\end{aligned}
\end{equation}

Integrating \eqref{puhlt} with respect to the time variable over $[0,t]$ and using the decay property in \eqref{pude} as well as the estimate for $\|s\|_{H^l}$ given by \eqref{scenes}, one finally has that
\begin{equation}\nonumber
	\begin{aligned}
		&\frac{1}{2} \sum_{|\alpha|=0}^l \int_{\mathbb{R}^3} \left( \frac{1}{\rho^0} \left(\partial_{\vec{x}}^\alpha p \right)^2 + \frac{P^0 \left(R+C_v\right)}{C_v}  \left|\partial_{\vec{x}}^\alpha \vec{u} \right|^2 \right) \! (\vec{x},t) \, d\vec{x} 
		+\tilde{\varepsilon}_2 \sum_{|\alpha|=0}^{l-1} \int_{\mathbb{R}^3} \left( \nabla \partial_{\vec{x}}^\alpha p \cdot \partial_{\vec{x}}^\alpha \vec{u}\right) \! (\vec{x},t) \, d\vec{x}\\
		&\quad + \frac{a\left( R+C_v\right)}{4C_v} \sum_{|\alpha|=0}^l \int_0^t \int_{\mathbb{R}^3} P^0 \left|\partial_{\vec{x}}^\alpha \vec{u} \right|^2(\vec{x},\tau) \, d\vec{x} d\tau + \frac{\tilde{\varepsilon}_2}{4\bar{\rho}^0} \int_0^t \left\| \nabla p(\cdot,\tau)\right\|_{H^{l-1}}^2 d\tau\\
		&\le C_{\tilde{\varepsilon}} \left[\left|\bar{P}^0-\bar{P}^1\right|^2+\left|\bar{S}^0-\bar{S}^1\right|^2 + \left\|\left(p_0,\vec{u}_0,s_0\right)\right\|_{H^l}^2 + \left\|s_0^1\right\|_{H^{l+1}}^2 \int_0^t (1+\tau)^{-\frac{5}{2}} (1+\tau) \int_0^{\tau} \left\|\nabla \vec{u}(\cdot,t_1)\right\|_{H^{l-1}}^2 dt_1 d\tau \right]\\
		&\le C_{\tilde{\varepsilon}} \left(\delta^2 + \left\|\left(p_0,\vec{u}_0,s_0\right)\right\|_{H^l}^2 + \epsilon_1^2 \int_0^t \left\|\nabla \vec{u}(\cdot,\tau)\right\|_{H^{l-1}}^2 d\tau\right),
	\end{aligned}
\end{equation}
where $C_{\tilde{\varepsilon}}$ is a constant depending on $\tilde{\varepsilon}_i^{-1}$ for $i=0,1,3$, and verifies \eqref{enpuc} since $\epsilon_1\ll 1$.

The bound for $\|s(\cdot,t)\|_{H^l}$ relies on the decay rate of $\|\nabla\vec{u}(\cdot,t)\|_{H^{l-1}}$ and thus we introduce two time‑dependent functionals:
\begin{equation}\label{E1}
	E_1(t):=\frac{1}{2} \sum_{|\alpha|=1}^l  \int_{\mathbb{R}^3} \frac{1}{\rho^0} \left(\partial_{\vec{x}}^\alpha p \right)^2 d\vec{x}+ \frac{R+C_v}{2C_v} \sum_{|\alpha|=0}^l  \int_{\mathbb{R}^3} P^0 \left|\partial_{\vec{x}}^\alpha \vec{u} \right|^2 d\vec{x} 
	+\tilde{\varepsilon}_2 \sum_{|\alpha|=0}^{l-1} \int_{\mathbb{R}^3} \nabla \partial_{\vec{x}}^\alpha p \cdot \partial_{\vec{x}}^\alpha \vec{u} \, d\vec{x},
\end{equation}
and
\begin{equation}\label{N1}
	N_1(t):= \underset{0 \le \tau \le t}{\sup} m^{-2}(\tau)(1+\tau)^{\frac{5}{2}} E_1(\tau),
\end{equation}
with $m(\tau):=\max\{\log(1+\tau),1\}$.

Invoking \eqref{ualpha}, \eqref{pp} and \eqref{rhocl2}, one obtains the following energy estimate for $\vec{u}$:
\begin{equation}\nonumber
	\begin{aligned}
		&\frac{1}{2} \frac{d}{dt} \int_{\mathbb{R}^3} P^0 \left|\vec{u} \right|^2 d\vec{x} + a \int_{\mathbb{R}^3} P^0 \left|\vec{u} \right|^2 d\vec{x}= \frac{1}{2} \int_{\mathbb{R}^3} \partial_t P^0 \left|\vec{u} \right|^2 d\vec{x} + \int_{\mathbb{R}^3} P^0 \vec{u} \cdot \left(H_{\vec{u}}-\frac{1}{\bar{\rho}^0} \nabla p \right) d\vec{x}\\
		&\le \frac{C}{\tilde{\varepsilon}_4}\left\|\nabla p^1\right\|_{L^2}^2 \left( \left|\bar{P}^0-\bar{P}^1\right|^2 + \left|\bar{S}^0-\bar{S}^1\right|^2+ \left\|s\right\|_{L^\infty}^2 \right) + C\left\|\nabla p\right\|_{L^2}^2 \left(1+\left\|\left(\rho^0-\bar{\rho}^0\right) \right\|_{L^\infty} \right) + C \left\|p\right\|_{L^\infty}^2 \cdot \left\|\nabla p^1\right\|_{L^2}\\
		& \quad +\frac{a}{4} \int_{\mathbb{R}^3} P^0 \left|\vec{u} \right|^2 d\vec{x}+ C\left[ \left\|\vec{u}\right\|_{L^2}^2 \left(\tilde{\varepsilon}_4 + \left\|\vec{u}^0\right\|_{W^{1,\infty}} + \left\|\nabla \vec{u}^1\right\|_{L^\infty}+ \left\|\left(\rho^0-\bar{\rho}^0\right) \right\|_{L^\infty} +\left\|\nabla p^1 \right\|_{L^2}\right) + \left\|\nabla \vec{u} \right\|_{L^2}^2 \cdot \left\|\vec{u}^0\right\|_{L^\infty}\right],
	\end{aligned}
\end{equation}
with $\tilde{\varepsilon}_4>0$ chosen sufficiently small. Since
$\bigl\|\vec{u}^0\bigr\|_{W^{1,\infty}} + \bigl\|\nabla \vec{u}^1\bigr\|_{L^\infty}+ \bigl\|\bigl(\rho^0-\bar{\rho}^0\bigr)\bigr\|_{L^\infty} +\bigl\|\nabla p^1\bigr\|_{L^2} \le C(\epsilon_0+\epsilon_1) \ll 1$, the above estimate yields that
\begin{equation}\label{ucenes0}
	\begin{aligned}
		\frac{1}{2} \frac{d}{dt} \int_{\mathbb{R}^3} P^0 \left|\vec{u} \right|^2 d\vec{x} + \frac{a}{2} \int_{\mathbb{R}^3} P^0 \left|\vec{u} \right|^2 d\vec{x} &\le \frac{C}{\tilde{\varepsilon}_4}\left\|\nabla p^1\right\|_{L^2}^2 \left( \left|\bar{P}^0-\bar{P}^1\right|^2 + \left|\bar{S}^0-\bar{S}^1\right|^2+ \left\|s\right\|_{L^\infty}^2 \right)\\
		&\quad + C\left( \left\|\nabla p\right\|_{L^2}^2 +  \left\|\nabla p\right\|_{H^1}^2 \cdot \left\|\nabla p^1\right\|_{L^2} + \left\|\nabla \vec{u} \right\|_{L^2}^2 \cdot \left\|\nabla \vec{u}^0\right\|_{H^1}\right),
	\end{aligned}
\end{equation}
which together with \eqref{puc1l} and \eqref{pcgood} results in that
\begin{equation}\label{pugood01l}
	\begin{aligned}
		&\frac{1}{2} \sum_{|\alpha|=1}^l \frac{d}{dt} \int_{\mathbb{R}^3} \frac{1}{\rho^0} \left(\partial_{\vec{x}}^\alpha p \right)^2  d\vec{x} +\frac{R+C_v}{2C_v} \sum_{|\alpha|=0}^l \frac{d}{dt} \int_{\mathbb{R}^3} P^0 \left|\partial_{\vec{x}}^\alpha \vec{u} \right|^2 d\vec{x} + \tilde{\varepsilon}_2 \sum_{|\alpha|=0}^{l-1} \frac{d}{dt} \int_{\mathbb{R}^3} \nabla \partial_{\vec{x}}^\alpha p \cdot \partial_{\vec{x}}^\alpha \vec{u} \, d\vec{x}\\
		&+ \frac{a\left( R+C_v\right)}{4C_v} \sum_{|\alpha|=0}^l \int_{\mathbb{R}^3} P^0 \left|\partial_{\vec{x}}^\alpha \vec{u} \right|^2 d\vec{x} +\frac{\tilde{\varepsilon}_2}{4\bar{\rho}^0} \left\| \nabla p \right\|_{H^{l-1}}^2\\
		&\le C_{\tilde{\tilde{\varepsilon}}} \left[ \left|\bar{P}^0-\bar{P}^1\right|^2 \left( \left\|div \vec{u}^1 \right\|_{H^l}^2 +  \left\|\nabla p^1\right\|_{H^l}^2\right) +  \left\|\nabla p^1\right\|_{H^l}^2 \left(\left|\bar{S}^0-\bar{S}^1\right|^2 + \left\| \nabla s\right\|_{H^{l-1}}^2 \right)\right]+C\left\|\nabla p\right\|_{L^2}^2.
	\end{aligned}
\end{equation}
Here, $C_{\tilde{\tilde{\varepsilon}}}$ is a positive constant depending on $C_{\tilde{\varepsilon}}$ and $\tilde{\varepsilon}_4$.

By virtue of the decay property \eqref{pude}, the estimate for $\|s\|_{H^l}$ given by \eqref{scenes}, the definitions \eqref{E1}--\eqref{N1} of $E_1(t)$ and $N_1(t)$, \eqref{pugood01l} and Gronwall's inequality, one has that
\begin{equation}\label{E1gron}
	\begin{aligned}
		E_1(t) &\le e^{-C_* t} E_1(0) +  C_{\tilde{\tilde{\varepsilon}}}\left(\left|\bar{P}^0-\bar{P}^1\right|^2+\left|\bar{S}^0-\bar{S}^1\right|^2 \right) \int_0^t e^{-C_*(t-\tau)} \left( \left\|div \vec{u}^1 (\cdot, \tau)\right\|_{H^l}^2 +  \left\|\nabla p^1(\cdot, \tau)\right\|_{H^l}^2\right) d\tau\\
		&\quad + C_{\tilde{\tilde{\varepsilon}}} \int_0^t e^{-C_*(t-\tau)} \left\|\nabla p^1(\cdot, \tau)\right\|_{H^l}^2  \left( \left\|s_0\right\|_{H^l}^2 + \left\|s_0^1\right\|_{H^{l+1}}^2 \left( \int_0^\tau \left\|\nabla \vec{u}(\cdot,t_1)\right\|_{H^{l-1}} dt_1 \right)^2 \right) d\tau\\
		&\quad + C\int_0^t e^{-C_*(t-\tau)} \left\|\nabla p(\cdot, \tau)\right\|_{L^2}^2 d\tau\\
		&\le e^{-C_* t}  E_1(0) +   C_{\tilde{\tilde{\varepsilon}}}(1+t)^{-\frac{5}{2}}\left\|\left(p_0^1,\vec{u}_0^1 \right) \right\|_{L^1 \cap H^{l+1}}^2 \left(\delta^2+ \left\|s_0\right\|_{H^l}^2 + \left\|s_0^1\right\|_{H^{l+1}}^2 N_1(t) \right)\\
		&\quad + C\int_0^t e^{-C_*(t-\tau)} \left\|\nabla p(\cdot, \tau)\right\|_{L^2}^2 d\tau
	\end{aligned}
\end{equation}
for a positive constant $C_*$. One can obtain the decay rate of $\left\|\nabla p(\cdot,t)\right\|_{L^2}$ by the Green's function with estimates given in \eqref{gf2kl2}--\eqref{gf1kl2} in Theorem~\ref{greenlp}:
\begin{equation}\nonumber
	\begin{aligned}
		\left\|\nabla p(\cdot,t)\right\|_{L^2} &\le C(1+t)^{-\frac{5}{4}} \left( \left\|p_0\right\|_{L^1 \cap H^1}+ (1+t)^{-\frac{1}{2}} \left\|\vec{u}_0\right\|_{L^1 \cap H^1}\right)\\
		&\quad + C\int_0^t e^{-R_0(t-\tau)} \left\|\nabla \left(H_p+\left(\bar{P}^1-\bar{P}^0\right)\frac{\left(R+C_v\right)}{C_v} div \vec{u}^1, H_{\vec{u}}\right) \!(\cdot,\tau)\right\|_{L^2} d\tau\\
		&\quad + C\int_0^t (1+t-\tau)^{-\frac{5}{4}} \left(\left\|H_p(\cdot,\tau)\right\|_{L^1} + (1+t-\tau)^{-\frac{1}{2}} \left\|\left(H_{\vec{u}}+\left(\frac{1}{\rho^0}-\frac{1}{\rho^1}\right)\nabla p^1\right) \!(\cdot,\tau)\right\|_{L^1} \right) d\tau\\
		&\quad + C\int_0^{t/2} (1+t-\tau)^{-\frac{7}{4}} \left(\left\|\left(\bar{P}^1-\bar{P}^0\right)\frac{\left(R+C_v\right)}{C_v} \vec{u}^1(\cdot,\tau)\right\|_{L^1} +  \left\|\left(\frac{1}{\rho^0}-\frac{1}{\rho^1}\right)\nabla p^1(\cdot,\tau)\right\|_{L^1} \right) d\tau\\
		&\quad + C\int_{t/2}^t (1+t-\tau)^{-\frac{1}{2}} \left(\left\|\left(\bar{P}^1-\bar{P}^0\right)\frac{\left(R+C_v\right)}{C_v} div \vec{u}^1(\cdot,\tau)\right\|_{L^2} + (1+t-\tau)^{-\frac{1}{2}} \left\|\left(\frac{1}{\rho^0}-\frac{1}{\rho^1}\right)\nabla p^1(\cdot,\tau)\right\|_{L^2} \right) d\tau.
	\end{aligned}
\end{equation}
Combining Theorem~\ref{pointwise}, Lemma~\ref{decay}, \eqref{rhocl2} and \eqref{scenes}, one has that
\begin{equation}\nonumber
	\begin{aligned}
		\left\|\nabla p(\cdot,t)\right\|_{L^2} &\le C(1+t)^{-\frac{5}{4}} \left( \left\|p_0\right\|_{L^1 \cap H^1}+ (1+t)^{-\frac{1}{2}} \left\|\vec{u}_0\right\|_{L^1 \cap H^1}\right)\\
		&\quad + C\epsilon_{01} \int_0^t e^{-R_0(t-\tau)}\left[(1+\tau)^{-\frac{7}{4}}  \left(\delta+\left\|s_0\right\|_{H^3}\right)+(1+\tau)^{-\frac{5}{4}} m(\tau) \sqrt{N_1(\tau)}\right] d\tau\\
		&\quad + C\epsilon_{01} \int_0^t (1+t-\tau)^{-\frac{5}{4}} m(\tau) \sqrt{N_1(\tau)}  \left[(1+\tau)^{-\frac{7}{4}} + (1+t-\tau)^{-\frac{1}{2}}(1+\tau)^{-\frac{5}{4}} \right] d\tau\\
		&\quad + C\epsilon_1 \int_0^{t/2} (1+t-\tau)^{-\frac{7}{4}} (1+\tau)^{-\frac{1}{2}} \left(\delta +\left\|s_0\right\|_{H^3}+  \sqrt{N_1(\tau)}\right) d\tau\\
		&\quad + C\epsilon_1 \int_{t/2}^t (1+t-\tau)^{-\frac{1}{2}} \left[(1+\tau)^{-\frac{7}{4}}\delta + (1+t-\tau)^{-\frac{1}{2}} (1+\tau)^{-\frac{5}{4}} \left(\delta +\left\|s_0\right\|_{H^3} +  \sqrt{N_1(\tau)}\right) \right] d\tau\\
		&\le C(1+t)^{-\frac{5}{4}} \left( \left\|p_0\right\|_{L^1 \cap H^1}+ (1+t)^{-\frac{1}{2}} \left\|\vec{u}_0\right\|_{L^1 \cap H^1}\right)+C\epsilon_{01} (1+t)^{-\frac{5}{4}}m(t)  \left(\delta +\left\|s_0\right\|_{H^3} + \sqrt{N_1(t)}\right),
	\end{aligned}
\end{equation}
where the following estimates are used:
\begin{equation}\nonumber
	\begin{aligned}
		&\left\|\nabla H_p(\cdot,\tau)\right\|_{L^2} \le C\left(\left\|\nabla p(\cdot,\tau)\right\|_{H^1}+\left\|\nabla \vec{u}(\cdot,\tau)\right\|_{H^1}\right)\left( \left\| \nabla^2 (p^1,\vec{u}^1)(\cdot,\tau)\right\|_{H^1} + \left\| \nabla(p^0, \vec{u}^0)(\cdot,\tau)\right\|_{H^2} \right)\\
		& \quad \quad \quad \quad \quad \ \ \le C\epsilon_{01}  (1+\tau)^{-\frac{5}{2}} m(\tau) \sqrt{N_1(\tau)},\\
		&\left\|\nabla H_{\vec{u}}(\cdot,\tau)\right\|_{L^2} \le C\left\|\nabla^2 p^1(\cdot,\tau)\right\|_{H^1} \left(\left|\bar{P}^0-\bar{P}^1\right| + \left|\bar{S}^0-\bar{S}^1\right| + \left\|\nabla p(\cdot,\tau)\right\|_{H^1} +\left\|\nabla s(\cdot,\tau)\right\|_{H^1}\right)\\
		&\quad \quad \quad \quad \quad \quad \quad + C \left\|\nabla \vec{u}(\cdot,\tau)\right\|_{H^1} \left(\left\|\nabla \vec{u}^0(\cdot,\tau)\right\|_{H^2}+\left\|\nabla^2 \vec{u}^1(\cdot,\tau)\right\|_{H^1} \right) + C \left\|\nabla p(\cdot,\tau)\right\|_{H^1} \cdot \left\|\nabla \left(\rho^0-\bar{\rho}^0\right) \! (\cdot,\tau)\right\|_{H^2}\\
		&\quad \quad \quad \quad \quad \quad \le C\epsilon_{01} \left[ (1+\tau)^{-\frac{7}{4}} \left(\delta+\left\|s_0\right\|_{H^3}+ \sqrt{N_1(\tau)}\right)+(1+\tau)^{-\frac{5}{4}} m(\tau) \sqrt{N_1(\tau)}\right],
	\end{aligned}
	\end{equation}
	\begin{equation}\label{hpul1}
	\begin{aligned}
		&\left\|H_p(\cdot,\tau)\right\|_{L^1} \le C\left( \left\|p(\cdot,\tau)\right\|_{L^\infty} \cdot \left\|div \vec{u}^1(\cdot,\tau)\right\|_{L^1} +\left\|\nabla p(\cdot,\tau)\right\|_{L^2} \cdot \left\|\vec{u}^0(\cdot,\tau)\right\|_{L^2} +  \left\|\vec{u}(\cdot,\tau)\right\|_{H^1} \cdot \left\|\left( p^0,\nabla p^1\right)(\cdot,\tau)\right\|_{L^2}\right)\\
		&\quad \quad \quad \quad \quad \le C\epsilon_{01} (1+\tau)^{-\frac{1}{2}} \left(\left\|\vec{u}(\cdot,\tau)\right\|_{H^1} +\left\|\nabla p(\cdot,\tau)\right\|_{H^1}\right) \le C\epsilon_{01} (1+\tau)^{-\frac{7}{4}} m(\tau) \sqrt{N_1(\tau)},
	\end{aligned}
\end{equation}
\begin{equation}\label{hpul1u}
	\begin{aligned}
		&\left\|H_{\vec{u}}+\left(\frac{1}{\rho^0}-\frac{1}{\rho^1}\right)\nabla p^1 (\cdot,\tau)\right\|_{L^1} \le C  \left\|\vec{u}(\cdot,\tau)\right\|_{H^1} \left(\left\|\vec{u}^0(\cdot,\tau)\right\|_{L^2}+\left\|\nabla \vec{u}^1(\cdot,\tau)\right\|_{L^2} \right) + C \left\|\nabla p(\cdot,\tau)\right\|_{L^2} \cdot \left\|\left(\rho^0-\bar{\rho}^0 \right) (\cdot,\tau)\right\|_{L^2}\\
		&\quad \quad \quad \quad \quad \quad \quad \quad  \quad \quad \quad \quad \le C\epsilon_{01} (1+\tau)^{-\frac{5}{4}} m(\tau) \sqrt{N_1(\tau)}, 
	\end{aligned}
\end{equation}
and
\begin{equation}\label{hpul12}
	\begin{aligned}
		 &\left\|\left(\frac{1}{\rho^0}-\frac{1}{\rho^1}\right)\nabla p^1 (\cdot,\tau)\right\|_{L^1} \le C\epsilon_1 (1+\tau)^{-\frac{1}{2}} \left(\left|\bar{P}^0-\bar{P}^1\right| + \left|\bar{S}^0-\bar{S}^1\right| + \left\|\nabla p(\cdot,\tau)\right\|_{H^1} +\left\|s_0\right\|_{H^3} + \left\|s_0^1\right\|_{H^4} \sqrt{N_1(\tau)}\right),\\
		 &\quad \quad \quad \quad \quad \quad \quad \quad \quad \quad  \le C\epsilon_1 (1+\tau)^{-\frac{1}{2}} \left(\delta +\left\|s_0\right\|_{H^3} +  \sqrt{N_1(\tau)}\right),\\
		 &\left\|\left(\frac{1}{\rho^0}-\frac{1}{\rho^1}\right)\nabla p^1 (\cdot,\tau)\right\|_{L^2} \le C\epsilon_1 (1+\tau)^{-\frac{5}{4}} \left(\delta +\left\|s_0\right\|_{H^3}+  \sqrt{N_1(\tau)}\right).
	\end{aligned}
\end{equation}

Substituting the above bound for $\|\nabla p\|_{L^2}$ into \eqref{E1gron} yields that
\begin{equation}\nonumber
		E_1(t) \le C(1+t)^{-\frac{5}{2}} \left( \left\|p_0\right\|_{L^1 \cap H^l}^2+ (1+t)^{-1} \left\|\vec{u}_0\right\|_{L^1 \cap H^l}^2\right)+C\epsilon_{01}^2 (1+t)^{-\frac{5}{2}} m^2(t) \left(\delta^2 +\left\|s_0\right\|_{H^l}^2 + N_1(t)\right),
\end{equation}
and thus
\begin{equation}\nonumber
	N_1(t) \le C\left\|(p_0,\vec{u}_0)\right\|_{L^1 \cap H^l}^2 + C\epsilon_{01}^2 \left(\delta^2 +\left\|s_0\right\|_{H^l}^2 + N_1(t)\right).
\end{equation}
By virtue of the smallness of $\epsilon_{01}$, one has that
\begin{equation}\nonumber
	N_1(t) \le C\left\|(p_0,\vec{u}_0)\right\|_{L^1 \cap H^l}^2 + C\epsilon_{01}^2 \left(\delta^2 +\left\|s_0\right\|_{H^l}^2\right),
\end{equation}
which yields that
\begin{equation}\nonumber
	\left\|\vec{u}(\cdot,t)\right\|_{H^l} +\left\|\nabla p(\cdot,t)\right\|_{H^{l-1}} \le C(1+t)^{-\frac{5}{4}}m(t) \left(\left\|(p_0,\vec{u}_0)\right\|_{L^1 \cap H^l} + \epsilon_{01} \left(\delta +\left\|s_0\right\|_{H^l}\right)\right).
\end{equation}
It together with \eqref{scenes} results in that
\begin{equation}\nonumber
		\left\|s(\cdot, t)\right\|_{H^l}\le C \left\|s_0\right\|_{H^l} + C\left\|s_0^1\right\|_{H^{l+1}} \left(\left\|(p_0,\vec{u}_0)\right\|_{L^1 \cap H^l} + \epsilon_{01} \delta\right).
\end{equation}
Thus one verifies \eqref{ensc} and \eqref{pucdecay1}, and finishes the proof.
\end{proof}

\subsection{Pointwise Estimates for the Entropy Function $s(\vec{x},t)$}
Following the arguments in \eqref{weightsk} and invoking \eqref{salpha}, one obtains that
\begin{equation}\nonumber
	\begin{aligned}
		\frac{1}{2} \frac{d}{dt} \left\| e^{|\vec{x}|}\partial_{\vec{x}}^\alpha s \right\|_{L^2}^2&=- \int_{\mathbb{R}^3} e^{2|\vec{x}|} \left[ \frac{1}{2}\vec{u}^0 \cdot \nabla \left(\partial_{\vec{x}}^\alpha s \right)^2 + \partial_{\vec{x}}^\alpha s \left( \partial_{\vec{x}}^\alpha \left( \vec{u}\cdot\nabla s^1\right) +\left[\partial_{\vec{x}}^\alpha, \vec{u}^0 \cdot \nabla\right] s\right) \right] d\vec{x}\\
		& \le \frac{1}{2} \int_{\mathbb{R}^3} div\left( e^{2|\vec{x}|} \vec{u}^0 \right) \left(\partial_{\vec{x}}^\alpha s \right)^2 d\vec{x} + \left\| e^{|\vec{x}|}\partial_{\vec{x}}^\alpha s \right\|_{L^2} \left(\left\| e^{|\vec{x}|} \partial_{\vec{x}}^\alpha \left( \vec{u}\cdot\nabla {s}^1\right) \right\|_{L^2} + \left\| e^{|\vec{x}|} \left[ \partial_{\vec{x}}^{\alpha}, \vec{u}^0\cdot\nabla\right]s \right\|_{L^2}\right),
	\end{aligned}
\end{equation}
which together with
\begin{equation}\nonumber
	\begin{aligned}
		\sum_{|\alpha|=1}^l \left\| e^{|\vec{x}|} \left[ \partial_{\vec{x}}^{\alpha}, \vec u^0 \cdot \nabla \right]s \right\|_{L^2} \le C \left\|\nabla \vec{u}^0 \right\|_{W^{l-1,\infty}} \sum_{i=1}^l \left\| e^{|\vec{x}|} \nabla^i s\right\|_{L^2},
	\end{aligned}
\end{equation}
\begin{equation}\nonumber
	\begin{aligned}
		\sum_{|\alpha|=0}^l \left\| e^{|\vec{x}|} \partial_{\vec{x}}^\alpha \left( \vec{u}\cdot\nabla s^1\right) \right\|_{L^2} &\le \sum_{|\alpha|=0}^l \left\| e^{|\vec{x}|}  \left( \vec{u}\cdot\nabla \partial_{\vec{x}}^\alpha s^1\right)  \right\|_{L^2} + \sum_{|\alpha|=1}^l \left\| e^{|\vec{x}|} \left[ \partial_{\vec{x}}^\alpha, \vec{u}\cdot\nabla\right]s^1 \right\|_{L^2} \\
		&\le C\left(\left\|\vec{u} \right\|_{L^\infty}  \sum_{|\alpha|=0}^l \left\|e^{|\vec{x}|} \nabla \partial_{\vec{x}}^\alpha s^1 \right\|_{L^2}+ \left\|\nabla \vec{u} \right\|_{H^{l-1}} \sum_{i=1}^l \left\| e^{|\vec{x}|} \nabla^i s^1\right\|_{L^\infty}\right) \le C \left\|\nabla \vec{u} \right\|_{H^{l-1}} \sum_{i=1}^{l+2} \left\| e^{|\vec{x}|} \nabla^i s^1\right\|_{L^2},
	\end{aligned}
\end{equation}
for $l \ge 2$, yields that
\begin{equation}\nonumber
		\frac{1}{2} \sum_{j=0}^l \frac{d}{dt} \left\| e^{|\vec{x}|} \nabla^j s \right\|_{L^2}^2 \le C \sum_{j=0}^l \left\| e^{|\vec{x}|} \nabla^j s \right\|_{L^2}  \left( \left\|\vec{u}^0 \right\|_{W^{l,\infty}} \sum_{j=0}^l \left\| e^{|\vec{x}|} \nabla^j s \right\|_{L^2} + \left\|\nabla \vec{u} \right\|_{H^{l-1}} \sum_{j=1}^{l+2} \left\| e^{|\vec{x}|} \nabla^j s^1\right\|_{L^2}\right).
\end{equation}
By Gronwall's inequality, the Sobolev embedding \eqref{wemb}, the estimate for $\| e^{|\vec{x}|} \nabla^j s^1\|_{L^2}$ in \eqref{expsk}, and the decay properties \eqref{pude} and \eqref{pucdecay1}, one has that
\begin{equation}\nonumber
	\begin{aligned}
		\sum_{j=0}^l \left\| e^{|\vec{x}|} \nabla^j s \right\|_{L^2}& \le \exp\left\lbrace C\int_0^t \left\|\nabla \vec{u}^0(\cdot,\tau) \right\|_{H^{l+1}} d\tau \right\rbrace 	\sum_{j=0}^l \left\| e^{|\vec{x}|} \nabla^j s_0 \right\|_{L^2} \\
		&\quad +C \exp\left\lbrace C\left\|\left(p_0^1, \vec{u}_0^1\right)\right\|_{L^1 \cap H^{l+2}} \right\rbrace \sum_{j=0}^{l+2} \left\| e^{|\vec{x}|} \nabla^j s_0^1 \right\|_{L^2} \int_0^t \exp\left\lbrace C\int_\tau^t \left\|\nabla \vec{u}^0(\cdot,t_1) \right\|_{H^{l+1}} dt_1\right\rbrace  \left\|\nabla \vec{u}(\cdot,\tau) \right\|_{H^{l-1}} d\tau\\
		&\le C\left[\sum_{j=0}^l \left\| e^{|\vec{x}|} \nabla^j s_0 \right\|_{L^2} +\sum_{j=0}^{l+2} \left\| e^{|\vec{x}|} \nabla^j s_0^1 \right\|_{L^2}\left(\left\|(p_0,\vec{u}_0)\right\|_{L^1 \cap H^l} + \epsilon_{01}\left(\delta+\left\|s_0\right\|_{H^l} \right)\right)  \right],
	\end{aligned}
\end{equation}
which combined with \eqref{weightsob} results in that
\begin{equation}\label{pointsc}
		\left|s(\vec{x},t)\right| %\le e^{-|\vec{x}|} \left\| e^{|\vec{x}|} s \right\|_{L^\infty} 
		\le Ce^{-|\vec{x}|} \sum_{i=0}^2 \left\| e^{|\vec{x}|} \nabla^i s \right\|_{L^2}
		\le Ce^{-|\vec{x}|} \left(\sum_{j=0}^2 \left\| e^{|\vec{x}|} \nabla^j s_0\right\|_{L^2} +\left\|(p_0,\vec{u}_0)\right\|_{L^1 \cap H^2} + \epsilon_{01} \delta \right) \le C\epsilon_{01}(\delta+\delta_0)e^{-|\vec{x}|}.
\end{equation}

\subsection{Temporal Decay Estimates for Higher-Order Derivatives of $(p,\vec{u})(\vec{x},t)$}
To characterize the pointwise space-time behaviors of $p$ and $\vec{u}$, the lemma below about decay estimates for their higher‑order derivatives is required:
\begin{lemma}
\label{decaypuc}
	Under the assumptions of Theorem~\ref{main}, for $l\ge 6$, the following temporal decay estimates hold for derivatives of the solution $(p,\vec{u})$ of \eqref{pc}\textup{:}
	\begin{equation}\label{decaypu3}
		\left\| \nabla p(\cdot, t) \right\|_{L^\infty}+\left\| \nabla^2 \vec{u}(\cdot, t) \right\|_{H^{l-2}} + \left\| \nabla^3 p(\cdot, t) \right\|_{H^{l-3}} \le C(1 + t)^{-2} \log(2+t) \left[\left\| \left( p_0, \vec{u}_0\right)  \right\|_{L^1 \cap H^l}+\epsilon_{01}\left(\delta +\left\|s_0\right\|_{H^l}\right)\right].
	\end{equation}
\end{lemma}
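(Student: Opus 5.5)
The estimate \eqref{decaypu3} improves \eqref{pucdecay1} by a factor $(1+t)^{-3/4}$ at the level of second derivatives. I would prove it as Lemma~\ref{decay} (specifically \eqref{decaygen3}) is proved for the individual problems, namely by combining a high-order energy inequality with a low-frequency bound coming from the Green's function. The natural quantity is the analogue of $E_1(t)$ in \eqref{E1}, with the sums starting at $|\alpha|=2$:
\begin{equation}\nonumber
E_2(t):=\frac12\sum_{|\alpha|=3}^{l}\int_{\mathbb{R}^3}\frac{1}{\rho^0}\left(\partial_{\vec{x}}^\alpha p\right)^2 d\vec{x}+\frac{R+C_v}{2C_v}\sum_{|\alpha|=2}^{l}\int_{\mathbb{R}^3}P^0\left|\partial_{\vec{x}}^\alpha\vec{u}\right|^2 d\vec{x}+\tilde{\varepsilon}_2\sum_{|\alpha|=2}^{l-1}\int_{\mathbb{R}^3}\nabla\partial_{\vec{x}}^\alpha p\cdot\partial_{\vec{x}}^\alpha\vec{u}\, d\vec{x}.
\end{equation}
I would control it through the weighted functional $N_2(t):=\sup_{0\le\tau\le t}(1+\tau)^4 m^{-2}(\tau)E_2(\tau)$. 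Repeating the computations \eqref{puenesc}--\eqref{pugood01l} for $|\alpha|\ge2$ should give a differential inequality of the form
\begin{equation}\nonumber
\frac{d}{dt}E_2+C_*E_2\le C\left(\delta^2+\|\nabla s\|_{H^{l-1}}^2\right)\left\|\nabla^2(p^1,\vec{u}^1)\right\|_{H^{l}}^2+C\left\|\nabla^2 p\right\|_{L^2}^2+\text{(cubic terms)}.
\end{equation}
On the right-hand side, $\delta\,\|\nabla^3 p^1\|$ and $\delta\,\|\nabla^2\vec{u}^1\|$ decay like $\delta\epsilon_1(1+t)^{-2}$ by \eqref{decaygen3} together with Theorem~\ref{pointwise} (so $\mathcal{N}^1\le C\epsilon_1$). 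The factor $\|\nabla s\|_{H^{l-1}}$ is bounded by \eqref{ensc}. The cubic terms carry a small factor $\epsilon_{01}$ times $m(t)^2(1+t)^{-4}N_2$, or they are already controlled by \eqref{pucdecay1}. The lower-order term $\|\nabla^2 p\|_{L^2}$ has to be estimated separately, since it is not dissipated by $E_2$.

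For $\|\nabla^2p\|_{L^2}$ and $\|\nabla p\|_{L^\infty}$ I would use Duhamel's formula with $\mathbb{F}^0$. The forcing is $(H_p-(\bar P^0-\bar P^1)\tfrac{R+C_v}{C_v}\mathrm{div}\,\vec{u}^1,\,H_{\vec u})$. On the linear part, \eqref{gf1kl2} with $|\alpha|=2$ gives $(1+t)^{-7/4}$ in $L^2$, and \eqref{gfkl00} with $q=1$, $|\alpha|=1$ gives $(1+t)^{-2}$ in $L^\infty$. In the Duhamel integral I would split the time interval at $t/2$. On $[0,t/2]$, I would put all derivatives on the Green's function and use the $L^1$ bounds \eqref{hpul1}--\eqref{hpul12}. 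The background term $\delta\,\mathrm{div}\,\vec u^1$ should be written as a divergence, so that the kernel $\nabla^{|\alpha|+1}\mathbb{F}$ gains an extra $(1+t-\tau)^{-1/2}$; the same applies to $(\frac{1}{\rho^0}-\frac{1}{\rho^1})\nabla p^1$, whose $\mathbb{F}_2$ column already carries the factor $d_2=1/2$. On $[t/2,t]$, I would put the derivatives on the sources. Those are bounded by $\epsilon_{01}\delta(1+\tau)^{-2}$ via the pointwise bounds of Theorem~\ref{pointwise}, the bound \eqref{pointsc} for $s$, and the estimate $\|\nabla^2(p,\vec u)\|\lesssim m(\tau)(1+\tau)^{-2}\sqrt{N_2}$ for the genuinely difference terms. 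For the $L^\infty$ bound, \eqref{gfkl00} with $q=\infty$ on the second half, and $q=1$ on the first half, should work the same way. Integrating the resulting convolutions produces at most one logarithm, which accounts for the factor $\log(2+t)$.

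The main obstacle is the time integral near $\tau\approx t$. There the forcing term $(\frac{1}{\rho^0}-\frac{1}{\rho^1})\nabla p^1$ contains the non-decaying factor $\bar S^0-\bar S^1+s$. Its $L^1$ norm therefore decays only like $\epsilon_1(1+\tau)^{-1/2}$, because $\nabla p^1$ decays like $(1+\tau)^{-2}$ in $L^\infty$ but only like $(1+\tau)^{-1/2}$ in $L^1$. To reach the rate $(1+t)^{-2}$, I would first try to exploit the pointwise bound $|\nabla p^1|\le C\epsilon_1 w^1_1$ directly: since this has the form $(1+\tau)^{-2}$ times a heat kernel, convolving it with the Green's function gives $(1+t)^{-2}\log(2+t)$. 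I would do this instead of passing through $L^1$ norms. Closing the argument then amounts to $N_2(t)\le C(\cdots)^2+C\epsilon_{01}^2N_2(t)$, which absorbs by the smallness of $\epsilon_{01}$.
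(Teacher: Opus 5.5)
\textbf{There is a genuine gap: you chose the wrong lower-order term to estimate separately, and with $\|\nabla^2 p\|_{L^2}$ the bootstrap cannot close.} Your overall architecture matches the paper's: an energy functional for $p$-derivatives of order $\ge 3$ and $\vec u$-derivatives of order $\ge 2$, a Green's-function bound for one undissipated lower-order quantity, and a weighted bootstrap. Your $E_2$ is exactly the paper's $E_3$. The problem is that you put $C\|\nabla^2 p\|_{L^2}^2$ on the right of the $E_2$ inequality and plan to control it by Duhamel. As you note yourself, \eqref{gf1kl2} with $|\alpha|=2$ gives only $(1+t)^{-7/4}$. This rate is sharp in general: the linear part behaves like a heat kernel at low frequency, so it is attained whenever $\int p_0\,d\vec x\neq 0$. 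Gronwall then gives no better than $E_2(t)\le C(1+t)^{-7/2}$, i.e. the rate $(1+t)^{-7/4}$ for $\|\nabla^2\vec u\|_{H^{l-2}}+\|\nabla^3 p\|_{H^{l-3}}$. Consequently $(1+t)^4m^{-2}(t)E_2(t)$ is unbounded and $N_2\le C(\cdots)^2+C\epsilon_{01}^2N_2$ never holds. The same error reappears on $[t/2,t]$: the bound $\|\nabla^2(p,\vec u)\|\le Cm(\tau)(1+\tau)^{-2}\sqrt{N_2}$ is false for $\nabla^2 p$, and $N_2$ does not control $\nabla^2 p$ at all.

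\textbf{The fix is to arrange that the only undissipated term is $\|\nabla^3 p\|_{L^2}^2$.}
\begin{itemize}
\item In the $|\alpha|=2$ velocity estimate, keep the coupling as $\frac{1}{\bar\rho^0}\int\nabla\partial^\alpha_{\vec x}p\cdot\partial^\alpha_{\vec x}\vec u\,d\vec x$, without integrating by parts. Bound it by $\frac a4\|\nabla^2\vec u\|_{L^2}^2+C\|\nabla^3 p\|_{L^2}^2$. The cross term only dissipates $\tilde\varepsilon_2\|\nabla^3p\|_{H^{l-3}}^2$, so $C\|\nabla^3p\|_{L^2}^2$ remains on the right. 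This is the $m=3$ case of \eqref{pugron} and is what produces \eqref{E3gron}.
\item Estimate $\|\nabla^3 p\|_{L^2}$ by Duhamel with \eqref{gf1kl2} at $|\alpha|=3$. Its linear part decays like $(1+t)^{-9/4}$, and the full bound is $(1+t)^{-2}m(t)$, as in \eqref{p3l2}.
\item Put $\|\nabla p\|_{L^\infty}^2$ into the bootstrap functional, as the paper's $N_3$ does. It enters the energy inequality through $\|\nabla p\|_{L^\infty}^2\|\nabla(p^0,s^0)\|_{H^{l-1}}$. It also enters the source bounds for $H_{\vec u}$ and its derivatives with a factor $\epsilon_0$ but no decay of its own. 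Its Duhamel estimate, via the exponential branch of \eqref{gfkl00}, in turn needs $\|\nabla^3 p\|_{H^3}$ from $E_3$; this is where $l\ge 6$ is used.
\end{itemize}

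\textbf{The term you flagged as the main obstacle is handled routinely.} For $(\frac1{\rho^0}-\frac1{\rho^1})\nabla p^1$, the paper uses the $L^1$ norm only on $[0,t/2]$. There the extra $(1+t-\tau)^{-1/2}$ carried by the $\vec u$-column absorbs the $(1+\tau)^{-1/2}$ growth. On $[t/2,t]$ it uses $L^2$ or $L^\infty$ norms with the derivatives placed on the source. No pointwise convolution is needed.
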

\begin{proof}
	In analogy with the derivations in \eqref{puc1l}, \eqref{pcgood}, and \eqref{ucenes0}, together with the global existence result \eqref{genenes} and the smallness of $\epsilon_0$, $\epsilon_1$, $\tilde{\varepsilon}_0$, $\tilde{\varepsilon}_1$ and $\tilde{\varepsilon}_4$, where $ \epsilon_j \ll \tilde{\varepsilon}_i$ for $i=0,1,4$ and $j=0,1$, one obtains that
\begin{equation}\nonumber
	\begin{aligned}
		&\frac{1}{2} \sum_{|\alpha|=3}^l \frac{d}{dt} \int_{\mathbb{R}^3} \left( \frac{1}{\rho^0} \left(\partial_{\vec{x}}^\alpha p \right)^2 + \frac{P^0 \left(R+C_v\right)}{C_v}  \left|\partial_{\vec{x}}^\alpha \vec{u} \right|^2 \right) d\vec{x} 
		+ \frac{a\left( R+C_v\right)}{2C_v} \sum_{|\alpha|=3}^l \int_{\mathbb{R}^3} P^0 \left|\partial_{\vec{x}}^\alpha \vec{u} \right|^2 d\vec{x}\\
		&\le C\left|\bar{P}^0-\bar{P}^1\right| \cdot \left\|\nabla^3 p\right\|_{H^{l-3}} \cdot \left\|\nabla^3 div \vec{u}^1 \right\|_{H^{l-3}} + \frac{C}{\tilde{\varepsilon}_0}  \left(\left|\bar{P}^0-\bar{P}^1\right|^2+\left|\bar{S}^0-\bar{S}^1\right|^2 + \left\|\nabla (p,s)\right\|_{H^{l-1}}^2 \right)\left(\left\|\nabla^4 p^1\right\|_{H^{l-3}}^2+\left\|\nabla p^1\right\|_{L^\infty}^2 \right) \\
		& \quad  + \frac{C}{\tilde{\varepsilon}_0} \left\|\vec{u}\right\|_{W^{1,\infty}}^2 \left(\left\|\nabla^3 \vec{u}^0\right\|_{H^{l-3}}^2+\left\|\nabla^4 \vec{u}^1\right\|_{H^{l-3}}^2\right)+ C\left\| \nabla^3 p\right\|_{H^{l-3}}^2 \left( \left\|\vec{u}^0\right\|_{W^{1,\infty}} +\left\| \nabla \left(p^0, p^1,\vec{u}^1, \rho^0\right)\right\|_{L^\infty}\right)\\ 
		& \quad + C\left\| \nabla^3 p\right\|_{H^{l-3}} \left[ \left\| p\right\|_{W^{1,\infty}} \left(\left\| \nabla^3 \vec{u}^0 \right\|_{H^{l-3}}+\left\| \nabla^3 div \vec{u}^1 \right\|_{H^{l-3}}\right) + \left\| \vec{u} \right\|_{W^{1,\infty}}\left(\left\| \nabla^3 p^0 \right\|_{H^{l-3}}+\left\| \nabla^4 p^1 \right\|_{H^{l-3}}\right) \right]\\
		&\quad + C\left\| \nabla p\right\|_{L^\infty}^2 \cdot \left\|\nabla \left(p^0, s^0\right)\right\|_{H^{l-1}},
	\end{aligned}
\end{equation}
\begin{equation}\nonumber
	\begin{aligned}
		&\sum_{|\alpha|=2}^{l-1} \frac{d}{dt} \int_{\mathbb{R}^3} \nabla \partial_{\vec{x}}^\alpha p \cdot \partial_{\vec{x}}^\alpha \vec{u} \, d\vec{x} + \frac{1}{2\bar{\rho}^0} \left\| \nabla^3 p \right\|_{H^{l-3}}^2 \\
		&\le C\left|\bar{P}^0-\bar{P}^1\right|^2 \cdot \left\|\nabla^2 div \vec{u}^1 \right\|_{H^{l-3}}^2 +\frac{C}{\tilde{\varepsilon}_1} \left(\left|\bar{P}^0-\bar{P}^1\right|^2+\left|\bar{S}^0-\bar{S}^1\right|^2+ \left\|\nabla s\right\|_{H^{l-2}}^2\right) \left( \left\| \nabla^3 p^1 \right\|_{H^{l-3}}^2+\left\| \nabla p^1 \right\|_{L^\infty}^2\right)\\
		&\quad + \frac{C}{\tilde{\varepsilon}_1} \left[ \left\|\nabla p\right\|_{H^1}^2  \left(\left\| \nabla^2 \vec{u}^0 \right\|_{H^{l-3}}^2+\left\| \nabla^3 \left(p^1,\vec{u}^1\right) \right\|_{H^{l-3}}^2+ \left\| \nabla \left(p^1,\vec{u}^1\right) \right\|_{L^\infty}^2\right) + \left\|\vec{u}\right\|_{W^{1,\infty}}^2 \left(\left\| \nabla^2 \left( p^0, \vec{u}^0\right) \right\|_{H^{l-3}}^2+\left\| \nabla^3 \left(p^1,\vec{u}^1\right) \right\|_{H^{l-3}}^2\right) \right]\\
		&\quad +C\left\|\nabla^2 \vec{u}\right\|_{H^{l-2}}^2
		+ C\left\|\nabla p\right\|_{L^\infty}^2 \cdot \left\|\nabla \left(p^0,s^0\right)\right\|_{H^{l-2}},
	\end{aligned}
\end{equation}
and 
\begin{equation}\nonumber
	\begin{aligned}
		&\frac{1}{2} \sum_{|\alpha|=2} \frac{d}{dt} \int_{\mathbb{R}^3} P^0 \left|\partial_{\vec{x}}^\alpha \vec{u} \right|^2 d\vec{x} + \frac{a}{2} \sum_{|\alpha|=2} \int_{\mathbb{R}^3} P^0 \left|\partial_{\vec{x}}^\alpha \vec{u} \right|^2 d\vec{x}\\
		&\le \frac{C}{\tilde{\varepsilon}_4} \left[ \left( \left|\bar{P}^0-\bar{P}^1\right|^2 + \left|\bar{S}^0-\bar{S}^1\right|^2+ \left\|\nabla (p,s)\right\|_{H^1}^2 \right) \left( \left\| \nabla^3 p^1 \right\|_{L^2}^2+\left\| \nabla p^1 \right\|_{L^\infty}^2\right)+ \left\|\vec{u} \right\|_{W^{1,\infty}}^2 \left(\left\| \nabla^2 \vec{u}^0 \right\|_{L^2}^2+\left\| \nabla^3 \vec{u}^1 \right\|_{L^2}^2\right) \right]\\
		&\quad+C\left[ \left\| \nabla^3 p\right\|_{L^2}^2+\left\| \nabla^3 \vec{u}\right\|_{L^2}^2 \cdot \left\|\vec{u}^0\right\|_{L^\infty} +\left\|\nabla p\right\|_{L^\infty}^2 \cdot \left\|\nabla \left(p^0,s^0\right)\right\|_{H^1}\right],
	\end{aligned}
\end{equation}
which together with a suitably small constant $\tilde{\varepsilon}_2 \in (0,1)$ satisfying $\epsilon_0+\epsilon_1 \ll \tilde{\varepsilon}_2$, yields that
\begin{equation}\nonumber
	\begin{aligned}
		&\frac{1}{2} \sum_{|\alpha|=3}^l \frac{d}{dt} \int_{\mathbb{R}^3} \frac{1}{\rho^0} \left(\partial_{\vec{x}}^\alpha p \right)^2 d\vec{x}+\frac{R+C_v}{2C_v} \sum_{|\alpha|=2}^l \frac{d}{dt} \int_{\mathbb{R}^3}  P^0 \left|\partial_{\vec{x}}^\alpha \vec{u} \right|^2 d\vec{x}  +\tilde{\varepsilon}_2 \sum_{|\alpha|=2}^{l-1} \frac{d}{dt} \int_{\mathbb{R}^3} \nabla \partial_{\vec{x}}^\alpha p \cdot \partial_{\vec{x}}^\alpha \vec{u} \, d\vec{x}\\
		&\quad + \frac{a\left( R+C_v\right)}{8C_v} \sum_{|\alpha|=2}^l \int_{\mathbb{R}^3} P^0 \left|\partial_{\vec{x}}^\alpha \vec{u} \right|^2 d\vec{x}+ \frac{\tilde{\varepsilon}_2}{4\bar{\rho}^0} \left\| \nabla^3 p \right\|_{H^{l-3}}^2 \\
		&\le C'_{\tilde{\varepsilon}} \left|\bar{P}^0-\bar{P}^1\right|^2 \cdot \left\|\nabla^2 div \vec{u}^1 \right\|_{H^{l-2}}^2 + C'_{\tilde{\varepsilon}} \left(\left|\bar{P}^0-\bar{P}^1\right|^2+\left|\bar{S}^0-\bar{S}^1\right|^2 + \left\|\nabla s\right\|_{H^{l-1}}^2 \right) \left(\left\|\nabla^3 p^1\right\|_{H^{l-2}}^2+\left\|\nabla p^1\right\|_{L^\infty}^2 \right)\\ 
		&\quad + C'_{\tilde{\varepsilon}}\left( \left\|\nabla p\right\|_{H^1}^2+ \left\|\nabla \vec{u}\right\|_{L^2}^2\right)  \left(\left\| \nabla^2 \left(p^0,\vec{u}^0\right) \right\|_{H^{l-2}}^2+\left\| \nabla^3 \left(p^1,\vec{u}^1\right) \right\|_{H^{l-2}}^2+ \left\| \nabla \left(p^1,\vec{u}^1\right) \right\|_{L^\infty}^2\right)\\
		&\quad +C\left(\left\| \nabla^3 p\right\|_{L^2}^2 + \left\| \nabla p\right\|_{L^\infty}^2 \cdot \left\|\nabla (p^0, s^0)\right\|_{H^{l-1}}\right),
	\end{aligned}
\end{equation}
where $C'_{\tilde{\varepsilon}}$ denotes a positive constant depending on $\tilde{\varepsilon}_i^{-1}$ for $i=0,1,2,4$.

Similar to \eqref{E1}--\eqref{N1}, define the following time-dependent functionals:
\begin{equation}\nonumber
\begin{aligned}
	E_3(t)&:=\frac{1}{2} \sum_{|\alpha|=3}^l  \int_{\mathbb{R}^3} \frac{1}{\rho^0} \left(\partial_{\vec{x}}^\alpha p \right)^2 d\vec{x}+ \frac{R+C_v}{2C_v} \sum_{|\alpha|=2}^l  \int_{\mathbb{R}^3} P^0 \left|\partial_{\vec{x}}^\alpha \vec{u} \right|^2 d\vec{x} 
	+\tilde{\varepsilon}_2 \sum_{|\alpha|=2}^{l-1}  \int_{\mathbb{R}^3} \nabla \partial_{\vec{x}}^\alpha p \cdot \partial_{\vec{x}}^\alpha \vec{u} \, d\vec{x},\\
	N_3(t)&:= \underset{0 \le \tau \le t}{\sup} m^{-2}(\tau)(1+\tau)^{4} \left( E_3(\tau)+\left\| \nabla p(\cdot,\tau)\right\|_{L^\infty}^2\right).
	\end{aligned}
\end{equation}
By virtue of Gronwall's inequality, the decay property \eqref{decaygen3}, the pointwise estimate of $\nabla(p^1,\vec{u}^1)$ from Theorem~\ref{pointwise}, and the estimates for $p,\vec{u},s$ in \eqref{enpuc}--\eqref{pucdecay1}, there exists a positive constant $D_*$ such that
\begin{equation}\label{E3gron}
	\begin{aligned}
		E_3(t) &\le e^{-D_* t} E_3(0) +  C'_{\tilde{\varepsilon}} \epsilon_{01}^2 (1+t)^{-4} \left(\left|\bar{P}^0-\bar{P}^1\right|^2+\left|\bar{S}^0-\bar{S}^1\right|^2 +\left\|\left(p_0,\vec{u}_0\right)\right\|_{L^1 \cap H^l}^2 + \left\|s_0\right\|_{H^l}^2 \right)\\
		&\quad +C\int_0^t e^{-D_*(t-\tau)} \left(\left\| \nabla^3 p(\cdot,\tau)\right\|_{L^2}^2 + \left\| \nabla p(\cdot,\tau)\right\|_{L^\infty}^2 \cdot \left\|\nabla \left(p^0, s^0\right) \!(\cdot,\tau)\right\|_{H^{l-1}}\right) d\tau.
	\end{aligned}
\end{equation}

Based on the $L^p$ estimates of the Green's function in Theorem~\ref{greenlp}, one could obtain the decay rate of $\bigl\| \nabla^3 p(\cdot,t)\bigr\|_{L^2}$ as follows:
\begin{equation}\label{p3l2}
	\begin{aligned}
		\left\|\nabla^3 p(\cdot,t)\right\|_{L^2} &\le C(1+t)^{-\frac{9}{4}} \left( \left\|p_0\right\|_{L^1 \cap H^3}+ (1+t)^{-\frac{1}{2}} \left\|\vec{u}_0\right\|_{L^1 \cap H^3}\right)\\
		&\quad + C\int_0^t e^{-R_0(t-\tau)} \left\|\nabla^3 \left(H_p+\left(\bar{P}^1-\bar{P}^0\right)\frac{\left(R+C_v\right)}{C_v} div \vec{u}^1, H_{\vec{u}}\right) \! (\cdot,\tau)\right\|_{L^2} d\tau\\
		&\quad + C\int_{t/2}^t (1+t-\tau)^{-\frac{3}{2}} \left(\left\|H_p(\cdot,\tau)\right\|_{L^2} + (1+t-\tau)^{-\frac{1}{2}} \left\|\left(H_{\vec{u}}+\left(\frac{1}{\rho^0}-\frac{1}{\rho^1}\right)\nabla p^1\right) \! (\cdot,\tau)\right\|_{L^2} \right) d\tau\\
		&\quad + C\int_{t/2}^t (1+t-\tau)^{-1} \left(\left\|\left(\bar{P}^1-\bar{P}^0\right)\frac{\left(R+C_v\right)}{C_v} \left( \nabla div \vec{u}^1\right) \! (\cdot,\tau)\right\|_{L^2} + \left\|\nabla^2 \left( \left(\frac{1}{\rho^0}-\frac{1}{\rho^1}\right)\nabla p^1\right) \! (\cdot,\tau)\right\|_{L^2} \right) d\tau\\
		&\quad + C\int_0^{t/2} (1+t-\tau)^{-\frac{9}{4}} \Bigg[\left\|H_p(\cdot,\tau)\right\|_{L^1} + (1+t-\tau)^{-\frac{1}{2}} \Bigg( \left\|\left(\bar{P}^1-\bar{P}^0\right)\frac{\left(R+C_v\right)}{C_v} \vec{u}^1(\cdot,\tau)\right\|_{L^1} +\left\|H_{\vec{u}}(\cdot,\tau)\right\|_{L^1} \Bigg) \Bigg] d\tau\\
		&\le C(1+t)^{-2}m(t)\left[\left\|(p_0,\vec{u}_0)\right\|_{L^1 \cap H^4}+\epsilon_{01}\left(\delta +\left\|s_0\right\|_{H^4}+\sqrt{N_3(t)}\right)\right].
	\end{aligned}
\end{equation}
In the above inequality, one uses \eqref{hpul1}--\eqref{hpul12} and the following estimates for the low‑order derivatives of $H_p$ and $H_{\vec{u}}$ gained from Lemma~\ref{derivaineq}, the decay estimates for $\nabla^i p$ and $\nabla^i \vec{u}$ in \eqref{pucdecay1}, and inequalities \eqref{rhocl2}:
\begin{equation}\nonumber
	\begin{aligned}
		&\left\|H_p(\cdot,\tau)\right\|_{L^2} \le C\epsilon_{01}(1+\tau)^{-2}\left(\delta+\left\|(p_0,\vec{u}_0)\right\|_{L^1 \cap H^3}+\left\|s_0\right\|_{H^3}\right),\\
		&\left\|\left(H_{\vec{u}}+\left(\frac{1}{\rho^0}-\frac{1}{\rho^1}\right)\nabla p^1\right) \!(\cdot,\tau)\right\|_{L^2} \le C\epsilon_{01} (1+\tau)^{-2} \left(\delta+\left\|(p_0,\vec{u}_0,s_0)\right\|_{H^3}\right)+C\epsilon_0 \left\|\nabla p(\cdot,\tau)\right\|_{L^\infty},\\
		&\left\|\nabla^2 \left( \left(\frac{1}{\rho^0}-\frac{1}{\rho^1}\right)\nabla p^1\right) \! (\cdot,\tau)\right\|_{L^2} \le C \epsilon_1 (1+\tau)^{-2} \left(\delta+\left\|(p_0,\vec{u}_0)\right\|_{L^1 \cap H^3}+\left\|s_0\right\|_{H^3}\right),
	\end{aligned}
\end{equation}
and the estimates involving higher‑order derivatives are presented in detail below:

\begin{equation}\nonumber
	\begin{aligned}
		&\left\|\nabla^3 \left(H_p+\left(\bar{P}^1-\bar{P}^0\right)\frac{\left(R+C_v\right)}{C_v} div \vec{u}^1, H_{\vec{u}}\right) \!(\cdot,\tau)\right\|_{L^2}\\
		&\le C\Bigg[\left| \bar{P}^0-\bar{P}^1\right| \cdot \left\|\nabla^3 div \vec{u}^1(\cdot,\tau)\right\|_{L^2} +\left\|\nabla p(\cdot,\tau)\right\|_{H^3}\left(\left\| \vec{u}^0(\cdot,\tau)\right\|_{L^\infty} +\left\|\nabla^3 \vec{u}^0(\cdot,\tau)\right\|_{L^2}+\left\|\nabla div \vec{u}^1(\cdot,\tau)\right\|_{H^2}\right)\\
		&\quad +\left\|\nabla \vec{u}(\cdot,\tau)\right\|_{H^3}\left(\left\|\left( p^0,\vec{u}^0\right) \! (\cdot,\tau)\right\|_{L^\infty} +\left\|\nabla^3 \left(p^0,\vec{u}^0\right) \! (\cdot,\tau)\right\|_{L^2} +\left\|\nabla^4 \left(p^1,\vec{u}^1\right) \!(\cdot,\tau)\right\|_{L^2}+\left\|\nabla \left(p^1,\vec{u}^1\right)\! (\cdot,\tau)\right\|_{L^\infty}\right)\\
		&\quad +\left\|\nabla^4 p(\cdot,\tau)\right\|_{L^2} \cdot \left\|\left( \rho^0-\bar{\rho}^0\right) \!(\cdot,\tau)\right\|_{L^\infty} + \left\|\nabla p(\cdot,\tau)\right\|_{L^\infty} \cdot \left\|\nabla \left(p^0,s^0\right) \!(\cdot,\tau)\right\|_{H^2}\\
		&\quad +\left(\delta + \left\|\nabla (p,s)(\cdot,\tau)\right\|_{H^2}\right)\left(\left\|\nabla^4 p^1(\cdot,\tau)\right\|_{L^2} +\left\|\nabla p^1(\cdot,\tau)\right\|_{L^\infty}\right) \Bigg]\\
		&\le C\epsilon_{01}(1+\tau)^{-2}\left(\delta+\left\|(p_0,\vec{u}_0)\right\|_{L^1 \cap H^4}+\left\|s_0\right\|_{H^4}\right) +C\epsilon_0 \left(\left\|\nabla p (\cdot,\tau)\right\|_{L^\infty}+\left\|\nabla^4 p (\cdot,\tau)\right\|_{L^2}\right).
	\end{aligned}
\end{equation}

The decay rate of $\bigl\|\nabla p(\cdot,t)\bigr\|_{L^\infty}$ could be obtained similarly:
\begin{equation}\label{pinfty}
	\begin{aligned}
		\left\|\nabla p(\cdot,t)\right\|_{L^\infty} &\le C\left[ (1+t)^{-2} \left( \left\|p_0\right\|_{L^1}+ (1+t)^{-\frac{1}{2}} \left\|\vec{u}_0\right\|_{L^1}\right)+e^{-t/\tilde{D}} \left( \left\|\nabla p_0\right\|_{W^{1,\infty}}+ \left\|\nabla \vec{u}_0\right\|_{W^{2,\infty}}\right)\right]\\
		&\quad + C\int_0^t e^{-(t-\tau)/\tilde{D}} \left( \left\|\nabla \left(H_p+\left(\bar{P}^1-\bar{P}^0\right)\frac{\left(R+C_v\right)}{C_v} div \vec{u}^1\right) \!(\cdot,\tau)\right\|_{W^{1,\infty}}+\left\|\nabla H_{\vec{u}}(\cdot,\tau)\right\|_{W^{2,\infty}}\right) d\tau\\
		&\quad + C\int_{t/2}^t (1+t-\tau)^{-\frac{1}{2}} \Bigg[\left\|H_p(\cdot,\tau)\right\|_{L^\infty} + (1+t-\tau)^{-\frac{1}{2}} \Bigg(\left\|\left(\bar{P}^1-\bar{P}^0\right)\frac{\left(R+C_v\right)}{C_v} \vec{u}^1(\cdot,\tau)\right\|_{L^\infty}\\
		&\quad \quad \quad \quad \quad +\left\|\left(H_{\vec{u}}-\left(\frac{1}{\bar{\rho}^0}-\frac{1}{\rho^0}\right)\nabla p\right) \!(\cdot,\tau)\right\|_{L^\infty} \Bigg)\Bigg] d\tau\\
		&\quad + C\int_{t/2}^t (1+t-\tau)^{-\frac{7}{4}} \left\| \left( \left(\frac{1}{\bar{\rho}^0}-\frac{1}{\rho^0}\right)\nabla p\right) \!(\cdot,\tau)\right\|_{L^2} d\tau\\
		&\quad + C\int_0^{t/2} (1+t-\tau)^{-2} \Bigg[\left\|\left( H_p+\frac{R+C_v}{C_v} p^0 div \vec{u}\right) \! (\cdot,\tau)\right\|_{L^1} +\left\|\frac{R+C_v}{C_v}\left( \vec{u} \cdot \nabla p^0\right) \!(\cdot,\tau)\right\|_{L^1}\\
		&\quad \quad \quad \quad \quad +(1+t-\tau)^{-\frac{1}{2}} \Bigg( \left\|\left(\bar{P}^1-\bar{P}^0\right)\frac{\left(R+C_v\right)}{C_v} \vec{u}^1(\cdot,\tau)\right\|_{L^1}+\left\|\frac{R+C_v}{C_v} \left( p^0 \vec{u}\right) \!(\cdot,\tau)\right\|_{L^1} +\left\|H_{\vec{u}}(\cdot,\tau)\right\|_{L^1} \Bigg) \Bigg] d\tau\\
		&\le C\left[ (1+t)^{-2} \left( \left\|p_0\right\|_{L^1}+ (1+t)^{-\frac{1}{2}} \left\|\vec{u}_0\right\|_{L^1}\right)+e^{-t/\tilde{D}} \left( \left\|\nabla p_0\right\|_{W^{1,\infty}}+ \left\|\nabla \vec{u}_0\right\|_{W^{2,\infty}}\right)\right]\\
		&\quad +C\epsilon_{01}(1+t)^{-2}m(t) \left(\delta+\left\|(p_0,\vec{u}_0)\right\|_{L^1 \cap H^6}+\left\|s_0\right\|_{H^6}+\sqrt{N_3(t)}\right)\\
		&\le C(1+t)^{-2}m(t)\left[\left\|(p_0,\vec{u}_0)\right\|_{L^1 \cap H^6}+\epsilon_{01}\left(\delta +\left\|s_0\right\|_{H^6}+\sqrt{N_3(t)}\right)\right],
	\end{aligned}
\end{equation}
where one uses the following estimates:
\begin{equation}\nonumber
	\begin{aligned}
		&\left\|\nabla \left(H_p+\left(\bar{P}^1-\bar{P}^0\right)\frac{\left(R+C_v\right)}{C_v} div \vec{u}^1\right) \!(\cdot,\tau)\right\|_{W^{1,\infty}}+\left\|\nabla H_{\vec{u}}(\cdot,\tau)\right\|_{W^{2,\infty}}\\
		&\le C\Bigg[\left|\bar{P}^0-\bar{P}^1\right| \cdot \left\|\nabla div \vec{u}^1(\cdot,\tau)\right\|_{W^{1,\infty}} + \left\|p(\cdot,\tau)\right\|_{W^{3,\infty}}\left( \left\|\vec{u}^0 (\cdot,\tau)\right\|_{W^{2,\infty}}+ \left\|\nabla \left(p^1, \vec{u}^1\right) \!(\cdot,\tau)\right\|_{W^{2,\infty}}\right)\\
		& \quad + \left\|\vec{u} (\cdot,\tau)\right\|_{W^{4,\infty}} \left(\left\|p^0(\cdot,\tau)\right\|_{W^{2,\infty}}+\left\|\vec{u}^0 (\cdot,\tau)\right\|_{W^{3,\infty}}+\left\|\nabla p^1(\cdot,\tau)\right\|_{W^{2,\infty}} +\left\|\nabla \vec{u}^1 (\cdot,\tau)\right\|_{W^{3,\infty}}\right)\\
		&\quad  + \left\|\nabla p (\cdot,\tau)\right\|_{W^{3,\infty}} \cdot \left\|\left(\frac{1}{\bar{\rho}^0}-\frac{1}{\rho^0}\right) \! (\cdot,\tau)\right\|_{W^{3,\infty}} +\left(\delta+\left\|(p,s) (\cdot,\tau)\right\|_{W^{3,\infty}}\right) \left\|\nabla p^1 (\cdot,\tau)\right\|_{W^{3,\infty}}\Bigg]\\
		&\le C\epsilon_{01}(1+\tau)^{-2}\left(\delta+\left\|(p_0,\vec{u}_0)\right\|_{L^1 \cap H^6}+\left\|s_0\right\|_{H^6}\right) +C\epsilon_0 \left(\left\|\nabla p (\cdot,\tau)\right\|_{L^\infty}+\left\|\nabla^3 p (\cdot,\tau)\right\|_{H^3} \right),
	\end{aligned}
\end{equation}
\begin{equation}\nonumber
	\begin{aligned}
		&\left\|H_p(\cdot,\tau)\right\|_{L^\infty} \le C\epsilon_{01} (1+\tau)^{-\frac{5}{4}} \left\|\nabla(p,\vec{u})(\cdot,\tau)\right\|_{H^2} \le C\epsilon_{01} (1+\tau)^{-\frac{5}{2}}m(t)\left(\delta+\left\|(p_0,\vec{u}_0)\right\|_{L^1 \cap H^3}+\left\|s_0\right\|_{H^3}\right),\\
		&\left\|\left(H_{\vec{u}}-\left(\frac{1}{\bar{\rho}^0}-\frac{1}{\rho^0}\right)\nabla p\right) \!(\cdot,\tau)\right\|_{L^\infty} \le C\epsilon_{01} (1+\tau)^{-2} \left(\delta+\left\|(p_0,\vec{u}_0)\right\|_{L^1 \cap H^3}+\left\|s_0\right\|_{H^3}\right),\\
		&\left\| \left( \left(\frac{1}{\bar{\rho}^0}-\frac{1}{\rho^0}\right)\nabla p\right) \! (\cdot,\tau)\right\|_{L^2}\le C\epsilon_0 \left\|\nabla p(\cdot,\tau)\right\|_{L^\infty},
	\end{aligned}
\end{equation}
\begin{equation}\nonumber
	\begin{aligned}
		&\left\|\left( H_p+\frac{R+C_v}{C_v} p^0 div \vec{u}\right) \! (\cdot,\tau)\right\|_{L^1}+\left\|\frac{R+C_v}{C_v}\left( \vec{u} \cdot \nabla p^0\right) \!(\cdot,\tau)\right\|_{L^1} \le C\epsilon_{01} (1+\tau)^{-\frac{5}{4}} \left(\delta+\left\|(p_0,\vec{u}_0,s_0)\right\|_{H^3}\right),\\
		&\left\|H_{\vec{u}}(\cdot,\tau)\right\|_{L^1}
		\le C\epsilon_{01} (1+\tau)^{-\frac{1}{2}}\left(\delta+\left\|(p_0,\vec{u}_0)\right\|_{L^1 \cap H^3}+\left\|s_0\right\|_{H^3}\right).
		\end{aligned}
\end{equation}

Substituting \eqref{p3l2} and \eqref{pinfty} into \eqref{E3gron}, one obtains
\begin{equation}\nonumber
	\begin{aligned}
		E_3(t) &\le e^{-D_*t}E_3(0)+C'_{\tilde{\varepsilon}}\epsilon_{01}^2 (1+t)^{-4}\left(\delta^2+\left\|(p_0,\vec{u}_0)\right\|_{L^1 \cap H^l}^2+\left\|s_0\right\|_{H^l}^2\right)\\
		&\quad +C(1+t)^{-4}m^2(t)\left[\left\|(p_0,\vec{u}_0)\right\|_{L^1 \cap H^6}^2+\epsilon_{01}^2\left(\delta^2 +\left\|s_0\right\|_{H^6}^2+N_3(t)\right)\right] +C\epsilon_0 (1+t)^{-4}m^2(t) N_3(t)\\
		&\le C'_{\tilde{\varepsilon}}(1+t)^{-4}m^2(t)\left[\left\|(p_0,\vec{u}_0)\right\|_{L^1 \cap H^l}^2+\epsilon_{01}^2\left(\delta^2 +\left\|s_0\right\|_{H^l}^2\right)\right] +C\epsilon_{01} (1+t)^{-4}m^2(t) N_3(t),
	\end{aligned}
\end{equation}
which combined with \eqref{pinfty} and the smallness of $\epsilon_{01}$ yields that
\begin{equation}\nonumber
	N_3(t) \le C'_{\tilde{\varepsilon}}\left[\left\|(p_0,\vec{u}_0)\right\|_{L^1 \cap H^l}^2+\epsilon_{01}^2\left(\delta^2 +\left\|s_0\right\|_{H^l}^2\right)\right],
\end{equation}
and this verifies \eqref{decaypu3}.
\end{proof}

\subsection{Pointwise Estimates for $(p,\vec{u})(\vec{x},t)$}
By Duhamel's principle, the solution $(p,\vec{u})$ of \eqref{pc} could be represented as follows:
\begin{equation}\label{jdefine}
	\begin{aligned}
	\vec{J}(\vec{x},t)
	&= \mathbb{F}^0(\vec{x},t) \underset{\vec{x}}{\ast} \vec{J}(\vec{x},0) + \int_0^t \left( \mathbb{F}^0- \mathbb{F}^{0*}\right) \!(\vec{x},t-\tau) \underset{\vec{x}}{\ast} 
	\begin{pmatrix}
		\tilde{H}_p \vspace{1.5mm}\\
		H_{\vec{u}}
	\end{pmatrix}(\vec{x},\tau) \, d\tau
	+ \int_0^t \left( \mathbb{F}^{0*}- \mathbb{F}_S^0\right) \! (\vec{x},t-\tau) \underset{\vec{x}}{\ast} 
	\begin{pmatrix}
		\tilde{H}_p \vspace{1.5mm}\\
		H_{\vec{u}}
	\end{pmatrix}(\vec{x},\tau) \, d\tau\\
	&\quad  + \int_0^t \mathbb{F}_S^0(\vec{x},t-\tau) \underset{\vec{x}}{\ast} 
	\begin{pmatrix}
		\tilde{H}_p \vspace{1.5mm}\\
		H_{\vec{u}}
	\end{pmatrix}(\vec{x},\tau) \, d\tau\\
	&: = \mathbb{F}^0(\vec{x},t) \underset{\vec{x}}{\ast} \vec{J}(\vec{x},0)+\vec{S}_r(\vec{x},t)+\vec{S}_*(\vec{x},t)+\vec{S}_c(\vec{x},t).
\end{aligned}
\end{equation}
Here, one denotes $\vec{J}$ and $\tilde{H}_p$:
\begin{equation} \label{jh}
	\vec{J}:=\left({J}_1,\vec{J}_2\right)^t:=\bigl(p,\vec{u}\bigr)^t,\quad \tilde{H}_p:=H_p+\left(\bar{P}^1-\bar{P}^0\right)\frac{\left(R+C_v\right)}{C_v} div \vec{u}^1,
\end{equation}
with $H_p$ and $H_{\vec{u}}$ defined by \eqref{pc}.
	
Combining the initial condition \eqref{inicon} and the inequalities \eqref{fs1k}--\eqref{greenfi}, \eqref{fiinitial}--\eqref{fsikini} by setting $k=0$, one obtains the following estimate consistent with \eqref{initialcon}:
\begin{equation}\label{iniconpuc}
	\begin{aligned}
		\left|\mathbb{F}_i^0(\vec{x},t) \underset{\vec{x}}{\ast} \vec{J}(\vec{x},0)\right| &\le \left|\left( \mathbb{F}_i^0-\mathbb{F}_{S,i}^0\right) \!(\vec{x},t) \underset{\vec{x}}{\ast} \vec{J}(\vec{x},0)\right| + \left|\left( \mathbb{F}_{S,i}^0 (\vec{x},t) \underset{\vec{x}}{\ast} \vec{J}(\vec{x},0)\right)\right|\\
		&\le C\epsilon_{01} \delta_0 \left((1+t)^{-3/2-d_i} e^{-\frac{|\vec{x}|^2}{4Dt}} + e^{-(|\vec{x}|+t)/K_0} \right), 
	\end{aligned}
\end{equation}
with a positive constant given by $K_0:=8\max\{ 2,D,1/a,8c_0/a\}$.

From \eqref{fs1k}--\eqref{greenfi},  $\vec{S}_r(\vec{x},t)$, $\vec{S}_*(\vec{x},t)$ and $\vec{S}_c(\vec{x},t)$ can be estimated as follows:
\begin{equation}\label{sri}
	\begin{aligned}
		\left|\vec{S}_{r,i}(\vec{x},t)\right| &= \left|\left( \mathbb{F}_i^0-\mathbb{F}_i^{0*}\right) \! (\vec{x},t)\underset{(\vec{x},t)}{\ast} 
		\begin{pmatrix}
			\tilde{H}_p \vspace{1.5mm}\\
			H_{\vec{u}}
		\end{pmatrix}(\vec{x},t) \right|\\
		&\le C\int_0^t (1+t-\tau)^{-3/2-d_i} e^{-\frac{|\vec{x}|^2}{D(t-\tau)}} \underset{\vec{x}}{\ast} \left[\left|H_p(\vec{x},\tau)\right|+(1+t-\tau)^{-1/2} \left( \left|\left(\bar{P}^1-\bar{P}^0\right)\vec{u}^1(\vec{x},\tau)\right|+\left|H_{\vec{u}}(\vec{x},\tau)\right|\right) \right] d\tau, 
	\end{aligned}
\end{equation}

\begin{equation}\label{sstari}
		\left|\vec{S}_{*}(\vec{x},t)\right|= \left|\left( \mathbb{F}_i^{0*}-\mathbb{F}_{S,i}^0\right) \!(\vec{x},t)\underset{(\vec{x},t)}{\ast} 
		\begin{pmatrix}
			\tilde{H}_p \vspace{1.5mm}\\
			H_{\vec{u}}
		\end{pmatrix}(\vec{x},t) \right| \le C\int_0^t e^{-(|\vec{x}|+t-\tau)/D} \underset{\vec{x}}{\ast} \left(\left|\tilde{H}_p(\vec{x},\tau)\right|+\left|H_{\vec{u}} (\vec{x},\tau)\right| \right) d\tau, 
\end{equation}

\begin{equation}\label{sc1}
	\begin{aligned}
		\left|\vec{S}_{c,1}(\vec{x},t)\right|&=\left|\mathbb{F}_{S,1}^0(\vec{x},t)\underset{(\vec{x},t)}{\ast} 
		\begin{pmatrix}
			\tilde{H}_p \vspace{1.5mm}\\
			H_{\vec{u}}
		\end{pmatrix}(\vec{x},t) \right|\\
		& \le C\int_0^t e^{-a(t-\tau)/4} \int_{|\vec{y}|=1} \left(\left|\tilde{H}_p\right|+\left|\nabla \tilde{H}_p\right|+\left|div H_{\vec{u}}\right|+ \left|\nabla div H_{\vec{u}}\right| \right) \!(\vec{x}+c_0 (t-\tau)\vec{y}, \tau) \, dS(\vec{y})d\tau\\
		&\quad + C\left(1+|\vec{x}|^{-1}\right)e^{-at/8}\left( e^{-|\vec{x}|/2}+e^{-a|\vec{x}|/(16c_0)}\right) \underset{(\vec{x},t)}{\ast} \left( \left| \tilde{H}_p(\vec{x},t)\right|+\left|div H_{\vec{u}}(\vec{x},t)\right|\right),
	\end{aligned}
\end{equation}
and
\begin{equation}\label{sc2}
	\begin{aligned}
		\left|\vec{S}_{c,2}(\vec{x},t)\right|&=\left|\mathbb{F}_{S,2}^0(\vec{x},t)\underset{(\vec{x},t)}{\ast} 
		\begin{pmatrix}
			\tilde{H}_p \vspace{1.5mm}\\
			H_{\vec{u}}
		\end{pmatrix}(\vec{x},t) \right|\\
	& \le \left| e^{-at} \underset{t}{\ast} H_{\vec{u}}(\vec{x},t)\right| + C\int_0^t e^{-a(t-\tau)/4} \int_{|\vec{y}|=1} \left(\left|\nabla \tilde{H}_p\right|+\left|\nabla^2 \tilde{H}_p\right|\right) \!(\vec{x}+c_0 (t-\tau)\vec{y},\tau) \, dS(\vec{y}) d\tau\\
	&\quad + C\left(1+|\vec{x}|^{-1}\right)e^{-at/8}\left( e^{-|\vec{x}|/4}+e^{-a|\vec{x}|/(16c_0)}\right) \underset{(\vec{x},t)}{\ast} \left( \left|\nabla \tilde{H}_p(\vec{x},t)\right|+\left|\nabla div H_{\vec{u}} (\vec{x},t)\right|\right).
	\end{aligned}
\end{equation}
Here, parameters $d_i (i=1,2)$ is defined in \eqref{di} and the sound speed $c_0=\displaystyle\sqrt{\frac{\bar{P}^0(R+C_v)}{C_v \bar{\rho}^0}}$. The further inverstigation about the above expressions requires the following estimates for $\tilde{H}_p$ and $H_{\vec{u}}$:
\begin{lemma}\label{lehpuc}
	The terms $\bigl(\tilde{H}_p, H_{\vec{u}}\bigr)$ defined in \eqref{pc} and \eqref{jh} satisfy that
	\begin{equation} \label{hpucbound}
		\begin{aligned}
			&\left| H_p(\vec{x},t)\right| \le C\epsilon_{01}\left(\delta+ \delta_0\right) (1+t)^{-5/4}\log(2+t)  \left((1+t)^{-3/2} e^{-|\vec{x}|^2/(K_* t)} + e^{-(|\vec{x}|+t)/K_*}\right),\\
			&\left|\nabla^i \tilde{H}_p(\vec{x},t)\right| \le C\epsilon_{01} \left(\delta+ \delta_0\right) \left((1+t)^{-2} e^{-|\vec{x}|^2/(K_* t)} + e^{-(|\vec{x}|+t)/K_*}\right), \quad \text{for } i=1,2,\\
			&\left|\nabla^j H_{\vec{u}}(\vec{x},t)\right| \le C\epsilon_{01}\left(\delta+ \delta_0\right)\left[(1+t)^{-2}\left(e^{-|\vec{x}|^2/(K_* t)} +\log(2+t) e^{-|\vec{x}|}\right)+e^{-(|\vec{x}|+t)/K_*}\right], \quad \text{for } j=0,1,2,\\
		\end{aligned}
	\end{equation}
with $\epsilon_{01}=\epsilon_0+\epsilon_1$.
Here, $\bigl| \bar{P}^0-\bar{P}^1\bigr|$, $\bigl|\bar{S}^0-\bar{S}^1\bigr|=O(1)\delta$, $\epsilon_k$ $(k=0,1)$ and $\delta_0$ are defined in \eqref{inicon}, and $K_*:=\max\{D_0,D_1\}$, with $D_0$ and $D_1$ defined in \eqref{poinconclu}.
\end{lemma}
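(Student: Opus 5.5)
The proof is a term-by-term pointwise bound, with no fixed-point argument. Each summand of $H_p$, $\tilde H_p$ and $H_{\vec u}$ in \eqref{pc}, \eqref{jh} is a product of one \emph{difference factor} and one \emph{individual factor}. The difference factor is one of $p,\vec u,s$, their derivatives, the constants $\bar P^0-\bar P^1$, $\bar S^0-\bar S^1$, or $1/\rho^0-1/\rho^1$. The individual factor is one of $p^k,\vec u^k,s^k$ and their derivatives, or $1/\bar\rho^0-1/\rho^0$.

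\textbf{Individual factors.} These are controlled pointwise by Theorem~\ref{pointwise}. This gives $(1+t)^{-3/2}$ heat-kernel decay for $p^k$ and $(1+t)^{-2}$ for $\vec u^k,\nabla^i(p^k,\vec u^k)$, $i\le 3$. It also gives $e^{-|\vec x|}$ for $\nabla^j s^k$. The constants are $D_k\epsilon_k$, which is where $K_*=\max\{D_0,D_1\}$ comes from.

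\textbf{Difference factors.} These are controlled only in $L^\infty$, not pointwise:
\begin{itemize}
\item Theorem~\ref{enlec} with Sobolev embedding gives $\|\vec u\|_{W^{2,\infty}}+\|\nabla p\|_{W^{1,\infty}}\le C(1+t)^{-5/4}\log(2+t)\,\epsilon_{01}(\delta+\delta_0)$. Here the initial-difference norms in \eqref{inicon} are bounded by $\epsilon_{01}\delta_0$, including $L^1$, since the $e^{|\vec x|}L^\infty$ weight gives integrability.
\item Lemma~\ref{decaypuc} improves this to $(1+t)^{-2}\log(2+t)$ for $\nabla p$ in $L^\infty$ and for $\nabla^2\vec u$, $\nabla^3p$, hence for $\nabla^2\vec u$, $\nabla^3 p$ in $L^\infty$ via \eqref{wemb}. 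The regularity $l=6$ is needed for this.
\item $\|p\|_{L^\infty}\le C\|\nabla p\|_{H^1}$ by \eqref{wemb}.
\item \eqref{pointsc} and its derivative analogue give $|\nabla^j s|\le C\epsilon_{01}(\delta+\delta_0)e^{-|\vec x|}$.
\item \eqref{rhocl2} gives $|1/\rho^0-1/\rho^1|\le C(\delta+|p|+|s|)$, together with its derivative version via \eqref{rhok}.
\end{itemize}

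\textbf{First bound.} For $H_p=-\tfrac{R+C_v}{C_v}(p\,\mathrm{div}\vec u^1+p^0\mathrm{div}\vec u)-\vec u\cdot\nabla p^1-\vec u^0\cdot\nabla p$, every term is a difference factor of size $(1+t)^{-5/4}\log(2+t)\,\epsilon_{01}(\delta+\delta_0)$ times an individual factor. The individual factor is bounded by $\epsilon_{01}\big((1+t)^{-3/2}e^{-|\vec x|^2/(K_*t)}+e^{-(|\vec x|+t)/K_*}\big)$, the worst case being $p^0$. This is exactly the first line of \eqref{hpucbound}.

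\textbf{Derivatives of $\tilde H_p$.} For $\nabla^i\tilde H_p$, $i=1,2$, expand by Leibniz. The extra term $(\bar P^1-\bar P^0)\nabla^i\mathrm{div}\vec u^1$ is bounded by $\delta\,\epsilon_1\big((1+t)^{-2}e^{-|\vec x|^2/(K_*t)}+e^{-(|\vec x|+t)/K_*}\big)$, using Theorem~\ref{pointwise} with up to three derivatives. The remaining products are at least as good, using $\|\nabla^j(p,\vec u)\|_{L^\infty}\le C(1+t)^{-5/4}\log(2+t)\,\epsilon_{01}(\delta+\delta_0)$ for $j\le 3$. The spare factor $(1+t)^{-5/4}\log(2+t)$ times the $(1+t)^{-3/2}$ or $(1+t)^{-2}$ individual decay beats $(1+t)^{-2}$.

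\textbf{$H_{\vec u}$ and its derivatives.} The terms $\vec u\cdot\nabla\vec u^1$, $\vec u^0\cdot\nabla\vec u$ and $(1/\bar\rho^0-1/\rho^0)\nabla p$ are treated the same way. For the last one, $|1/\bar\rho^0-1/\rho^0|\le C(|p^0|+|s^0|)$, and the $s^0$ part contributes $\epsilon_0 e^{-|\vec x|}$ times $\|\nabla p\|_{L^\infty}\le C(1+t)^{-2}\log(2+t)\,\epsilon_{01}(\delta+\delta_0)$. This is the origin of the $(1+t)^{-2}\log(2+t)e^{-|\vec x|}$ term in \eqref{hpucbound}, and it is why Lemma~\ref{decaypuc} is required rather than the $(1+t)^{-5/4}$ rate alone.

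The term $(1/\rho^0-1/\rho^1)\nabla p^1$ is bounded by $C(\delta+\|p\|_{L^\infty}+|s|)\,\epsilon_1\big((1+t)^{-2}e^{-|\vec x|^2/(K_*t)}+e^{-(|\vec x|+t)/K_*}\big)$, which again fits. Derivatives up to order two distribute via \eqref{rhok}. They involve $\nabla^j s^k$ with $j\le 2$ (pointwise $e^{-|\vec x|}$), $\nabla^j s$ (pointwise $e^{-|\vec x|}$), and $\nabla^3 p$ or $\nabla^3 p^k$, all controlled as above. Any product of a heat kernel with $e^{-|\vec x|}$ is absorbed into the $(1+t)^{-2}e^{-|\vec x|}$ or $e^{-(|\vec x|+t)/K_*}$ terms.

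\textbf{Main obstacle.} The main difficulty is bookkeeping of the worst terms in $\nabla^2H_{\vec u}$: products such as $\nabla^2(1/\rho^0)\nabla p$ and $(1/\bar\rho^0-1/\rho^0)\nabla^3 p$. The only available decay for these comes from the $L^\infty$ rate $(1+t)^{-2}\log(2+t)$ of $\nabla p$ and $\nabla^3 p$ in Lemma~\ref{decaypuc}, multiplied by the entropy factor $e^{-|\vec x|}$. I would check these first to confirm that no term requires more than three derivatives of the difference $(p,\vec u)$ in $L^\infty$, which $H^6$ control guarantees.
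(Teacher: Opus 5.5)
Your proposal follows the paper's proof: a term-by-term Leibniz bound in which the individual factors are controlled pointwise by Theorem~\ref{pointwise}, and the difference factors in $L^\infty$ by Theorem~\ref{enlec}, Lemma~\ref{decaypuc} and \eqref{wemb}. You also correctly identify that the $(1+t)^{-2}\log(2+t)e^{-|\vec x|}$ term arises from $(p^0,s^0)$ multiplying derivatives of $p$.

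One claim needs correcting. The ``derivative analogue'' of \eqref{pointsc}, namely $|\nabla^j s|\le C\epsilon_{01}(\delta+\delta_0)e^{-|\vec x|}$ for $j=1,2$, does not follow from \eqref{inicon}. That hypothesis controls the weighted norms of $s_0^0-s_0^1$ at size $\epsilon_{01}\delta_0$ only up to second order, whereas \eqref{weightsob} would need $\|e^{|\vec x|}\nabla^i s_0\|_{L^2}$ at that size for $i\le j+2$. The weighted bound is not needed, however. In the proof, $\nabla^j s$ enters only through $\nabla^j(1/\rho^0-1/\rho^1)$, which is always multiplied by some $\nabla^k p^1$ that supplies the space-time decay. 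So the unweighted bound
\[
\|\nabla^j s\|_{L^\infty}\le C\|s\|_{H^4}\le C\epsilon_{01}(\delta+\delta_0),
\]
obtained from \eqref{wemb} and \eqref{ensc}, suffices, and this is what the paper uses.
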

\begin{proof}
	Theorem~\ref{pointwise}, Theorem~\ref{enlec} and Lemma~\ref{decaypuc}, together with \eqref{wemb}, \eqref{rhok} and \eqref{rhocl2} yield that
	\begin{equation}\nonumber
		\begin{aligned}
	 		\left| H_p(\vec{x},t)\right| 
	 		&\le C\left\|(p,\vec{u})(\cdot,t)\right\|_{W^{1,\infty}} \left(\left| div \vec{u}^1\right|+ \left|p^0\right|+ \left|\nabla p^1\right|+\left| \vec{u}^0\right|\right) \!(\vec{x},t)\\
	 		&\le C\epsilon_{01}\left(\delta+ \delta_0\right) (1+t)^{-5/4}\log(2+t)  \left((1+t)^{-3/2} e^{-|\vec{x}|^2/(K_* t)} + e^{-(|\vec{x}|+t)/K_*}\right),\\
	 		\left| H_{\vec{u}}(\vec{x},t) \right|&\le C\left[\left|\vec{u}\right| \cdot \left| \nabla \vec{u}^1\right| + \left|\vec{u}^0\right| \cdot  \left|\nabla \vec{u} \right| +\left|\left(p^0,s^0\right)\right| \cdot \left|\nabla p\right|\right.
	 		\left. +\left(\left|\bar{P}^0-\bar{P}^1\right|+\left|\bar{S}^0-\bar{S}^1\right|+\left| p\right| +\left| s\right| \right)\left|\nabla p^1\right|\right] \!(\vec{x},t)\\
	 		&\le C\epsilon_{01} \left(\delta+\delta_0\right) \left[  (1+t)^{-5/4}\log(2+t)  \cdot (1+t)^{-3/2} e^{-|\vec{x}|^2/(K_* t)} + (1+t)^{-2}\log(2+t) e^{-|\vec{x}|}\right.\\
	 		&\left. \quad \quad \quad \quad \quad \quad \quad+(1+t)^{-2} e^{-|\vec{x}|^2/(K_* t)}+e^{-(|\vec{x}|+t)/K_*}\right]\\
	 		&\le C\epsilon_{01}\left(\delta+ \delta_0\right)\left[(1+t)^{-2}\left(e^{-|\vec{x}|^2/(K_* t)} +\log(2+t) e^{-|\vec{x}|}\right)+e^{-(|\vec{x}|+t)/K_*}\right],
	 	\end{aligned}
	 \end{equation}
and for $i=1,2$,
\begin{equation}\nonumber
	\begin{aligned}
		\left|\nabla^i \tilde{H}_p(\vec{x},t)\right| &\le C \left[ \left|\bar{P}^1-\bar{P}^0\right| \cdot \left|\nabla^i div \vec{u}^1(\vec{x},t)\right|+\left\|\nabla^i div \vec{u}(\cdot,t)\right\|_{L^\infty} \cdot \left|p^0(\vec{x},t)\right|\right]\\
		&\quad  + C \left( \left\|p(\cdot,t)\right\|_{W^{i+1,\infty}}+\left\|\vec{u}(\cdot,t)\right\|_{W^{i,\infty}}\right)\left(\left|\vec{u}^0 \right| + \sum_{j=1}^i \left|\nabla^j \left(p^0,p^1,\vec{u}^0,\vec{u}^1 \right) \right| + \left|\nabla^{i+1} \left(p^1,\vec{u}^1 \right) \right|\right) \!(\vec{x},t)\\
		&\le C\epsilon_{01} \left(\delta+ \delta_0\right) \left((1+t)^{-2} e^{-|\vec{x}|^2/(K_* t)} + e^{-(|\vec{x}|+t)/K_*}\right),\\
		\left|\nabla^i H_{\vec{u}}(\vec{x},t)\right| & \le C \left\|\vec{u}(\cdot,t)\right\|_{W^{i+1,\infty}}\left(\left|\vec{u}^0 \right|+\sum_{j=1}^i \left|\nabla^j \left(\vec{u}^0,\vec{u}^1 \right) \right| + \left|\nabla^{i+1} \vec{u}^1 \right|\right) \!(\vec{x},t)+C\left\|\nabla p(\cdot,t)\right\|_{W^{i,\infty}} \sum_{j=0}^i \left|\nabla^j \left(p^0,s^0 \right) \!(\vec{x},t)\right|\\
		&\quad + C\left(\left|\bar{P}^0-\bar{P}^1\right|+\left|\bar{S}^0-\bar{S}^1\right|+\sum_{j=0}^i \left|\nabla^j (p,s)(\vec{x},t)\right| \right) \sum_{j=1}^{i+1} \left|\nabla^j p^1(\vec{x},t)\right|\\
		&\le C\epsilon_{01}\left(\delta+ \delta_0\right)\left[(1+t)^{-2}\left(e^{-|\vec{x}|^2/(K_* t)} +\log(2+t) e^{-|\vec{x}|}\right)+e^{-(|\vec{x}|+t)/K_*}\right].
	\end{aligned}
\end{equation}
Thus one completes the proof of \eqref{hpucbound}.
\end{proof}

Next one could combine \eqref{sri}--\eqref{sc2} and Lemma \ref{lehpuc} to obtain that
\begin{equation}\nonumber %\label{sstaricon}
	\begin{aligned}
		\left|\vec{S}_{*}(\vec{x},t)\right|&\le C\epsilon_{01}\left(\delta+ \delta_0\right) \int_0^t \left(\int_{|\vec{y}| \le |\vec{x}|/2} + \int_{|\vec{y}| > |\vec{x}|/2} \right)  e^{-(|\vec{x}-\vec{y}|+t-\tau)/D}  \left(q_2(\vec{y},\tau)+(1+\tau)^{-2}m_1(\tau) e^{-|\vec{y}|}\right) d\vec{y} d\tau\\
		&\le C\epsilon_{01}\left(\delta+ \delta_0\right)(1+t)^{-2}m_1(t) \left( e^{-\frac{|\vec{x}|}{4D}} + e^{-\frac{|\vec{x}|^2}{4K_*t}}+e^{-\frac{|\vec{x}|}{2K_*}} + e^{-\frac{|\vec{x}|}{2}} \right),
	\end{aligned}
\end{equation}
\begin{equation} \nonumber 
	\begin{aligned}
		\left|\vec{S}_{r,i}(\vec{x},t)\right| &\le C\epsilon_{01}\left(\delta+ \delta_0\right) \int_0^t \left(\int_{|\vec{y}| \le |\vec{x}|/2} + \int_{|\vec{y}| > |\vec{x}|/2} \right) (1+t-\tau)^{-3/2-d_i} e^{-\frac{|\vec{x}-\vec{y}|^2}{D(t-\tau)}}  \left[(1+\tau)^{-\frac{5}{4}}m_1(\tau)  q_1(\vec{y},\tau)\right.\\
		&\left.\quad \quad \quad \quad \quad \quad \quad \quad +(1+t-\tau)^{-1/2} \left( q_2(\vec{y},\tau)+(1+\tau)^{-2}m_1(\tau) e^{-|\vec{y}|} \right) \right] d\vec{y} d\tau\\
		&\le C\epsilon_{01}\left(\delta+ \delta_0\right)\left(e^{-\frac{|\vec{x}|^2}{8Dt}} + e^{-\frac{|\vec{x}|^2}{8K_*t}} + e^{-\frac{|\vec{x}|}{4K_*}}+e^{-\frac{|\vec{x}|}{4}}\right) \Bigg[\int_0^t (1+t-\tau)^{-2-d_i} (1+\tau)^{-2} m_1(\tau) d\tau \int_{\mathbb{R}^3} e^{-|\vec{y}|/2} d\vec{y}\\
		&\quad + \int_0^{t/2} (1+t-\tau)^{-3/2-d_i} \left( (1+\tau)^{-\frac{5}{4}}m_1(\tau) \int_{\mathbb{R}^3} q_1(\vec{y}/2,\tau) d\vec{y}+ (1+t-\tau)^{-1/2} \int_{\mathbb{R}^3} q_2(\vec{y}/2,\tau)  d\vec{y} \right) d\tau\\
		&\quad +\int_{t/2}^t  \left( \int_{\mathbb{R}^3} (1+t-\tau)^{-3/2-d_i} e^{-\frac{|\vec{x}-\vec{y}|^2}{2D(t-\tau)}}  d\vec{y} \right)  \left((1+\tau)^{-\frac{11}{4}}m_1(\tau) + (1+t-\tau)^{-1/2}(1+\tau)^{-2}\right) d\tau \Bigg]\\
		& \le C\epsilon_{01}\left(\delta+ \delta_0\right) \left[(1+t)^{-2}m_1(t)+(1+t)^{-3/2-d_i}\right]\left(e^{-\frac{|\vec{x}|^2}{8Dt}} + e^{-\frac{|\vec{x}|^2}{8K_*t}} + e^{-\frac{|\vec{x}|}{4K_*}}+e^{-\frac{|\vec{x}|}{4}}\right),
	\end{aligned}
\end{equation}
and
\begin{equation}\nonumber 
	\begin{aligned}
		\left|\vec{S}_{c,i}(\vec{x},t)\right|
		& \le C\epsilon_{01}\left(\delta+ \delta_0\right)\Bigg[e^{-at} \underset{t}{\ast} \phi(\vec{x},t) + \left( \int_{0 \le t-\tau \le |\vec{x}|/(2c_0)}+\int_{ |\vec{x}|/(2c_0) < t-\tau \le t}\right) e^{-a(t-\tau)/4} \int_{|\vec{y}|=1} \phi(\vec{x}+c_0 (t-\tau)\vec{y},\tau) \, dS(\vec{y})d\tau \\
		& \quad \quad \quad + \int_0^t \left(\int_{|\vec{y}| \le |\vec{x}|/2} + \int_{|\vec{y}| > |\vec{x}|/2} \right)  \left(\sum_{i=0}^1 |\vec{x}-\vec{y}|^{-i}\right)e^{-a(t-\tau)/8}\left( e^{-|\vec{x}-\vec{y}|/4}+e^{-a|\vec{x}-\vec{y}|/(16c_0)}\right) \phi(\vec{y},\tau) d\vec{y} d\tau\Bigg] \\
		 &\le C\epsilon_{01}\left(\delta+ \delta_0\right)\left[(1+t)^{-2} e^{-\frac{|\vec{x}|^2}{K_* t}}  + (1+t)^{-2}m_1(t) \left(e^{-|\vec{x}|/K_*}+e^{-|\vec{x}|} \right) + \left(e^{-\frac{|\vec{x}|^2}{4K_*t}}+e^{-\frac{|\vec{x}|}{2K_*}}+e^{-\frac{|\vec{x}|}{16}}+ e^{-\frac{a|\vec{x}|}{64c_0}}\right)\right.\\
		 & \left.\quad \quad \quad \times \int_0^t e^{-a(t-\tau)/8} (1+\tau)^{-2} m_1(\tau) d\tau  \left( \int_{|\vec{y}|=1} 1 dS(\vec{y})+\int_{\mathbb{R}^3} \sum_{i=0}^1 |\vec{x}-\vec{y}|^{-i} \left( e^{-|\vec{x}-\vec{y}|/8}+e^{-a|\vec{x}-\vec{y}|/(32c_0)}\right) d\vec{y} \right) \right]\\
	  	&\le C\epsilon_{01}\left(\delta+ \delta_0\right) (1+t)^{-2}m_1(t) \left(e^{-\frac{|\vec{x}|^2}{4K_*t}}+e^{-\frac{|\vec{x}|}{2K_*}}+e^{-\frac{|\vec{x}|}{16}}+ e^{-\frac{a|\vec{x}|}{64c_0}}\right), 
	\end{aligned}
\end{equation}
where one denotes 
$$q_1(\vec x,t):=(1+t)^{-3/2} e^{-|\vec{x}|^2/(K_* t)} + e^{-(|\vec{x}|+t)/K_*}, \ \ \ \ q_2(\vec{x},t):=(1+t)^{-2} e^{-|\vec{x}|^2/(K_* t)} + e^{-(|\vec{x}|+t)/K_*},$$ 
$$\phi(\vec{x},t):=q_2(\vec{x},t)+(1+t)^{-2}m_1(t) e^{-|\vec{x}|},\ \ \ \ m_1(t):=\log(2+t).$$ Then it follows that
\begin{equation}\label{sourcecon}
	\begin{aligned}
		&\left| \int_0^t \mathbb{F}_i^0(\vec{x},t-\tau) \underset{\vec{x}}{\ast} 
		\begin{pmatrix}
			\tilde{H}_p \vspace{1.5mm}\\
			H_{\vec{u}}
		\end{pmatrix}(\vec{x},\tau) \, d\tau\right| \le \left|\vec{S}_{r,i}(\vec{x},t)\right| + \left|\vec{S}_{*,i}(\vec{x},t)\right| + \left|\vec{S}_{c,i}(\vec{x},t)\right|\\
		&\le C\epsilon_{01}\left(\delta+ \delta_0\right) \left[(1+t)^{-2}m_1(t)+(1+t)^{-3/2-d_i}\right]\left(e^{-\frac{|\vec{x}|^2}{K_1t}} +  e^{-\frac{|\vec{x}|}{K_2}}\right),
	\end{aligned}
\end{equation}
for some positive constants $K_1$ and $K_2$. 

Finally, combining \eqref{jdefine}, \eqref{iniconpuc} and \eqref{sourcecon}, it holds that
\begin{equation}\nonumber
	\begin{aligned}
		\left| \vec{J}_i(\vec{x},t)\right| &\le \left|\mathbb{F}_i^0(\vec{x},t) \underset{\vec{x}}{\ast} \vec{J}(\vec{x},0)\right|+\left| \int_0^t \mathbb{F}_i^0(\vec{x},t-\tau) \underset{\vec{x}}{\ast} 
		\begin{pmatrix}
			\tilde{H}_p \vspace{1.5mm}\\
			H_{\vec{u}}
		\end{pmatrix}(\vec{x},\tau) \, d\tau\right|\\
		&\le C\epsilon_{01} \delta_0 e^{-(|\vec{x}|+t)/K_0} + C\epsilon_{01}\left(\delta+ \delta_0\right) \left[(1+t)^{-2}m_1(t)+(1+t)^{-3/2-d_i}\right]\left(e^{-\frac{|\vec{x}|^2}{K_1t}} +  e^{-\frac{|\vec{x}|^2}{K_2t}}+ e^{-\frac{|\vec{x}|+t}{2K_2}}\right)
	\end{aligned}
\end{equation}
for $i=1,2$ and $d_i$ defined in \eqref{di}. One could verify \eqref{puscpoint} by combining the above estimate with the pointwise space structure \eqref{pointsc} of the entropy function $s$, and complete the proof of Theorem~\ref{main}.

%%%%%%%%%%%%%%%%%%%%%%%%%%%%%%%%%%%%%%%%%%%%%%%%%%%%%%%%%%%%%%%%%%%%%%%%%%%%%%%%%5
%
%
%                       Appendix I
%
%
%%%%%%%%%%%%%%%%%%%%%%%%%%%%%%%%%%%%%%%%%%%%%%%%%%%%%%%%%%%%%%%%%%%%%%%%%%%%%%%%%

\setcounter{equation}{0}

\section{Appendix I: Energy Estimates for High-Order Derivatives}
\begin{proof}[\textbf{Proof of Theorem~\ref{genes}.}] 
	First, assume that the a priori assumption
	\begin{equation} \label{priori}
		\sup_{0 \le t \le T} \left\|\left(p^k, \vec{u}^k, s^k\right) \!(\cdot, t)\right\|_{H^l} \le \delta_l^k,
	\end{equation}
	holds for a sufficiently small positive constant $\delta_l^k (l\ge 3)$.
	
	For a multi-index $\alpha$ with $1\le |\alpha|\le l$, applying $\partial^{\alpha}_{\vec{x}}$ to \eqref{pk}, one has that
	\begin{equation}\label{pukt}
		\begin{cases}
			\left( \partial_{\vec{x}}^{\alpha} p^k\right)_t+\frac{\bar{P}^k\left(R+C_v\right)}{C_v} div \partial_{\vec{x}}^{\alpha} \vec u^k=-\frac{R+C_v}{C_v} p^k div \partial_{\vec{x}}^{\alpha}\vec u^k -\vec u^k\cdot \nabla \partial_{\vec{x}}^{\alpha} p^k -\frac{R+C_v}{C_v} \left[ \partial_{\vec{x}}^{\alpha}, p^k\right] div\vec u^k -\left[ \partial_{\vec{x}}^{\alpha}, \vec u^k\cdot\nabla \right] p^k=\partial^{\alpha}_{\vec x} R_p^k,\\
			\left( \partial_{\vec{x}}^{\alpha} \vec{u}^k\right)_t+\frac{1}{\bar{\rho}^k}\nabla \partial_{\vec{x}}^{\alpha} p^k+a \partial_{\vec{x}}^{\alpha} \vec{u}^k=-\vec{u}^k\cdot\nabla \partial_{\vec{x}}^{\alpha} \vec u^k +\frac{\rho^k-\bar{\rho}^k}{\bar{\rho}^k\rho^k} \nabla \partial_{\vec{x}}^{\alpha} p^k -\left[ \partial_{\vec{x}}^{\alpha}, \vec{u}^k\cdot\nabla \right]\vec u^k+\left[\partial_{\vec{x}}^{\alpha},\frac{\rho^k-\bar{\rho}^k}{\bar{\rho}^k\rho^k}\right]\nabla p^k=\partial^{\alpha}_{\vec x} R_{\vec{u}}^k,
		\end{cases}
	\end{equation}
	and
	\begin{equation}\label{skt}
		\left( \partial_{\vec{x}}^{\alpha} s^k \right)_t=-\vec u^k\cdot\nabla \partial_{\vec{x}}^{\alpha} s^k - \left[ \partial_{\vec{x}}^{\alpha}, \vec u^k\cdot\nabla\right]s^k:=\partial^{\alpha}_{\vec x} R_s^k.
	\end{equation}
	Integrate the inner product of \eqref{pukt} and $\left(\partial_{\vec{x}}^{\alpha}p^k/\bar{\rho}^k, \bar{P}^k\left(R+C_v\right)\partial_{\vec{x}}^{\alpha}\vec{u}^k/C_v\right)^t$ over $\mathbb{R}^3$ with respect to $\vec{x}$ and obtain that
	\begin{equation}\label{pukenes}
		\begin{aligned}
			&\frac{1}{2\bar{\rho}^k}\frac{d}{dt} \left\|\partial^{\alpha}_{\vec x}p^k\right\|_{L^2}^2+\frac{\bar{P}^k\left(R+C_v\right)}{2C_v}\frac{d}{dt} \left\|\partial^{\alpha}_{\vec x} \vec{u}^k\right\|_{L^2}^2 +\frac{a\bar{P}^k\left(R+C_v\right)}{C_v} \left\|\partial^{\alpha}_{\vec x}\vec{u}^k\right\|_{L^2}^2\\
			&=-\frac{R+C_v}{C_v \bar{\rho}^k} \int_{\mathbb{R}^3} \partial^{\alpha}_{\vec x}p^k \left( p^k div \partial_{\vec{x}}^{\alpha}\vec u^k\right)d\vec{x} + \frac{\bar{P}^k\left(R+C_v\right)}{C_v \bar{\rho}^k} \int_{\mathbb{R}^3}  \partial^{\alpha}_{\vec x}\vec{u}^k \cdot \left( \frac{\rho^k-\bar{\rho}^k}{\rho^k} \nabla \partial_{\vec{x}}^{\alpha} p^k\right) d\vec{x}\\
			&\quad + \frac{1}{2}  \int_{\mathbb{R}^3} div \vec{u}^k \left( \frac{1}{\bar{\rho}^k} \left( \partial^{\alpha}_{\vec x}p^k\right)^2 + \frac{\bar{P}^k\left(R+C_v\right)}{C_v}\left| \partial^{\alpha}_{\vec x}\vec{u}^k\right|^2\right) d\vec{x}\\
			& \quad -\frac{1}{\bar{\rho}^k} \int_{\mathbb{R}^3} \partial^{\alpha}_{\vec x}p^k \left( \frac{R+C_v}{C_v} \left[ \partial_{\vec{x}}^{\alpha}, p^k\right] div\vec u^k + \left[ \partial_{\vec{x}}^{\alpha}, \vec u^k\cdot\nabla \right] p^k\right)  d\vec{x} \\
			&\quad - \frac{\bar{P}^k\left(R+C_v\right)}{C_v} \int_{\mathbb{R}^3} \partial^{\alpha}_{\vec x}\vec{u}^k \cdot \left( \left[ \partial_{\vec{x}}^{\alpha}, \vec{u}^k\cdot\nabla \right]\vec u^k-\left[\partial_{\vec{x}}^{\alpha},\frac{\rho^k-\bar{\rho}^k}{\bar{\rho}^k\rho^k}\right]\nabla p^k\right) d\vec{x},
		\end{aligned}
	\end{equation}
	where
	\begin{equation} \nonumber
		\begin{aligned}
			-\int_{\mathbb{R}^3} \partial^{\alpha}_{\vec x}p^k \left( p^k div \partial_{\vec{x}}^{\alpha}\vec u^k\right)d\vec{x}&= \int_{\mathbb{R}^3} \left( p^k  \nabla \partial^{\alpha}_{\vec x}p^k +\partial^{\alpha}_{\vec x}p^k  \nabla p^k \right) \cdot \partial_{\vec{x}}^{\alpha}\vec u^k d\vec{x}\\
			&=-\frac{1}{2} \frac{d}{dt} \int_{\mathbb{R}^3} p^k \rho^k \left| \partial^{\alpha}_{\vec x}\vec{u}^k\right|^2 d\vec{x} + \frac{1}{2} \int_{\mathbb{R}^3} \left( \partial_t\left( p^k \rho^k\right)  + div\left( p^k \rho^k \vec{u}^k\right) \right) \left| \partial^{\alpha}_{\vec x}\vec{u}^k\right|^2 d\vec{x}\\
			&\quad -\int_{\mathbb{R}^3} p^k \partial^{\alpha}_{\vec x}\vec{u}^k \cdot \left(\left[ \partial_{\vec{x}}^{\alpha}, \rho^k \right]\left( \partial_t \vec{u}^k+a\vec{u}^k\right)+ \left[ \partial_{\vec{x}}^{\alpha}, \rho^k \vec{u}^k\cdot\nabla \right]\vec u^k + a \rho^k \partial_{\vec{x}}^\alpha \vec{u}^k \right) d\vec{x}\\
			&\quad +\int_{\mathbb{R}^3} \partial^{\alpha}_{\vec x}p^k \nabla p^k \cdot \partial_{\vec{x}}^{\alpha}\vec u^k d\vec{x},
		\end{aligned}
	\end{equation}
	\begin{equation} \nonumber
		\begin{aligned}
			\int_{\mathbb{R}^3}  \partial^{\alpha}_{\vec x}\vec{u}^k \cdot \left( \frac{\rho^k-\bar{\rho}^k}{\rho^k} \nabla \partial_{\vec{x}}^{\alpha} p^k\right) d\vec{x}
			&=-\frac{1}{2}\frac{d}{dt} \int_{\mathbb{R}^3} \left( \rho^k-\bar{\rho}^k\right)\left| \partial^{\alpha}_{\vec x}\vec{u}^k\right|^2 d\vec{x} + \frac{1}{2} \int_{\mathbb{R}^3} \left( \partial_t\left(\rho^k-\bar{\rho}^k\right)  + div\left( \left(\rho^k-\bar{\rho}^k\right) \vec{u}^k\right) \right) \left| \partial^{\alpha}_{\vec x}\vec{u}^k\right|^2 d\vec{x}\\
			&\quad -\int_{\mathbb{R}^3} \frac{\rho^k-\bar{\rho}^k}{\rho^k} \partial^{\alpha}_{\vec x}\vec{u}^k \cdot \left(\left[ \partial_{\vec{x}}^{\alpha}, \rho^k \right]\left( \partial_t \vec{u}^k+a\vec{u}^k\right) + \left[ \partial_{\vec{x}}^{\alpha}, \rho^k \vec{u}^k\cdot\nabla \right]\vec u^k + a \rho^k \partial_{\vec{x}}^\alpha \vec{u}^k \right)  d\vec{x}.
		\end{aligned}
	\end{equation}
	Here, one uses the identity
	$\nabla p^k=-\rho^k \left( \partial_t \vec{u}^k + \vec{u}^k \cdot \nabla \vec{u}^k+a\vec{u}^k\right)$ obtained from the second equation of \eqref{pk}. Summing \eqref{pukenes} over multi‑indices $\alpha$ satisfying $|\alpha|\in[m,l]$ with $m\ge 1$ and $l\ge 3$, and using Lemma \ref{derivaineq} along with the identities \eqref{rho} and \eqref{pp}, it holds that 
	\begin{equation}\label{umlenes}
		\begin{aligned}
			&\frac{1}{2\bar{\rho}^k}\frac{d}{dt} \left\|\nabla^m p^k\right\|_{H^{l-m}}^2+\frac{\bar{P}^k\left(R+C_v\right)}{2C_v}\frac{d}{dt} \left\|\nabla^m \vec{u}^k\right\|_{H^{l-m}}^2 + \sum_{|\alpha|=m}^l \frac{R+C_v}{2C_v \bar{\rho}^k} \frac{d}{dt} \int_{\mathbb{R}^3} \left(p^k \rho^k +  \bar{P}^k \left( \rho^k-\bar{\rho}^k\right)\right) \left| \partial^{\alpha}_{\vec x}\vec{u}^k\right|^2 d\vec{x}\\
			& \quad +\frac{a\bar{P}^k\left(R+C_v\right)}{C_v} \left\|\nabla^m \vec{u}^k\right\|_{H^{l-m}}^2\\
			& \le C\left[ \left\|\nabla^m p^k\right\|_{H^{l-m}}^2 \left(\left\|\nabla p^k\right\|_{L^\infty} + \left\|\nabla \vec{u}^k\right\|_{L^\infty} +\left\|\nabla \rho^k\right\|_{L^\infty} + \left\|\nabla \frac{1}{\rho^k}\right\|_{L^\infty}\right)+ \left( \left\|\nabla p^k\right\|_{L^\infty}^2 +\left\|\nabla \vec{u}^k\right\|_{L^\infty}^2\right) \left( \left\|\nabla^m \rho^k \right\|_{H^{l-m}}+\left\|\nabla^m \frac{1}{\rho^k}\right\|_{H^{l-m}} \right)\right] \\
			& \quad + C\left\|\nabla^m \vec{u}^k\right\|_{H^{l-m}}^2 \left( \left\|p^k\right\|_{W^{1,\infty}} + \left\|\nabla \vec{u}^k\right\|_{L^\infty} +\left\|\left(\rho^k-\bar{\rho}^k\right)\right\|_{W^{1,\infty}}+\left\|\nabla^m \rho^k \right\|_{H^{l-m}} + \left\|\nabla \frac{1}{\rho^k}\right\|_{L^\infty}+ \left\|\nabla^m \frac{1}{\rho^k}\right\|_{H^{l-m}} \right)\\
			&\quad +C\left\|\nabla^m \vec{u}^k\right\|_{H^{l-m}} \cdot \left\|\nabla \rho^k\right\|_{L^\infty} \cdot
			\begin{cases}
				0 & \text{for } m=1, \\
				\left( \left\|\nabla^{m-1} \frac{1}{\rho^k}\right\|_{L^2} \cdot \left\|\nabla p^k\right\|_{L^\infty} +  \left\|\nabla^{m-1} \vec{u}^k\right\|_{L^2} \cdot \left\|\nabla \vec{u}^k\right\|_{L^\infty} \right) & \text{for } m \ge 2
			\end{cases}\\
			& \le C\delta_l^k \left(\left\|\nabla^m p^k\right\|_{H^{l-m}}^2 + \left\|\nabla^m \vec{u}^k\right\|_{H^{l-m}}^2 + \left\|\nabla p^k\right\|_{L^\infty}^2 + \left\|\nabla \vec{u}^k \right\|_{L^\infty}^2 \right).
		\end{aligned}
	\end{equation}
	Similarly, one also has the following basic $L^2$ estimates:
	\begin{equation}\label{u0lenes}
		\begin{aligned}
			&\frac{1}{2\bar{\rho}^k}\frac{d}{dt} \left\|p^k\right\|_{L^2}^2+\frac{\bar{P}^k\left(R+C_v\right)}{2C_v}\frac{d}{dt} \left\|\vec{u}^k\right\|_{L^2}^2+\frac{a\bar{P}^k\left(R+C_v\right)}{C_v} \left\|\vec{u}^k\right\|_{L^2}^2 \\
			&\le C\left(\left\|p^k\right\|_{L^2} \cdot \left\|p^k\right\|_{L^\infty} \cdot \left\|div \vec{u}^k\right\|_{L^2} + \left\|\vec{u}^k \right\|_{L^2} \cdot \left\|\nabla p^k\right\|_{L^2} \cdot \left\|\left( \rho^k-\bar{\rho}^k\right)\right\|_{L^\infty} + \left\|div \vec{u}^k\right\|_{L^2} \cdot \left\|\vec{u}^k \right\|_{L^2} \cdot \left\|\vec{u}^k\right\|_{L^\infty}\right)\\
			& \le C\delta_3^k \left(\left\|\nabla p^k\right\|_{H^1}^2 + \left\|\vec{u}^k\right\|_{H^2}^2\right).
		\end{aligned}
	\end{equation}
The final inequalities in both \eqref{umlenes} and \eqref{u0lenes} follow from the a priori estimate \eqref{priori}. Furthermore, combining \eqref{umlenes} with $m=1$, \eqref{u0lenes}, and the Sobolev embedding $\left\|\nabla \left(p^k,\vec{u}^k\right) \right\|_{L^\infty} \le C\left\|\nabla^2 \left(p^k,\vec{u}^k\right) \right\|_{H^1}$ yields that
	\begin{equation}\label{uhlenes}
		\begin{aligned}
			&\frac{1}{2\bar{\rho}^k}\frac{d}{dt} \left\|p^k\right\|_{H^l}^2+\frac{\bar{P}^k\left(R+C_v\right)}{2C_v}\frac{d}{dt} \left\|\vec{u}^k\right\|_{H^l}^2 +\frac{a\bar{P}^k\left(R+C_v\right)}{2C_v} \left\| \vec{u}^k\right\|_{H^l}^2\\
			&+ \sum_{|\alpha|=1}^l \frac{R+C_v}{2C_v \bar{\rho}^k} \frac{d}{dt} \int_{\mathbb{R}^3} \left[p^k \rho^k +  \bar{P}^k \left( \rho^k-\bar{\rho}^k\right)\right] \left| \partial^{\alpha}_{\vec x}\vec{u}^k\right|^2 d\vec{x} 
			 \le C\delta_l^k \left\|\nabla p^k\right\|_{H^{l-1}}^2.
		\end{aligned}
	\end{equation}
	
	Next, integrating the inner product of the equations for $\partial_{\vec{x}}^{\alpha}\vec{u}^k$ in \eqref{pukt} and $\nabla \partial_{\vec{x}}^{\alpha} p^k$ over $\mathbb R^3$ yields that
	\begin{equation}\label{pterm}
		\begin{aligned}
			\frac{d}{dt} \int_{\mathbb{R}^3} \nabla \partial_{\vec{x}}^{\alpha} p^k \cdot \partial_{\vec{x}}^{\alpha} \vec{u}^k d\vec{x} + \frac{1}{\bar{\rho}^k} \left\| \nabla \partial_{\vec{x}}^{\alpha} p^k \right\|_{L^2}^2 &=\frac{\bar{P}^k\left(R+C_v\right)}{C_v} \left\| div\left( \partial_{\vec{x}}^{\alpha} \vec{u}^k \right) \right\|_{L^2}^2 -a \int_{\mathbb{R}^3} \nabla \partial_{\vec{x}}^{\alpha} p^k \cdot \partial_{\vec{x}}^{\alpha} \vec{u}^k d\vec{x}\\
			& \quad + \int_{\mathbb{R}^3} \nabla \partial_{\vec{x}}^{\alpha} p^k \cdot \partial_{\vec{x}}^{\alpha} R_{\vec{u}}^k \, d\vec{x} -\int_{\mathbb{R}^3} div \left(\partial_{\vec{x}}^{\alpha} \vec{u}^k\right) \partial_{\vec{x}}^{\alpha} R_p^k \, d\vec{x}.  
		\end{aligned}
	\end{equation}
	Upon summing \eqref{pterm} over all multi‑indices $\alpha$ satisfying $|\alpha|\in [m,l-1]$ with $m \ge 1$ and employing the following estimate
	$$a \left| \int_{\mathbb{R}^3} \nabla \partial_{\vec{x}}^{\alpha} p^k \cdot \partial_{\vec{x}}^{\alpha} \vec{u}^k d\vec{x}\right| \le \frac{1}{4 \bar{\rho}^k}  \left\| \nabla \partial_{\vec{x}}^{\alpha} p^k \right\|_{L^2}^2 + a^2 \bar{\rho}^k  \left\|\partial_{\vec{x}}^{\alpha} \vec{u}^k \right\|_{L^2}^2,$$
	it follows that
	\begin{equation}\label{pmlenes}
		\begin{aligned}
			&\sum_{|\alpha|=m}^{l-1} \frac{d}{dt} \int_{\mathbb{R}^3} \nabla \partial_{\vec{x}}^{\alpha} p^k \cdot \partial_{\vec{x}}^{\alpha} \vec{u}^k d\vec{x} + \frac{3}{4\bar{\rho}^k} \left\| \nabla^{m+1} p^k \right\|_{H^{l-m-1}}^2\\
			& \le C \left\| \nabla^m \vec{u}^k \right\|_{H^{l-m}}^2 \left(1+\left\|p^k\right\|_{W^{1,\infty}}+\left\|\vec{u}^k\right\|_{W^{1,\infty}}\right)\\
			& \quad + C\left\|\nabla^{m+1} p^k\right\|_{H^{l-m-1}}^2 \left(\left\|\vec{u}^k\right\|_{W^{1,\infty}}+ \left\|\left(\rho^k-\bar{\rho}^k \right)\right\|_{L^\infty} + \left\|\nabla^m \frac{1}{\rho^k}\right\|_{H^{l-m-1}} \right)\\
			& \quad + C\left\|\nabla^m p^k\right\|_{L^2}^2 \cdot \left\|\nabla \vec{u}^k \right\|_{L^\infty} + C\left\|\nabla p^k\right\|_{L^\infty}^2 \cdot \left\|\nabla^m \frac{1}{\rho^k}\right\|_{H^{l-m-1}}\\
			& \le C\left[ \left(1+\delta_3^k\right)\left\| \nabla^m \vec{u}^k \right\|_{H^{l-m}}^2 + \delta_{\max\{l-1,3\}}^k \left\| \nabla^m p^k \right\|_{H^{l-m}}^2 + \delta_{l-1}^k \left\| \nabla p^k \right\|_{L^\infty}^2\right],
		\end{aligned}
	\end{equation}
	and
	\begin{equation}\nonumber
		\begin{aligned}
			\frac{d}{dt} \int_{\mathbb{R}^3} \nabla p^k \cdot  \vec{u}^k d\vec{x} + \frac{3}{4\bar{\rho}^k} \left\| \nabla p^k \right\|_{L^2}^2 & \le C\left\|\vec{u}^k \right\|_{H^1}^2 \left(1+\left\|p^k\right\|_{L^\infty}+\left\|\vec{u}^k\right\|_{L^\infty}\right) + C\left\|\nabla p^k\right\|_{L^2}^2 \left(\left\|\vec{u}^k\right\|_{L^\infty}+\left\|\left(\rho^k-\bar{\rho}^k \right)\right\|_{L^\infty}\right)\\
			& \le C\left[ \left(1+\delta_2^k\right)\left\|\vec{u}^k \right\|_{H^1}^2 + \delta_2^k \left\| \nabla p^k \right\|_{L^2}^2\right],
		\end{aligned}
	\end{equation}
	which directly yields
	\begin{equation}\label{phlenes}
		\sum_{|\alpha|=0}^{l-1} \frac{d}{dt} \int_{\mathbb{R}^3} \nabla \partial_{\vec{x}}^{\alpha} p^k \cdot \partial_{\vec{x}}^{\alpha} \vec{u}^k d\vec{x} + \frac{1}{2\bar{\rho}^k} \left\| \nabla p^k \right\|_{H^{l-1}}^2 \le C\left(1+\delta_3^k\right)\left\|\vec{u}^k \right\|_{H^l}^2.
	\end{equation}
	There exists a sufficiently small positive constant $\varepsilon_0$ such that the sum \eqref{uhlenes} $+\varepsilon_0 \eqref{phlenes}$ leads to
	\begin{equation}\label{puhlenes}
		\begin{aligned}
			&\frac{1}{2\bar{\rho}^k}\frac{d}{dt} \left\|p^k\right\|_{H^l}^2+\frac{\bar{P}^k\left(R+C_v\right)}{2C_v}\frac{d}{dt} \left\|\vec{u}^k\right\|_{H^l}^2 + \sum_{|\alpha|=1}^l \frac{R+C_v}{2C_v \bar{\rho}^k} \frac{d}{dt} \int_{\mathbb{R}^3} \left(p^k \rho^k +  \bar{P}^k \left( \rho^k-\bar{\rho}^k\right)\right) \left| \partial^{\alpha}_{\vec x}\vec{u}^k\right|^2 d\vec{x}\\
			& \quad + \varepsilon_0 \sum_{|\alpha|=0}^{l-1} \frac{d}{dt} \int_{\mathbb{R}^3} \nabla \partial_{\vec{x}}^{\alpha} p^k \cdot \partial_{\vec{x}}^{\alpha} \vec{u}^k d\vec{x} + \frac{a\bar{P}^k\left(R+C_v\right)}{4C_v} \left\| \vec{u}^k\right\|_{H^l}^2+\frac{\varepsilon_0}{4\bar{\rho}^k} \left\| \nabla p^k \right\|_{H^{l-1}}^2 \le 0.
		\end{aligned}
	\end{equation}
	
	Now turn to the entropy function given by \eqref{skt}: Multiplying \eqref{skt} by $\partial_{\vec{x}}^\alpha s^k$ and integrating the product over $\mathbb{R}^3$, one has that
	\begin{equation}\nonumber
		\begin{aligned}
			\frac{1}{2}\frac{d}{dt} \left\|\partial^{\alpha}_{\vec x}s^k\right\|_{L^2}^2 &=\frac{1}{2} \int_{\mathbb{R}^3} div \vec{u}^k \left( \partial^{\alpha}_{\vec x}s^k\right)^2 d\vec{x} -\int_{\mathbb{R}^3} \partial^{\alpha}_{\vec x}s^k \left[ \partial_{\vec{x}}^{\alpha}, \vec u^k\cdot\nabla\right]s^k d\vec{x}\\
			& \le C\left\|\nabla \vec{u}^k\right\|_{L^\infty} \cdot \left\|\nabla^{|\alpha|}s^k\right\|_{L^2}^2 +
			\begin{cases}
				0 & \text{for } |\alpha|=0,\\
				C\left\|\partial^{\alpha}_{\vec x} s^k\right\|_{L^2} \cdot \left\|\nabla s^k\right\|_{L^\infty} \cdot \left\|\nabla^{|\alpha|} \vec{u}^k\right\|_{L^2} & \text{for } |\alpha| \ge 1,
			\end{cases}
		\end{aligned}
	\end{equation}
	which yields that
	\begin{equation}\nonumber
		\frac{1}{2}\frac{d}{dt} \left\|s^k\right\|_{H^l}^2 \le C \left\|\nabla \vec{u}^k\right\|_{H^{l-1}} \cdot \left\|s^k\right\|_{H^l}^2,
	\end{equation}
	and thus
	\begin{equation} \label{skhl}
		\left\|s^k(\cdot,t)\right\|_{H^l}^2 \le  \exp\left\lbrace C\int_0^t \left\|\nabla \vec{u}^k(\cdot,\tau)\right\|_{H^{l-1}} d\tau\right\rbrace  \left\|s^k_0\right\|_{H^l}^2.
	\end{equation}
	Here, one uses Gronwall's inequality. To achieve the boundedness of the entropy function $s^k$ from \eqref{skhl}, a decay estimate for $\bigl\|\nabla \vec{u}^k(\cdot, t)\bigr\|_{H^{l-1}}$ is required to guarantee its integrability with respect to $t$. 
	
	Define the following time-dependent functional
	$$ \mathcal{M}_1^k(t):= \underset{0 \le \tau \le t}{\sup} (1+\tau)^{5/2} \left( \left\|\vec{u}^k(\cdot, \tau)\right\|_{H^l}^2 + \left\|\nabla p^k (\cdot, \tau)\right\|_{H^{l-1}}^2 \right).$$ 
	Considering \eqref{umlenes} $+\varepsilon_1 \eqref{pmlenes}$, with $\varepsilon_1\in(0,1)$ taken sufficiently small, it holds that
	\begin{equation}\nonumber
		\begin{aligned}
			&\frac{1}{2\bar{\rho}^k}\frac{d}{dt} \left\|\nabla^m p^k\right\|_{H^{l-m}}^2+\frac{\bar{P}^k\left(R+C_v\right)}{2C_v}\frac{d}{dt} \left\|\nabla^m \vec{u}^k\right\|_{H^{l-m}}^2  + \sum_{|\alpha|=m}^l \frac{R+C_v}{2C_v \bar{\rho}^k} \frac{d}{dt} \int_{\mathbb{R}^3} \left(p^k \rho^k +  \bar{P}^k \left( \rho^k-\bar{\rho}^k\right)\right) \left| \partial^{\alpha}_{\vec x}\vec{u}^k\right|^2 d\vec{x}\\
			&\quad+\varepsilon_1 \sum_{|\alpha|=m}^{l-1} \frac{d}{dt} \int_{\mathbb{R}^3} \nabla \partial_{\vec{x}}^{\alpha} p^k \cdot \partial_{\vec{x}}^{\alpha} \vec{u}^k d\vec{x}+\frac{a\bar{P}^k\left(R+C_v\right)}{C_v} \left\|\nabla^m \vec{u}^k\right\|_{H^{l-m}}^2+ \frac{3\varepsilon_1}{4\bar{\rho}^k} \left\| \nabla^{m+1} p^k \right\|_{H^{l-m-1}}^2\\
			&\le C\left(\delta_l^k+\varepsilon_1\right)\left\| \nabla^m \vec{u}^k \right\|_{H^{l-m}}^2 + C\delta_l^k \left(\left\|\nabla^m p^k\right\|_{H^{l-m}}^2 + \left\|\nabla p^k\right\|_{L^\infty}^2 + \left\|\nabla \vec{u}^k \right\|_{L^\infty}^2 \right),
		\end{aligned}
	\end{equation}
	which, combined with the smallness of $\delta_l^k$ and $\varepsilon_1$ as well as the Sobolev embedding inequality $\bigl\|\nabla \bigl(p^k,\vec{u}^k\bigr) \bigr\|_{L^\infty} \le C\bigl\|\nabla^2 \bigl(p^k,\vec{u}^k\bigr) \bigr\|_{H^1}$, yields
	\begin{equation}\label{pumenes}
		\begin{aligned}
			&\frac{1}{2\bar{\rho}^k}\frac{d}{dt} \left\|\nabla^m p^k\right\|_{H^{l-m}}^2+\frac{\bar{P}^k\left(R+C_v\right)}{2C_v}\frac{d}{dt} \left\|\nabla^m \vec{u}^k\right\|_{H^{l-m}}^2+ \sum_{|\alpha|=m}^l \frac{R+C_v}{2C_v \bar{\rho}^k} \frac{d}{dt} \int_{\mathbb{R}^3} \left(p^k \rho^k +  \bar{P}^k \left( \rho^k-\bar{\rho}^k\right)\right) \left| \partial^{\alpha}_{\vec x}\vec{u}^k\right|^2 d\vec{x}\\
			&\quad +\varepsilon_1 \sum_{|\alpha|=m}^{l-1} \frac{d}{dt} \int_{\mathbb{R}^3} \nabla \partial_{\vec{x}}^{\alpha} p^k \cdot \partial_{\vec{x}}^{\alpha} \vec{u}^k d\vec{x}+\frac{a\bar{P}^k\left(R+C_v\right)}{2C_v} \left\|\nabla^m \vec{u}^k\right\|_{H^{l-m}}^2+ \frac{\varepsilon_1}{2\bar{\rho}^k} \left\| \nabla^{m+1} p^k \right\|_{H^{l-m-1}}^2\\
			&\le C\delta_l^k \left\| \nabla^m p^k \right\|_{L^2}^2 + C\delta_l^k
			\begin{cases}
				0 & \text{for } m=1,2,\\
				\left( \left\|\nabla p^k\right\|_{L^\infty}^2 + \left\|\nabla \vec{u}^k \right\|_{L^\infty}^2 \right) & \text{for } m \ge 3.
			\end{cases} 
		\end{aligned}
	\end{equation}
	
	Subsequently, for $|\alpha|\ge 1$, a standard energy estimate for $\partial_{\vec{x}}^{\alpha}\vec{u}^k$ based on the equation for $\partial_{\vec{x}}^{\alpha}\vec{u}^k$ in \eqref{pukt} yields that
	\begin{equation} \nonumber
		\begin{aligned}
			\frac{1}{2} \frac{d}{dt}\left\|\partial^{\alpha}_{\vec x} \vec{u}^k \right\|_{L^2}^2 + a\left\|\partial^{\alpha}_{\vec x} \vec{u}^k \right\|_{L^2}^2 &=\int_{\mathbb{R}^3} \partial^{\alpha}_{\vec x} \vec{u}^k \cdot \left[-\frac{1}{\bar{\rho}^k} \nabla \partial^{\alpha}_{\vec x} p^k - \partial^{\alpha}_{\vec x}\left( \vec{u}^k\cdot \nabla \vec u^k\right)  +\partial_{\vec{x}}^{\alpha} \left( \frac{\rho^k-\bar{\rho}^k}{\bar{\rho}^k\rho^k} \nabla p^k\right) \right] d\vec{x}\\
			& \le \frac{a}{4} \left\|\partial^{\alpha}_{\vec x} \vec{u}^k \right\|_{L^2}^2 + C\left\|\nabla^{|\alpha|} \vec{u}^k\right\|_{L^2}^2 \left( \left\|\vec{u}^k \right\|_{W^{1,\infty}}+\left\|\left(\rho^k-\bar{\rho}^k\right) \right\|_{L^\infty} + {{\left\|\nabla^{|\alpha|} \frac{1}{\rho^k}\right\|_{L^2}}} \right)\\
			&\quad + C \left[\left\|\nabla^{|\alpha|+1} \vec{u}^k\right\|_{L^2}^2 \cdot \left\|\vec{u}^k\right\|_{L^\infty}+\left\|\nabla^{|\alpha|+1} p^k\right\|_{L^2}^2 \left( 1+\left\|\left(\rho^k-\bar{\rho}^k\right) \right\|_{L^\infty}\right) +  \left\|\nabla p^k \right\|_{L^\infty}^2 \cdot  \left\|\nabla^{|\alpha|} \frac{1}{\rho^k}\right\|_{L^2}\right],
		\end{aligned}
	\end{equation}
and thus
	\begin{equation} \label{umlow}
		\frac{1}{2} \frac{d}{dt} \left\|\nabla^{m-1} \vec{u}^k\right\|_{L^2}^2 + \frac{a}{2} \left\|\nabla^{m-1} \vec{u}^k\right\|_{L^2}^2 \le C\left( \delta_2^k \left\|\nabla^m \vec{u}^k\right\|_{L^2}^2 + \left\|\nabla^m p^k\right\|_{L^2}^2 + \delta_{m-1}^k \left\|\nabla p^k\right\|_{L^\infty}^2\right),\ \ m\ge 2,
	\end{equation}
	\begin{equation} \label{u0low}
		\begin{aligned}
			\frac{1}{2} \frac{d}{dt} \left\|\vec{u}^k\right\|_{L^2}^2 + \frac{a}{2} \left\|\vec{u}^k\right\|_{L^2}^2 &\le C\left\|\nabla p^k\right\|_{L^2}^2 \left( 1+ \left\|\left(\rho^k -\bar{\rho}^k \right)\right\|_{L^\infty}\right) \le C\left\|\nabla p^k\right\|_{L^2}^2.
		\end{aligned}
	\end{equation}
	Adding $\|\nabla^m p^k \|_{L^2}^2$ to both sides of \eqref{pumenes}, and coupling the resulting estimate with $\bar{P}^k(R+C_v)/C_v \cdot \eqref{umlow}$, $\bar{P}^k(R+C_v)/C_v \cdot \eqref{u0low}$ together with the Sobolev embedding inequaliity, it holds that
	\begin{equation}\label{pugron}
		\begin{aligned}
			&\frac{1}{2\bar{\rho}^k}\frac{d}{dt} \left\|\nabla^m p^k\right\|_{H^{l-m}}^2+\frac{\bar{P}^k\left(R+C_v\right)}{2C_v}\frac{d}{dt} \left\|\nabla^{m-1} \vec{u}^k\right\|_{H^{l-m+1}}^2 + \sum_{|\alpha|=m}^l \frac{R+C_v}{2C_v \bar{\rho}^k} \frac{d}{dt} \int_{\mathbb{R}^3} \left(p^k \rho^k +  \bar{P}^k \left( \rho^k-\bar{\rho}^k\right)\right) \left| \partial^{\alpha}_{\vec x}\vec{u}^k\right|^2 d\vec{x}\\
			&\quad +\varepsilon_1 \sum_{|\alpha|=m}^{l-1} \frac{d}{dt} \int_{\mathbb{R}^3} \nabla \partial_{\vec{x}}^{\alpha} p^k \cdot \partial_{\vec{x}}^{\alpha} \vec{u}^k d\vec{x} +\frac{a\bar{P}^k\left(R+C_v\right)}{8C_v} \left\|\nabla^{m-1} \vec{u}^k\right\|_{H^{l-m+1}}^2+ \frac{\varepsilon_1}{4\bar{\rho}^k} \left\| \nabla^m p^k \right\|_{H^{l-m}}^2\\
			&\le C\left\| \nabla^m p^k \right\|_{L^2}^2 + C\delta_l^k
			\begin{cases}
				0 & \text{for } m=1,2,\\
				\left\|\nabla p^k\right\|_{L^\infty}^2  & \text{for } m = 3,\\
				\left( \left\|\nabla p^k\right\|_{L^\infty}^2 + \left\|\nabla \vec{u}^k \right\|_{L^\infty}^2 \right) & \text{for } m \ge 4.
			\end{cases} 
		\end{aligned}
	\end{equation}
	
	Moreover, from Theorem~\ref{greenlp} and Duhamel's principle, one has that
	\begin{equation}\nonumber
		\begin{aligned}
			\left\| \nabla p^k \right\|_{L^2} &\le C(1+t)^{-\frac{5}{4}}\left(\left\|p_0^k\right\|_{L^1 \cap H^1} + (1+t)^{-\frac{1}{2}} \left\|\vec{u}_0^k\right\|_{L^1 \cap H^1} \right)\\
			& \quad + C\int_0^t (1+t-\tau)^{-\frac{5}{4}} \left(\left\|R_p^k(\cdot,\tau)\right\|_{L^1 \cap H^1} + (1+t-\tau)^{-\frac{1}{2}} \left\|R_{\vec{u}}^k(\cdot,\tau)\right\|_{L^1 \cap H^1} \right) d\tau,
		\end{aligned}
	\end{equation}
	which together with the definitions of $R_p^k$ and $R_{\vec{u}}^k$ given in \eqref{pk} yields that
	\begin{equation}\label{rpul1}
		\begin{aligned}
			\left\|\left(R_p^k,R_{\vec{u}}^k\right) (\cdot,\tau) \right\|_{L^1} &\le C\left[ \left\|\left(p^k,\vec{u}^k\right) \!(\cdot,\tau) \right\|_{L^2} \cdot \left\|\nabla \vec{u}^k(\cdot,\tau) \right\|_{L^2} + \left( \left\|\vec{u}^k(\cdot,\tau) \right\|_{L^2} + \left\|\left( \rho^k-\bar{\rho}^k\right) \!(\cdot,\tau) \right\|_{L^2}\right) \left\|\nabla p^k(\cdot,\tau) \right\|_{L^2} \right]\\
			& \le C \delta_1^k \left( \left\|\nabla \vec{u}^k(\cdot,\tau) \right\|_{L^2}+\left\|\nabla p^k(\cdot,\tau) \right\|_{L^2} \right),
		\end{aligned} 
	\end{equation}
	\begin{equation}\label{rpul2}
		\begin{aligned}
			\left\|\left(R_p^k,R_{\vec{u}}^k\right) \! (\cdot,\tau) \right\|_{L^2} &\le C\left[ \left\|\left(p^k,\vec{u}^k\right) \!(\cdot,\tau) \right\|_{L^\infty} \cdot \left\|\nabla \vec{u}^k(\cdot,\tau) \right\|_{L^2} + \left( \left\|\vec{u}^k(\cdot,\tau) \right\|_{L^\infty} + \left\|\left( \rho^k-\bar{\rho}^k\right) \!(\cdot,\tau) \right\|_{L^\infty}\right) \left\|\nabla p^k(\cdot,\tau) \right\|_{L^2} \right]\\
			& \le C \delta_2^k \left( \left\|\nabla \vec{u}^k(\cdot,\tau) \right\|_{L^2}+\left\|\nabla p^k(\cdot,\tau) \right\|_{L^2} \right),
		\end{aligned} 
	\end{equation}
	and
	\begin{equation}\nonumber
		\begin{aligned}
			\left\|\nabla \left(R_p^k,R_{\vec{u}}^k\right) \! (\cdot,\tau) \right\|_{L^2} &\le C\left[ \left\|\left(p^k,\vec{u}^k\right) \!(\cdot,\tau) \right\|_{W^{1,\infty}}\cdot \left\|\nabla \vec{u}^k(\cdot,\tau) \right\|_{H^1} + \left( \left\|\vec{u}^k(\cdot,\tau) \right\|_{W^{1,\infty}} + \left\|\left( \rho^k-\bar{\rho}^k\right) \!(\cdot,\tau) \right\|_{W^{1,\infty}}\right) \left\|\nabla p^k(\cdot,\tau) \right\|_{H^1} \right]\\
			& \le C \delta_3^k \left( \left\|\nabla \vec{u}^k(\cdot,\tau) \right\|_{H^1}+\left\|\nabla p^k(\cdot,\tau) \right\|_{H^1} \right),
		\end{aligned} 
	\end{equation}
	and thus one obtains that
	\begin{equation}\label{nablapk}
		\begin{aligned}
			\left\| \nabla p^k \right\|_{L^2} &\le C(1+t)^{-\frac{5}{4}}\left(\left\|p_0^k\right\|_{L^1 \cap H^1} + (1+t)^{-\frac{1}{2}} \left\|\vec{u}_0^k\right\|_{L^1 \cap H^1} \right) + C\delta_3^k \int_0^t (1+t-\tau)^{-\frac{5}{4}} (1+\tau)^{-\frac{5}{4}} \sqrt{\mathcal{M}_1^k(\tau)} \, d\tau\\
			& \le C(1+t)^{-\frac{5}{4}} \left(\left\|p_0^k\right\|_{L^1 \cap H^1} + (1+t)^{-\frac{1}{2}} \left\|\vec{u}_0^k\right\|_{L^1 \cap H^1} + \delta_3^k \sqrt{\mathcal{M}_1^k(t)} \right).
		\end{aligned}
	\end{equation}
	Substituting \eqref{nablapk} into \eqref{pugron} for $m=1$ and applying Gronwall's inequality yields
	\begin{equation} \nonumber 
		\begin{aligned}
			\left\|\vec{u}^k(\cdot, t)\right\|_{H^l}^2  + \left\|\nabla p^k(\cdot, t)\right\|_{H^{l-1}}^2 & \le C e^{- t/C} \left(\left\|\vec{u}_0^k \right\|_{H^l}^2  + \left\|\nabla p_0^k\right\|_{H^{l-1}}^2 \right)\\
			& \quad + C (1+t)^{-\frac{5}{2}} \left(\left\|p_0^k\right\|_{L^1 \cap H^1}^2 +(1+t)^{-1} \left\|\vec{u}_0^k\right\|_{L^1 \cap H^1}^2 + \left( \delta_3^k \right)^2 \mathcal{M}_1^k(t)\right),
		\end{aligned}
	\end{equation}
	and hence
	\begin{equation} \nonumber 
		\mathcal{M}_1^k(t) \le  C \left(\left\|\left( p_0^k,\vec{u}_0^k\right) \right\|_{L^1 \cap H^l}^2 + \left( \delta_3^k \right)^2 \mathcal{M}_1^k(t)\right),
	\end{equation}
	which implies
	\begin{equation} \nonumber
		\mathcal{M}_1^k(t) \le C\left\|\left( p_0^k,\vec{u}_0^k\right) \right\|_{L^1 \cap H^l}^2,
	\end{equation}
	and verifies \eqref{pude}.
	
	Then, combining \eqref{pude} and \eqref{skhl}, one obtains the following estimate for $\|s^k\|_{H^l}^2$:
	\begin{equation} \nonumber
		\left\|s^k(\cdot,t)\right\|_{H^l}^2 \le  \exp\left\lbrace C \left\| \left( p_0^k, \vec{u}_0^k\right)\right\|_{L^1 \cap H^l} \right\rbrace  \left\|s^k_0\right\|_{H^l}^2,
	\end{equation}
	which together with \eqref{puhlenes} verifies the ansatz assumption \eqref{priori} and also \eqref{genenes}, and finishes the proof.
\end{proof}

\begin{proof}[\textbf{Proof of Lemma~\ref{decay}.}] The proofs of \eqref{decaygen2} and \eqref{decaygen3} are similar to the one of \eqref{pude}:
	Denote
	$$ \mathcal{M}_i^k(t):= \underset{0 \le \tau \le t}{\sup} (1+\tau)^{2m_i} \left( \left\|\nabla^{i-1} \vec{u}^k (\cdot, \tau)\right\|_{H^{l-i+1}}^2 + \left\|\nabla^i p^k (\cdot, \tau)\right\|_{H^{l-i}}^2 \right),$$
	with $m_2=7/4$ and $m_3=2$. From \eqref{pugron} and 
	\begin{equation}\label{nablampk}
		\begin{aligned}
			\left\| \nabla^i p^k \right\|_{L^2} &\le C(1+t)^{-\frac{3}{4}-\frac{i}{2}}\left(\left\|p_0^k\right\|_{L^1 \cap H^i} + (1+t)^{-\frac{1}{2}} \left\|\vec{u}_0^k\right\|_{L^1 \cap H^i} \right)\\
			& \quad + C\int_0^{t/2} (1+t-\tau)^{-\frac{3}{4}-\frac{i}{2}} \left(\left\|R_p^k(\cdot,\tau)\right\|_{L^1} + (1+t-\tau)^{-\frac{1}{2}} \left\|R_{\vec{u}}^k(\cdot,\tau)\right\|_{L^1} \right) d\tau\\
			&\quad + C\int_{t/2}^t (1+t-\tau)^{-\frac{i}{2}} \left(\left\|R_p^k(\cdot,\tau)\right\|_{L^2} + (1+t-\tau)^{-\frac{1}{2}} \left\|R_{\vec{u}}^k(\cdot,\tau)\right\|_{L^2} \right) d\tau\\
			&\quad + C\int_0^t e^{-R_0(t-\tau)} \left( \left\|\nabla^i R_p^k(\cdot,\tau)\right\|_{L^2} + \left\|\nabla^i R_{\vec{u}}^k(\cdot,\tau)\right\|_{L^2}\right) d\tau,\ \ \ \ i=2,3,
		\end{aligned} 
	\end{equation}
	 one first derives estimates for $\bigl\|\bigl( R_p^k, R_{\vec{u}}^k \bigr)(\cdot,\tau)\bigr\|_{L^1 \cap H^i}$. Combining \eqref{genenes}, \eqref{pude}, and the initial condition \eqref{initial}, $\delta_l^k$ in \eqref{rpul1} and \eqref{rpul2} taken from the a priori estimate \eqref{priori} in the proof of global existence, can be replaced by $\epsilon_k$; and one obtains that
	\begin{equation}\label{rpuil1}
		\left\|\left(R_p^k,R_{\vec{u}}^k\right) \! (\cdot,\tau) \right\|_{L^1} \le C \epsilon_k \left( \left\|\nabla \vec{u}^k(\cdot,\tau) \right\|_{L^2}+\left\|\nabla p^k(\cdot,\tau) \right\|_{L^2} \right) \le C\epsilon_k(1+\tau)^{-\frac{5}{4}} \left\|\left(p_0^k, \vec{u}_0^k \right) \right\|_{L^1 \cap H^3},
	\end{equation}
	\begin{equation}\label{rpuil2}
		\begin{aligned}
			\left\|R_p^k (\cdot,\tau) \right\|_{L^2} &\le C\left\|\nabla p^k(\cdot,\tau) \right\|_{H^1} \cdot \left\|\nabla \vec{u}^k(\cdot,\tau) \right\|_{H^1}\le C(1+\tau)^{-\frac{5}{2}} \left\|\left(p_0^k, \vec{u}_0^k \right) \right\|_{L^1 \cap H^3}^2,\\
			\left\|R_{\vec{u}}^k(\cdot,\tau) \right\|_{L^2} &\le C(1+\tau)^{-\frac{5}{2}} \left\|\left(p_0^k, \vec{u}_0^k \right) \right\|_{L^1 \cap H^3}^2 + C\epsilon_k \left\|\nabla p^k(\cdot,\tau) \right\|_{L^\infty},
		\end{aligned} 
	\end{equation}
	\begin{equation}\label{rpuhi}
		\begin{aligned}
			&\left\|R_p^k (\cdot,\tau) \right\|_{H^i} \le C \left\|\left(p^k,\vec{u}^k\right) \! (\cdot,\tau) \right\|_{W^{1,\infty}} \cdot \left\|\nabla^i \left( p^k, \vec{u}^k\right)(\cdot,\tau) \right\|_{H^1} \le C\epsilon_k \left\|\nabla^i \left( p^k, \vec{u}^k\right) \!(\cdot,\tau) \right\|_{H^1} \le C\epsilon_k (1+\tau)^{-m_i} \sqrt{\mathcal{M}_i^k(\tau)},\\
			&\left\|R_{\vec{u}}^k(\cdot,\tau) \right\|_{H^i} \le C\Bigg( \left\|\vec{u}^k(\cdot,\tau) \right\|_{W^{1,\infty}} \cdot \left\|\nabla^i \vec{u}^k(\cdot,\tau) \right\|_{H^1} +  \left\|\left( \rho^k-\bar{\rho}^k\right) \!(\cdot,\tau) \right\|_{L^\infty} \cdot \left\|\nabla^{i+1} p^k(\cdot,\tau) \right\|_{L^2}\\
			&\quad \quad \quad \quad \quad \quad \quad + \left\|\nabla p^k(\cdot,\tau) \right\|_{L^\infty} \cdot \left\|\nabla^i \left(\frac{\rho^k-\bar{\rho}^k}{\bar{\rho}^k \rho^k} \right) (\cdot,\tau) \right\|_{L^2} \Bigg)\\
			&\quad \quad \quad \quad \quad \le C\epsilon_k \left(  (1+\tau)^{-m_i}  \sqrt{\mathcal{M}_i^k(\tau)} + \left\|\nabla p^k(\cdot,\tau) \right\|_{L^\infty}\right).
		\end{aligned} 
	\end{equation}
	
One takes $m=2$ in \eqref{nablampk}, together with \eqref{rpuil1}--\eqref{rpuhi} and the estimate
\begin{equation}\nonumber
	 \left\|\nabla p^k(\cdot,\tau) \right\|_{L^\infty} \le C\left\|\nabla^2 p^k(\cdot,\tau) \right\|_{H^1} \le C(1+\tau)^{-m_2} \sqrt{\mathcal{M}_2^k(\tau)}=C(1+\tau)^{-\frac{7}{4}} \sqrt{\mathcal{M}_2^k(\tau)},
\end{equation}
and obtains
\begin{equation}\nonumber
	\begin{aligned}
		\left\| \nabla^2 p^k(\cdot,t) \right\|_{L^2} &\le C(1+t)^{-\frac{7}{4}}\left(\left\|p_0^k\right\|_{L^1 \cap H^2} + (1+t)^{-\frac{1}{2}} \left\|\vec{u}_0^k\right\|_{L^1 \cap H^2} \right)\\
		& \quad + C\epsilon_k \int_0^{t/2} (1+t-\tau)^{-\frac{7}{4}} (1+\tau)^{-\frac{5}{4}} d\tau \cdot  \left\|\left(p_0^k, \vec{u}_0^k \right) \right\|_{L^1 \cap H^3}\\
		&\quad + C\int_{t/2}^t (1+t-\tau)^{-1} \left[ (1+\tau)^{-\frac{5}{2}} \left\|\left(p_0^k, \vec{u}_0^k \right) \right\|_{L^1 \cap H^3}^2 + \epsilon_k (1+t-\tau)^{-\frac{1}{2}} (1+\tau)^{-\frac{7}{4}} \sqrt{\mathcal{M}_2^k(\tau)} \, \right] d\tau\\
		&\quad + C\epsilon_k \int_0^t e^{-R_0(t-\tau)} (1+\tau)^{-\frac{7}{4}} \sqrt{\mathcal{M}_2^k(\tau)} \, d\tau\\
		& \le C(1+t)^{-\frac{7}{4}}\left(\left\|\left(p_0^k, \vec{u}_0^k \right) \right\|_{L^1 \cap H^3}+\epsilon_k \sqrt{\mathcal{M}_2^k(t)}\right).
	\end{aligned}
\end{equation}
Combining the above estimate for $\|\nabla^2 p^k\|_{L^2}$ with \eqref{pugron} for $m=2$ yields that
\begin{equation} \nonumber 
	\begin{aligned}
		\left\|\nabla \vec{u}^k(\cdot, t)\right\|_{H^{l-1}}^2  + \left\|\nabla^2 p^k(\cdot, t)\right\|_{H^{l-2}}^2 & \le C e^{- t/C} \left(\left\|\nabla \vec{u}_0^k \right\|_{H^{l-1}}^2  + \left\|\nabla^2 p_0^k\right\|_{H^{l-2}}^2 \right) + C (1+t)^{-\frac{7}{2}} \left(\left\|\left( p_0^k,\vec{u}_0^k\right) \right\|_{L^1 \cap H^3}^2 + \left( \epsilon_k \right)^2 \mathcal{M}_2^k(t)\right).
	\end{aligned}
\end{equation}
It then follows that
\begin{equation} \nonumber 
	\mathcal{M}_2^k(t) \le  C \left(\left\|\left( p_0^k,\vec{u}_0^k\right) \right\|_{L^1 \cap H^l}^2 + \left( \epsilon^k \right)^2 \mathcal{M}_2^k(t)\right),
\end{equation}
which results in that
\begin{equation} \nonumber
	\mathcal{M}_2^k(t) \le C\left\|\left( p_0^k,\vec{u}_0^k\right) \right\|_{L^1 \cap H^l}^2,
\end{equation}
and verifies \eqref{decaygen2}.

For $i=3$, by an argument analogous to the case $i=2$, and invoking \eqref{pugron}, \eqref{nablampk}--\eqref{rpuhi} together with the ansatz \eqref{anasn} which gives $\|\nabla p^k(\cdot, t)\|_{L^\infty} \le C(1+t)^{-2} \mathcal{N}^k(t)$, one has that
	\begin{equation}\nonumber
		\begin{aligned}
			\left\| \nabla^3 p^k(\cdot,t) \right\|_{L^2} &\le C(1+t)^{-\frac{9}{4}}\left(\left\|p_0^k\right\|_{L^1 \cap H^3} + (1+t)^{-\frac{1}{2}} \left\|\vec{u}_0^k\right\|_{L^1 \cap H^3} \right)\\
			& \quad + C\epsilon_k \int_0^{t/2} (1+t-\tau)^{-\frac{9}{4}} (1+\tau)^{-\frac{5}{4}} d\tau \cdot  \left\|\left(p_0^k, \vec{u}_0^k \right) \right\|_{L^1 \cap H^3}\\
			&\quad + C\int_{t/2}^t (1+t-\tau)^{-\frac{3}{2}} \left[ (1+\tau)^{-\frac{5}{2}} \left\|\left(p_0^k, \vec{u}_0^k \right) \right\|_{L^1 \cap H^3}^2 + \epsilon_k (1+t-\tau)^{-\frac{1}{2}} (1+\tau)^{-2}\mathcal{N}^k(\tau) \, \right] d\tau\\
			&\quad + C\epsilon_k \int_0^t e^{-R_0(t-\tau)} (1+\tau)^{-2} \left(  \sqrt{\mathcal{M}_3^k(\tau)}+\mathcal{N}^k(\tau)\right)  \, d\tau\\
			& \le C(1+t)^{-\frac{9}{4}} \left\|\left(p_0^k, \vec{u}_0^k \right) \right\|_{L^1 \cap H^3}+ C\epsilon_k (1+t)^{-2} \left(  \sqrt{\mathcal{M}_3^k(t)}+\mathcal{N}^k(t)\right),
		\end{aligned}
	\end{equation}
	which combined with \eqref{pugron} for $m=3$ yields that
	\begin{equation}\nonumber
		\begin{aligned}
			\left\|\nabla^2 \vec{u}^k(\cdot, t)\right\|_{H^{l-2}}^2  + \left\|\nabla^3 p^k(\cdot, t)\right\|_{H^{l-3}}^2 & \le C (1+t)^{-\frac{9}{2}} \left\|\left( p_0^k,\vec{u}_0^k\right) \right\|_{L^1 \cap H^l}^2 + C (1+t)^{-4}\left( \left( \epsilon_k \right)^2 \mathcal{M}_3^k(t) + \left( \mathcal{N}^k(t)\right)^2\right).
		\end{aligned}
	\end{equation}
	Thus one has that
	\begin{equation} \nonumber 
		\mathcal{M}_3^k(t) \le  C \left(\left\|\left( p_0^k,\vec{u}_0^k\right) \right\|_{L^1 \cap H^l}^2 + \left( \epsilon^k \right)^2 \mathcal{M}_2^k(t) + \left( \mathcal{N}^k(t)\right)^2\right),
	\end{equation}
	and then
	\begin{equation} \nonumber
		\mathcal{M}_3^k(t) \le C\left(\left\|\left( p_0^k,\vec{u}_0^k\right) \right\|_{L^1 \cap H^l}^2 + \left( \mathcal{N}^k(t)\right)^2\right),
	\end{equation}
	which verifies \eqref{decaygen3}.
\end{proof}

%%%%%%%%%%%%%%%%%%%%%%%%%%%%%%%%%%%%%%%%%%%%%%%%%%%%%%%%%%%%%%%%%%%%%%%%%%%%%%%%%5
%
%
%                        Appendix II
%
%
%%%%%%%%%%%%%%%%%%%%%%%%%%%%%%%%%%%%%%%%%%%%%%%%%%%%%%%%%%%%%%%%%%%%%%%%%%%%%%%%%

\setcounter{equation}{0}

\section{Appendix II: Computational Lemmas and the Proofs}
In this section, we provide the proofs for all the computational lemmas used for the nonlinear wave propagations. From \eqref{fs1k}--\eqref{greenfi} and Lemma~\ref{lerpu}, the following lemmas are given to estimate $\vec{S}_{R}^k$ defined in \eqref{v}.
\begin{lemma}\label{leker}
	Let $D$ be a positive constant and $m$ be a nonnegative integer.
 	Then the following holds:
	\begin{align}
			&(1+t)^{-(3+m)/2} e^{-\frac{|\vec{x}|^2}{Dt}} \underset{(\vec{x},t)}{\ast} (1+t)^{1/2} w_3^k(\vec{x},t) \cdot w_3^k(\vec{x},t) \le C\left[ (1+t)^{-\frac{3+\min\{m,4\}}{2}} e^{-\frac{|\vec{x}|^2}{C_{0,k}t}}+(1+t)^{-\frac{3+m}{2}} e^{-(|\vec{x}|+t)/D_{0,k}}\right], \label{kercon1}\\
			&(1+t)^{-(3+m)/2} e^{-\frac{|\vec{x}|^2}{Dt}}  \underset{(\vec{x},t)}{\ast} (1+t)^{-2} e^{-|\vec{x}|} \le C(1+t)^{-\frac{3+\min\{m,1\}}{2}}\left( e^{-\frac{|\vec{x}|^2}{4Dt}} +e^{-(|\vec{x}|+t)/8}\right) \le C(1+t)^{-\frac{3+\min\{m,1\}}{2}}\left( e^{-\frac{|\vec{x}|^2}{C_{0,k}t}} +e^{-(|\vec{x}|+t)/D_{0,k}}\right), \label{kercon2}	 	
	\end{align}
	where $w_3^k(\vec{x},t)$ is defined in \eqref{w0123} and the constants $C_{0,k}$ and $D_{0,k}$ satisfy \eqref{cd0k}.
\end{lemma}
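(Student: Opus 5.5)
We have to prove the two space-time convolution bounds \eqref{kercon1}--\eqref{kercon2} directly. The plan is to split the time integral at $t/2$ and the space integral at $|\vec y|=|\vec x|/2$, which is the standard device for heat-kernel convolutions. We write
\[
\int_0^t\int_{\mathbb{R}^3}(1+t-\tau)^{-(3+m)/2}e^{-\frac{|\vec x-\vec y|^2}{D(t-\tau)}}\,g(\vec y,\tau)\,d\vec y\,d\tau ,
\]
where $g$ is the relevant source. The two elementary facts used throughout are
\[
\int_{\mathbb{R}^3} e^{-\frac{|\vec x-\vec y|^2}{D(t-\tau)}}\,d\vec y\le C(t-\tau)^{3/2},
\]
and the observation that on the region $|\vec y|\le|\vec x|/2$ the heat factor obeys $e^{-|\vec x-\vec y|^2/(D(t-\tau))}\le e^{-|\vec x|^2/(4Dt)}$.

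For \eqref{kercon2} we take $g=(1+\tau)^{-2}e^{-|\vec y|}$.

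On the region $|\vec y|>|\vec x|/2$ we have $e^{-|\vec y|}\le e^{-|\vec x|/4}e^{-|\vec y|/2}$. The time integral $\int_0^t(1+\tau)^{-2}\,d\tau$ is then bounded, leaving a factor $e^{-|\vec x|/4}$. To turn this into the target form we separate cases. When $|\vec x|\ge t$, the factor $e^{-|\vec x|/4}$ dominates $e^{-(|\vec x|+t)/8}$. When $|\vec x|<t$, we bound $e^{-|\vec x|/4}\le 1\le Ce^{-|\vec x|^2/(4Dt)}$ up to the time decay, which is recovered below.

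On the region $|\vec y|\le|\vec x|/2$ we extract $e^{-|\vec x|^2/(4Dt)}$ and handle time as follows:
\begin{itemize}
\item For $\tau\le t/2$, the factor $(1+t-\tau)^{-(3+m)/2}\le C(1+t)^{-(3+m)/2}$, and $\int(1+\tau)^{-2}\,d\tau<\infty$.
\item For $\tau\ge t/2$, the source gives $(1+\tau)^{-2}\le C(1+t)^{-2}$, and the space integral of the heat kernel gives $(1+t-\tau)^{-(3+m)/2}(t-\tau)^{3/2}$. Its time integral over $[t/2,t]$ grows at most like $(1+t)$ when $m=0$.
\end{itemize}
Combining the two time pieces gives the rate $(1+t)^{-(3+\min\{m,1\})/2}$, since $(1+t)^{-2}(1+t)^{1}\le(1+t)^{-3/2}$ for $m=0$ and $(1+t)^{-2}$ is the cap for $m\ge1$. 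The second inequality in \eqref{kercon2} is immediate from \eqref{cd0k}, because $C_{0,k}\ge 4D$ and $D_{0,k}\ge 8$.

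For \eqref{kercon1} we expand $(1+\tau)^{1/2}(w_3^k)^2$ into three pieces.
\begin{itemize}
\item The heat-heat piece is $(1+\tau)^{-7/2}e^{-2|\vec y|^2/(C_{3,k}\tau)}$.
\item The cross piece is $(1+\tau)^{-3/2}e^{-|\vec y|^2/(C_{3,k}\tau)}e^{-(|\vec y|+\tau)/D_{3,k}}$.
\item The pure exponential piece is $(1+\tau)^{1/2}e^{-2(|\vec y|+\tau)/D_{3,k}}$.
\end{itemize}

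The last two pieces decay exponentially in both $\vec y$ and $\tau$, so they behave like the source in \eqref{kercon2}, only better. Their contribution is $(1+t)^{-(3+m)/2}$ times a Gaussian or exponential in $\vec x$, which fits the stated right-hand side.

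The heat-heat piece is the main step, and I would treat it with the standard Gaussian-Gaussian convolution estimate. The key identity is that the spatial convolution of $e^{-|\cdot|^2/(D(t-\tau))}$ with $e^{-|\cdot|^2/(C\tau)}$ is bounded by
\[
C\Big(\tfrac{(t-\tau)\tau}{t}\Big)^{3/2}e^{-|\vec x|^2/(\max\{D,C\}t)} .
\]
For $\tau\le t/2$, the heat kernel is bounded by $(1+t)^{-(3+m)/2}$, and $\int(1+\tau)^{-7/2}\tau^{3/2}\,d\tau$ is finite. For $\tau\ge t/2$, the source gives $(1+t)^{-7/2}$ and the kernel contributes $\int(1+t-\tau)^{-(3+m)/2}(t-\tau)^{3/2}\,d\tau$, which is at most $(1+t)$. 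This produces $(1+t)^{-5/2}$ in the worst case $m=0$, and caps the rate at $(1+t)^{-7/2}$ once $m\ge4$. That cap is exactly $\min\{m,4\}$.

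The main obstacle is bookkeeping the Gaussian constants. The output exponent must be at least $C_{0,k}$, yet $C_{3,k}=\mu^6C_{0,k}>C_{0,k}$. I expect to resolve this by using the factor $2$ in $(w_3^k)^2$, since $2/C_{3,k}=2/(\mu^6C_{0,k})$ and $\mu^6=54\cdot 6/343<2$. This is presumably why $\mu$ was chosen as it was, and together with $C_{0,k}\ge 6D_{0,k}$ it lets the resulting Gaussian be dominated by $e^{-|\vec x|^2/(C_{0,k}t)}$.
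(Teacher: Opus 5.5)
\paragraph{Heat--heat piece.} Your treatment of the heat--heat piece in \eqref{kercon1} is sound. The exact Gaussian convolution gives the exponent $D(t-\tau)+\tfrac12 C_{3,k}\tau\le\max\{D,\tfrac{27}{49}C_{0,k}\}\,t\le C_{0,k}t$, since $\mu^6=\nu^2=54/49$ (not $324/343$). This is cleaner than the paper's split at $|\vec y|=6|\vec x|/7$, which uses the exact value of $\nu$. For $m=3$ you need $\int_0^{t/2}(1+s)^{-m/2}ds\le C$ rather than the crude bound $(1+t)$.

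\paragraph{Gap in \eqref{kercon2}.} Your argument does not give the stated rate.
\begin{itemize}
\item On $\tau\ge t/2$ you put the $L^1$ norm on the Gaussian. This leaves $(1+t)^{-2}\int_{t/2}^t(1+t-\tau)^{-(3+m)/2}(t-\tau)^{3/2}d\tau$, which is $(1+t)^{-1}$ for $m=0$, $(1+t)^{-3/2}$ for $m=1$, and $(1+t)^{-2}\log(1+t)$ for $m=2$. The inequality $(1+t)^{-2}(1+t)\le(1+t)^{-3/2}$ you invoke is false.
\item On $|\vec y|>|\vec x|/2$ you discard $(1+t-\tau)^{-(3+m)/2}$. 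You then use $1\le Ce^{-|\vec x|^2/(4Dt)}$ for $|\vec x|<t$, which fails for large $t$.
\end{itemize}
The fix, which is what the paper does, is to put the $L^1$ norm on the localized source. After extracting $e^{-|\vec x|^2/(4Dt)}$ (on $|\vec y|\le|\vec x|/2$) or $e^{-|\vec x|/4}$ (on $|\vec y|>|\vec x|/2$), bound the kernel by $(1+t-\tau)^{-(3+m)/2}$ and integrate $e^{-|\vec y|/2}$. Then use $\int_0^t(1+t-\tau)^{-(3+m)/2}(1+\tau)^{-2}d\tau\le C(1+t)^{-(3+\min\{m,1\})/2}$; no time splitting is needed. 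Finally, $e^{-|\vec x|/4}\le e^{-|\vec x|^2/(4t)}$ for $|\vec x|\le t$ (recall $D>1$), and $e^{-|\vec x|/4}\le e^{-(|\vec x|+t)/8}$ for $|\vec x|>t$.

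\paragraph{Gap in the exponential pieces of \eqref{kercon1}.} These are not ``like the source in \eqref{kercon2}, only better.'' Their spatial rate is $2/D_{3,k}=2/(\nu D_{0,k})$, far below $1$, while the target carries $e^{-(|\vec x|+t)/D_{0,k}}$ with the same $D_{0,k}$.
\begin{itemize}
\item Sources at small $\tau$ contribute with no exponential time decay. So from $|\vec y|>\theta|\vec x|$ you must extract $e^{-\alpha|\vec x|}$ with $\alpha>1/D_{0,k}$ strictly.
\item Your split $\theta=1/2$ only yields $\alpha<1/(\nu D_{0,k})<1/D_{0,k}$. For fixed $t$ and $|\vec x|\to\infty$, such a bound is not dominated by the right-hand side of \eqref{kercon1}.
\item One needs $\theta>\nu/2$. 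The paper takes $\theta=2\nu/3$, extracts $e^{-7|\vec x|/(6D_{0,k})}$, and converts it via the cases $|\vec x|\le 6t$ and $|\vec x|>6t$, using $C_{0,k}\ge 6D_{0,k}$.
\item The cross piece should be dominated by the other two via $2ab\le a^2+b^2$, not treated on its own. Its exponential rate $1/D_{3,k}$ is even smaller.
\end{itemize}
So the constant bookkeeping you flagged for the Gaussian piece is equally essential here, and less automatic. The factor $2$ in $(w_3^k)^2$ and the size of $\nu$ enter through the requirement $2\theta/\nu>1$.
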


\begin{proof}
	First, for $w_3^k(\vec{x},t)$ defined in \eqref{w0123} with positive constants $C_{3,k}$ and $D_{3,k}$, where $C_{3,k}=\nu^2 C_{0,k}$ and $D_{3,k}=\nu D_{0,k}$ with $\nu=\mu^3=3\sqrt{6}/7$, one obtains that
	\begin{equation}\nonumber
		\begin{aligned}
			&\int_0^t \int_{\mathbb{R}^3} (1+t-\tau)^{-\frac{3+m}{2}} e^{-\frac{|\vec{x}-\vec{y}|^2}{D(t-\tau)}} (1+\tau)^{-\frac{7}{2}} e^{-\frac{2|\vec{y}|^2}{C_{3,k}\tau}} d\vec{y} d\tau\\
			&=\int_0^t \left(\int_{|\vec{y}| \le \nu \sqrt{6}|\vec{x}|/3} + \int_{|\vec{y}| > \nu \sqrt{6}|\vec{x}|/3}\right)  (1+t-\tau)^{-\frac{3+m}{2}} e^{-\frac{|\vec{x}-\vec{y}|^2}{D(t-\tau)}}(1+\tau)^{-\frac{7}{2}} e^{-\frac{2|\vec{y}|^2}{\nu^2 C_{0,k}\tau}} d\vec{y} d\tau,
		\end{aligned}
	\end{equation}
	which combined with the estimates $e^{-\frac{|\vec{x}-\vec{y}|^2}{2D(t-\tau)}} \le e^{-\frac{|\vec{x}|^2}{98Dt}}$ for $|\vec{y}| \le \nu \sqrt{6}|\vec{x}|/3 =6|\vec{x}|/7$, and $e^{-\frac{3|\vec{y}|^2}{2\nu^2 C_{0,k}\tau}} \le e^{-\frac{|\vec{x}|^2}{C_{0,k}t}}$	for $|\vec{y}| > \nu \sqrt{6}|\vec{x}|/3$ yields that
	\begin{equation}\label{kerw31}
		\begin{aligned}
			&\int_0^t \int_{\mathbb{R}^3} (1+t-\tau)^{-\frac{3+m}{2}} e^{-\frac{|\vec{x}-\vec{y}|^2}{D(t-\tau)}}(1+\tau)^{-\frac{7}{2}} e^{-\frac{2|\vec{y}|^2}{C_{3,k}\tau}} d\vec{y} d\tau\\
			&\le C\left( e^{-\frac{|\vec{x}|^2}{98Dt}}+e^{-\frac{|\vec{x}|^2}{C_{0,k}t}}\right) \left( \int_0^{t/2} (1+t-\tau)^{-\frac{3+m}{2}} \int_{\mathbb{R}^3} (1+\tau)^{-\frac{7}{2}} e^{-\frac{|\vec{y}|^2}{2\nu^2 C_{0,k}\tau}} d\vec{y} d\tau +\int_{t/2}^t (1+\tau)^{-\frac{7}{2}} \int_{\mathbb{R}^3} (1+t-\tau)^{-\frac{3+m}{2}} e^{-\frac{|\vec{x}-\vec{y}|^2}{2D(t-\tau)}} d\vec{y} d\tau\right)\\
			& \le C\left(e^{-\frac{|\vec{x}|^2}{98Dt}}+e^{-\frac{|\vec{x}|^2}{C_{0,k}t}}\right) \left( (1+t)^{-\frac{3+m}{2}} + (1+t)^{-\frac{7}{2}} \begin{cases}
				(1+t)^{1-\frac{m}{2}}, & m=0,1,\\
				\log(1+t), & m=2,\\
				1, & m \ge 3,
			\end{cases}\right) \le C(1+t)^{-\frac{3+\min\{m,4\}}{2}} e^{-\frac{|\vec{x}|^2}{C_{0,k}t}}
		\end{aligned}
	\end{equation}
	since $C_{0,k}\ge 98D$. Moreover, it holds that
	\begin{equation}\label{kerw32}
		\begin{aligned}
			&\int_0^t \int_{\mathbb{R}^3} (1+t-\tau)^{-\frac{3+m}{2}} e^{-\frac{|\vec{x}-\vec{y}|^2}{D(t-\tau)}}(1+\tau)^{\frac{1}{2}} e^{-2(|\vec{y}|+\tau)/D_{3,k}} d\vec{y} d\tau\\
			&\le C\int_0^t \left(\int_{|\vec{y}| \le 2 \nu |\vec{x}|/3} + \int_{|\vec{y}| > 2\nu|\vec{x}|/3}\right)  (1+t-\tau)^{-\frac{3+m}{2}} e^{-\frac{|\vec{x}-\vec{y}|^2}{D(t-\tau)}} e^{-\frac{4|\vec{y}|+3\tau}{2\nu D_{0,k}}} d\vec{y} d\tau\\
			&\le C(1+t)^{-\frac{3+m}{2}} \left( e^{-\frac{4|\vec{x}|^2}{49Dt}} + e^{-\frac{7|\vec{x}|}{6D_{0,k}}} \right) \le C(1+t)^{-\frac{3+m}{2}} \left( e^{-\frac{4|\vec{x}|^2}{49Dt}} + e^{-\frac{7|\vec{x}|^2}{36D_{0,k}t}} + e^{-\frac{|\vec{x}|+t}{D_{0,k}}} \right),
		\end{aligned}
	\end{equation}
	where one uses $|\vec{x}-\vec{y}| \le 2|\vec{x}|/7$ for $|\vec{y}| \le 2 \nu |\vec{x}|/3 = 2\sqrt{6}|\vec{x}|/7 \le 5|\vec{x}|/7$. One could combine \eqref{kerw31}--\eqref{kerw32} with $C_{0,k} \ge 6D_{0,k}$ to verify \eqref{kercon1}.

	For \eqref{kercon2}, one has that
	\begin{equation}\nonumber
		\begin{aligned}
			&\int_0^t \left(\int_{|\vec{y}| \le |\vec{x}|/2} + \int_{|\vec{y}| > |\vec{x}|/2}\right)  (1+t-\tau)^{-\frac{3+m}{2}} e^{-\frac{|\vec{x}-\vec{y}|^2}{D(t-\tau)}} (1+\tau)^{-2} e^{-|\vec{y}|} d\vec{y} d\tau\\
			&\le C\left( e^{-\frac{|\vec{x}|^2}{4Dt}} + e^{-|\vec{x}|/4} \right) \int_0^t (1+t-\tau)^{-\frac{3+m}{2}} (1+\tau)^{-2} d\tau \int_{\mathbb{R}^3} e^{-|\vec{y}|/2} d\vec{y}\\
			&\le C(1+t)^{-\frac{3+\min\{m,1\}}{2}}\left( e^{-\frac{|\vec{x}|^2}{4Dt}} + e^{-\frac{|\vec{x}|^2}{4t}} +e^{-(|\vec{x}|+t)/8}\right),
		\end{aligned}
	\end{equation}
	which verifies \eqref{kercon2}. Here, the final inequality relies on the two cases $|\vec{x}| \le t$ and $|\vec{x}|>t$.
\end{proof}

The next lemma is provided to estimate convolutions involving exponential functions:
\begin{lemma} \label{leexp}
	Let $L_1, L_2$ be positive constants and an integer $d\in\{0,1,2\}$. Then it holds that
	\begin{align}
		&|\vec{x}|^{-d} e^{-|\vec{x}|/L_1-t/L_2} \underset{(\vec{x},t)}{\ast} (1+t)^{1/2} w_3^k(\vec{x},t) \cdot w_3^k(\vec{x},t) \le C\left[(1+t)^{-7/2} \left(e^{-|\vec{x}|/(14L_1)}+e^{-\frac{|\vec{x}|^2}{C_{0,k}t}}\right) +e^{-|\vec{x}|/D_{0,k}} \left( e^{-2t/(7L_2)} + e^{-t/D_{0,k}} \right)\right],\label{expw3}\\
		&|\vec{x}|^{-d} e^{-|\vec{x}|/L_1-t/L_2} \underset{(\vec{x},t)}{\ast} (1+t)^{-2} e^{-|\vec{x}|} \le C(1+t)^{-2} \left( e^{-|\vec{x}|/(4L_1)} + e^{-|\vec{x}|/2} \right),\label{expst}
	\end{align}
	and for $i=0,1,2$,
	\begin{equation}\label{expwi}
		|\vec{x}|^{-d} e^{-|\vec{x}|/L_1-t/L_2} \underset{(\vec{x},t)}{\ast} (1+t)^{-3/2} w_i^k(\vec{x},t) \le C\left[ (1+t)^{-7/2}\left( e^{-(1-\mu^{-1})|\vec{x}|/(2L_1)}+e^{-\frac{|\vec{x}|^2}{C_{i+1,k}t}}\right) + e^{-|\vec{x}|/D_{i+1,k}} \left(e^{-(1-\mu^{-1}) t/L_2}+e^{-t/D_{i+1,k}}\right)\right],
	\end{equation}
	with $w_{0}^k(\vec{x},t)$, $w_1^k(\vec{x},t)$, $w_2^k(\vec{x},t)$, $w_3^k(\vec{x},t)$ given by \eqref{w0123}.
\end{lemma}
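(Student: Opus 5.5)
Each of the three estimates is a space-time convolution of $|\vec{x}|^{-d}e^{-|\vec{x}|/L_1-t/L_2}$ against a product of Gaussian and exponential profiles, so I would handle them all the same way. First split the time integral into $0\le\tau\le t/2$ and $t/2\le\tau\le t$, and the space integral into $|\vec{y}|\le\theta|\vec{x}|$ and $|\vec{y}|>\theta|\vec{x}|$. The fraction $\theta$ is chosen per estimate: $\theta=1/2$ for \eqref{expst}, roughly $6/7$ or $2\nu/3$ for \eqref{expw3} as in Lemma~\ref{leker}, and $\theta=\mu^{-1}$ for \eqref{expwi}. The kernel factor $|\vec{x}|^{-d}e^{-|\vec{x}|/L_1}$ is integrable in 3-D because $d\le 2$. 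So whenever $|\vec{x}-\vec{y}|$ is comparable to $|\vec{x}|$, we may take out $e^{-(1-\theta)|\vec{x}|/(2L_1)}$ and keep $e^{-|\vec{x}-\vec{y}|/(2L_1)}$ to absorb the $\vec{y}$-integral.

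\textbf{Proof of \eqref{expst}.} This is the simplest case.
\begin{itemize}
\item For $|\vec{y}|\le|\vec{x}|/2$, we have $|\vec{x}-\vec{y}|\ge|\vec{x}|/2$, which gives $e^{-|\vec{x}|/(4L_1)}$.
\item For $|\vec{y}|>|\vec{x}|/2$, we have $e^{-|\vec{y}|}\le e^{-|\vec{x}|/2}$ times an integrable remainder.
\end{itemize}
In both cases the time integral $\int_0^t e^{-(t-\tau)/L_2}(1+\tau)^{-2}\,d\tau\le C(1+t)^{-2}$ closes the estimate. The spatial integrals are bounded by $\||\vec{x}|^{-d}e^{-|\vec{x}|/(2L_1)}\|_{L^1}$ and $\|e^{-|\vec{y}|/2}\|_{L^1}$.

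\textbf{Proof of \eqref{expw3} and \eqref{expwi}.} Expand $w_3^2$ (respectively $w_i$) into its Gaussian part $(1+\tau)^{-4}e^{-2|\vec{y}|^2/(C_{3,k}\tau)}$ (respectively $(1+\tau)^{-2}e^{-|\vec{y}|^2/(C_{i,k}\tau)}$), its exponential part $e^{-2(|\vec{y}|+\tau)/D_{3,k}}$, and cross terms. Cross terms are dominated by the larger of the pure terms.

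\emph{Gaussian part.} On the near region, $|\vec{x}-\vec{y}|\ge(1-\theta)|\vec{x}|$ yields the decay $e^{-c|\vec{x}|/L_1}$. The time integral $\int_0^te^{-(t-\tau)/L_2}(1+\tau)^{-7/2}\,d\tau$ gives $(1+t)^{-7/2}$, and the $\vec{y}$-integral of the Gaussian over $\mathbb{R}^3$ contributes $\tau^{3/2}$. This costs a factor $(1+\tau)^{3/2}$, so I would instead bound the near-region Gaussian by $1$ and integrate the kernel in $\vec{y}$. The resulting exponent is $-7/2$ for \eqref{expw3} and $-3/2-2=-7/2$ for \eqref{expwi}, which matches the claims.

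On the far region, $|\vec{y}|>\theta|\vec{x}|$ and $\tau\le t$ give
\[
e^{-2|\vec{y}|^2/(C_{3,k}\tau)}\le e^{-2\theta^2|\vec{x}|^2/(C_{3,k}t)}.
\]
With $\theta$ chosen so that $2\theta^2/C_{3,k}\ge 1/C_{0,k}$ (for \eqref{expwi}, $\theta^2/C_{i,k}\ge 1/C_{i+1,k}=1/(\mu^2C_{i,k})$, i.e.\ $\theta=\mu^{-1}$), this is the Gaussian $e^{-|\vec{x}|^2/(C_{0,k}t)}$ (respectively $e^{-|\vec{x}|^2/(C_{i+1,k}t)}$). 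The kernel is integrated out with factor $e^{-(t-\tau)/L_2}$.

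\emph{Exponential part.} On the far region, $e^{-|\vec{y}|/D}$ gives $e^{-\theta|\vec{x}|/D}$. The time integral $\int_0^te^{-(t-\tau)/L_2-\tau/D}\,d\tau$ is bounded by $C(e^{-c t/L_2}+e^{-ct/D})$. Here the constants $2/7$ and $(1-\mu^{-1})$ arise from giving up part of the rate, and the relations $D_{i+1,k}=\mu D_{i,k}$ and $D_{3,k}=\nu D_{0,k}$ turn $\theta|\vec{x}|/D_{i,k}$ into $|\vec{x}|/D_{i+1,k}$. On the near region we use the kernel's $e^{-(1-\theta)|\vec{x}|/L_1}$ as before.

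\textbf{Main obstacle.} I expect the main difficulty to be the bookkeeping of constants. The rescaled rates must land exactly on $C_{i+1,k}$ and $D_{i+1,k}$, which is what $\mu=(3\sqrt6/7)^{1/3}$ and \eqref{cd0k} are designed for. In the near-region Gaussian part, one must also avoid the $\tau^{3/2}$ loss by integrating the kernel rather than the Gaussian. I would handle the time factors by splitting $e^{-(t-\tau)/L_2}$ into halves and using $t-\tau\ge t/2$ on $[0,t/2]$.
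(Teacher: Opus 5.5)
Your proposal is correct and follows essentially the same route as the paper. In both, space is split at $|\vec{y}|=\theta|\vec{x}|$, with $\theta=1/2$, $\theta=\mu^{-1}$, and, for $(w_3^k)^2$, $\theta=\nu\sqrt{2}/2\le 6/7$ on the Gaussian part and $\theta=\nu/2$ on the exponential part; cross terms are absorbed by $2ab\le a^2+b^2$. On the far region the weight is bounded by its value at $\theta|\vec{x}|$, on the near region $e^{-(1-\theta)|\vec{x}|/(2L_1)}$ is extracted from the kernel, and in both cases the $L^1$ function $|\vec{x}-\vec{y}|^{-d}e^{-|\vec{x}-\vec{y}|/(2L_1)}$ is integrated in $\vec{y}$.

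One point needs to be made precise. For the pieces decaying like $e^{-c\tau}$, the time integral must be split at $\tau=\mu^{-1}t$, or at $\tau=2\nu t/3$ after absorbing $(1+\tau)^{1/2}$ into $e^{-2\tau/D_{3,k}}$, not at $t/2$ as your last paragraph suggests. A split at $t/2$ only gives $e^{-\mu t/(2D_{i+1,k})}$ (resp.\ $e^{-3t/(4\nu D_{0,k})}$), which is weaker than the stated $e^{-t/D_{i+1,k}}$ (resp.\ $e^{-t/D_{0,k}}$). The later split is exactly what produces the factors $e^{-(1-\mu^{-1})t/L_2}$ and $e^{-2t/(7L_2)}$.
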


\begin{proof}
	First, by arguments analogous to \eqref{kerw31}--\eqref{kerw32}, one has that
	\begin{equation}\nonumber
		\begin{aligned}
			&\int_0^t \left(\int_{|\vec{y}| \le \nu \sqrt{2}|\vec{x}|/2} + \int_{|\vec{y}| > \nu \sqrt{2}|\vec{x}|/2}\right)  |\vec{x}-\vec{y}|^{-d} e^{-|\vec{x}-\vec{y}|/L_1-(t-\tau)/L_2} (1+\tau)^{-\frac{7}{2}} e^{-\frac{2|\vec{y}|^2}{C_{3,k}\tau}} d\vec{y} d\tau\\
			&\le C\left(e^{-|\vec{x}|/(14L_1)}+ e^{-\frac{|\vec{x}|^2}{C_{0,k}t}} \right) \int_0^t e^{-(t-\tau)/L_2} (1+\tau)^{-\frac{7}{2}} d\tau \int_{\mathbb{R}^3} |\vec{x}-\vec{y}|^{-d} e^{-|\vec{x}-\vec{y}|/(2L_1)} d\vec{y}\\
			& \le C(1+t)^{-\frac{7}{2}} \left(e^{-|\vec{x}|/(14L_1)}+e^{-\frac{|\vec{x}|^2}{C_{0,k}t}}\right),
		\end{aligned}
	\end{equation}
	and
	\begin{equation}\nonumber
		\begin{aligned}
			&\left( \int_0^{2\nu t/3} + \int_{2 \nu t/3}^t\right) \left(\int_{|\vec{y}| \le \nu |\vec{x}|/2} + \int_{|\vec{y}| > \nu|\vec{x}|/2}\right)  |\vec{x}-\vec{y}|^{-d} e^{-|\vec{x}-\vec{y}|/L_1-(t-\tau)/L_2} (1+\tau)^{\frac{1}{2}} e^{-2(|\vec{y}|+\tau)/D_{3,k}} d\vec{y} d\tau\\
			&\le  C\left( e^{-|\vec{x}|/(7L_1)} + e^{-|\vec{x}|/D_{0,k}} \right) \left( e^{-2t/(7L_2)} + e^{-t/D_{0,k}}\right),
		\end{aligned}
	\end{equation}
	which relies on the inequalities $\nu \sqrt{2}/2 \le 6/7$ and $\nu /2 \le 2\nu/3 \le 5/7$, and the above estimates yield \eqref{expw3}.
	
	Next, a direct computation yields that
	\begin{equation}\nonumber
		\begin{aligned}
			&\int_0^t \left(\int_{|\vec{y}| \le |\vec{x}|/2} + \int_{|\vec{y}| > |\vec{x}|/2}\right)  |\vec{x}-\vec{y}|^{-d} e^{-|\vec{x}-\vec{y}|/L_1-(t-\tau)/L_2}  (1+\tau)^{-2} e^{-|\vec{y}|} d\vec{y} d\tau\\
			&\le \left( e^{-\frac{|\vec{x}|}{4L_1}} + e^{-\frac{|\vec{x}|}{2}} \right) \int_0^t e^{-(t-\tau)/L_2} (1+\tau)^{-2} d\tau \int_{\mathbb{R}^3} |\vec{x}-\vec{y}|^{-d} e^{-\frac{|\vec{x}-\vec{y}|}{2L_1}} d\vec{y} \le C(1+t)^{-2} \left( e^{-\frac{|\vec{x}|}{4L_1}} + e^{-\frac{|\vec{x}|}{2}} \right),
		\end{aligned}
	\end{equation}
	which implies \eqref{expst}.
	
	Finally, since $C_{i+1,k}=\mu^2 C_{i,k}$ and $D_{i+1,k}=\mu D_{i,k}$ for $i=0,1,2$, with $\mu=\bigl(3\sqrt{6}/7\bigr)^{1/3}>1$, one derives
	\begin{equation}\label{expwical}
		\begin{aligned}
			&\int_0^t \int_{\mathbb{R}^3} |\vec{x}-\vec{y}|^{-d} e^{-|\vec{x}-\vec{y}|/L_1-(t-\tau)/L_2}(1+\tau)^{-3/2} \left( (1+\tau)^{-2} e^{-\frac{|\vec{y}|^2}{C_{i,k}\tau}} + e^{-(|\vec{y}|+\tau)/D_{i,k}} \right) d\vec{y} d\tau\\
			&=\int_0^t \left( \int_{|\vec{y}| \le \mu^{-1}|\vec{x}|}+ \int_{|\vec{y}| > \mu^{-1}|\vec{x}|} \right) |\vec{x}-\vec{y}|^{-d} e^{-|\vec{x}-\vec{y}|/L_1-(t-\tau)/L_2}(1+\tau)^{-3/2} \left( (1+\tau)^{-2} e^{-\frac{\mu^2|\vec{y}|^2}{C_{i+1,k}\tau}} + e^{-\mu(|\vec{y}|+\tau)/D_{i+1,k}} \right) d\vec{y} d\tau\\
			&\le C\left[(1+t)^{-7/2}\left( e^{-(1-\mu^{-1})|\vec{x}|/(2L_1)}+e^{-\frac{|\vec{x}|^2}{C_{i+1,k}t}}\right) + e^{-|\vec{x}|/D_{i+1,k}} \left(e^{-(1-\mu^{-1}) t/L_2}+e^{-t/D_{i+1,k}}\right)\right]
		\end{aligned}
	\end{equation}
	and verifies \eqref{expwi}.
\end{proof}

The last lemma concerns the convolution involving $\mathbb{S}^k$ and $\mathbb{C}^k$, which are defined in \eqref{skck}. The proof relies on Kirchhoff's formulas \eqref{sg}--\eqref{cg}:
\begin{lemma}\label{leskck}
	Let $a$ and $c$ be positive constants. Then one has that
	\begin{equation}\label{skckw3}
		\begin{aligned}
			&\int_0^t e^{-a(t-\tau)/4} \int_{|\vec{y}|=1} (1+\tau)^{1/2} \left(w_3^k\right)^2(\vec{x}+c(t-\tau)\vec{y}, \tau) \, dS(\vec{y}) d\tau\\ &\le C\left[(1+t)^{-7/2} \left( e^{-a|\vec{x}|/(56c)}+e^{-\frac{|\vec{x}|^2}{C_{0,k}t}}\right)+e^{-|\vec{x}|/D_{0,k}} \left(e^{-at/28} + e^{-t/D_{0,k}}\right)\right], 
		\end{aligned}
	\end{equation}
	\begin{equation} \label{skckst}
		\int_0^t e^{-a(t-\tau)/4} \int_{|\vec{y}|=1} (1+\tau)^{-2} e^{-|\vec{x}+c(t-\tau)\vec{y}|} \, dS(\vec{y}) d\tau \le C(1+t)^{-2} \left(e^{-a|\vec{x}|/(16c)} +e^{-|\vec{x}|/2} \right),
	\end{equation}
	and for $i=0,1,2$,
	\begin{equation}\label{skckwi}
		\begin{aligned}
			&\int_0^t e^{-a(t-\tau)/4} \int_{|\vec{y}|=1} (1+\tau)^{-3/2} w_i^k(\vec{x}+c(t-\tau)\vec{y}, \tau) \, dS(\vec{y}) d\tau\\
			&\le C\left[ (1+t)^{-7/2} \left( e^{-a(1-\mu^{-1})|\vec{x}|/(8c)}+e^{-\frac{|\vec{x}|^2}{C_{i+1,k}t}}\right)+e^{-|\vec{x}|/D_{i+1,k}}\left(e^{-a(1-\mu^{-1}) t/8}+e^{-t/D_{i+1,k}}\right)\right].
		\end{aligned}
	\end{equation}
\end{lemma}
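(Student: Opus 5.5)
The plan is to reduce each estimate to an explicit space-time integral, using Kirchhoff's formula \eqref{sg}--\eqref{cg}. The inner integral over the unit sphere is an average of the given function on the sphere of radius $r:=c(t-\tau)$ centred at $\vec{x}$. The key elementary fact is that every point on that sphere satisfies $|\vec{x}+c(t-\tau)\vec{y}|\ge \bigl||\vec{x}|-c(t-\tau)\bigr|$. For each fixed $\tau$ I will therefore bound the integrand by its supremum over the sphere. Then I split the $\tau$-integral according to whether $c(t-\tau)\le |\vec{x}|/2$ (near field) or $c(t-\tau)>|\vec{x}|/2$ (far field), exactly as is done for $\vec{S}_{c,i}$ in Section~\ref{sec:3}.

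\textbf{Estimate \eqref{skckst}.} In the near field, $|\vec{x}+c(t-\tau)\vec{y}|\ge|\vec{x}|/2$, so the integrand is at most $(1+\tau)^{-2}e^{-|\vec{x}|/2}$. Integrating against $e^{-a(t-\tau)/4}$ then gives $C(1+t)^{-2}e^{-|\vec{x}|/2}$, by the standard bound $\int_0^te^{-a(t-\tau)/4}(1+\tau)^{-2}d\tau\le C(1+t)^{-2}$. In the far field, $t-\tau>|\vec{x}|/(2c)$. I write $e^{-a(t-\tau)/4}\le e^{-a|\vec{x}|/(16c)}e^{-a(t-\tau)/8}$ and bound the exponential of the integrand by $1$, which yields $C(1+t)^{-2}e^{-a|\vec{x}|/(16c)}$.

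\textbf{Estimates \eqref{skckw3} and \eqref{skckwi}.} I expand $w_i^k$, or the square $(w_3^k)^2$, into its heat part $(1+\tau)^{-2}e^{-|\cdot|^2/(C_{i,k}\tau)}$ and its exponential part $e^{-(|\cdot|+\tau)/D_{i,k}}$; the cross terms in the square are dominated by the sum of the two squared parts. In the far field the same trick as above extracts $e^{-a|\vec{x}|/(8c)}$, or a weakened version, from $e^{-a(t-\tau)/4}$. The remaining time integral, for example $\int_0^te^{-a(t-\tau)/8}(1+\tau)^{1/2-4}d\tau\le C(1+t)^{-7/2}$, gives the $(1+t)^{-7/2}$ prefactor. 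For \eqref{skckwi} the analogous integral is $\int_0^t e^{-a(t-\tau)/8}(1+\tau)^{-3/2-2}d\tau$. The factors $1-\mu^{-1}$ and $1/56$ come from choosing the near/far threshold as $c(t-\tau)\le(1-\mu^{-1})|\vec{x}|$, respectively as a fraction tied to $\nu=3\sqrt6/7$. This choice is made so that in the near field we retain $|\vec{x}+c(t-\tau)\vec{y}|\ge\mu^{-1}|\vec{x}|$.

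\textbf{Main obstacle: the near-field heat part.} Here one must pass from the weight at time $\tau$ and point $\vec{x}+c(t-\tau)\vec{y}$ to the enlarged-constant weight at time $t$. Using $|\vec{x}+c(t-\tau)\vec{y}|\ge\mu^{-1}|\vec{x}|$ and $\tau\le t$, I get $e^{-|\cdot|^2/(C_{i,k}\tau)}\le e^{-\mu^{-2}|\vec{x}|^2/(C_{i,k}t)}=e^{-|\vec{x}|^2/(C_{i+1,k}t)}$; this uses the relation $C_{i+1,k}=\mu^2C_{i,k}$. The polynomial factor is then handled by the same $(1+t)^{-7/2}$ time integral.

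\textbf{Near-field exponential part.} The bound $e^{-(\mu^{-1}|\vec{x}|+\tau)/D_{i,k}}$ has to be converted into time decay at $t$. I plan to split $\tau\le t/2$ versus $\tau>t/2$. For $\tau\le t/2$, the factor $e^{-a(t-\tau)/4}\le e^{-at/8}$ supplies the time decay. For $\tau>t/2$, the factor $e^{-\tau/D_{i,k}}\le e^{-t/(2D_{i,k})}$ does. Together with $D_{i+1,k}=\mu D_{i,k}$, this gives $e^{-|\vec{x}|/D_{i+1,k}}\bigl(e^{-a(1-\mu^{-1})t/8}+e^{-t/D_{i+1,k}}\bigr)$. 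The conditions in \eqref{cd0k}, namely $D_{0,k}\ge 64c_k/(a(\mu-1))$ and the others, are what make these constants compatible, and I expect checking them to be the fiddly part. The case of \eqref{skckw3} runs identically with $\nu=\mu^3$ in place of $\mu$, using $C_{3,k}=\nu^2C_{0,k}$ and $D_{3,k}=\nu D_{0,k}$; the square doubles the exponents, and that doubling absorbs the factor $\nu$.
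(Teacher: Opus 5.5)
Your architecture matches the paper's: the bound $|\vec x+c(t-\tau)\vec y|\ge|\vec x|-c(t-\tau)$, a near/far split of $c(t-\tau)$ against a fraction of $|\vec x|$, half of $e^{-a(t-\tau)/4}$ spent on spatial decay in the far field, and the relations $C_{i+1,k}=\mu^2C_{i,k}$, $D_{i+1,k}=\mu D_{i,k}$ in the near field. Your treatment of \eqref{skckst} and of the near-field heat parts is the same as the paper's.

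\textbf{The step that fails is the near-field exponential part.} There you are left with $e^{-|\vec x|/D_{i+1,k}}\int_0^t e^{-a(t-\tau)/4}(1+\tau)^{-3/2}e^{-\tau/D_{i,k}}\,d\tau$. Splitting at $\tau=t/2$ gives, on $\tau>t/2$, only $e^{-t/(2D_{i,k})}=e^{-\mu t/(2D_{i+1,k})}$.
\begin{itemize}
\item Since $\mu<2$, this is strictly weaker than $e^{-t/D_{i+1,k}}$.
\item It cannot be absorbed into $e^{-a(1-\mu^{-1})t/8}$ either. That would require $D_{i+1,k}\le 4\mu^2/(a(\mu-1))$, whereas \eqref{cd0k} gives $D_{i+1,k}\ge 8\mu/(a(\mu-1))$.
\end{itemize}
So \eqref{cd0k} does not patch this step; it is exactly what makes it fail. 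The same happens in \eqref{skckw3}. After absorbing $(1+\tau)^{1/2}$ you have at best $e^{-\gamma\tau/D_{0,k}}$ with $\gamma<2/\nu<2$. A $t/2$ split then gives $e^{-\gamma t/(2D_{0,k})}$ rather than $e^{-t/D_{0,k}}$.

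\textbf{The remedy} is to split the time integral at the point where the target rate is attained.
\begin{itemize}
\item For \eqref{skckwi}, split at $\tau=\mu^{-1}t$. For $\tau\ge\mu^{-1}t$ you have $e^{-\mu\tau/D_{i+1,k}}\le e^{-t/D_{i+1,k}}$. For $\tau\le\mu^{-1}t$ you have $e^{-a(t-\tau)/8}\le e^{-a(1-\mu^{-1})t/8}$. This is precisely where the factor $1-\mu^{-1}$ in the time exponent of \eqref{skckwi} comes from.
\item For \eqref{skckw3}, the paper writes $(1+\tau)^{1/2}e^{-2\tau/(\nu D_{0,k})}\le Ce^{-3\tau/(2\nu D_{0,k})}$ and splits at $\tau=2\nu t/3$. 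Then $1-2\nu/3>2/7$ produces $e^{-at/28}$, and $\tau\ge 2\nu t/3$ gives $e^{-t/D_{0,k}}$.
\end{itemize}
With these splits no condition from \eqref{cd0k} is needed in this lemma.

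\textbf{A smaller point on the spatial threshold for \eqref{skckw3}.} It cannot be literally ``$\mu$ replaced by $\nu$.'' The threshold $(1-\nu^{-1})|\vec x|\approx 0.047|\vec x|$ would give only $e^{-a(1-\nu^{-1})|\vec x|/(8c)}$ in the far field, which is weaker than $e^{-a|\vec x|/(56c)}$. You need a threshold of at least $|\vec x|/7$. The doubled Gaussian exponent allows a threshold up to $(1-\sqrt2\nu/2)|\vec x|$, which is the paper's choice. With it, $|\vec x+c(t-\tau)\vec y|\ge\sqrt2\nu|\vec x|/2$ in the near field, and $2(\sqrt2\nu/2)^2/\nu^2=1$.
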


\begin{proof}
	First, for $c(t-\tau) \le \bigl(1-\sqrt{2}\nu/2\bigr)|\vec{x}|$ and $|\vec{y}|=1$, it holds that
	$|\vec{x}+c(t-\tau)\vec{y}| \ge |\vec{x}|-c(t-\tau) \ge \sqrt{2}\nu |\vec{x}|/2 > \nu |\vec{x}|/2$;
	and for $c(t-\tau) > (1-\sqrt{2}\nu/2)|\vec{x}| > |\vec{x}|/7$, one has that
	$e^{-a(t-\tau)/8} \le e^{-a|\vec{x}|/(56c)}$; thus it follows that
	\begin{equation}\nonumber
		\begin{aligned}
			\int_0^t e^{-a(t-\tau)/4} \int_{|\vec{y}|=1}  (1+\tau)^{-\frac{7}{2}} e^{-\frac{2|\vec{x}+c(t-\tau)\vec{y}|^2}{C_{3,k}\tau}} dS(\vec{y}) d\tau
			&\le C\left( e^{-a|\vec{x}|/(56c)}+e^{-\frac{|\vec{x}|^2}{C_{0,k}t}}\right)
			\int_0^t e^{-a(t-\tau)/8}(1+\tau)^{-\frac{7}{2}} d\tau \int_{|\vec{y}|=1} 1 dS(\vec{y})\\
			&\le C(1+t)^{-\frac{7}{2}} \left( e^{-a|\vec{x}|/(56c)}+e^{-\frac{|\vec{x}|^2}{C_{0,k}t}}\right),
		\end{aligned}
	\end{equation}
	and
	\begin{equation}\nonumber
		\begin{aligned}
			&\int_0^t e^{-a(t-\tau)/4} \int_{|\vec{y}|=1}  (1+\tau)^{\frac{1}{2}} e^{-2(|\vec{x}+c(t-\tau)\vec{y}|+\tau)/D_{3,k}} dS(\vec{y}) d\tau\\
			&\le C\left( e^{-a|\vec{x}|/(56c)}+e^{-|\vec{x}|/D_{0,k}}\right)
			\left( \int_0^{2\nu t/3} + \int_{2\nu t/3}^t \right)  e^{-a(t-\tau)/8} e^{-\frac{3\tau}{2\nu D_{0,k}}} d\tau \int_{|\vec{y}|=1} 1 dS(\vec{y})\\
			&\le C\left( e^{-a|\vec{x}|/(56c)}+e^{-|\vec{x}|/D_{0,k}}\right)\left(e^{-at/28} + e^{-t/D_{0,k}}\right),
		\end{aligned}
	\end{equation}
	where the inequality $2\nu/3=2\sqrt{6}/7 <5/7$ holds, and \eqref{skckw3} is verified.
	
	Furthermore, from $|\vec{x}+c(t-\tau)\vec{y}| \ge |\vec{x}|-c(t-\tau) \ge |\vec{x}|/2$ for $c(t-\tau) \le |\vec{x}|/2$, and $(t-\tau)/8 >|\vec{x}|/(16c)$ for $c(t-\tau) > |\vec{x}|/2$, one derives \eqref{skckst} as follows:
	\begin{equation}\nonumber
		\begin{aligned}
			\int_0^t e^{-a(t-\tau)/4} \int_{|\vec{y}|=1}  (1+\tau)^{-2} e^{-|\vec{x}+c(t-\tau)\vec{y}|} dS(\vec{y}) d\tau &\le C\left(e^{-a|\vec{x}|/(16c)} +e^{-|\vec{x}|/2} \right) \int_0^t e^{-a(t-\tau)/8} (1+\tau)^{-2} d\tau \int_{|\vec{y}|=1} 1 dS(\vec{y})\\
			&\le C(1+t)^{-2} \left(e^{-a|\vec{x}|/(16c)} +e^{-|\vec{x}|/2} \right).
		\end{aligned}
	\end{equation}
	
	Finally, one splits the time integral over the regions $c(t-\tau) \le (1-\mu^{-1})|\vec{x}|$ and $c(t-\tau) > (1-\mu^{-1})|\vec{x}|$, follows arguments similar to those for \eqref{expwical}, and makes use of $C_{i+1,k}=\mu^2 C_{i,k}, D_{i+1,k}=\mu D_{i,k}$ to obtain that
	\begin{equation}\nonumber
		\begin{aligned}
			&\int_0^t e^{-a(t-\tau)/4} \int_{|\vec{y}|=1} (1+\tau)^{-3/2} \left( (1+\tau)^{-2} e^{-\frac{|\vec{x}+c(t-\tau)\vec{y}|^2}{C_{i,k}\tau}} + e^{-(|\vec{x}+c(t-\tau)\vec{y}|+\tau)/D_{i,k}} \right) dS(\vec{y}) d\tau\\
			&\le C\left[ \left( e^{-a(1-\mu^{-1})|\vec{x}|/(8c)}+e^{-\frac{|\vec{x}|^2}{C_{i+1,k}t}}\right) \int_0^t e^{-a(t-\tau)/8} (1+\tau)^{-7/2} d\tau +e^{-|\vec{x}|/D_{i+1,k}} \int_0^t e^{-a(t-\tau)/8} e^{-\mu \tau/D_{i+1,k}} d\tau \right]\\
			&\le C\left[(1+t)^{-7/2} \left( e^{-a(1-\mu^{-1})|\vec{x}|/(8c)}+e^{-\frac{|\vec{x}|^2}{C_{i+1,k}t}}\right)+e^{-|\vec{x}|/D_{i+1,k}}\left(e^{-a(1-\mu^{-1}) t/8}+e^{-t/D_{i+1,k}}\right)\right], 
		\end{aligned}
	\end{equation}
	which directly yields \eqref{skckwi}.
\end{proof}

\section*{Acknowledgement}
The authors are supported by National Key R\&D Program of China (No. 2022YFA1007300).

\end{document}